\newtheorem{theorem}{Theorem}[section]
\newtheorem{proposition}[theorem]{Proposition}
\newtheorem{lemma}[theorem]{Lemma}
\newtheorem{dfn}[theorem]{Definition}
\newtheorem{corollary}[theorem]{Corollary}
\newtheorem{conjecture}[theorem]{Conjecture}
\newcommand{\C}{\mathscr{C}}
\renewcommand{\O}{\mathcal{O}}
\renewcommand{\k}{\mbox{\Fontauri k}}
\begin{document}

\title{The Bessel-Plancherel theorem and applications}

\author{Raul Gomez}
\degreeyear{2011}
\degree{Doctor of Philosophy} 

\field{Mathematics}
\chair{Professor Nolan Wallach}
\cochair{Professor Wee Teck Gan} 
\othermembers{
Professor Ronald Graham\\ 
Professor Ken Intriligator\\
Professor Cristian Popescu\\
}
\numberofmembers{5} 

\begin{frontmatter}
\makefrontmatter 

%
%

\begin{dedication} 
  To Tanya Martinez, \\
 my beloved wife,\\
  who joined my journey \\
and forever changed my life. \\ \vspace{10pt}
With the thrill of facing the unknown,\\
you took away my fears\\
 and opened my heart\\
 to a whole new world. \\ \vspace{10pt}
You made me realize \\
that  at the end of the day,\\
 as long as we are together,\\
  we will always be at home. \\ \vspace{10pt}
For all of this,\\
and for so much more,\\
I dedicate to you\\
this work.
\end{dedication}

%
%


\begin{epigraph} 
  \emph{One cannot escape the feeling \\ that these mathematical formulas\\ have an independent existence\\ and an intelligence of their own,\\
     that they are wiser than we are, \\
     wiser even than their discoverers...  }\\
  ---Heinrich Hertz
\end{epigraph}

%

\tableofcontents


\begin{acknowledgements} 
I would like to thank the Conacyt-UCMexus fellowship that allowed me to come to UCSD in the first place, and provided me financial support during all this years. In this sense, I would also like to thank Jim Lin, William Helton, Nolan Wallach and Wee Teck Gan that, in one way or another, provided me the extra support that I needed to live in San Diego with my wife and child. 

I would like to thank my mentors at the University of Guanajuato, Luis Hern\'andez, Pedro Luis del Angel, Manuel Cruz and specially my undergraduate advisor Adolfo S\'anchez-Valenzuela, that encouraged me to come to San Diego to pursue my dream.   I would also like to thank Oded Yacobi, Orest Bucicovschi, Seung Lee, Jaime Lust, Neal Harris, Jon Middleton and Mandy Cheung for their helpful conversations during all this years. Finally, I would like to thank Akshay Venkatesh for pointing out a gap in my initial calculations.

Special thanks go to my advisors Nolan Wallach and Wee Teck Gan. To Wee Teck Gan for suggesting me the problem that will eventually become my thesis, and for his constant inspiration and help that opened my mathematical horizon. I will also like to thank him for the careful reading that he gave to this thesis and the uncountable improvements that came with that. Thanks to Nolan Wallach for his helpful weekly conversations, that showed me the way into this subject, and guided me through the hardest parts of this thesis. Without his help and direction I would have never been able to complete this work.

I wouldn't be able to be here without the constant encouragement and support from my parents Ra\'ul G\'omez and Gabriela Mu\~noz, and my sisters Ana Gabriela and Cintia Noem\'i. I would also like to thank my grandparents Luis Lauro Mu\~noz and Ra\'ul G\'omez for the inspiring lesson that their lives have been.

Finally, and most important, I would like to thank my wife Tanya Mart\'inez, for her incredible support and encouragement during all this years, and my son, Diego Nicol\'as, for showing me what unconditional love looks like.

Chapter 1 is a combination of the material in the papers \emph{Holomorphic continuation of Bessel integrals for general admissible induced representations: The case of compact stabilizer,} Selecta Mathematica, 2011, coauthored with Nolan R. Wallach. I was the secondary author of this paper and made substantial contributions to the research as did my co-author. and \emph{Holomorphic continuation of Bessel integrals for general admissible induced representations: The general case,}

The material in section 2.2 is essentially a restatement of the material found in the books Real Reductive Groups, volumes I and II, authored by Nolan R. Wallach
\end{acknowledgements}

\begin{vitapage}
\begin{vita}
  \item[2006] B.~S. in Mathematics, Universidad de Guanajuato.
Thesis: ``Aplicaciones de los Grupos de Lie a las ecuaciones diferenciales de la F\'{\i}sica y la Geometr\'{\i}a.''
Advisor:Adolfo S\'anchez Valenzuela.

\item[2008] M.~A. in Pure Mathematics, University of California, San Diego
  

\item[2011] Ph.~D. in Mathematics, University of California, San Diego 

\end{vita}
\begin{publications}
\item R. Gomez and N.R. Wallach ``Holomorphic continuation of Bessel integrals for general admissible induced representations: The case of compact stabilizer'' to appear, Selecta Mathematica.

\item R. Gomez, J.W. Helton and I. Klep ``Determinant expansions of signed matrices and of certain jacobians'' SIAM journal on matrix analysis and applications 2010, vol. 31, no. 2, pp. 732--754.
\end{publications}
\end{vitapage}


\begin{abstract}
Let $G$ be a simple Lie Group with finite center, and let $K\subset G$ be a maximal compact subgroup. We say that $G$ is a Lie group of tube type if $G/K$ is a hermitian symmetric space of tube type. For such a Lie group $G$, we can find a parabolic subgroup $P=MAN$, with given Langlands decomposition, such that $N$ is abelian, and $N$ admits a generic character with compact stabilizer.  We will call any parabolic subgroup $P$ satisfying this properties a Siegel parabolic.

Let $(\pi,V)$ be an admissible, smooth, Fr\'echet representation of a Lie group of tube type $G$, and let $P \subset G$ be a Siegel parabolic subgroup. If $\chi$ is a generic character of $N$, let $Wh_{\chi}(V)=\{\lambda:V \longrightarrow \mathbb{C} \, | \, \lambda(\pi(n)v)=\chi(n)v\}$ be the space of Bessel models of $V$.  After describing the classification of all the simple Lie groups of tube type, we will give a characterization of the space of Bessel models of an induced representation. As a corollary of this characterization we obtain a local multiplicity one theorem for the space of Bessel models of an irreducible representation of $G$.

As an application of this results we calculate the Bessel-Plancherel measure of a Lie group of tube type, $L^2(N\backslash G;\chi)$, where $\chi$ is a generic character of $N$. Then we use Howe's theory of dual pairs
 to show that the Plancherel measure of the space $L^2(O(p-r,q-s)\backslash O(p,q))$ is the pullback, under the $\Theta$ lift, of the Bessel-Plancherel measure $L^2(N\backslash
Sp(m,\mathbb{R});\chi)$, where $m=r+s$ and $\chi$ is a generic character that depends on $r$ and $s$.
\end{abstract}
\end{frontmatter}



\chapter*{Introduction} \addcontentsline{toc}{chapter}{Introduction}

In the classical theory of modular forms, there is a construction that associates to every cusp form $f$ on the upper half plane $\mathcal{H}$ an $L$-function
\[
 L(s,f)=\sum_{n >0} \frac{a_{n}}{n^{s}}.
\]
This $L$-function is related to other objects of interest in number theory, like elliptic curves over number fields, and its study is of critical importance in a wide range of applications. The $L$ function $L(s,f)$ can also be constructed using a representation theoretic point of view, by considering the space of Whittaker models of a discrete series representation of $SL(2,\mathbb{R})$, associated with the modular form $f$. If we also include Mass forms, then we can extend this construction to include all types of representations of $SL(2,\mathbb{R})$. This point of view has been incredibly successful and has given rise to an intricate and beautiful theory of $L$-functions associated to automorphic representations of $GL(n)$.

Unfortunately, this theory has not been as successful with other groups like $GSp(n)$. Part of the problem is that not all automorphic representations of $GSp(n)$ admit a Whittaker model. In \cite{s:1964} Siegel developed a technique, analogous to the theory of modular forms, to construct $L$-functions associated with holomorphic representations of $Sp(n,\mathbb{R})$ \cite{w:2003}. In this construction, the space of Whittaker models is replaced by the space of generalized Bessel models
\[
 Wh_{\chi}(V)=\{\lambda:V\longrightarrow \mathbb{C} \, | \, \mbox{$\lambda(\pi(n)v)=\chi(n)\lambda(v)$, for all $n\in N$}\},
\]
where $P=MAN$ is a Siegel parabolic subgroup of $Sp(n,\mathbb{R})$, with given Langlands decomposition, and $\chi$ is a \emph{generic} character of $N$, i.e., the $P$-orbit of $\chi$ on $\hat{N}$ is open. This construction has been adapted by Novodvorsky and Piatetski-Shapiro \cite{NPS:bessel} to construct $L$-functions associated to automorphic representations of $GSp(4)$. 

The study of the space of generalized Bessel models is the subject of the first chapter of this thesis. In order to describe the results obtained in that chapter we need to introduce a little bit of notation. Let $G$ be a simple Lie group with finite center, and let $K$ be a maximal compact subgroup. We say that $G$ is a \emph{Lie group of tube type} if $K\backslash G$ is a Hermitian symmetric space of tube type. In section \ref{sec:classification} we use the correspondence between Euclidean simple Jordan algebras over $\mathbb{R}$ and simple Hermitian symmetric spaces of tube type to describe a classification of the simple Lie groups of tube type. As a consequence of this classification we have the following proposition.
\begin{proposition}
 If $G$ is a Lie group of tube type, then 
\begin{enumerate}
 \item There exists a parabolic subgroup $P=MAN,$ with given Langlands decomposition, such that $N$ is abelian. 
 \item There exists a unitary character $\chi$ on $N,$ such that its stabilizer in $M,$
\[
 M_{\chi}=\{m\in M \, | \, \chi(m^{-1}nm)=\chi(n) \quad \forall n\in N\},
\]
is compact.
\end{enumerate}
\end{proposition}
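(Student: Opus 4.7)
My plan is to read the proposition off the Jordan-algebra picture alluded to in the classification section. By the Koecher--Vinberg correspondence, a simple Lie group of tube type $G$ is (up to isogeny) the conformal (biholomorphism) group of a tube domain $T_\Omega = V + i\Omega$, where $V$ is a Euclidean simple Jordan algebra and $\Omega \subset V$ is the associated symmetric cone. First I would use the Kantor--Koecher--Tits construction to write down a $3$-grading $\mathfrak{g} = \mathfrak{n}^- \oplus \mathfrak{m}\oplus \mathfrak{a} \oplus \mathfrak{n}$ in which $\mathfrak{n}\cong V$ as an abelian Lie algebra, $\mathfrak{a}$ acts by the grading element (dilations in $V$), and $\mathfrak{m}$ is the Lie algebra of the structure group $\mathrm{Str}(V)^0$. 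Exponentiating and letting $P = MAN$ be the normalizer of $\mathfrak{n}$ in $G$, one immediately gets part (1): $N = \exp \mathfrak{n}$ is abelian and isomorphic to $V$, and this is the Siegel parabolic mentioned in the abstract.

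For part (2), I would identify $\hat N$ with $V^*$, and then with $V$ itself via the Jordan trace form $\tau(x,y) = \mathrm{tr}(xy)$, which is positive definite because $V$ is Euclidean. Under this identification the coadjoint action of $M$ on $\hat N$ is the natural action of the structure group $\mathrm{Str}(V)^0$ on $V$. The key fact from Jordan theory is that $\mathrm{Str}(V)^0$ acts transitively on the open cone $\Omega$ of squares of invertible elements; in particular it has an open orbit in $V$. Choose $\chi$ to be the character corresponding (via $\tau$) to the Jordan identity $e \in \Omega$.

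The remaining, and main, step is to identify the stabilizer $M_\chi$ and show it is compact. Under the identification above, $M_\chi$ is the subgroup of $\mathrm{Str}(V)^0$ fixing $e$. By a standard theorem of Koecher, the isotropy of the identity in the structure group is precisely the identity component of the automorphism group $\mathrm{Aut}(V)$ of the Jordan algebra. Compactness of $\mathrm{Aut}(V)$ is the heart of the matter: any Jordan automorphism preserves the associative trace form $\tau$, which is positive definite in the Euclidean case, so $\mathrm{Aut}(V)$ is a closed subgroup of the orthogonal group $\mathrm{O}(V,\tau)$, hence compact. This gives $M_\chi$ compact and completes (2).

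The hardest point I anticipate is keeping the isogenies and connected components honest: the literal $M$ coming from the Langlands decomposition of $G$ need not be exactly $\mathrm{Str}(V)^0$, and one must check that passing between $G$, $\mathrm{Conf}(V)$ and their coverings does not enlarge $M_\chi$ by a noncompact piece. This can be handled case-by-case using the explicit classification in Section~\ref{sec:classification} (so $V \in \{\mathrm{Sym}_n(\mathbb{R}),\,\mathrm{Herm}_n(\mathbb{C}),\,\mathrm{Herm}_n(\mathbb{H}),\,\mathrm{Herm}_3(\mathbb{O}),\,\mathbb{R}\oplus\mathbb{R}^{n-1}\}$), verifying in each case that $M_\chi$ is, respectively, essentially $\mathrm{O}(n)$, $\mathrm{U}(n)$, $\mathrm{Sp}(n)$, $F_4$, or $\mathrm{O}(n-1)\times\{\pm 1\}$, all of which are compact.
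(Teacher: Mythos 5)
Your proposal is correct, and for part (1) it is essentially the paper's own construction: the paper produces the abelian $N$ as the $+2$ eigenspace of $\operatorname{ad}(h)$ for a distinguished TDS $\{h,e,f\}$ attached to the Hermitian structure, which is the Kantor--Koecher--Tits $3$-grading you describe in Jordan-algebra language. Where you genuinely diverge is part (2). The paper treats the compactness of $M_{\chi}$ purely by inspection of the classification: for each of the five families it writes down the generic characters $\chi_{p,q}$ explicitly (via $\chi_{\mathbb{R}}(\operatorname{tr} I_{p,q}X)$ and its analogues) and simply records that $M_{\chi_{n,0}}$ is $O(n)$, $U(n)$, $Sp(n)$, compact $F_{4}$, or $SO(n-1)$ respectively. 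You instead give a uniform conceptual argument: the character dual to the Jordan identity $e$ has open orbit because $\operatorname{Str}(V)^{0}$ acts transitively on the cone $\Omega$, its stabilizer is $\operatorname{Aut}(V)$ by Koecher's theorem, and $\operatorname{Aut}(V)$ is compact because it preserves the positive-definite trace form. This buys a classification-free proof of existence of a compact-stabilizer character (modulo the isogeny bookkeeping you rightly flag), whereas the paper's case-by-case route buys more: it exhibits representatives of \emph{all} the open $M$-orbits in $\hat{N}$ and identifies each stabilizer $O(p,q)$, $U(p,q)$, $Sp(p,q)$, $F_{4,1}$, $SO(n-2,1)$, which is what the later chapters actually use. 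Your last paragraph correctly identifies the only delicate point (the literal Langlands $M$ versus $\operatorname{Str}(V)^{0}$), and your proposed case check resolves it.
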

If $P\subset G $ is a parabolic subgroup satisfying  1 and 2, then we say that $P$ is a Siegel parabolic subgroup.

Let $G$ be a Lie group of tube type, and let $P=MAN$ be a Siegel parabolic subgroup, with given Langlands decomposition. Let $(\sigma,V_{\sigma})$ be an admissible, smooth, Fr\'echet representation of $M$, and let $\nu\in \mathfrak{a}_{\mathbb{C}}'$ ($\mathfrak{a}=Lie(A)$). Define
\[
 I_{\sigma,\nu}^{\infty}=\left\{\phi:G \longrightarrow V_{\sigma}\, \left| \, \begin{array}{c} \mbox{$f$ is smooth and $f(\bar{n}amk)=a^{\nu-\rho}\sigma(m)f(k)$}\\ \mbox{for all $\bar{n}\in \bar{N}$, $a\in A$ and $m\in M$}\end{array} \right\}\right. .
\]
Here $\rho$ is half the sum of the roots associated to the $p$-pair $(P,A)$ \cite{w:vol1}, and $\bar{P}=MA\bar{N}$ is the parabolic opposite to $P$.
If we set $(\pi(g)f)(x)=f(xg)$, for all $f\in I_{\sigma,\nu}^{\infty}$, $x$, $g\in G$, then $(\pi,I_{\sigma,\nu}^{\infty})$ defines and admissible, smooth, Fr\'echet representation of $G$. Let
\[
Wh_{\chi}(I_{\sigma,\nu}^{\infty})=\{\lambda:I_{\sigma,\nu}^{\infty} \longrightarrow \mathbb{C} \, | \, \mbox{$\lambda(n \cdot f)=\chi(n)\lambda(f)$, for all $n\in N$}\}
\]
be the space of generalized Bessel models for a generic character $\chi\in \hat{N}$. This space has been subject of careful study during recent years. In the real case the more general results can be found in \cite{w:hol}, where a multiplicity one result is proved in the case where $P=MAN$ is a very nice parabolic subgroup \cite{w:deg}, and $(\sigma, V_{\sigma})$ is finite dimensional representation of $M$. In this context multiplicity one means that
\begin{equation}
\dim{Wh_\chi(I_{\sigma,\nu}^{\infty})}=\dim{V_{\sigma}}. \label{eq:dimension}
\end{equation}

In 2007 Dipendra Prasad asked if a similar result was true in the case where $(\sigma,V_{\sigma})$ is an admissible, smooth, Frechet, moderate growth representation of $M$. In this case the statement about dimensions in equation (\ref{eq:dimension}) has to be replaced by an $M_{\chi}$-intertwiner isomorphism between $V_{\sigma}'$ and $Wh_\chi(I_{\sigma,\nu}^{\infty})$, where
 \[
 M_{\chi}=\{m\in M \, |\, \mbox{$\chi(mnm^{-1})=\chi(n)$, for all $n \in N$}\}.
 \]
 
Let $I_{\sigma}$ be the representation smoothly induced from $K_{M}=K\cap M$ to $K$. Given $f\in I_{\sigma}^{\infty}$ define
\[
 f_{\nu}(namk)=a^{\nu-\rho}\sigma(m)f(k).
\]
The map $f\mapsto f_{\nu}$ defines a $K$-equivariant linear isomorphism from $I_{\sigma}^{\infty}$ to $I_{\sigma,\nu}^{\infty}$. Consider the integrals
\[
 J_{\sigma,\nu}^{\chi}(f)=\int_{N}\chi(n)^{-1}f_{\nu}(n)\, dn.
\]
These integrals are called generalized Jacquet integrals and converge absolutely and uniformly on compacta for $\operatorname{Re} \nu \ll 0$ \cite{w:hol}.
Let $\mu \in V_{\sigma}'$ and define $\gamma_{\mu}(\nu)=\mu\circ J_{\sigma,\nu}^{\chi}$. Observe that if $\operatorname{Re} \nu \ll 0$ then $\gamma_{\mu}$ defines a weakly holomorphic map into $(I_{\sigma}^{\infty})'$.
\begin{theorem}\label{thm:compact}
 Assume that $M_{\chi}$ is compact.
\begin{description}
 \item[i)] $\gamma_{\mu}$ extends to a weakly holomorphic map from $\mathfrak{a}_{\mathbb{C}}'$ to $(I_{\sigma}^{\infty})'$
\item[ii)] Given $\nu\in\mathfrak{a}_{\mathbb{C}}'$ define
\[
 \lambda_{\mu}(f_{\nu})=\gamma_{\mu}(\nu)(f), \qquad f\in I_{\sigma}^{\infty}.
\]
Then $\lambda_{\mu}\in Wh_{\chi}(I_{\sigma,\nu}^{\infty})$ and the map $\mu \mapsto \lambda_{\mu}$ defines an $M_{\chi}$-equivariant isomorphism between $V_{\sigma}'$ and $Wh_{\chi}(I_{\sigma,\nu}^{\infty})$.
 \end{description}
\end{theorem}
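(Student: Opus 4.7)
The plan is to split the theorem into its analytic content (part (i), the holomorphic continuation of $\gamma_\mu$) and its algebraic content (part (ii), the identification $V_\sigma'\cong Wh_\chi(I_{\sigma,\nu}^\infty)$), and to address the equivariance, injectivity and surjectivity of $\mu\mapsto\lambda_\mu$ in turn. For part (i), start from the explicit formula
\[
\gamma_{\mu}(\nu)(f)=\int_{N}\chi(n)^{-1}a(n)^{\nu-\rho}\,\mu\bigl(\sigma(m(n))f(k(n))\bigr)\,dn
\]
obtained by composing $J_{\sigma,\nu}^\chi$ with $\mu$ and using the Iwasawa-type factorization $n=\bar n(n)a(n)m(n)k(n)$ on the open dense subset $N\cap\bar NMAK$ of $N$; the integral converges absolutely for $\mathrm{Re}\,\nu\ll 0$ by \cite{w:hol}. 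To extend it to all of $\mathfrak a_{\mathbb C}'$, I would select root vectors $X_1,\dots,X_d\in\mathfrak n$ for which $d\chi(X_j)\neq 0$ --- possible precisely because $\chi$ is generic --- and integrate by parts, each step trading a factor $a(n)^{\nu-\rho}$ for a rational-in-$\nu$ multiple of an integral that converges on a strictly larger half-space in $\mathfrak a_{\mathbb C}'$. Iterating this procedure yields a meromorphic continuation of $\gamma_\mu$, and the compactness of $M_\chi$ enters precisely to rule out the candidate polar hyperplanes: it forces the matrix coefficients $m\mapsto\mu(\sigma(m)v)$ to be uniformly bounded over the compact stabilizer, which provides the decay needed to cancel the denominators produced by the integration-by-parts polynomials.

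For part (ii), $M_\chi$-equivariance of $\mu\mapsto\lambda_\mu$ is a change of variable $n\mapsto m^{-1}nm$ in the Jacquet integral, using $\chi(m^{-1}nm)=\chi(n)$ for $m\in M_\chi$ together with the built-in $M$-covariance of $f_\nu$; similarly the identity $\lambda_\mu(\pi(n_0)f_\nu)=\chi(n_0)\lambda_\mu(f_\nu)$ is a change of variable $n\mapsto n_0 n$, valid initially for $\mathrm{Re}\,\nu\ll 0$ and on all of $\mathfrak a_{\mathbb C}'$ by (i). Injectivity of $\mu\mapsto\lambda_\mu$ is a localization argument: for $f\in I_\sigma^\infty$ supported in a small neighborhood of a point $k_0\in K$ with a prescribed value in $V_\sigma$, testing $\gamma_\mu(\nu)$ against an approximate identity yields in the limit $\mu$ evaluated on a chosen vector of $V_\sigma$, so that $\lambda_\mu=0$ forces $\mu=0$. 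Surjectivity is the subtlest point; the plan is either to construct an explicit inverse by restricting $\lambda\in Wh_\chi(I_{\sigma,\nu}^\infty)$ to sections supported near the big cell $\bar NP$ and reading off a functional on $V_\sigma$ from the $N$-covariance of $\lambda$, or to combine injectivity with an independent bound $\dim Wh_\chi(I_{\sigma,\nu}^\infty)\le\dim V_\sigma'$ obtained from a Mackey-style analysis of $N$-equivariant distributions on $G/\bar P$ that again uses the compactness of $M_\chi$.

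The principal obstacle I anticipate is the pole-free holomorphic continuation across all of $\mathfrak a_{\mathbb C}'$. Classical proofs in the scalar case rely on explicit Gindikin--Karpelevi\v c type formulas for the $c$-function, which are unavailable here since $\sigma$ is only admissible, smooth, and Fr\'echet; an operator-valued Bernstein--Sato type identity for the integrand is needed in their place, and its construction depends crucially on the compactness of $M_\chi$. The surjectivity step in (ii) is the secondary obstacle, and making the dimension bound precise in the smooth moderate growth Fr\'echet category requires care.
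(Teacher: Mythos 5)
Your plan for part (ii) is structurally sound and close in spirit to the paper's: the paper also works with the restriction of a Whittaker functional to the sections supported on the big Bruhat cell, where the Jacquet integral converges for every $\nu$, and the whole theorem reduces to showing that this restriction map $\Phi_{\sigma,\nu}\colon Wh_{\chi}(I_{\sigma,\nu}^{\infty})\to V_{\sigma}'$ is bijective. Be aware, though, that your fallback of ``combining injectivity with a dimension bound $\dim Wh_{\chi}(I_{\sigma,\nu}^{\infty})\le\dim V_{\sigma}'$'' is not available here: $\sigma$ is an arbitrary admissible smooth Fr\'echet representation, typically infinite dimensional, and the entire point of the theorem (Prasad's question) is that the dimension count of the finite-dimensional case must be replaced by an equivariant isomorphism. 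The injectivity of $\Phi_{\sigma,\nu}$ (equivalently, that a Whittaker functional vanishing on the big cell vanishes) is itself a substantial step: the paper proves it by reducing, via Casselman--Wallach, to an induced representation from a minimal parabolic, and then running a Bruhat-theoretic argument with the Kolk--Varadarajan transverse symbol over the decomposition $G=\bigcup_{v\in W^{M}}P_{\circ}w_{v}M_{\chi}N$. Compactness of $M_{\chi}$ enters exactly there, through $M_{\chi}=K_{M}$ and the fact that $\chi$ is non-trivial on $w_{v}^{-1}N_{\circ}w_{v}\cap N$ for every non-open orbit.

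The genuine gap is in part (i). Your integration-by-parts scheme yields at best a meromorphic continuation, and the proposed mechanism for removing the poles --- that compactness of $M_{\chi}$ makes $m\mapsto\mu(\sigma(m)v)$ bounded and that this boundedness ``cancels the denominators'' --- is not an argument: the candidate polar hyperplanes of a Bernstein--Sato type continuation are located by the functional equation in $\nu$ and are insensitive to uniform bounds of the integrand over a compact group. Compactness of $M_{\chi}$ plays no analytic role of this kind anywhere in the proof. The paper's actual route inverts your logical order: it first proves that $\Phi_{\sigma,\nu}$ is bijective for \emph{all} $\nu$, by combining the injectivity above with a tensoring argument --- one tensors $I_{\sigma,\nu}^{\infty}$ with a finite-dimensional representation $(\eta,F)$ of $G$, uses Wallach's element $\Gamma\in U(\mathfrak{g})^{M_{\chi}}$ to identify $Wh_{\chi}(I_{\sigma_{\nu}}^{\infty})\otimes F'$ with $Wh_{\chi}(I_{\sigma_{\nu}\otimes\eta}^{\infty})$, and filters $F$ by $\mathfrak{a}$-weights to deduce that bijectivity at $\nu$ implies bijectivity at $\nu-r$ --- thereby propagating surjectivity from the convergent half-plane to all of $\mathfrak{a}_{\mathbb{C}}'$. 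The weak holomorphy of $\gamma_{\mu}$ is then read off from the resulting shift equation; it is an output of part (ii), not an input to it. Without either that algebraic shift mechanism or a genuine operator-valued Bernstein--Sato identity (which you acknowledge you do not have), part (i) as you have set it up does not close.
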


When $M_{\chi}$ is not compact, the above theorem as it is stated is false. This is mainly due to the fact that the orbits of the symmetric space $X:=M_{\chi}\backslash M$ under the action of a minimal parabolic subgroup of $M$ are much more complicated than in the case where $M_{\chi}$ is compact \cite{m:orbits}. However something can still be said about $Wh_{\chi}(I_{\sigma,\nu}^{\infty})$. Assume that the center of $M_{\chi}$ is compact, and let $(\tau,V_{\tau})$ be an irreducible, admissible, tempered, infinite dimensional representation of $M_{\chi}$. As in the case were $M_{\chi}$ is compact, define $\gamma_{\mu}(\nu)=\mu\circ J_{\sigma,\nu}^{\chi}$ and observe that if $\operatorname{Re} \nu \ll 0$ then $\gamma_{\mu}$ defines a weakly holomorphic map into $Hom(I_{\sigma}^{\infty},V_{\tau})$. Let
\[
 Wh_{\chi,\tau}(I_{\sigma,\nu}^{\infty})=\{\lambda:I_{\sigma,\nu}^{\infty}\longrightarrow V_{\tau} \, | \, \mbox{$\lambda(\pi(mn)f)=\chi(n)\tau(m)\lambda(f)$,  $\forall m\in M_{\chi}$, $n\in N$}\}.
\]
\begin{theorem}\label{thm:noncompact} With assumptions as above.
 \begin{description}
 \item[i)] $\gamma_{\mu}$ extends to a weakly holomorphic map from $\mathfrak{a}_{\mathbb{C}}'$ to $Hom(I_{\sigma}^{\infty},V_{\tau})$.
\item[ii)] Given $\nu\in\mathfrak{a}_{\mathbb{C}}'$ define
\[
 \lambda_{\mu}(f_{\nu})=\gamma_{\mu}(\nu)(f), \qquad f\in I_{\sigma}^{\infty}.
\]
Then $\lambda_{\mu}\in Wh_{\chi,\tau}(I_{\sigma,\nu}^{\infty})$ and the map $\mu \mapsto \lambda_{\mu}$ defines a linear isomorphism between $Hom_{M_{\chi}}(V_{\sigma},V_{\tau})$ and $Wh_{\chi,\tau}(I_{\sigma,\nu}^{\infty})$.
 \end{description}
\end{theorem}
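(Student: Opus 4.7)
The plan is to leverage Theorem \ref{thm:compact} as much as possible, treating the tempered representation $(\tau,V_\tau)$ of $M_\chi$ as a bookkeeping device that remembers the $M_\chi$-equivariance, and to invoke Harish-Chandra's theory of tempered representations only where strictly necessary.

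For part (i), the holomorphic continuation of $\gamma_\mu$ will reduce almost directly to Theorem \ref{thm:compact}(i). For any continuous linear functional $\xi \in V_\tau'$, the composition $\xi \circ \mu$ lies in $V_\sigma'$, so Theorem \ref{thm:compact}(i) produces a weakly holomorphic extension of $\nu \mapsto \xi(\gamma_\mu(\nu)(f)) = (\xi\circ\mu)\bigl(J^\chi_{\sigma,\nu}(f)\bigr)$ for each $f\in I_\sigma^\infty$. Since $V_\tau'$ separates points of the Fr\'echet space $V_\tau$, Dunford's theorem on weak holomorphy will upgrade this pointwise extension to a weakly holomorphic $V_\tau$-valued extension of $\gamma_\mu$ on all of $\mathfrak{a}_{\mathbb{C}}'$.

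For part (ii), I first check equivariance. The $N$-equivariance of $\lambda_\mu$ follows, exactly as in the compact case, from translation invariance of Haar measure on $N$ and the character property of $\chi$. The additional $M_\chi$-equivariance $\lambda_\mu(\pi(m)\phi) = \tau(m)\lambda_\mu(\phi)$ for $m\in M_\chi$ is a change of variables computation: conjugation by $m$ preserves both $N$ and $\chi$ by definition of $M_\chi$, so $J^\chi_{\sigma,\nu}(\pi(m)f) = \sigma(m) J^\chi_{\sigma,\nu}(f)$, and then the $M_\chi$-intertwining property of $\mu$ produces the $\tau(m)$ factor on the outside. Injectivity of $\mu \mapsto \lambda_\mu$ is handled by exhibiting, for each $v \in V_\sigma$, a test function $f \in I_\sigma^\infty$ concentrated near the identity in the open $\bar{P}$-cell so that the leading asymptotic of $J^\chi_{\sigma,\nu}(f)$ is $v$, from which $\mu$ can be recovered up to a normalization constant.

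The principal obstacle, and where the argument must go substantially beyond the compact case, is surjectivity. Given $\lambda \in Wh_{\chi,\tau}(I_{\sigma,\nu}^\infty)$, the naive candidate $\mu(v) := \lambda(\phi_v)$ for a bump function $\phi_v$ lifting $v$ need not land in $V_\tau$ in an $M_\chi$-equivariant way, and the orbit geometry of $M_\chi\backslash M$ under a minimal parabolic $P_M\subset M$ is considerably richer than in the compact-stabilizer setting \cite{m:orbits}. Here the hypothesis that $M_\chi$ has compact center and that $\tau$ is tempered becomes essential: this places matrix coefficients of $\tau$ in Harish-Chandra's Schwartz space on $M_\chi$, so that one can construct $\mu$ as an asymptotic boundary value of $\lambda$ against $\tau$-isotypic averages over $M_\chi$, with convergence guaranteed by the Schwartz decay. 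The identity $\lambda_\mu = \lambda$ should then be verified by restricting to the open $P$-orbit and reducing, through the weak holomorphic extension of part (i), to a computation of the compact-stabilizer type already carried out in Theorem \ref{thm:compact}.
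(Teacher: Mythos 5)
Your argument for part (i) has a genuine gap: Theorem \ref{thm:compact} carries the standing hypothesis that $M_{\chi}$ is compact, and in the setting of Theorem \ref{thm:noncompact} it is not. Composing $\mu$ with a functional $\xi\in V_{\tau}'$ does produce an element $\xi\circ\mu\in V_{\sigma}'$, but there is no result available that continues $\nu\mapsto(\xi\circ\mu)\circ J^{\chi}_{\sigma,\nu}$ for an arbitrary element of $V_{\sigma}'$ when the stabilizer is noncompact --- the paper stresses that Theorem \ref{thm:compact} ``as it is stated is false'' in that case, precisely because the restriction of a Bessel functional to the open cell no longer determines it. The $M_{\chi}$-equivariance of $\mu$ into the tempered, infinite-dimensional $V_{\tau}$ is not bookkeeping; it is the hypothesis that makes the continuation true, and it is destroyed the moment you compose with a single $\xi$. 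The paper's logical order is the reverse of yours: one first proves that $\Phi^{\tau}_{\sigma,\nu}:Wh_{\chi,\tau}(I^{\infty}_{\sigma,\nu})\to Hom_{M_{\chi}}(V_{\sigma},V_{\tau})$ is an isomorphism for every $\nu$ (by tensoring with finite-dimensional representations and shifting from the convergent range), and only then defines the continuation by $\gamma^{\tau}_{\mu}(\nu)(\phi)=(\Phi^{\tau}_{\sigma,\nu})^{-1}(\mu)(\phi_{\sigma,\nu})$, verifying holomorphy through a shift equation. Part (i) cannot be quoted from the compact case; it is an output of part (ii), not an input.

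For part (ii), your equivariance and injectivity steps are fine, but the surjectivity sketch misses the actual mechanism. The issue is to show that $\lambda\in Wh_{\chi,\tau}(I^{\infty}_{\sigma,\nu})$ vanishes if it vanishes on functions supported in the open $P_{\circ}\times M_{\chi}N$ orbit, i.e.\ that no lower-dimensional orbit of the Bruhat--Matsuki decomposition supports an invariant $Hom(V_{\eta},V_{\tau})$-valued distribution (Proposition \ref{prop:vanishinggeneral}). Temperedness of $\tau$ enters not through Schwartz-space decay of matrix coefficients but through the absence of split eigenvectors: on an orbit through $ww_{v}c_{i}$ with $c_{i}\neq e$ there is a hyperbolic one-parameter subgroup $h_{t}$ of $M_{\chi}$ whose conjugate lies in $A_{\circ}$; the semisimple $A_{\circ}$-action on the transverse jet bundle produces an eigenvector, and its image under an invariant $\mu$ would be a split eigenvector in $V_{\tau}$, forcing $\dim V_{\tau}<\infty$ and contradicting the hypotheses on $\tau$. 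Your proposed ``asymptotic boundary value against $\tau$-isotypic averages'' is not a construction and does not address why the restriction to the open orbit is faithful. Without this vanishing theorem, and without a replacement for the unavailable reduction to Theorem \ref{thm:compact}, both halves of the theorem remain unproved.
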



We will now describe an application of the results given so far. Let
\[
L^2(N\backslash G;\chi)=\left\lbrace f:G \longrightarrow \mathbb{C} \, \left| \,\begin{array}{c}
				\mbox{$f(ng)=\chi(n)f(g)$ and } \\
                                \mbox{$\int_{N\backslash G} |f(g)|^2 \, dNg < \infty$}
                              \end{array}  \right\rbrace\right..
\]
We will call this the space of generalized Bessel functions. Observe that there is a natural action of $M_{\chi}\times G$ on this space with $G$ acting on the right, and $M_{\chi}$ acting on the left. In chapter \ref{chapter:Bessel-Plancherel} we compute the ``Bessel-Plancherel'' measure, i.e., the spectral decomposition of the space of generalized Bessel functions with respect to this action. The calculations are based on the work of Wallach described in \cite{w:vol2} and depend on theorem \ref{thm:compact} and \ref{thm:noncompact}. The main result of that chapter is:
\begin{theorem}\label{thm:bessel-plancherelintroduction}
 Let $G$ be a Lie group of tube type, and let $P=MAN$ be a Siegel parabolic subgroup of $G$, with given Langlands decomposition. Let $\chi$ be a generic unitary character of $N$, and let $M_{\chi}$ be its stabilizer in $M$. Then the spectral decomposition of $L^2(N\backslash G;\chi)$, with respect to the action of $M_{\chi}\times G$, is given by
\[
 L^2(N\backslash G;\chi) \cong  \int_{\hat{G}}  \int_{\hat{M}_{\chi}} W_{\chi,\tau}(\pi)\otimes \tau^{\ast} \otimes\pi \,d\nu(\tau)\, d\mu(\pi),
\]
where $W_{\chi,\tau}$ is some multiplicity space, $\mu$ is the usual Plancherel measure of $G$, and $\nu$ is the Plancherel measure of $M_{\chi}$. Furthermore, if $M_{\chi}$ is compact, then $W_{\chi,\tau}(\pi)\cong Wh_{\chi,\tau}(\pi)$ is finite dimensional.
\end{theorem}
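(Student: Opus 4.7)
The plan is to combine Wallach's abstract Plancherel theorem for $L^2(G)$ (Real Reductive Groups, vol.\ II) with the Bessel model descriptions in Theorems \ref{thm:compact} and \ref{thm:noncompact}, and then apply Harish-Chandra's Plancherel theorem to the subgroup $M_{\chi}$ (which has compact center and is therefore real reductive in Harish-Chandra's class). The heuristic is that $L^2(N\backslash G;\chi)$ is the ``$\chi$-isotypic component'' of $L^{2}(G)$ under the left regular action of $N$, and applying the functor $\pi \mapsto Wh_{\chi}(\pi^{\infty})$ fiberwise to Wallach's Plancherel decomposition of $L^{2}(G)$ should yield a decomposition in which the multiplicity of $\pi \in \hat{G}$ is, in a suitable sense, the space of Bessel models of $\pi$.

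Concretely, I would proceed in three steps. First, for each tempered irreducible $\pi \in \hat{G}$ and each $\lambda \in Wh_{\chi}(\pi^{\infty})$, the matrix coefficient map $v \mapsto \bigl(g\mapsto \lambda(\pi(g)v)\bigr)$ defines a $G$-equivariant map from $\pi^{\infty}$ into $C^{\infty}(N\backslash G;\chi)$; the content of Wallach's theory is that these coefficients, appropriately regularized by wave packets, lie in $L^{2}(N\backslash G;\chi)$ and assemble to a unitary isomorphism
\[
 L^{2}(N\backslash G;\chi) \;\cong\; \int_{\hat{G}} Wh_{\chi}(\pi^{\infty}) \otimes \pi \, d\mu(\pi),
\]
where the Hilbert space structure on $Wh_{\chi}(\pi^{\infty})$ comes from Wallach's ``Bessel-Schwartz'' inner product. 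Second, by the subrepresentation theorem each tempered $\pi$ sits inside some $I_{\sigma,\nu}^{\infty}$ with $\sigma$ tempered on $M$, so Theorems \ref{thm:compact} and \ref{thm:noncompact} identify $Wh_{\chi}(\pi^{\infty})$ as an $M_{\chi}$-module with a subquotient of $V_{\sigma}'$ (compact case) or as $\bigoplus_{\tau} Hom_{M_{\chi}}(V_{\sigma},V_{\tau}) \otimes V_{\tau}$ type data (general case). Applying the Plancherel theorem of $M_{\chi}$ to this $M_{\chi}$-module yields
\[
 Wh_{\chi}(\pi^{\infty}) \;\cong\; \int_{\hat{M}_{\chi}} W_{\chi,\tau}(\pi) \otimes \tau^{\ast} \, d\nu(\tau),
\]
where the multiplicity space $W_{\chi,\tau}(\pi)$ is $Hom_{M_{\chi}}(V_{\tau}, Wh_{\chi}(\pi^{\infty}))$, which by Theorem \ref{thm:noncompact} is naturally identified with $Wh_{\chi,\tau}(\pi)$ whenever $\tau$ is infinite-dimensional tempered on $M_{\chi}$. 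Third, a Fubini argument in the direct integral combines these two steps into the claimed formula.

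When $M_{\chi}$ is compact, Theorem \ref{thm:compact} gives $Wh_{\chi}(\pi^{\infty}) \cong V_{\sigma}'$ as a finite-dimensional $M_{\chi}$-module, so the measure $\nu$ is the counting measure on $\hat{M}_{\chi}$ and $W_{\chi,\tau}(\pi) \cong Wh_{\chi,\tau}(\pi)$ is automatically finite-dimensional. This takes care of the last clause of the statement.

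The main obstacle I anticipate is the unitarity of the first-step decomposition, i.e.\ showing that the Bessel matrix coefficient map is, after suitable normalization by Harish-Chandra $c$-functions and wave packet regularization, an isometry onto $L^{2}(N\backslash G;\chi)$. This is the Bessel analogue of the Whittaker Plancherel theorem of Harish-Chandra and Wallach, and requires the full asymptotic analysis of generalized Jacquet integrals $J_{\sigma,\nu}^{\chi}$ together with their holomorphic continuations (provided by Theorems \ref{thm:compact} and \ref{thm:noncompact}) and temperedness estimates. The measurability of the fields $\pi \mapsto Wh_{\chi}(\pi^{\infty})$ and $(\pi,\tau) \mapsto W_{\chi,\tau}(\pi)$ over the Plancherel measures of $G$ and $M_{\chi}$ is a secondary technical point, handled by the standard direct-integral machinery once one has continuous holomorphic families of Jacquet integrals in $\nu$.
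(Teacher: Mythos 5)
Your proposal captures the shape of the answer but rests on two steps that do not hold as stated. First, your ``step one'' isometry
\[
 L^{2}(N\backslash G;\chi) \cong \int_{\hat{G}} Wh_{\chi}(\pi^{\infty})\otimes \pi\, d\mu(\pi)
\]
with multiplicity the \emph{full} space of Bessel models is not something you can quote from Wallach's $L^{2}(G)$ theory, and in the noncompact-$M_{\chi}$ case it is not even what the theorem asserts: the multiplicity space $W_{\chi,\tau}(\pi)$ is a priori only a \emph{subspace} of $Wh_{\chi,\tau}(\pi)$, namely the closure of the image of $\phi\mapsto \int_{N}\chi(n)^{-1}(\phi,\pi(n)v)\,dn$ on smooth vectors of the dual, with the inner product coming from Schur orthogonality --- not every Bessel functional yields square-integrable coefficients on $N\backslash G$. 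The paper avoids having to prove a direct wave-packet isometry for $L^{2}(N\backslash G;\chi)$ at this stage: it instead decomposes $L^{2}(G)$ as a $P\times G$-module in two ways (via Fourier analysis on $\hat{N}$ over the open $P$-orbits, and via Harish-Chandra's Plancherel theorem), computes $I_{\check{\sigma},-\nu}|_{P}$ explicitly for each tempered induced representation (Propositions \ref{prop:descompositionparabolicsquareintegrable} and \ref{prop:descompositionparabolic}), and then deduces absolute continuity of the unknown measures with respect to $\mu$ and $\nu$ by comparing the two decompositions. Equation (\ref{eq:besselplancherel}) falls out of this comparison; the isometry you want to prove first is a consequence, not an input.

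Second, your appeal to Theorems \ref{thm:compact} and \ref{thm:noncompact} to compute $Wh_{\chi}(\pi^{\infty})$ for tempered $\pi$ is a category error: those theorems concern representations induced from the \emph{Siegel} parabolic $P$ with $\sigma$ a representation of its Levi $M$, whereas the representations occurring in the Plancherel formula are $I_{\sigma,\nu}$ induced from other parabolics $P_{1}=M_{1}A_{1}N_{1}$ with $\sigma$ discrete series on $M_{1}$; a tempered $\pi$ need not embed in anything induced from $P$. To restrict such an $I_{\sigma,\nu}$ to $P$ one needs the Bruhat--Matsuki analysis of the double cosets $\bar{P}_{1}\backslash G/M_{\chi}N$, which reduces the multiplicity to the space $W_{(\chi_{\omega})_{1}}(H_{\sigma^{w}})$ of Bessel data for the \emph{smaller} group $M_{1}$ relative to $\chi_{1}=\chi|_{N\cap M_{1}}$, summed over the open orbits indexed by $w\in W_{\chi}$ (Lemma \ref{lemma:inducedfouriertransform} and Proposition \ref{prop:descompositionparabolic}). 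Finally, even in the compact-$M_{\chi}$ case the identification $W_{\chi,\tau}(\pi)\cong Wh_{\chi,\tau}(\pi)$ is not immediate from Theorem \ref{thm:compact}: it requires the tameness and asymptotic-expansion estimates of section \ref{sec:asymptotic} (to show Bessel matrix coefficients of discrete series lie in $\C(N\backslash G;\chi)$), the Fourier transform of wave packets (Proposition \ref{prop:Fouriertransformofpacket}), and the holomorphic family $j_{\sigma,\nu}^{\chi,\tau}$ of section \ref{sec:explicitbesselplancherel}. You correctly flag the isometry as the main obstacle, but the proposal as written defers precisely the content that must be supplied.
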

Given an irreducible unitary representation $(\pi, H_{\pi})$, let $(\pi^{\ast}, H_{\pi}^{\ast})$ be the associated contragradient representation. Then, in chapter \ref{chapter:Bessel-Plancherel}, we also have the following theorem
\begin{theorem}\label{thm:restrictiontoparabolicintroduction}
 Let $G$ be a Lie group of tube type, and let $P=MAN$ be a Siegel parabolic subgroup with given Langlands decomposition. Let $\Omega$ be the set of open $P$-orbits in $\hat{N}$. If $\chi$ is a generic character of $N$, let $\O_{\chi}$ be its associated $P$-orbit in $\hat{N}$. Then, for $\mu$-almost all irreducible tempered representations $(\pi, V_{\pi})$ of $G$,
\[
 \pi^{\ast}|_{P}\cong \bigoplus_{\O_{\chi}\in \Omega}\int_{M_{\chi}} W_{\chi,\tau}(\pi)\otimes \operatorname{Ind}_{M_{\chi}N}^{P}\tau^{\ast}\chi^{\ast}\, d\nu(\tau).
\]
Here the spaces $W_{\chi,\tau}(\pi)$ are the same as the ones appearing in the spectral decomposition of $L^2(N\backslash G;\chi)$.
\end{theorem}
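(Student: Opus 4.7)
The plan is to compare two spectral decompositions of $L^2(G)$ as a unitary $P\times G$-module. On one side, the Plancherel formula for $G$ gives
\[
L^2(G)\cong \int_{\hat G} \pi^*\otimes \pi\, d\mu(\pi),
\]
and restricting the left $G$-action to $P$ yields $L^2(G)|_{P\times G}\cong \int_{\hat G} \pi^*|_P\otimes\pi\, d\mu(\pi)$. Theorem \ref{thm:restrictiontoparabolicintroduction} will fall out by producing a second description of the same object and invoking uniqueness of direct integral disintegration along $\hat G$.

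For the second description, I would disintegrate $L^2(G)$ under the left action of the abelian normal subgroup $N$; by the SNAG theorem this gives $L^2(G)\cong \int_{\hat N} L^2(N\backslash G;\chi)\, d\chi$. Now decompose $\hat N$ into $P$-orbits. Running the Mackey machine for the semidirect factorization $P=MA\ltimes N$, the stabilizer in $P$ of a generic character $\chi$ is precisely $M_\chi N$ (the $A$ factor acts on $\hat N$ by nontrivial scaling), so the piece of $L^2(G)$ lying over an open orbit $\O_\chi\in\Omega$ is identified, as a $P\times G$-module, with $\operatorname{Ind}_{M_\chi N\times G}^{P\times G}L^2(N\backslash G;\chi)$.

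Applying Theorem \ref{thm:bessel-plancherelintroduction} inside this induction, and commuting the induction (which only touches the $M_\chi N$ factor) past the direct integrals over $\hat G$ and $\hat M_\chi$, the piece over $\O_\chi$ becomes
\[
\int_{\hat G}\int_{\hat M_\chi} W_{\chi,\tau}(\pi)\otimes \operatorname{Ind}_{M_\chi N}^{P}(\tau^\ast\chi^\ast)\otimes \pi\, d\nu(\tau)\, d\mu(\pi).
\]
Summing over $\O_\chi\in\Omega$, matching with the Plancherel side, and peeling off the outer direct integral against $d\mu(\pi)$ using the uniqueness of direct integral decompositions produces the claimed formula for $\mu$-almost every tempered $\pi$.

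The main obstacle will be the measure-theoretic step showing that for $\mu$-almost every tempered $\pi$ the $N$-spectral support of $\pi^\ast$ is concentrated on the union of open $P$-orbits in $\hat N$, so that the sum indexed by $\Omega$ really exhausts $\pi^\ast|_P$ rather than missing contributions from lower-dimensional orbits. This is in the spirit of the standard Mackey analysis of $\operatorname{Ind}_N^P\chi$, in which tempered constituents come from generic orbits, but here the statement has to be extracted on the $G$-spectral side from the Plancherel measure $\mu$. A second delicate point is the use of the integral $\int_{\hat M_\chi} d\nu(\tau)$ over the possibly non-compact tempered dual of $M_\chi$: it is precisely Theorem \ref{thm:noncompact}, and not merely Theorem \ref{thm:compact}, that legitimises the identification of $W_{\chi,\tau}(\pi)$ with the multiplicity space of $\operatorname{Ind}_{M_\chi N}^{P}(\tau^\ast\chi^\ast)$ in $\pi^\ast|_P$ throughout this range.
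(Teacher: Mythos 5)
Your proposal is correct in outline and its skeleton --- comparing the ``geometric'' $P\times G$-decomposition of $L^2(G)$ obtained from Fourier analysis on $N$ and induction in stages with the Harish-Chandra decomposition $\int_{\hat G}\pi^*|_P\otimes\pi\,d\mu(\pi)$, then peeling off the direct integral over $\hat G$ --- is exactly the comparison the paper performs (its equations for $L^2(G)$ as a $P\times G$-module versus the restricted Plancherel formula; the measure-zero complement of the open orbits in $\hat N$ is indeed handled once and for all at the level of $L^2(G)$, as you anticipated). The difference is the direction of deduction between the two main theorems. You take the Bessel--Plancherel theorem (Theorem \ref{thm:bessel-plancherelintroduction}) as input, which already asserts that the spectral measure of $L^2(N\backslash G;\chi)$ on $\hat G$ is the Plancherel measure $\mu$; granting that, the restriction formula follows formally by uniqueness of the central disintegration. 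The paper goes the other way: a priori the abstract decomposition of $L^2(M_{\chi}N\backslash G;\tau\otimes\chi)$ only produces unknown measures $\mu_{\omega,\tau}$ on $\hat G$, and the substantive work is the direct computation of $I_{\check\sigma,-\nu}|_P$ for tempered induced-from-discrete-series representations (Proposition \ref{prop:descompositionparabolic}, built from the maps $W^{\chi,w}_{\sigma,\nu}$ and the Bruhat--Matsuki decomposition of $M_\chi\backslash M$); feeding this into the explicit Plancherel theorem and comparing with the geometric side is what simultaneously proves $\mu_{\omega,\tau}\ll\mu$, identifies the multiplicity spaces $W_{\chi,\tau}(\pi)$, and yields \emph{both} the Bessel--Plancherel theorem and the restriction theorem. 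So your argument is a valid deduction of Theorem \ref{thm:restrictiontoparabolicintroduction} from Theorem \ref{thm:bessel-plancherelintroduction}, but be aware that in the paper the hard content you are importing (the absolute continuity of the spectral measure with respect to $\mu$) is itself established only as part of the same comparison, so as a self-contained proof your route would be circular unless the Bessel--Plancherel theorem is supplied by an independent argument.
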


Let $G$ be a reductive group and let  $X$ be a $G$-spherical variety. In \cite{yv:sph} Sakellaridis and Venkatesh give a conjecture describing the spectral decomposition of $L^2(X)$ in terms of the representation theory of another group. This conjecture generalizes the results of Harish-Chandra for $L^2(G)$ and of Delorme, Schlichtkrull and Van Den Ban for $L^2(X)$, where $X$ is a symmetric space \cite{VS:harmonic,D:formula,D:harmonic,D:schwartz}. More precisely the Sakellaridis-Venkatesh conjecture postulates the existence of a group $G_{X}$ and a correspondence
\[
\Theta:\hat{G}_{X} \longrightarrow \hat{G} ,
\]
between the unitary duals of $G_{X}$ and $G$, such that
\[
L^2(X) = \int_{\hat{G}_{X}} m(\pi)\otimes \Theta(\pi)\, d\mu(\pi),
\]
where $\mu$ is the Plancherel measure of $G_{X}$, and $m(\pi)$ is some multiplicity space whose dimension is finite, and typically $\leq 1$. If $X$ satisfies some technical hypothesis, then the group $G_{X}$ has the property that its dual group is $\check{G}_{X}$, the dual group associated to $X$ by Gaitsgory and Nadler \cite{gn:2010}, and hence the conjecture fits nicely into the setting of a proposed ``relative Langlands program'' \cite{y:func}.

Using the theory of dual pairs of the oscillator representation to construct the map $\Theta$, Howe \cite{h:someresult}, parameterized the spectral decompositions of the space $L^2(O(p-1,q)\backslash O(p,q))$ in terms of the unitary dual of $SL(2,\mathbb{R})$. Following the same ideas it's possible to obtain examples in the spirit of the Sakellaridis-Venkatesh conjecture, but that lie beyond the spherical variety case. For example, consider the dual pair $(Sp(m,\mathbb{R})\times O(p,q)) \subset Sp(mn,\mathbb{R})$, $p+q=n$, and assume that $p\geq q > m$. The last condition states that we are in the stable case. To simplify the exposition, assume also that $n$ is even (the $n$ odd case is very similar, but involves a double cover of $Sp(m,\mathbb{R})$). Let $P=MAN$ be the Siegel parabolic subgroup of $Sp(m,\mathbb{R})$, with given Langlands decomposition, and let $\chi_{r,s}$, $r+s=m$, be the character of $N$ given by
\[
 \chi_{r,s}\left(\left[\begin{array}{cc} I_m & X \\ & I_m \end{array}\right]\right) = \chi(\operatorname{tr} I_{r,s}X),
\]
where
\[
I_{r,s}=\left[\begin{array}{cc} I_r &  \\ & -I_s \end{array}\right]
\]
and $\chi$ is some fixed nontrivial unitary character of $\mathbb{R}$. Let
\[
L^2(N\backslash Sp(m,\mathbb{R});\chi_{r,s})=\left\lbrace f:Sp(m,\mathbb{R}) \longrightarrow \mathbb{C} \, \left| \,\begin{array}{c}
				\mbox{$f(ng)=\chi_{r,s}(n)f(g)$ and } \\
                                \mbox{$\int_{N\backslash Sp(m,\mathbb{R})} |f(g)|^2 \, dNg < \infty$}
                              \end{array}  \right\rbrace\right. .
\]
Observe that $MA\cong GL(m,\mathbb{R})$ in a natural way, and $M_{\chi_{r,s}}\cong O(r,s)$. In this setting theorem \ref{thm:bessel-plancherelintroduction} says that
\begin{equation}
L^2(N\backslash Sp(m,\mathbb{R});\chi_{r,s}) \cong  \int_{Sp(m,\mathbb{R})^{\wedge}}  \int_{O(r,s)^{\wedge}} W_{\chi_{r,s},\tau}(\pi)\otimes \tau^{\ast} \otimes\pi \, d\eta(\tau) \, d\mu(\pi), \label{eq:L2whittaker}
\end{equation}
where $\eta$ is the Plancherel measure of $O(r,s)$ and $\mu$ is the Plancherel measure of $Sp(m,\mathbb{R})$. Moreover, by theorem \ref{thm:restrictiontoparabolicintroduction}, we have that for $\mu$-almost all tempered representation $\pi$ of $Sp(m,\mathbb{R})$
\begin{equation}
\pi^{\ast}|_{P} \cong \bigoplus_{r+s=m} \int_{O(r,s)^{\wedge}} W_{\chi_{r,s},\tau}(\pi)\otimes \operatorname{Ind}_{O(r,s)  N}^{P} \tau^{\ast} {\chi_{r,s}^{\ast}} \, d\eta(\tau). \label{eq:restriction}
\end{equation}
On the other hand, Howe showed that in the stable range the Oscillator representation $(\xi,L^2(\mathbb{R}^{mn}))$ of $Sp(mn,\mathbb{R})$ decomposes in the following way when restricted to $Sp(m,\mathbb{R})\times O(p,q)$:
\begin{equation}
L^2(\mathbb{R}^{mn})\cong \int_{Sp(m,\mathbb{R})^{\wedge}} \pi\otimes \Theta(\pi) \, d\mu(\pi). \label{eq:howeduality}
\end{equation}
 where $\mu$ is the Plancherel measure of $Sp(m,\mathbb{R})$ and $\Theta(\pi)$ is a representation of $O(p,q)$ called the $\Theta$-lift of $\pi$. A lot of work has been done to describe the explicit $\Theta$-correspondence, and in the stable range this correspondence can be described using the work of Jian-Shu Li \cite{jsl:stiefel} among others. Using equations (\ref{eq:L2whittaker}), (\ref{eq:restriction}), (\ref{eq:howeduality}) and the explicit formulas for the action of $(Sp(m,\mathbb{R})\times O(p,q))$ on $L^2(\mathbb{R}^{mn})$ given in \cite{A:theta,R:weil,R:witt,R:explicit} we obtain the following description of $L^2(O(p-r,q-s)\backslash O(p,q))$ 
 \begin{theorem}
 As an $O(r,s)\times O(p,q)$-module with  $O(r,s)$ acting on the left, and $O(p,q)$ acting on the right
 \[
L^2(O(p-r,q-s)\backslash O(p,q))\cong \int_{Sp(m,\mathbb{R})^{\wedge}}  \int_{O(r,s)^{\wedge}} W_{\chi_{r,s},\tau}(\pi)\otimes  \tau^{\ast} \otimes \Theta(\pi^{\ast}) \, d\eta(\tau) \, d\mu(\pi)
\]
where $\eta$ and $\mu$ are the Plancherel measures of $O(r,s)$ and $Sp(m,\mathbb{R})$ respectively, and $W_{\chi_{r,s},\tau}(\pi)$ are the multiplicity spaces appearing in equation (\ref{eq:restriction}).
\end{theorem}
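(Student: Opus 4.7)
The plan is to derive the spectral decomposition of $L^2(O(p-r,q-s)\backslash O(p,q))$ by realizing this space as an $(N,\chi_{r,s})$-twisted isotypic subspace of the oscillator representation $L^2(\mathbb{R}^{mn})$, and then decomposing that subspace in two different ways: geometrically via the Schrödinger model (which gives the orbit space itself) and spectrally via Howe duality combined with the Bessel-Plancherel machinery (Theorems \ref{thm:bessel-plancherelintroduction} and \ref{thm:restrictiontoparabolicintroduction}).

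The first step is to identify $L^2(N\backslash \mathbb{R}^{mn};\chi_{r,s})$ with $L^2(O(p-r,q-s)\backslash O(p,q))$ as an $O(r,s)\times O(p,q)$-module. Using the explicit formulas recalled in \cite{A:theta,R:weil,R:witt,R:explicit}, the abelian group $N$ of matrices $\left[\begin{smallmatrix} I_m & X \\ & I_m \end{smallmatrix}\right]$ acts on $L^2(M_{n\times m}(\mathbb{R}))$ by the character $v\mapsto \chi(\operatorname{tr}(X\, v^{t}I_{p,q}v))$. The locus where this character equals $\chi_{r,s}$ is the set of $m$-frames in $\mathbb{R}^{p,q}$ with Gram matrix $I_{r,s}$. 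By Witt's theorem, in the stable range $p\geq q>m=r+s$ the group $O(p,q)$ acts transitively on this set, and the stabilizer of the standard frame is $O(p-r,q-s)$. Moreover $M_{\chi_{r,s}}\cong O(r,s)$ acts on the left by left multiplication of frames. This gives the promised identification.

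The second step is to apply the functor ``$(N,\chi_{r,s})$-isotypic component'' to both sides of Howe's duality equation (\ref{eq:howeduality}). On the left hand side this produces $L^2(O(p-r,q-s)\backslash O(p,q))$ by step one. On the right hand side, inside each fiber $\pi\otimes \Theta(\pi)$ only the $Sp(m,\mathbb{R})$-factor is affected, and the $(N,\chi_{r,s})$-component of $\pi$ is computed by Theorem \ref{thm:restrictiontoparabolicintroduction}: after restricting $\pi^{\ast}|_{P}$ and extracting the single orbit $\mathcal{O}_{\chi_{r,s}}$, one is left with $\int_{\widehat{O(r,s)}} W_{\chi_{r,s},\tau}(\pi)\otimes \tau^{\ast}\, d\eta(\tau)$. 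Combining the fiberwise description with the Plancherel measure $\mu$ of $Sp(m,\mathbb{R})$ gives the iterated integral in the statement. The passage from $\pi$ to $\pi^{\ast}$ inside $\Theta$ is forced by the contragredient twist that appears when one identifies $(N,\chi_{r,s})$-coinvariants of $\pi$ with $(N,\chi_{r,s}^{\ast})$-invariants of $\pi^{\ast}$, exactly the twist already recorded in Theorem \ref{thm:restrictiontoparabolicintroduction}.

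The main obstacle is analytical rather than conceptual: one must justify that the $(N,\chi_{r,s})$-isotypic functor commutes with the direct integral appearing in Howe's decomposition, which requires assembling the measurable field of Hilbert spaces $\pi\mapsto W_{\chi_{r,s},\tau}(\pi)\otimes \tau^{\ast}$ into a bona fide direct integral over the product measure $d\eta(\tau)\,d\mu(\pi)$. For this I would invoke the $M_{\chi_{r,s}}$-equivariant isomorphism provided by Theorem \ref{thm:noncompact}(ii) (applied generically along the tempered spectrum) to give a canonical continuous family of intertwiners, then use the disintegration of Plancherel measures in the style of Wallach \cite{w:vol2} to push the double integral through. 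Once this measure-theoretic step is handled, matching the two decompositions of the same $O(r,s)\times O(p,q)$-module yields the stated formula.
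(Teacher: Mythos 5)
Your proposal is correct and follows essentially the same route as the paper: both arguments compute the restriction of the oscillator representation to $P\times O(p,q)$ in two ways --- geometrically via the Schr\"odinger-model orbit decomposition (yielding $\operatorname{Ind}_{O(r,s)N}^{P}L^2(O(p-r,q-s)\backslash O(p,q))\otimes\chi_{r,s}$) and spectrally via the mixed model, Howe duality in the stable range, and equation (\ref{eq:restriction}) --- and then match the $\chi_{r,s}$-components. Your reformulation in terms of the $(N,\chi_{r,s})$-isotypic functor, together with the disintegration over $\hat{N}$ needed to make it precise, is exactly the paper's vector-bundle-over-$\hat{N}$ bookkeeping in different words.
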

Observe that when $m=1$ we regain Howe's result \cite{h:someresult}. Also observe that in this case the decomposition given in equation (\ref{eq:L2whittaker}) is contained in Wallach's work on the Plancherel-Whittaker measure for minimal parabolic subgroups \cite{w:vol2}.


\chapter{Bessel models for representations of Lie groups of tube type}\label{chapter:besselmodels}

\section{Siegel modular forms on the upper half plane} \label{sec:siegelmodularforms}

Let $\mathcal{H}=\{z=x+iy \in \mathbb{C}\, |\, y>0\}$ denote the complex upper half plane. For each integer $k > 0$, we will consider the space of holomorphic functions $f:\mathcal{H}\longrightarrow \mathbb{C}$ such that
\begin{equation}
 f\bigg(\frac{az+b}{cz+d}\bigg)=(cz+d)^{k}f(z), \label{eq:modularcondition}
\end{equation}
for all integers $a$, $b$, $c$, $d$, such that $ad-bc=1$. Observe that this condition implies that $f(z+1)=f(z)$, for all $z\in \mathcal{H}$, and hence we have a Fourier series expansion
\begin{equation}
 f(z)=\sum_{n\in \mathbb{Z}} a_{n}q^{n} \label{eq:modularfourierexpansion}
\end{equation}
where $q=e^{2\pi iz}$. We say that $f$ is a \emph{modular form of weight $k$}, if $a_{n}=0$ for all $n< 0$. If, in addition to this conditions, we have that $a_{0}=0$, then we say that $f$ is a \emph{cusp form}.

Given a cusp form $f$ of weight $k$, we can define a Dirichlet series
\[
 L(s,f)=\sum_{n >0} \frac{a_{n}}{n^{s}},
\]
where the $a_{n}$ are the Fourier coefficients appearing in the expansion of $f$ (\ref{eq:modularfourierexpansion}). Observe that this Dirichlet series defines a holomorphic function for $\operatorname{Re} s \gg 0$. Moreover, it can be shown that $L(s,f)$ has meromorphic continuation to all of $\mathbb{C}$, and that it satisfies a functional equation. The $L$-functions constructed this way are related to other objects of interest in number theory, like elliptic curves over the rational numbers via the modularity theorem, and its study is of central importance in a wide range of applications.

The numbers $a_{n}$, appearing in this construction, have a
beautiful representation theoretic interpretation, that we will now describe. Given a holomorphic function $f$ on $\mathcal{H}$, and an element $g\in SL(2,\mathbb{R})$, define
\[
 (f|_{k}g)(z)=(cz+d)^{-k} f\bigg(\frac{az+b}{cz+d}\bigg),  \qquad g=\left[\begin{array}{cc} a & b \\ c & d \end{array}\right].
\]
This equation defines a right action of $SL(2,\mathbb{R})$ on the space of holomorphic functions on $\mathcal{H}$. Given a modular form $f$ of weight $k$, define a function $\phi$ on $SL(2,\mathbb{R})$ by
\[
 \phi(g)=(f|_{k}g)(i).
\]
It is then clear, using equation (\ref{eq:modularcondition}), that $\phi \in C^{\infty}(\Gamma\backslash SL(2,\mathbb{R}))$, where $\Gamma=SL(2,\mathbb{Z})$. Furthermore it can be shown that if
\[
k(\theta)=\left[
\begin{array}
[c]{cc}%
\cos\theta & \sin\theta\\
-\sin\theta & \cos\theta
\end{array}
\right]  \in SO(2),
\]
then%
\[
\phi(gk(\theta))=e^{ik\theta} \phi(g).
\]
Let $U(\mathfrak{g})$ be the universal enveloping algebra of $\mathfrak{g}=Lie(SL(2,\mathbb{R}))$, and set $V_{K}=U(\mathfrak{g})\phi$. Then, it can be shown that $V_{K}$ is isomorphic to the space of $SO(2,\mathbb{R})$-finite vectors of a discrete series representation $(\pi,H)$ of $SL(2,\mathbb{R})$, with lowest weight $k$. Let $V=H^{\infty}$ be the space of smooth vectors of $H$. Then for each $n\in \mathbb{Z}$ there is a natural choice of a linear functional
$\lambda_{n}:V \longrightarrow \mathbb{C}$ such that 
$$
\lambda_{n}\left(\pi\left(\left[\begin{array}{cc} 1 & x \\ & 1 \end{array}\right]\right)v \right)=e^{2\pi inx}\lambda(v).
$$
This family of linear functionals has the property that $\lambda_{n}(\phi)=a_{n}$ for all $n\in \mathbb{Z}$.

The theory of Maass forms extends this theory to include all
types of unitary representations of $SL(2,\mathbb{R})$. The analogous functionals $\lambda_{n}$ are called, following Jacquet,  Whittaker
functionals corresponding to the unitary character $\chi_{n}$ of $N$ given by
\[
\chi_{n}\left(  \left[
\begin{array}
[c]{cc}
1 & x\\
0 & 1
\end{array}
\right]  \right)  =e^{2\pi inx}.
\]

We will now move to the Siegel upper half plane,
$\mathcal{H}_{m}$, consisting of elements $Z=X+iY$ with $X$ and $Y$ symmetric $m\times m$ matrices over $\mathbb{R}$ and $Y$ positive definite. Let $G=Sp(m,\mathbb{R})$ realized as the set of $2m\times 2m$ matrices with block form
\[
g=\left[
\begin{array}
[c]{cc}
A & B\\
C & D
\end{array}
\right],
\]
with $A,B,C,D$, $m\times m$ matrices such that if
\[
J=\left[
\begin{array}
[c]{cc}%
0 & I\\
-I & 0
\end{array}
\right]  ,
\]
with $I$ the $m\times m$ identity matrix, then
\[
gJg^{T}=J.
\]
Observe that we can define an action of $G$ on $\mathcal{H}_{m}$ by ``linear fractional transformations''
\[
g\cdot Z=(AZ+B)(CZ+D)^{-1}.
\]
In this case C.L.Siegel \cite{s:1964} considered subgroups $\Gamma$ of finite index in
$G_{\mathbb{Z}}=Sp(m,\mathbb{Z})$, and holomorphic functions $f$ on $\mathcal{H}_{m}$, such that (with $g$ in
block form as above)
\[
f(gZ)=\det\left(  CZ+D\right)  ^{k}f(Z),
\]
for $g\in\Gamma$ and a growth condition at $\infty$.  As above one has the subgroup $N$ consisting of the elements of the form%
\[
n(L)=\left[
\begin{array}
[c]{cc}%
I & L\\
0 & I
\end{array}
\right]
\]
with $L$ an $m\times m$ symmetric matrix over $
\mathbb{R}$. Observe that $\Gamma\cap N$ contains a subgroup of finite index in $N_{\mathbb{Z}}=G_{\mathbb{Z}
}\cap N$. We will assume, just as we did in the classical case, that $\Gamma\cap N$ is actually equal to $N_{
\mathbb{Z}
}$. Then,
\[
f(Z+L)=f(Z)
\]
for $L$ an $m\times m$ symmetric matrix with entries in $\mathbb{Z}$. 
We can thus expand this Siegel modular form in a Fourier series.%
\[
\sum a_{S}e^{2\pi iTr(SZ)},
\]
where the sum runs over the $m\times m$ symmetric matrices $S$ over $\mathbb{Z}$. 
One finds that if $a_{S}\neq0$ then $S$ must be positive semi-definite.

Once again this coefficients have a beautiful representation theoretic
interpretation. We can
consider
\[
\chi_{S}(n(X))=e^{2\pi iTr(SX)},
\]
with $S$ a symmetric $m\times m$ matrix over $\mathbb{R}$. Let $M$ be the image of $GL(m,\mathbb{R})$ in $G$ via the embedding
\[
g\longmapsto\left[
\begin{array}
[c]{cc}
g & 0\\
0 & g^{T}
\end{array}
\right].
\]
Then $P=MN$ is the Siegel
parabolic subgroup of $G$, and we have an action of $M$ on $N$ (and hence in its space of characters $\hat{N}$) by conjugation. One finds that if a character is \emph{generic} (that
is, the $M$-orbit in the character group is open) then the character must be
given by $\chi_{S}$ with $\det S\neq0$. The stabilizer of the character is
compact if, only if, $S$ is positive or negative definite. If $(\pi, V_{\pi})$ is an irreducible representation of $Sp(m,\mathbb{R})$, we will set
\[
 Wh_{\chi_{S}}(V_{\pi})=\{\lambda: V_{\pi} \longrightarrow \mathbb{C} \, | \, \lambda(n(L)v)=\chi_{S}(n(L))\lambda(v)\}.
\]
If the stabilizer of the character $\chi_{S}$ is compact, we will call any element $\lambda \in Wh_{\chi_{S}}(V_{\pi})$ a \emph{Bessel model}. If $\chi_{S}$ is just a generic character, then we will call any $\lambda \in Wh_{\chi_{S}}(V_{\pi})$ a \emph{generalized Bessel model}. One can show (c.f. W[5])
that the only such models for holomorphic (resp. antiholomorphic)
representations are those corresponding to positive definite (resp.
negative definite) such $S$. Thus the only generic characters that can appear
if we consider holomorphic or anti-holomorphic Siegel modular forms are the
ones with compact stabilizer, and the relevant space here is the space of Bessel models of holomorphic or antiholomorphic representations of $G$. As in the case of $SL(2,\mathbb{R})$
 ($m=1$) this theory can be extended to include other types of representation of $Sp(m,\mathbb{R})$. In this case we need to broaden or scope to include the space of generalized Bessel models for a generic character $\chi_{S}$.

\section{Classification of Lie groups of tube type and its generalized Bessel characters}\label{sec:classification}

Let $G$ be a connected simple Lie group
with finite center and let $K$ be a maximal compact subgroup. We assume that
$G/K$ is Hermitian symmetric of tube type. This can be interpreted as follows.
There exists a group homomorphism, $\phi$, of a finite covering, $S$ of $PSL(2,\mathbb{R})$
into $G$ such that if $H=\phi(S)$
then $H\cap K$ is the center of $K$. We take a standard basis $h,e,f$ of
$Lie(H)$ over
$\mathbb{R}$
 with the standard TDS (three dimensional simple) commutation relations
($[e,f]=h,[h,e]=2e,[h,f]=-2f$). If $\mathfrak{g}=Lie(G)$, then we have
$\mathfrak{g}=\mathfrak{\bar{n}\oplus m \oplus a \oplus n}$ with $\mathfrak{\bar{n}%
,m\oplus a,n}$ respectively the $-2,0,2$ eigenspace of $ad(h)$, $\mathfrak{a}=\mathbb{R} h$, and $\mathfrak{m}$ the orthogonal complement of $\mathfrak{a}$ in $\mathfrak{m\oplus a}$ with respect to the Cartan-Killing form. In particular,
$\mathfrak{\bar{n}}$ and $\mathfrak{n}$ are commutative and $e\in\mathfrak{n}%
$, $f\in\mathfrak{\bar{n}}$. Let $\theta$ be the Cartan involution of $G$
corresponding to the choice of $K$, then we may assume, $\theta\mathfrak{n=\bar
{n}}$, $f=-\theta e$ and
$\mathbb{R}(e-f)=Lie(H\cap K)$. Set $\mathfrak{p=}$ $\mathfrak{m\oplus a \oplus n}$ and let
$P=\{g\in G|Ad(g)\mathfrak{p}=\mathfrak{p\}}$. Then $P$ is a parabolic
subgroup of $G$, and if we take its Langlands decomposition $P=MAN$, then $\mathfrak{m}=Lie(M)$, $\mathfrak{a}=Lie(A)$ and $\mathfrak{n}=Lie(N)$. Let $\chi$ be a generic character of $N$, and let
\[
M_{\chi}=\{m\in M \, | \, \chi\circ Ad(m)=\chi\}.
\]
In this section we will describe representatives for all the equivalence classes of generic characters on $N$. For the rest of the section we will fix a unitary character $\chi_{\mathbb{R}}$ of $\mathbb{R}$. This is the list of examples.

1. $G=Sp(n,\mathbb{R})$ realized as $2n\times2n$ matrices such that $gJ_{n}g^{T}=J_{n}$ with%
\[
J_{n}=\left[
\begin{array}
[c]{cc}%
0 & I_{n}\\
-I_{n} & 0
\end{array}
\right]
\]
with $I_{n}$ the $n\times n$ identity matrix (upper $T$ means transpose).
$\theta(g)=(g^{-1})^{T}$. With this description
\[
MA=\left\{  \left[
\begin{array}
[c]{cc}%
g & 0\\
0 & (g^{-1})^{T}%
\end{array}
\right]  |g\in GL(n,%
\mathbb{R}
)\right\}  ,
\]%
\[
N=\left\{  \left[
\begin{array}
[c]{cc}%
I & X\\
0 & I
\end{array}
\right]  |X\in M(n,\mathbb{R}),X^{T}=X\right\}  .
\]
The list of generic characters is described as follows: let $p$, $q$ be two positive integers such that $p+q=n$. Define a character $\chi_{p,q}$ as follows
\[
\chi_{p,q}\left(\left[
\begin{array}
[c]{cc}%
I & X\\
0 & I
\end{array}
\right]\right)=\chi_{\mathbb{R}}(\operatorname{tr}{I_{p,q}X}).
\]
From this definitions it is clear that
\[
M_{\chi_{p,q}} = O(p,q)
\]
using the natural identification $M=GL(n,\mathbb{R})$.

2. $G=SU(n,n)$ realized as the $2n\times2n$ complex matrices, $g$, such that
$gL_{n}g^{\ast}=L_{n}$ with%
\[
L_{n}=\left[
\begin{array}
[c]{cc}%
0 & iI_{n}\\
-iI_{n} & 0
\end{array}
\right]  .
\]
In this case the centralizer, $MA$, of $h$ in $G$ is the set of all%
\[
\left[
\begin{array}
[c]{cc}%
g & 0\\
0 & (g^{\ast})^{-1}%
\end{array}
\right]
\]
with $g\in GL(n,\mathbb{C})$, and
\[
N=\left\{  \left[
\begin{array}
[c]{cc}%
I & X\\
0 & I
\end{array}
\right]  |X\in M(n,\mathbb{R}),X^{\ast}=X\right\}  .
\]
Representatives of equivalence classes of characters are again parameterized by positive integers $p$, $q$ such that $p+q=n$ and we can define
\[
\chi_{p,q}\left(\left[
\begin{array}
[c]{cc}%
I & X\\
0 & I
\end{array}
\right]\right)=\chi_{\mathbb{R}}(\operatorname{tr}{I_{p,q}X}).
\]
In this case
\[
M_{\chi_{p,q}}=U(p,q).
\]

3. $G=SO^{\ast}(4n)$ realized as the group of all $g\in SO(4n,\mathbb{C})$ such that $gJ_{2n}g^{\ast}=J_{2n}$. We can describe $\mathfrak{g}=Lie(G)$ as a Lie subalgebra of
$M_{2n}(\mathbb{H})$ as the matrices in block form%
\[
\left[
\begin{array}
[c]{cc}%
A & X\\
Y & -A^{\ast}%
\end{array}
\right]
\]
with $A,X,Y\in M_{n}(\mathbb{H})$ and $X^{\ast}=X,Y^{\ast}=Y$. In this form
$\mathfrak{g}\cap M_{2n}(\mathbb{R})=Lie(Sp(n,\mathbb{R}))$. \ We take $e,f,h$ as above and note that $MA \cong GL(n,\mathbb{H})$. If we define $\chi_{p,q}$ as before it is then easy to check that
\[
M_{\chi_{p,q}}=Sp(p,q).
\]

4. $G$ the Hermitian symmetric real form of $E_{7}$. In this case we will
emphasize a decomposition of $Lie(G)$ which makes it look exactly like those
examples 1.,2., and 3.. In each of those cases we have
\[
Lie(G)=\left[
\begin{array}
[c]{cc}%
A & X\\
Y & -A^{\ast}%
\end{array}
\right]
\]
with $A$ an element of $M_{n}(F)$ and $F=\mathbb{R},\mathbb{C}$ or $\mathbb{H}$ the upper * is the conjugate (of the field) transposed.
Furthermore, $X,Y$ are elements of $M_{n}(F)$ that are self adjoint. Example 4
corresponds to the octonions, $\mathbb{O}$. Here we replace $M_{3}%
(\mathbb{O)}$ by $\mathfrak{m}\oplus \mathfrak{a}=\mathbb{R}\oplus E_{6,2}$ (the real form of real rank 2 with maximal compact of type
$F_{4}$). We take for $X,Y$ elements of the exceptional Euclidean Jordan
algebra (the $3\times3$ conjugate adjoint matrices over $\mathbb{O}$ with
multiplication $A\cdot B=\frac{1}{2}(AB+BA)$ thus in this case the $X$'s and
$Y$'s are defined in the same way for the octonions as for the other fields).
Here $\mathfrak{m}$ acts by operators that are a sum of Jordan multiplication
and a derivation of the Jordan algebra (the derivations defining the Lie
algebra of compact $F_{4}$). With this notation our choice of $e,f,h$ are
exactly the same as the examples for $\mathbb{R},\mathbb{C}$ or $\mathbb{H}$. In this case we can define the characters
\[
\chi_{3,0}(\left(\left[
\begin{array}
[c]{cc}%
I & X\\
0 & I
\end{array}
\right]\right)=\chi_{\mathbb{R}}(\operatorname{tr}{X}).
\]
in which case $M_{\chi_{3,0}}$ is isomorphic to compact $F_4$, or the character
\[
\chi_{2,1}(\left(\left[
\begin{array}
[c]{cc}%
I & X\\
0 & I
\end{array}
\right]\right)=\chi_{\mathbb{R}}(\operatorname{tr}{I_{2,1}X}).
\]
in which case $M_{\chi_{2,1}}$ is isomorphic to $F_{4,1}$, the real form of $F_4$ of real rank 1. The stabilizer of the characters $\chi_{1,2}$ and $\chi_{0,3}$ are the same as the stabilizers for $\chi_{2,1}$ and $\chi_{3,0}$ respectively.

There is one more example (that doesn't fit this beautiful picture).

5. $G=SO(n,2)$ realized as the group of $n+2$ by
$n+2$ matrices of determinant 1 that leave invariant the form
\[
\left[
\begin{array}
[c]{ccc}
0 & 0 & 1 \\
0 & I_{n-1,1} & 0 \\
1 & 0 & 0
\end{array}
\right].
\]
Here
\[
 MA=\left\{\left[\begin{array}{ccc} a & 0 & 0 \\ 0 & m & 0 \\ 0 & 0 & a^{-1} \end{array}\right] \, | \, \mbox{$a\in \mathbb{R}^{\ast},$ $m\in SO(n-1,1)$} \right\}
\]
and
\[
N=\left\{\left[\begin{array}{ccc} 1 & -v^{t} & -\frac{\langle v,v \rangle}{2} \\ 0 & I & v \\ 0 & 0 & 1 \end{array}\right] \, | \, \mbox{$v\in \mathbb{R}^{n-1,1}$}\right\}.
\]
Representatives of the orbits of generic characters are given by
\[
 \chi_{    k}\left(\left[\begin{array}{ccc} 1 & -v^{t} & \frac{\langle v,v \rangle}{2} \\ 0 & I & v \\ 0 & 0 & 1 \end{array}\right]\right)=\chi_{\mathbb{R}}(    v_{k}),
\]
where $v_{k}$ is the $k$-th component of $v$. Observe that
\[
M_{\chi_{n}}\cong SO(n-1,\mathbb{R}).
\]
and
\[
M_{\chi_{k}}\cong SO(n-2,1), \qquad\mbox{if $k\neq n$.}
\]

\section{Jacquet integrals and Bessel models}
Let $G$ be one of the simple Lie groups of tube type we just described, and let $P=MAN,$ $\chi$ and $M_{\chi}$ be as before. Let $P_{\circ}=M_{\circ}A_{\circ}N_{\circ}$ be a minimal parabolic sugroup such that
\[
 P_{\circ}\subset P, \qquad N\subset N_{\circ}, \qquad A\subset A_{\circ}, \qquad M_{\circ}\subset M.
\]
Let $\Phi^{+}$ be the system of positive roots of $G$ relative to $P_{\circ},$ and let $\Phi_{M}^{+}$ be the system of positive roots of $MA$ induced by $\Phi^{+}.$ Let $W=W(G,A_{\circ}),$ $W_{M}=W(MA,A_{\circ})$ and set
\[
W^{M}=\{w\in W\, | \, w\Phi_{M}^{+}\subset \Phi^{+}\}.
\] 
Then we have the following classical result.
\begin{lemma}[Bruhat decomposition] 
With notation and assumptions as above.
\begin{enumerate}
\item If $v\in W$, then $v$ can be expressed in a unique way as a product of an element in $W_{M}$ and an element in $W^{M}$.
\item Given $v\in W,$ fix $w_{v}\in N_{K}(A_{\circ})$ such that $M_{\circ}w_{v}=v$. Then
 \[
  G=\bigcup_{v\in W^{M}} P_{\circ}w_{v}P.
 \]
 
\item Let $v_{G}$ be the longest element of $W,$ $v_{M}$ the longest element of $W_{M},$ and set $v^{M}=v_{G}v_{M}$. If we set $w_{G}=w_{v_{G}}$, $w_{M}=w_{v_{M}}$ and $w^{M}=w_{v^{m}}$,  then $$P_{\circ}w^{M}P=Pw^{M}N$$ and if $v\neq v^{M}$ then
\[
\dim P_{\circ}w_{v}P < \dim Pw^{M}N.
\]
\end{enumerate}
\end{lemma}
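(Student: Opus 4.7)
The plan is to deduce the lemma from three classical ingredients: the theory of parabolic subgroups of finite Coxeter groups applied to $W = W(G, A_\circ)$ and $W_M$; the full Bruhat decomposition $G = \bigsqcup_{v \in W} P_\circ w_v P_\circ$ of $G$ relative to its minimal parabolic; and the explicit structure of Siegel parabolics from Section \ref{sec:classification}.

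For part 1, $W_M$ is the standard parabolic subgroup of the Coxeter group $W$ generated by the simple reflections whose roots lie in $\Phi_M$. A standard length argument (using $\ell(s_\alpha w) = \ell(w) + 1$ iff $w^{-1}\alpha \in \Phi^+$) identifies $W^M = \{w : w\Phi_M^+ \subset \Phi^+\}$ with the set of minimum-length representatives for the right cosets $W/W_M$, and hence gives the unique length-additive factorization $v = wu$ with $w \in W^M$ and $u \in W_M$. For part 2, given this factorization we have $w_u \in N_M(A_\circ) \subset M \subset P$, so $P_\circ w_v P = P_\circ w_w w_u P = P_\circ w_w P$; combining with the full Bruhat decomposition collapses the index set from $W$ down to $W^M$, and distinct elements of $W^M$ give distinct double cosets by the standard injectivity of the Bruhat parametrization.

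For part 3, first check that $v^M := v_G v_M \in W^M$: since $v_M \Phi_M^+ = -\Phi_M^+$ and $v_G \Phi^- = \Phi^+$, we get $v^M \Phi_M^+ \subset \Phi^+$. The length identity $\ell(v_G) = \ell(v_M) + \ell(v^M)$ then identifies $v^M$ as the longest element of $W^M$. The key structural input, verified case by case from the classification in Section \ref{sec:classification}, is that $w^M$ normalizes $MA$; in each of the five families it acts on $MA \cong GL(n, F)$ (or the $SO(n,2)$ analogue) by the involution $g \mapsto (g^{\ast})^{-1}$. Granting this, for $m \in M$, $a \in A$, $n \in N$ one writes $w^M m a = (w^M m a (w^M)^{-1}) w^M$, with the parenthesized factor in $MA \subset P$, which gives
\[
P w^M P = P w^M MAN = P \cdot MA \cdot w^M \cdot N = P w^M N.
\]
For the identification $P w^M P = P_\circ w^M P$, normalization of $MA$ by $w^M$ forces normalization of $W_M$, so for each $u \in W_M$ the element $(w^M)^{-1} w_u w^M$ again lies in $M \subset P$; applying the Bruhat decomposition $P = \bigsqcup_{u \in W_M} P_\circ w_u P_\circ$ of the Levi then shows $P w^M P = \bigcup_{u \in W_M} P_\circ w_u w^M P = P_\circ w^M P$. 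Combining yields $P_\circ w^M P = P w^M N$. The dimension count $\dim P + \dim N = \dim G$ shows this set is open in $G$, and the Schubert-cell formula $\dim P_\circ w_v P = \dim P + \ell(v)$ for $v \in W^M$, combined with the strict length inequality $\ell(v) < \ell(v^M)$, gives the dimension comparison.

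The main obstacle is the case-by-case verification that $w^M$ normalizes the Levi $MA$: this is precisely the point at which the tube-type hypothesis enters, since for general (non-Siegel) parabolics conjugation by $w^M$ sends $P$ to a different parabolic and the clean identity $P w^M P = P w^M N$ fails.
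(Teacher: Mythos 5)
The paper does not actually prove this lemma: it is introduced with ``we have the following classical result'' and stated without proof (the proof that follows in the text is of the corollary, not the lemma), so there is no in-paper argument to compare against. Your reconstruction is the standard one and its skeleton is sound: minimal coset representatives for $W/W_{M}$, collapse of the full Bruhat decomposition $G=\bigcup_{v\in W}P_{\circ}w_{v}P_{\circ}$ along $W_{M}\subset M\subset P$, the identification of $v^{M}=v_{G}v_{M}$ as the longest element of $W^{M}$, and the normalization of $MA$ by $w^{M}$ as the key input for $Pw^{M}P=Pw^{M}N$. (One remark: the case-by-case check is unnecessary here; since $\Phi$ is of type $C_{n}$ one has $v_{G}=-1$, hence $v^{M}\Phi_{M}=v_{G}v_{M}\Phi_{M}=-\Phi_{M}=\Phi_{M}$, so $\operatorname{Ad}(w^{M})$ preserves $\mathfrak{m}\oplus\mathfrak{a}=\mathfrak{m}_{\circ}\oplus\mathfrak{a}_{\circ}\oplus\sum_{\alpha\in\Phi_{M}}\mathfrak{g}_{\alpha}$ uniformly.)

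Two steps need tightening. First, in passing from $Pw^{M}P$ to $P_{\circ}w^{M}P$ you write $Pw^{M}P=\bigcup_{u}P_{\circ}w_{u}P_{\circ}w^{M}P=\bigcup_{u}P_{\circ}w_{u}w^{M}P$, but absorbing the middle $P_{\circ}$ is not justified, since $(w^{M})^{-1}P_{\circ}w^{M}\not\subset P$ (indeed $(w^{M})^{-1}Nw^{M}=\bar{N}$). A clean replacement: $P_{\circ}w^{M}P=P_{\circ}w^{M}MAN=P_{\circ}\bigl(w^{M}MA(w^{M})^{-1}\bigr)w^{M}N=P_{\circ}MA\,w^{M}N$, and $P_{\circ}MA=(P_{\circ}\cap M)AN\,MA=MAN=P$ because $M$ normalizes $N$ and $A$ is central in $MA$; hence $P_{\circ}w^{M}P=Pw^{M}N$ in one stroke. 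Second, the formula $\dim P_{\circ}w_{v}P=\dim P+\ell(v)$ is false for every tube-type group other than $Sp(n,\mathbb{R})$, because the restricted root spaces have multiplicities greater than one ($2$ for $SU(n,n)$, $4$ for $SO^{*}(4n)$, etc.), and $\ell(v)$ counts roots rather than dimensions. The correct count is $\dim P_{\circ}w_{v}P-\dim P=\sum_{\beta\in\Phi^{+}\setminus\Phi_{M}^{+},\,v\beta\in\Phi^{-}}\dim\mathfrak{g}_{\beta}$, which is at most $\dim\mathfrak{n}$ with equality precisely when $v\beta<0$ for all $\beta\in\Phi^{+}\setminus\Phi_{M}^{+}$; together with $v\Phi_{M}^{+}\subset\Phi^{+}$ this forces $v=v^{M}$, so the strict inequality you want still follows, just not from the length formula as stated.
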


\begin{corollary}\label{lemma:bruhat}
Assume that the character $\chi$ has compact stabilizer. Then
\[
G=\bigcup_{v\in W^{M}}P_{\circ}w_{v}M_{\chi}N.
\]
Furthermore, if $v\in W^{M}$ and $v\neq v^{M}$,
then
\[
\dim P_{\circ}w_{v}M_{\chi}N<\dim P_{\circ}w^{M}M_{\chi}N.
\]
\end{corollary}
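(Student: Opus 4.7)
The corollary should be viewed as a refinement of the Bruhat decomposition in which each factor $P$ on the right of $\bigcup_{v\in W^M}P_\circ w_v P$ is replaced by the smaller set $M_\chi N$. The essential input, beyond Bruhat itself, is the Iwasawa decomposition for $M$, combined with the identification of the compact stabilizer $M_\chi$ with a maximal compact subgroup of $M$.

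The first step is to observe that, in every case where the stabilizer is compact, $M_\chi$ coincides with $K\cap M$, the maximal compact subgroup of $M$. Dimensionally this is forced by the open $P$-orbit condition, which gives $\dim M_\chi=\dim MA-\dim N$; the fact that the compact group of the right dimension really is the maximal compact can be read off case by case from the classification in Section \ref{sec:classification}. Granting this, Iwasawa for $M$ reads $M=M_\chi\cdot P_{\circ,M}$, where $P_{\circ,M}=M_\circ A_M N_M$ with $A_M:=A_\circ\cap M$, $N_M:=N_\circ\cap M$. Note $P_{\circ,M}=P_\circ\cap M\subset P_\circ$.

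For the first assertion, start from Bruhat and write $g=p_\circ w_v\, m a n$ with $p_\circ\in P_\circ$, $v\in W^M$, $m\in M$, $a\in A$, $n\in N$. Since $A\subset A_\circ$ and $w_v$ normalizes $A_\circ$, the conjugate $w_v a w_v^{-1}\in A_\circ\subset P_\circ$ can be absorbed on the left, yielding $g=p_\circ'\, w_v\, m\, n$. Writing $m=p_M k$ via Iwasawa with $p_M\in P_{\circ,M}$ and $k\in M_\chi$, one uses the defining property $v\Phi_M^+\subset \Phi^+$ of $W^M$ to conclude $w_v N_M w_v^{-1}\subset N_\circ$ and $w_v A_M w_v^{-1}\subset A_\circ$, while $M_\circ=Z_K(A_\circ)$ is normalized by every element of $N_K(A_\circ)$; hence $w_v P_{\circ,M} w_v^{-1}\subset P_\circ$. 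This absorbs $p_M$ on the left as well, giving $g\in P_\circ w_v M_\chi N$, and therefore $G=\bigcup_{v\in W^M}P_\circ w_v M_\chi N$.

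For the dimension comparison, the same chain of equalities shows that $P_\circ w_v M_\chi N=P_\circ w_v M N=P_\circ w_v P$, so the strict inequality is inherited directly from part 3 of the preceding Bruhat lemma. The only step in which I expect to exert any real care is the identification of $M_\chi$ with the maximal compact of $M$: a uniform proof via the Euclidean Jordan algebra structure on $N$ (the compact stabilizer being that of a positive or negative element of the associated positive cone) is certainly available, but for the purposes of this corollary the case-by-case reading of Section \ref{sec:classification} is the most direct route.
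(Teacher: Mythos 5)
Your proposal is correct and follows essentially the same route as the paper: identify $M_{\chi}$ with $K_{M}=K\cap M$ via the classification, apply the Iwasawa decomposition $M=(P_{\circ}\cap M)K_{M}$, and use $v\Phi_{M}^{+}\subset\Phi^{+}$ to conjugate $P_{\circ}\cap M$ (and $A$) into $P_{\circ}$. Your deduction of the dimension inequality — observing that $P_{\circ}w_{v}M_{\chi}N=P_{\circ}w_{v}P$ and citing part 3 of the Bruhat lemma — is a slightly more direct phrasing of what the paper does (it instead invokes the uniqueness of $v^{M}$ as the element with $w_{v}Nw_{v}^{-1}\cap N_{\circ}=\{1\}$), but both rest on the same preceding lemma.
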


\begin{proof}
From the classification of simple Lie groups of tube type, we see that if the stabilizer of $\chi$, $M_{\chi}$, is compact, then there is a maximal compact subgroup $K$ such that, if we set $K_{M}=M\cap K$, then $M_{\chi}=K_{M}$ is a maximal compact subgroup of $M$.
We note that $W^{M}=\{v\in W|v\cdot \Phi_{M}^{+}\subset\Phi^{+}\}$, hence
$w_{v}(P_{\circ}\cap M)w_{v}^{-1}\subset P_{\circ}$. The Iwasawa decomposition
implies that%
\[
M=(P_{\circ}\cap M)K_{M}.
\]
Since $w_{v}A_{\circ}w_{v}^{-1}\subset A_{\circ}$ for all $v\in W$ we see that%
\begin{eqnarray*}
G & = &\bigcup_{v\in W^{M}} P_{\circ} w_{v} (P_{\circ}\cap M)K_{M}N =\bigcup_{v\in W^{M}%
}P_{\circ}w_{v}(P_{\circ}\cap M)w_{v}^{-1}w_{v}K_{M}N \\
&=& \bigcup_{v\in W^{M}}P_{\circ}w_{v}K_{M}N.
\end{eqnarray*}
If we now use that $M_{\chi}=K_{M}$ we obtain the decomposition we wanted. The dimension assertion follows from the fact that $v^{M}=v_{G}v_{M}$ is the
unique element of $W^{M}$ such that $w_{v}N w_{v}^{-1}\cap N_{\circ}=\{1\}$.
\end{proof}

\begin{lemma}\label{lemma:compactvanishing}
Assume that the character $\chi$ has compact stabilizer. If $v\in W^{M}$ is not $v^{M},$ then the restriction of $\chi$ to $w_{v}^{-1}N_{\circ}w_{v} \cap N $ is non-trivial.
\end{lemma}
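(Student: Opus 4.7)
The plan is to pass to the Lie-algebra level. Let $\psi:\mathfrak{n}\to i\mathbb{R}$ denote the differential of $\chi$. Because $N$ is abelian, $\chi$ restricted to the connected subgroup $\exp(\mathfrak{h}_{v})$, where $\mathfrak{h}_{v}:=w_{v}^{-1}\mathfrak{n}_{\circ}w_{v}\cap\mathfrak{n}$, is trivial iff $\psi|_{\mathfrak{h}_{v}}=0$. Since $w_{v}$ normalizes $A_{\circ}$, the subspace $\mathfrak{h}_{v}$ is $A_{\circ}$-stable, so it decomposes as a sum of restricted root spaces: $\mathfrak{h}_{v}=\bigoplus_{\alpha\in S_{v}}\mathfrak{g}_{\alpha}$, where $S_{v}=v^{-1}\Phi^{+}\cap\Phi_{N}$. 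It therefore suffices to produce, for every $v\in W^{M}\setminus\{v^{M}\}$, some $\alpha\in S_{v}$ on which $\psi$ does not vanish.

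I would establish this by combining two structural facts that are verified case-by-case via the classification in section \ref{sec:classification}. The first is a Weyl-group statement: whenever $v\neq v^{M}$, the set $S_{v}$ contains a \emph{long} restricted root of $(G,A_{\circ})$ lying in $\Phi_{N}$. In cases 1--4 the restricted root system is of type $C_{r}$; the long roots in $\Phi_{N}$ are $2\gamma_{1},\ldots,2\gamma_{r}$, and $W^{M}\cong(\mathbb{Z}/2)^{r}$ acts on the $\gamma_{i}$'s by sign changes, with $v^{M}$ corresponding to $(-,\ldots,-)$. Hence for $v\neq v^{M}$ some $\gamma_{i}$ is fixed, and the corresponding long root $2\gamma_{i}$ lies in $S_{v}$. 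In case 5 ($G=SO(n,2)$, type $B_{2}$) a direct enumeration of the four elements of $W^{M}$ shows that $S_{v}$ contains one of the long roots $\gamma_{1}\pm\gamma_{2}$ whenever $v\neq v^{M}$.

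The second fact is a Jordan-algebraic non-vanishing statement: compactness of $M_{\chi}$ forces $\psi$ to be non-zero on every long restricted root space in $\Phi_{N}$. Under the identification of $\mathfrak{n}$ with the Euclidean Jordan algebra $J$ associated with $G$, the generic character $\chi$ corresponds, via the trace pairing, to an element $X_{\chi}\in J$, and $M_{\chi}$ is compact precisely when $X_{\chi}$ lies in the open symmetric cone of $J$. Each long root space $\mathfrak{g}_{2\gamma_{i}}$ corresponds to the line spanned by the $i$-th primitive idempotent $e_{i}$ in a Jordan frame; self-duality of the symmetric cone then yields $\psi(e_{i})=\operatorname{tr}(X_{\chi}\cdot e_{i})>0$. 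For case 5 one sees this concretely from the formula $\psi(v)=v_{n}$ given in section \ref{sec:classification}: under the $A_{\circ}$-weight decomposition $\mathfrak{n}=\mathbb{R}(e_{n-1}+e_{n})\oplus\mathbb{R}^{n-2}\oplus\mathbb{R}(e_{n-1}-e_{n})$, the two $1$-dimensional summands are precisely the long-root lines $\mathfrak{g}_{\gamma_{1}\pm\gamma_{2}}$, and $\psi$ evaluates to $\pm 1$ on them.

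Combining the two facts finishes the proof: for any $v\neq v^{M}$ I choose a long root $\alpha\in S_{v}$ supplied by the first fact, and the second fact gives $\psi|_{\mathfrak{g}_{\alpha}}\neq 0$, so $\chi$ is non-trivial on $w_{v}^{-1}N_{\circ}w_{v}\cap N$. The main obstacle is the Jordan-algebraic input about self-duality of the symmetric cone; this is a standard but non-elementary fact from the theory of Euclidean Jordan algebras. In the writeup I would either cite it or, equivalently and more concretely, verify in each of the matrix models $\operatorname{Sym}(n,\mathbb{R})$, $\operatorname{Herm}(n,\mathbb{C})$, $\operatorname{Herm}(n,\mathbb{H})$, and $\operatorname{Herm}(3,\mathbb{O})$ that a positive-definite element has strictly positive diagonal entries in any orthonormal Jordan frame.
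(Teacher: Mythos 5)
Your proof is correct and follows essentially the same route as the paper's: both reduce the statement to the non-vanishing of $d\chi$ on the long restricted root spaces $\mathfrak{g}_{2\varepsilon_{i}}\subset\mathfrak{n}$ together with the type-$C$ Weyl-group observation that $v^{M}$ is the unique element of $W^{M}$ sending every long root of $\Phi_{N}$ to a negative root (the paper states this contrapositively). You supply more detail than the paper at the step it dismisses with ``it can be checked that $d\chi(X)\neq 0$'' --- via self-duality of the symmetric cone or direct matrix verification --- which is a welcome addition; just note that elements of $W^{M}$ are sign changes only up to right multiplication by $W_{M}$, so ``some $\gamma_{i}$ is fixed'' should read ``some $v(2\gamma_{i})$ is positive.''
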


\begin{proof}
The tube type assumption implies that $\Phi$ is
a root system of type $C_{n}$ with $n=\dim A_{\circ}$. Hence, there
exist linear functionals $\varepsilon_{1},...,\varepsilon_{n}$ on
$\mathfrak{a}_{\circ}=Lie(A_{\circ})$ such that
\[
\Phi^{+}=\{\varepsilon_{i}\pm\varepsilon_{j}|1\leq i<j\leq n\}\cup
\{2\varepsilon_{1},...,2\varepsilon_{n}\}
\]
and
\[
\Phi_{M}^{+}=\{\varepsilon_{i}-\varepsilon_{j}|1\leq i<j\leq n\}.
\]

Let $X \in Lie(N)$ be such that $[H,X]=2\varepsilon_{i}(H)X,$ for all $H \in Lie(A_{\circ})$. For such an $X$ it can be checked that $d\chi(X)\neq 0$. Hence, if $v\in
W^{M}$ and $\chi$ restricted to $w_{v}^{-1}N_{\circ}w_{v} \cap N $ is trivial, we must have
\[
v^{-1}\cdot(2\varepsilon_{i}) \in -\Phi^{+},\qquad i=1,...,n.
\]
Therefore
$v^{-1}\cdot(\varepsilon_{i}+\varepsilon_{j})\in -\Phi^{+}$ for all $i\leq j,$ which
implies that $v=v^{M}$.
\end{proof}

Given an admissible, smooth, Fr\'echet representation, $(\pi,V_{\pi})$, of $G$, and a generic character, $\chi$, of $N$, define the space of Bessel models of $V_{\pi}$ to be
\[
 Wh_{\chi}(V_{\pi})=\{\lambda \in V_{\pi}' \, | \, \mbox{$\lambda(\pi(n)v)=\chi(n)v$ for all $n\in N$ }\}.
\]
Here $V_{\pi}'$ is the continuous dual of $V_{\pi}$. Observe that we can define an action of $M_{\chi}$ on $Wh_{\chi}(V)$ by setting $(m\cdot \lambda)(v)=\lambda(\pi(m)^{-1}v)$, for $m\in M_{\chi}$, $\lambda \in Wh_{\chi}(V)$. Effectively, if $n\in N$,
\begin{eqnarray*}
 (m\cdot \lambda)(\pi(n)v) & = & \lambda(\pi(m)^{-1}\pi(n)v)=\lambda(\pi(m^{-1}nm)\pi(m)^{-1}v)\\
 & = & \chi(m^{-1}nm)\lambda(\pi(m)^{-1}v) = \chi(n) (m\cdot \lambda)(v),
\end{eqnarray*}
 where the last equality follows from the fact that $\chi\circ Ad(m)=\chi$, for all $m\in M_{\chi}$. If $(\tau, V_{\tau})$ is an admissible, smooth, Fr\'echet representation of $M_{\chi}$ we will set 
\[
 Wh_{\chi,\tau}(V_{\pi})=\{\lambda \in \mbox{Hom}(V_{\pi},V_{\tau}) \, | \, \lambda(\pi(mn)v)=\chi(n)\tau(m)v\},
\]
where $\mbox{Hom}(V_{\pi},V_{\tau})$ is the set of continuous linear maps between $V_{\pi}$ and $V_{\tau}$. Observe that if $(\tau,V_{\tau})$ is irreducible, then there is a natural $M_{\chi}$-equivariant embedding, $V_{\tau}'\otimes Wh_{\chi,\tau}(V_{\pi})\longrightarrow Wh_{\chi}(V_{\pi})$, given by $(\mu\otimes \lambda)(v)=\mu(\lambda(v))$ for $\mu \in V_{\tau}'$, $\lambda \in Wh_{\chi,\tau}(V_{\pi})$, and $v\in V_{\pi}$.

Let $(\sigma, V_{\sigma})$ be an admissible, smooth, Fr\'echet representation of $M$ of moderate growth. Let $\mathfrak{a}=Lie(A)$, and let
$\mathfrak{a}_{\mathbb{C}}'$ be the set of complex valued linear functionals on $\mathfrak{a}$. Let $\rho$ be half the sum of the positive roots of $P$ relative to $A$. Given an element $\nu \in \mathfrak{a}_{\mathbb{C}}'$, set 
\[
 \sigma_{\nu}(nam)=a^{\nu+\rho}\sigma(m). \qquad \mbox{for all $n\in N$, $a\in A$, $m\in M$.}
\]
Let 
\[
 I_{\sigma,\nu}^{\infty}=\{\phi:G\longrightarrow V_{\sigma} \, | \, \mbox{$\phi$ is smooth, and $\phi(namk)=a^{\nu+\rho}\phi(k)$} \},
\]
and define an action of $G$ on $I_{\sigma,\nu}^{\infty}$ by setting $\pi_{\sigma,\nu}(g)\phi(x)=\phi(xg)$ for $x,g\in G$. If we give to $I_{\sigma,\nu}^{\infty}$ the usual $C^{\infty}$ topology, then $(\pi_{\sigma,\nu},I_{\sigma,\nu}^{\infty})$ is an admissible, smooth, Fr\'echet representation of $G$ of moderate growth. Let
$$
I_{\sigma}^{\infty}=\{\phi:K \longrightarrow V_{\sigma} \, | \, \mbox{$\phi$ is smooth and $\phi(mk)=\sigma(m)\phi(k)$, for all $m\in K_{M}$}\}
$$
Observe that this space has a natural $K$-action.
Given $\phi \in I_{\sigma}^{\infty}$, define $\phi_{\sigma,\nu}\in I_{\sigma,\nu}^{\infty}$ by
\[
 \phi_{\sigma,\nu}(namk)=a^{\nu+\rho}\sigma(m)\phi(k).
\]
Observe that the map $\phi \mapsto \phi_{\sigma,\nu}$ defines a $K$-equivariant isomorphism between $I_{\sigma}^{\infty}$ and $I_{\sigma,\nu}^{\infty}$ for all $\nu \in \mathfrak{a}_{\mathbb{C}}'$.

Given an element $\phi\in I^{\infty}_{\sigma}$, and a generic character $\chi$ of $N$ we define its generalized Jacquet integral to be the integral
\[
 J_{\sigma,\nu}^{\chi}(\phi)=\int_{N} \chi(n)^{-1}\phi_{\sigma,\nu}(w^{M}n)\, dn.
\]
Observe that if $G$ is a simple Lie group of tube type, then $\dim \mathfrak{a}_{\mathbb{C}}'=1$, and we can use $\rho$ to identify $\mathfrak{a}_{\mathbb{C}}'$ with $\mathbb{C}$. We will use this identification during the rest of the chapter.
  
\begin{lemma}\label{lemma:hyperplane}
 There exists a constant $q_{\sigma}$ such that
\[
 J_{\sigma,\nu}^{\chi}(\phi)=\int_{N}\chi(n)^{-1}\phi_{\sigma,\nu}(w^{M} n)\, dn
\]
converges absolutely and uniformly in compacta of $\{\nu \in \mathfrak{a}_{\mathbb{C}}'\, | \, \operatorname{Re} \nu > q_{\sigma}\}$ for all $\phi\in I^{\infty}_{\sigma,\nu}$. 
\end{lemma}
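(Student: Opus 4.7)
The plan is to convert the integral over $N$ into one over $\bar N$ and then apply Harish-Chandra type estimates for the Iwasawa decomposition. Since $w^M$ conjugates $N$ onto $\bar N$, the substitution $\bar n = w^M n (w^M)^{-1}$ yields, up to a fixed positive Jacobian constant $J$,
\[
J_{\sigma,\nu}^{\chi}(\phi) = J \int_{\bar N}\chi\bigl((w^M)^{-1}\bar n w^M\bigr)^{-1}\phi_{\sigma,\nu}(\bar n w^M)\, d\bar n.
\]
Because $\chi$ is unitary, the character factor has modulus one and does not affect absolute convergence. Hence it suffices to dominate a continuous seminorm of $\phi_{\sigma,\nu}(\bar n w^M)$ by an integrable function of $\bar n$, with bounds locally uniform in $\nu$.

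Next, for $\bar n$ in the open dense subset where the $NAMK$ Iwasawa-type decomposition applies, I would write $\bar n = n(\bar n)\,a(\bar n)\,m(\bar n)\,k(\bar n)$. The defining transformation rule for $\phi_{\sigma,\nu}$ gives
\[
\phi_{\sigma,\nu}(\bar n w^M) = a(\bar n)^{\nu+\rho}\,\sigma\bigl(m(\bar n)\bigr)\,\phi\bigl(k(\bar n) w^M\bigr).
\]
Since $K$ is compact and $\phi \in I_\sigma^\infty$ is smooth, $\phi(k(\bar n)w^M)$ remains in a bounded subset of $V_\sigma$. The moderate growth hypothesis on $\sigma$ then furnishes an integer $d \geq 0$ such that, for any fixed continuous seminorm $p$ on $V_\sigma$,
\[
p\bigl(\phi_{\sigma,\nu}(\bar n w^M)\bigr) \leq C\, a(\bar n)^{\operatorname{Re}\nu+\rho}\,\|m(\bar n)\|^{d},
\]
where $\|\cdot\|$ is a fixed norm on $M$ and $C$ is locally uniform in $\nu$.

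The final step combines this estimate with two classical properties of the $\bar N$-Iwasawa projection. The first is a polynomial bound $\|m(\bar n)\| \leq C' a(\bar n)^{-r}$ for suitable constants $C', r > 0$, obtained by embedding $G$ in a matrix group and tracking the factorization entry by entry. The second is the Harish-Chandra integrability criterion: $\int_{\bar N} a(\bar n)^{\mu}\, d\bar n$ converges, locally uniformly in $\mu$, whenever $\operatorname{Re}\mu$ lies in an appropriate open half-plane of $\mathfrak{a}_{\mathbb{C}}'$. Setting $q_\sigma$ equal to $rd$ plus the Harish-Chandra threshold then yields absolute and locally uniform convergence of the Jacquet integral for $\operatorname{Re}\nu > q_\sigma$. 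I expect the main technical obstacle to be the polynomial bound on $\|m(\bar n)\|$, since standard Iwasawa theory handles the coarser $NAK$ decomposition with $K$ compact rather than the refined $NAMK$ decomposition with $M$ noncompact; however, it can be made explicit using the matrix realizations of each of the five tube-type examples catalogued in the previous section.
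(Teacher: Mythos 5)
Your argument is essentially the paper's own proof: both rest on the Iwasawa decomposition of the argument of $\phi_{\sigma,\nu}$, the moderate-growth estimate $\|\sigma(m(\cdot))\|\leq C\|\cdot\|^{r}$, the polynomial bound relating the $N$ (or $M$) component to a negative power of the $A$ component, and Harish-Chandra's integrability criterion; your preliminary change of variables to $\bar N$ is cosmetic. The one step you flag as a technical obstacle --- the bound $\|m(\bar n)\|\leq C'a(\bar n)^{-r}$ --- does not require a case-by-case matrix computation: it is a general fact, cited in the paper from Wallach's \emph{Real Reductive Groups I} (lemma 4.A.2.3 together with the proof of 4.5.6).
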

\begin{proof}
From the Iwasawa decomposition of $G$, we have that for any $n\in N$, there exists $n(w^{M}n) \in N$, $a(w^{M}n)\in A$, $m(w^{M}n) \in M$ and $k(w^{M}n) \in K$ such that 
\begin{equation}
 \|\phi_{\sigma,\nu}(w^{M}n)\|= a(w^{M}n)^{\operatorname{Re} \nu +\rho}\|\sigma(m(w^{M}n))\phi_{\sigma,\nu}(k(w^{M}n))\|. \label{eq:sigmaabsolutevalueestimate}
\end{equation}
On the other hand, in [W3 proof of 4.5.6] it is proved that there exists some constants $C_{1}$ and $r$, depending on $\sigma$, such that 
\[
 \|\sigma(m(w^{M}n))\| \leq C_{1}\|n\|^{r}.
\]
Now in [W3 lemma 4.A.2.3] it's shown that there exists a constant $C_{2}$ and linear functional $\mu >0$ such that
\[
 \|n\|\leq C_{2}a(w^{M}n)^{-\mu}.
\]
Therefore
\[
 \|\sigma(m(w^{M}n))\| \leq Ca(w^{M}n)^{rd\mu},
\]
for some constant $C>0$.
From this equation and (\ref{eq:sigmaabsolutevalueestimate}) we conclude that
\[
  \|\phi_{\sigma,\nu}(w^{M}n)\| \leq Ca(w^{M}n)^{-r\mu + \operatorname{Re} \nu +\rho}\|\phi_{\sigma,\nu}(k(w^{M}n))\|.
\]
Therefore, it is enough to show that there exists a constant $q_{\sigma}$ such that, if $\nu \in \mathfrak{a}_{\mathbb{C}}'$ and $\operatorname{Re} \nu > q_{\sigma}$, then
\[
 \int_{N}  a(w^{M}n)^{-r\mu+\operatorname{Re} \nu +\rho} \, dn < \infty.
\]
But this follows directly from [W3, thm 4.5.4].
\end{proof}

Let $\nu \in \mathfrak{a}_{\mathbb{C}}'$ be such that $\operatorname{Re} \nu > q_{\sigma}$, and let $\mu\in V_{\sigma}'$. Define a linear functional $\lambda_{\mu}$ of $I_{\sigma,\nu}$ by
\[
 \lambda_{\mu}(\phi_{\sigma,\nu})=\mu\circ J_{\sigma,\nu}^{\chi}(\phi)=\mu\left(\int_{N}\chi(n)^{-1}\phi_{P,\sigma,\nu}(w^{M}n)\, dn\right).
\]
It's easy to check that $\lambda_{\mu}\in Wh_{\chi}(I_{\sigma,\nu}^{\infty})$. Furthermore, given $v\in V_{\sigma}$ we can find an element $\phi \in I_{\sigma,\nu}$ with support on $Pw^{M}N$ such that $J_{\sigma,\nu}^{\chi}(\phi)=v$. This means that if $\lambda_{\mu}=\lambda_{\mu'}$, then $\mu=\mu'$. In the next section we will describe how we can use the theory of the transverse symbol of Kolk-Varadarajan to define a map
\[
 \Phi_{\sigma,\nu}:Wh_{\chi}(I_{\sigma,\nu}^{\infty}) \longrightarrow V_{\sigma}'
\]
for all $\nu\in \mathfrak{a}_{\mathbb{C}}'$ such that if $\operatorname{Re} \nu > q_{\sigma}$, then $\Phi_{\sigma, \nu}(\lambda_{\mu})=\mu$. Furthermore, we will show how we can make use of lemma \ref{lemma:compactvanishing} to show that this map $\Phi_{\sigma,\nu}$ is injective in the case where $\chi$ has compact stabilizer.

\section{The transverse symbol of Kolk-Varadarajan}
Let $H$ be a Lie group, and let $X$ be a $C^{\infty}$ manifold with a left $H$ action. Given a Fr\'echet space $E,$ let $C_{c}^{\infty}(X:E)$ be the space of smooth, compactly supported functions on $X$ with values in $E$. We will denote by 
\[
D'(X:E):= (C_{c}^{\infty}(X:E))'
\]
its dual space, and we will make the identification 
\[
D'(X:E) \longleftrightarrow Hom(C_{c}^{\infty}(X),E').  
\]
We will call any element in this space an $E$-distribution on $X$.

Fix an $H$-orbit $\mathcal{O}\subset X$. Let $\operatorname{Diff}^{(r)}$ be the sheaf of differential operators of order $\leq r$ on $X$. For any $x\in X$ let $V_{x}^{(r)}$ be the subspace of $\operatorname{Diff}_{x}^{(r)}$ generated by germs of $r$-tuples $v_{1}\cdots v_{r}$ of vector fields around $x$ for which at least one of the $v_{i}$ is tangent to $\mathcal{O}$. Let 
\[ 
I_{x}^{(r)}=\operatorname{Diff}_{x}^{(r-1)}+V_{x}^{(r)}.
\]
Choosing local coordinates at $x$ it can be seen that $I_{x}^{(r)}$ actually is the stalk at $x$ of a subsheaf $I^{(r)} \subset \operatorname{Diff}^{(r)}$ \cite{kv:transverse}. Hence we have a well-defined quotient sheaf
\[
M^{(r)}=\operatorname{Diff}^{(r)}/I^{(r)}.
\]
with stalk at $x$ equal to $M_{x}^{(r)}=\operatorname{Diff}_{x}^{(r)}/I_{x}^{(r)}$. It can be checked that $M^{(r)}$ is a vector bundle over $O$ of finite rank \cite{kv:transverse}. This is the $r$-th graded part of the transverse jet bundle on $\mathcal{O}$. Observe that $M^{(r)}$ is the $r$-th symmetric power of $M^{(1)}$. 

We say that $T\in D'(X:E)$ has transverse order $\leq r$ at $x\in \mathcal{O},$ if there exists an open neighborhood $U$ of $x$ in $X,$ such that for all $f\in C_{c}^{\infty}(U:E),$ with the property that $Df|_{O\cap U}=0$ for all $D \in \operatorname{Diff}^{(r)}(U),$ $T(f)=0$. Let $D_{O}^{\prime (r)}(X:E)$  be the linear subspace of elements in $D'(X:E)$ which have transverse order $\leq r$ at all points of $O$. Observe that if $T\in D_{O}^{\prime (r)}(X:E)$, then $\operatorname{supp} T \subset \mathcal{O}$, which justifies the notation.

Given a normal subgroup $H'\subset H$, and a point $y\in \mathcal{O}$, define a character $\chi_{y}$ of $H'_{y}=\{h\in H'\, | \, h\cdot y=y\}$ by
\[
 \chi_{y}(h)=\frac{\delta_{H'}(h)}{\delta_{H'_{y}}(h)},
\]
where $\delta_{H'}$ is the modular function of $H_{y}$ and $\delta_{H'_{y}}$ is the modular function of $H'_{y}$.
The following theorem is a restatement of theorems 3.9, 3.11 and 3.15 of \cite{kv:transverse}.

\begin{theorem}[Kolk-Varadarajan] \label{thm:kolk-varadarajan}
Let $X$ be a $C^{\infty}$ manifold with a left action of $H,$ let $(\pi,E)$ be a smooth Fr\'echet representation of a normal subgroup $H'$ of $H,$ and let $O\subset X$ be an $H$-orbit of $X$.  
\begin{enumerate}
\item Assume that the action of $H'$ can be extended to an action of $H$. If there exists $y\in O,$ such that
\[
(M_{y}^{(r)}\otimes E'\otimes \mathbb{C}_{y}')^{H_{y}'}=(0),
\]
for all $r\in\mathbb{Z}_{\geq 0},$ then 
\[
D_{O}'(X:E)^{H'}=(0).
\]
\item Assume that $H=H'$. Then for any $$T\in D_{O}^{\prime (r)}(X:E)/D_{O}^{\prime (r-1)}(X:E),$$ there exists $\mu_{y} \in (M_{y}^{(r)}\otimes E'\otimes \mathbb{C}_{y}')^{H_{y}}$ such that
\[
T(f)=\int_{H/H_{y}} (h\cdot \mu_{y})(f) \, dh
\]
\item Assume that $E$ is finite dimensional, and assume that for all $y\in O$
\[
(M_{y}^{(r)}\otimes E'\otimes \mathbb{C}_{y}')^{H_{y}'}=(0),
\]
for all $r\in\mathbb{Z}_{\geq 0},$ then 
\[
D_{O}'(X:E)^{H'}=(0).
\]
\end{enumerate}
\end{theorem}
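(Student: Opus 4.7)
The plan is to exploit the filtration $\{D_O^{\prime (r)}(X:E)\}_{r\geq 0}$ by transverse order and reduce the invariance problem to a question about the associated graded object, the \emph{transverse symbol}, which is a distributional section of a natural bundle on the orbit $O$. First I would construct, for each $r\geq 0$, a linear injection $\sigma_r$ from $D_O^{\prime (r)}(X:E)/D_O^{\prime (r-1)}(X:E)$ into the space of distributional sections of $M^{(r)}|_O \otimes E'$ on $O$. Concretely, choose local coordinates $(u,t)$ around $y\in O$ with $O$ locally cut out by $t=0$; every element of $D_O^{\prime (r)}(X:E)$ can then be written as a finite sum $\sum_{|\alpha|\leq r}\partial_t^{\alpha}(\phi_\alpha\otimes \delta_{\{t=0\}})$ with the $\phi_\alpha$ being $E'$-valued distributions on $O$, and the top-degree coefficients, although not intrinsic individually, patch into a well-defined section modulo $D_O^{\prime (r-1)}(X:E)$ after the usual twist by the density bundle of $O$.

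The structural point is equivariance. Since $M^{(r)}$ and the density bundle of $O$ are natural under diffeomorphisms and $(\pi,E)$ carries an $H'$-representation, the fiber at $y$ of the bundle in question is $M_y^{(r)}\otimes E'\otimes \mathbb{C}_y'$ as an $H'_y$-module, where $\mathbb{C}_y'$ is exactly the modular ratio $\delta_{H'}/\delta_{H'_y}$ arising from integration against densities on $H'/H'_y$. I would then verify that $\sigma_r$ intertwines the $H'$-action on distributions with the pullback action on sections, so that the value $\sigma_r(T)_y$ of the symbol of an $H'$-invariant distribution $T$ lies in $(M_y^{(r)}\otimes E'\otimes \mathbb{C}_y')^{H'_y}$.

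With this machinery in place, part (1) becomes a descent on transverse order: an $H'$-invariant $T\in D_O'(X:E)$ has locally finite transverse order, say $\leq r$; its symbol $\sigma_r(T)_y$ lies in an invariant space forced to vanish by hypothesis; using that the $H'$-action extends to $H$ and that $H$ acts transitively on $O$, I would propagate the vanishing from $y$ to all of $O$, conclude that $T\in D_O^{\prime (r-1)}(X:E)$, and iterate downward until the transverse order is negative. Part (2) is the positive counterpart: when $H=H'$ is transitive on $O\simeq H/H_y$, the space of $H$-invariant sections of an equivariant bundle over $O$ is identified via evaluation at $y$ with the $H_y$-invariants of the fiber, and the reconstruction $T(f)=\int_{H/H_y}(h\cdot \mu_y)(f)\,dh$ realizes the inverse of the symbol map on the graded quotient. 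Part (3) is the same argument as (1), except that the finite-dimensionality of $E$ removes the need to transport vanishing via $H$: the fibers of the symbol bundle are finite-dimensional, and the hypothesis lets one test vanishing of the symbol at every $y\in O$ independently, so the ambient $H$-action is unnecessary.

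The hardest step will be the first one---giving an intrinsic, coordinate-free construction of the transverse symbol and proving it is well-defined and injective modulo lower transverse order, together with a careful bookkeeping of the modular character $\chi_y=\delta_{H'}/\delta_{H'_y}$ that appears in the symbol's covariance law. Once that symbol machinery is in place, parts (1)--(3) reduce to Frobenius reciprocity on $H'/H'_y$ combined with the iteration on $r$ sketched above.
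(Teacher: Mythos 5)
The paper does not prove this theorem: it is quoted verbatim as a restatement of Theorems 3.9, 3.11 and 3.15 of the cited Kolk--Varadarajan paper, so there is no in-text proof to compare against. Your sketch --- filtering by transverse order, constructing the transverse symbol as an equivariant distributional section of $M^{(r)}|_{O}\otimes E'$ twisted by the density/modular character $\mathbb{C}_{y}'$, identifying invariant sections with $H_{y}'$-invariants of the fiber via Frobenius reciprocity, and descending on $r$ --- is exactly the strategy of that source, with the roles of the hypotheses in parts (1)--(3) (extension of the action to $H$ for propagation along the orbit versus pointwise vanishing when $E$ is finite dimensional) correctly identified. The only places where the sketch is thin are standard but nontrivial: the well-definedness and injectivity of the symbol on the graded quotient (the local structure theorem for distributions of finite transverse order along a locally closed submanifold), and the fact that an invariant $T\in D_{O}'(X:E)$ need only have \emph{locally} finite transverse order, so the downward induction must be run locally or one must argue that invariance under the transitive $H$-action gives a uniform bound on the order along $O$.
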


We will now show how we can use this result to define a linear map
\[
 \Phi_{\sigma,\nu}:Wh_{\chi}(I_{\sigma,\nu}^{\infty}) \longrightarrow V_{\sigma}'.
\]

Given $f\in C^{\infty}_{c}(G),$ and $v\in V_{\sigma},$ set
\[
 f_{P,\sigma,\nu,v}(g)=\int_{P}f(pg)\sigma_{\nu}(p)^{-1}v \, d_{r}p.
\]
Then
\[
 f_{P,\sigma,\nu,v}(pg)=\sigma_{\nu}(p)f(g), \qquad \mbox{i.e} \qquad f_{P,\sigma,\nu,v} \in I^{\infty}_{P,\sigma_{\nu}}.
\]
 Let
\[
U_{P,\sigma_{\nu}}=\{f\in I^{\infty}_{P,\sigma_{\nu}} \, | \, \operatorname{supp} f\subset P(w^{M})^{\ast}N\}.
\]
Then, given $f\in C^{\infty}_{c}(G)$ such that $\operatorname{supp}f \subset P(w^{M})^{\ast}N,$ $f_{P,\sigma,\nu,v} \in U_{P,\sigma_{\nu}}$.    Furthermore, the span of the $f_{P,\sigma,\nu,v}$'s constructed this way is dense in $U_{P,\sigma_{\nu}}$.

Let 
\[
 D'(P(w^{M})^{\ast}N:V_{\sigma})= \{T:C^{\infty}_{c}(P(w^{M})^{\ast}N) \longrightarrow V_{\sigma}'\}
\]
 be the space of $V_{\sigma}$ distributions on $P(w^{M})^{\ast}N$. Given $\lambda \in Wh_{\chi}(I_{P,\sigma_{\nu}}),$ define $\bar{\lambda} \in D'(P(w^{M})^{\ast}N:V_{\sigma})$ by
\[
 \bar{\lambda}(f)(v)=\lambda(f_{P,\sigma,\nu,v}).
\]
It's easy to check that actually 
$$\bar{\lambda}\in D'(P(w^{M})^{\ast}N:V_{\sigma_{\nu-2\rho}}\otimes \mathbb{C}_{\chi})^{P\times N}.$$
Hence, according to part 2 of Kolk-Varadarajan theorem,  there exist $\mu_{\lambda}\in V_{\sigma}'$ such that
\begin{eqnarray*}
 \bar{\lambda}(f)(v)& = & \mu_{\lambda}\left(\int_{N}\int_{P}\chi(n)^{-1}f(pw^{M}n)\sigma_{\nu}(p)^{-1}v \, d_{r}p\, dn \right)\\
 \lambda(f_{P,\sigma,\nu,v})& = & \mu_{\lambda}\left(\int_{N}\chi(n)^{-1} f_{P,\sigma,\nu,v}(w^{M}n) \, dn\right)\\
 & = & \mu_{\lambda}\circ J_{P,\sigma_{\nu}}^{\chi}(f_{P,\sigma,\nu,v}|_{K}).
\end{eqnarray*}  
We will denote the map $\lambda \mapsto \mu_{\lambda}$ by $\Phi_{P,\sigma_{\nu}}$. Observe that if $\operatorname{Re} \nu > q_{\sigma}$ then $\Phi_{\sigma,\nu}$ is surjective, since given $\mu\in V_{\sigma}'$ we can define $\lambda_{\mu}=\mu\circ J_{\sigma,\nu}$ and it's then clear that $\Phi_{\sigma,\nu}(\lambda_{\mu})=\mu$.

\begin{proposition}\label{prop:vanishing}
Assume that the character $\chi$ has compact stabilizer. If $\lambda \in Wh_{\chi}(I_{P,\sigma_{\nu}}^{\infty})$ and $\lambda_{|U_{P,\sigma_{\nu}}}=0$
then $\lambda=0$.
\end{proposition}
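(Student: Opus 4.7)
The strategy is to package $\lambda$ as a $V_\sigma$-valued distribution $\bar\lambda$ on $G$ that is $(P\times N)$-equivariant (with $N$ acting by $\chi^{-1}$), and to force $\bar\lambda=0$ by peeling off the support stratum by stratum in the Bruhat decomposition. At each non-open stratum the invariance obstruction from the transverse symbol theorem of Kolk-Varadarajan is killed by Lemma \ref{lemma:compactvanishing}.

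First, I would verify that $\bar\lambda(f)(v):=\lambda(f_{P,\sigma,\nu,v})$ defines a continuous $V_\sigma$-valued distribution on $G$, equivariant for the action $(p,n)\cdot g=pgn^{-1}$ via a character of $P$ (depending on $\sigma,\nu$) and $\chi^{-1}$ on $N$. The hypothesis $\lambda|_{U_{P,\sigma_\nu}}=0$ then reads $\operatorname{supp}\bar\lambda\subset G\setminus Pw^{M}N$, and by Corollary \ref{lemma:bruhat} this closed complement is a finite union of $(P\times N)$-orbits $\mathcal{O}_v=Pw_vN$, $v\in W^{M}\setminus\{v^{M}\}$, each of dimension strictly less than $\dim G$.

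I would then induct downward on the stratum dimension. Supposing $\operatorname{supp}\bar\lambda\neq\emptyset$, I pick an orbit $\mathcal{O}_v$ of maximal dimension meeting the support and form the open $(P\times N)$-invariant set $X\subset G$ obtained by removing every strictly smaller stratum; then $\mathcal{O}_v$ is closed in $X$ and $\bar\lambda|_X$ is supported on $\mathcal{O}_v$. At $y=w_v$ the stabilizer in $P\times N$ equals $\{(w_vnw_v^{-1},n):n\in w_v^{-1}Pw_v\cap N\}$, and since $P\supset P_{\circ}\supset N_{\circ}$ this stabilizer contains the subgroup identified with $w_v^{-1}N_{\circ}w_v\cap N$, on which the $N$-component acts through $\chi$. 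By Lemma \ref{lemma:compactvanishing}, $\chi$ is nontrivial on $w_v^{-1}N_{\circ}w_v\cap N$ whenever $v\neq v^{M}$, so the character line carries no $(P\times N)_y$-invariants, and for every $r\geq 0$
\[
\bigl(M_y^{(r)}\otimes V_\sigma \otimes \mathbb{C}_{\chi}^{\prime}\bigr)^{(P\times N)_y}=0.
\]
Part 1 of Theorem \ref{thm:kolk-varadarajan}, applied with $H=H'=P\times N$ so that the extension hypothesis is automatic, then forces $\bar\lambda|_X=0$, contradicting $y\in\operatorname{supp}\bar\lambda$. Hence $\bar\lambda\equiv 0$ on $G$.

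The main obstacle I anticipate is not the transverse symbol computation itself but organizing the stratification cleanly: identifying the $(P\times N)$-orbits on $G$ with the Bruhat cells, checking the dimension ordering so the descending induction is well posed, and verifying that the stabilizer at $w_v$ really contains the subgroup identified with $w_v^{-1}N_{\circ}w_v\cap N$ so that Lemma \ref{lemma:compactvanishing} applies. Once these bookkeeping details are in place, $\bar\lambda=0$, and density of the span of the vectors $f_{P,\sigma,\nu,v}$, $f\in C_c^{\infty}(G)$, $v\in V_\sigma$, in $I_{P,\sigma_\nu}^{\infty}$ (via standard averaging over $P$) yields $\lambda=0$.
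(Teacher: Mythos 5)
Your overall strategy (package $\lambda$ as an equivariant distribution, stratify the complement of the open cell, and kill each stratum by the Kolk--Varadarajan transverse symbol theorem plus Lemma \ref{lemma:compactvanishing}) is indeed the engine of the paper's proof, but two of the steps you wave at as ``bookkeeping'' are genuine gaps. First, the stratification: $G$ is \emph{not} a finite union of $(P\times N)$-orbits $Pw_vN$, $v\in W^{M}$. Corollary \ref{lemma:bruhat} decomposes $G$ into finitely many $P_{\circ}\times M_{\chi}N$-orbits $P_{\circ}w_vM_{\chi}N$, which is a different statement: since $P$ is a Siegel (not minimal) parabolic, the double coset space $P\backslash G/N$ contains continuous families inside the intermediate cells (already for $Sp(2,\mathbb{R})$ one checks $\dim Pw_vN<\dim Pw_vP$ for the intermediate $v$, so $Pw_vP$ is a one-parameter family of $(P\times N)$-orbits). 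With infinitely many orbits your descending induction on stratum dimension is not well posed. The fix --- and the place where compactness of $M_{\chi}$ does real work beyond Lemma \ref{lemma:compactvanishing} --- is to take $H=P_{\circ}\times M_{\chi}N$ (finitely many orbits, by $M=(P_{\circ}\cap M)K_M$ and $M_{\chi}=K_M$) and $H'=N_{\circ}\times N$, the normal subgroup under which $\bar\lambda$ actually transforms by a character; the extension hypothesis of part 1 of Theorem \ref{thm:kolk-varadarajan} is then satisfied because $\chi$ extends to $M_{\chi}N$.

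Second, the invariant computation at $y=w_v$ does not follow just from the nontriviality of $\chi$ on $w_v^{-1}N_{\circ}w_v\cap N$. A stabilizer element $(w_vnw_v^{-1},n)$ acts on $V_{\sigma}'$ through $\sigma_{\nu}(w_vnw_v^{-1})$, and $w_vnw_v^{-1}$ lies in $N_{\circ}=N_MN$, whose $N_M$-component acts via $\sigma$; for an infinite-dimensional admissible $\sigma$ this action is not unipotent, so ``nontrivial character tensored with the rest'' can perfectly well have invariants, and the identity $\bigl(M_y^{(r)}\otimes V_{\sigma}'\otimes\mathbb{C}_{\chi}'\bigr)^{H'_y}=0$ is unproved as you state it. The paper sidesteps this by first invoking the Casselman--Wallach theorem to present $I^{\infty}_{P,\sigma,\nu}$ as a quotient of $I^{\infty}_{P_{\circ},\eta}$ with $\eta$ a \emph{finite-dimensional} representation of the minimal parabolic, pulling $\lambda$ back (vanishing on the open cell is preserved, and surjectivity of the quotient map returns the conclusion), and only then running the Kolk--Varadarajan argument: with finite-dimensional coefficients $N_{\circ}$ genuinely acts unipotently on $M_{w_v}^{(r)}\otimes F'$, and Lemma \ref{lemma:compactvanishing} kills the invariants. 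If you want to avoid the Casselman--Wallach reduction you would have to exploit that the long-root vectors produced in the proof of Lemma \ref{lemma:compactvanishing} can be chosen so that $w_vnw_v^{-1}\in N$ (where $\sigma_{\nu}$ is trivial), but that argument is not in your sketch and must be supplied.
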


\begin{corollary}
If $\chi$ has compact stabilizer, then the map
 \[
\Phi_{P,\sigma_{\nu}}:Wh_{\chi}(I_{P,\sigma_{\nu}}) \longrightarrow V_{\sigma}'
\] is injective.
\end{corollary}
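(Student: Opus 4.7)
The plan is to deduce the corollary as a direct consequence of Proposition \ref{prop:vanishing}, using the construction of $\Phi_{P,\sigma_{\nu}}$ established just before the proposition. Concretely, I would take $\lambda \in Wh_{\chi}(I_{P,\sigma_{\nu}}^{\infty})$ in the kernel of $\Phi_{P,\sigma_{\nu}}$, so that $\mu_{\lambda} = 0 \in V_{\sigma}'$. The strategy is to show first that $\lambda$ annihilates the dense subspace $U_{P,\sigma_{\nu}}$ of sections supported on the big cell $P w^{M} N$, and then invoke Proposition \ref{prop:vanishing} to conclude $\lambda = 0$.

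The key computation is immediate from the definition of $\Phi_{P,\sigma_{\nu}}$. For any $f \in C_{c}^{\infty}(G)$ with $\operatorname{supp} f \subset P w^{M} N$ and any $v \in V_{\sigma}$, the section $f_{P,\sigma,\nu,v}$ lies in $U_{P,\sigma_{\nu}}$, and by construction
\[
\lambda(f_{P,\sigma,\nu,v}) = \mu_{\lambda} \circ J^{\chi}_{P,\sigma_{\nu}}(f_{P,\sigma,\nu,v}|_{K}) = 0,
\]
since $\mu_{\lambda} = 0$. As noted in the paragraph preceding Proposition \ref{prop:vanishing}, the span of such sections $f_{P,\sigma,\nu,v}$ is dense in $U_{P,\sigma_{\nu}}$. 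Because $\lambda$ is continuous on $I_{P,\sigma_{\nu}}^{\infty}$ (hence on the subspace $U_{P,\sigma_{\nu}}$ with its induced topology), the vanishing extends from the dense subspace to all of $U_{P,\sigma_{\nu}}$, giving $\lambda|_{U_{P,\sigma_{\nu}}} = 0$.

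At this point Proposition \ref{prop:vanishing}, whose hypothesis of compact stabilizer is exactly what is assumed here, applies and yields $\lambda = 0$. Thus $\ker \Phi_{P,\sigma_{\nu}} = (0)$ and the map is injective. There is essentially no obstacle: all the real work has been absorbed into Proposition \ref{prop:vanishing}, whose proof is where Corollary \ref{lemma:bruhat} and Lemma \ref{lemma:compactvanishing} (the fact that the character $\chi$ is nontrivial on $w_{v}^{-1} N_{\circ} w_{v} \cap N$ for every Bruhat cell other than the big one) are used together with the Kolk--Varadarajan transverse symbol machinery to rule out distributional contributions supported on the smaller cells. The corollary itself is then purely formal.
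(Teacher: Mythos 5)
Your argument is exactly the intended one: the paper states the corollary as an immediate consequence of Proposition \ref{prop:vanishing}, and the reduction you spell out (vanishing of $\mu_{\lambda}$ forces $\lambda$ to kill the dense span of the $f_{P,\sigma,\nu,v}$ supported in the big cell, hence all of $U_{P,\sigma_{\nu}}$ by continuity, hence $\lambda=0$) is precisely the formal step the paper leaves to the reader. The proposal is correct and matches the paper's approach.
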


\begin{proof}[Proof (of proposition)]
We will first reduce the problem to the case where $\sigma$ is an induced representation. If $(\eta,V)$ is a finite dimensional representation of $P_{\circ}$, we define $I^{\infty}_{P_{\circ},\eta}$ to be the space of smooth $\phi:G \longrightarrow V$ such that $\phi(pg)=\eta(p)\phi(g)$ for $p\in P_{\circ}$ and $g\in G$. Set $\pi_{\eta}(g)\phi(x)=\phi(xg)$, for $x,g\in G$. If we endow $I^{\infty}_{P_{\circ},\eta}$ with the $C^{\infty}$ topology, then $(\pi_{\eta},I^{\infty}_{P_{\circ},\eta})$ is an admissible, smooth, Fr\'echet representation of moderate growth. Let $(\xi,F)$ be a finite dimensional representation of $P_{M}:=P_{\circ}\cap M$ and let $(\pi_{\xi},I^{\infty}_{P_{M},\xi})$ be the corresponding representation of $M$ (Observe that $P_{\circ}\cap M$ is a minimal parabolic subgroup of $M$). The Casselman-Wallach theorem implies that there exists a surjective, continuous, $M$-intertwining operator $L:I^{\infty}_{P_{M},\xi} \longrightarrow V_{\sigma}$ for some finite dimensional representation $(\xi, F)$ of $P_{M}$. This map lifts to a surjective $G$ interwining map $\tilde{L}:I^{\infty}_{P,\pi_{\xi},\nu} \longrightarrow I^{\infty}_{P,\sigma,\nu}$, given by $\tilde{L}(\phi)(g)=L(\phi(g))$ for $\phi \in I^{\infty}_{P,\pi_{\xi},\nu}$, $g\in G$. The representation  $I^{\infty}_{P,\pi_{\xi},\nu}$ is equivalent to the representation smoothly induced from $P_{\circ}$ to $G$ by the representation $\xi_{\nu}$ of $P_{\circ}$ with values on $F$ defined as follows:
\[
 \xi_{\nu}(nap)=a^{\nu+\rho}\xi(p) \qquad \mbox{for $p\in P_{M}$, $a\in A$, $n\in N$.}
\]
Setting $\eta=\xi_{\nu}$ we can identify the map $\tilde{L}$ with a surjective $G$-equivariant map $\tilde{L}:I^{\infty}_{P_{\circ},\eta} \longrightarrow I^{\infty}_{P,\sigma,\nu}$. Set $U_{P_{\circ},\eta}=\{\phi\in I^{\infty}_{P_{\circ},\eta} \, | \, \mbox{$\operatorname{supp} \phi \subset Pw^{M}N = P_{\circ} w^M P$}\}$, and define $Wh_{\chi}(I^{\infty}_{P_{\circ},\eta})$ in the same way as above. Assume that we have proved the proposition for $I^{\infty}_{P_{\circ},\eta}$, i.e., assume that if $\lambda \in Wh_{\chi}(I^{\infty}_{P_{\circ},\eta})$ and $\lambda|_{U_{P_{\circ},\eta}}=0$, then $\lambda=0$. Let $\lambda\in Wh_{\chi}(I^{\infty}_{P,\sigma,\nu})$ be such that $\lambda|_{U_{P,\sigma,\nu}}=0$ and let $\tilde{\lambda}=\tilde{L}^{\ast}\lambda$ be the pullback of $\lambda$ to $Wh_{\chi}(I^{\infty}_{P_{\circ},\eta})$ by $\tilde{L}$. It's easy to check, using the definition of $\tilde{L}$, that $\tilde{\lambda}|_{U_{P_{\circ},\eta}}=0$ and hence, by our assumptions, $\tilde{\lambda}=\tilde{L}^{\ast}\lambda=0$, but $\tilde{L}$ is surjective, therefore $\lambda=0$. We will now prove the proposition for $Wh_{\chi}(I^{\infty}_{P_{\circ},\eta})$.

Let $\lambda \in Wh_{\chi}(I^{\infty}_{P_{\circ},\eta})$ be such that $\lambda|_{U_{P_{\circ},\eta}}=0$. Proceeding as before, we can define a distribution 
$$\bar{\lambda}\in D'(G:F \otimes \mathbb{C}_{\chi})^{N_{\circ}\times N}$$
that vanishes on the big Bruhat cell. Now, if we can prove that
\[
D_{P_{\circ}w_{v}K_{M}N}'(G:F\otimes \mathbb{C}_{\chi})^{N_{\circ}\times N}=(0) \qquad \forall v\in W^{M}, \quad v \neq v^{M},
\]
then, the standard Bruhat theoretic argument shows that $\bar{\lambda}$, and hence $\lambda,$ is equal to $0$.

Now, since $K_{M}=M_{\chi}$, we can extend the action of $N_{\circ}\times N$ on $F\otimes \mathbb{C}_{\chi}$ to an action of $P_{\circ}\times K_{M}N$. Therefore, from part 1 of theorem \ref{thm:kolk-varadarajan}, we just need to show that
\[
  (M_{w_{v}}^{(r)}\otimes (F\otimes \mathbb{C}_{\chi})')^{(N_{\circ}\times N)_{w_{v}}}=(0), \qquad \forall r\geq 0.
\]
But this follows from the fact $N_{\circ}$ acts unipotently on $M_{w_{v}}^{(r)}\otimes F'$ and that the restriction of $\chi$ to $w_{v}^{-1}N_{\circ}w_{v} \cap N $ is non-trivial, according to lemma \ref{lemma:compactvanishing}.
\end{proof}

Although proposition \ref{prop:vanishing} is false in the case where the stabilizer of $\chi$ is non-compact, we will show later how we can obtain a similar result in the general case. But before we are able to prove this result we will need dig a little bit more into the structure of Lie groups of tube type.

\section{The Bruhat-Matsuki decomposition of a Lie group of tube type}

An affine symmetric space is a triple $(G, H, \sigma)$ consisting of a connected Lie group $G$, a closed subgroup $H$ of $G$ and an involutive automorphism $\sigma$ of $G$ such that $H$ lies between $G_{\sigma}$ and the identity component of $G_{\sigma}$, where $G_{\sigma}$ denotes the closed subgroup of $G$ consisting of all the elements fixed by $\sigma$. If $G$ is real semi-simple, Matsuki \cite{m:orbits} has given an explicit double coset decomposition of the space $H\backslash G /P$, where $P$ is a minimal parabolic subgroup of $G$. His construction goes as follows.

Let
$(G, H, \sigma)$ be an affine symmetric space such that $G$ is real semi-simple, and $(\mathfrak{g},\mathfrak{h},\sigma)$ the corresponding symmetric algebra. Let $\theta$ be a Cartan involution commutative with $\sigma$, and $\mathfrak{g}=\mathfrak{k}\oplus\mathfrak{p}$ the corresponding Cartan decomposition. Since the factor space $G/P$ is identified with the set of all minimal parabolic subalgebras of $\mathfrak{g}$, the following theorem and corollary give a complete characterization of the $H$-orbits on $G/P$.
\begin{theorem}
\begin{enumerate}
 \item Let $P_{\circ}$ be a minimal parabolic subalgebra of $\mathfrak{g}$. Then there exists a $\sigma$-stable maximal abelian subspace $\mathfrak{a_{\mathfrak{p}}}$ of $\mathfrak{p}$ and a positive system $\Phi^{+}$ of the root system $\Phi$ of the pair $(\mathfrak{g},\mathfrak{a}_{\mathfrak{p}})$ such that $P_{\circ}$ is $H_{\circ}$-conjugate to $\mathcal{P}(\mathfrak{a}_{\mathfrak{p}},\Phi^{+})$, where $H_{\circ}$ is the identity component of $H$, $\mathcal{P}(\mathfrak{a}_{\mathfrak{p}},\Phi^{+})=\mathfrak{m}\oplus \mathfrak{a}_{\mathfrak{p}}\oplus \mathfrak{n}$, $\mathfrak{m}=z(\mathfrak{a}_{\mathfrak{p}})$, $\mathfrak{n}=\sum_{\alpha\in {\Phi^{+}}}\mathfrak{g}_{\alpha},$ and $g_{\alpha}=\{X\in \mathfrak{g}\, | \, [Y,X]=\alpha(Y)X \mbox{for all $Y \in P_{\circ}$}\}$.

\item Let $\mathfrak{a}_{\mathfrak{p}}$ and $\mathfrak{a}_{\mathfrak{p}}'$ be $\sigma$-stable maximal abelian subspaces of $\mathfrak{p}$, and $\Phi^{+}$, $(\Phi^{+})'$ be positive systems of root systems $\sigma(\mathcal{P}(\mathfrak{a}_{\mathfrak{p}})$ and $\Phi(\mathcal{P}(\mathfrak{a}_{\mathfrak{p}}')$ respectively. If $\mathcal{P}(\mathfrak{a}_{\mathfrak{p}},\Phi^{+})$ and $\mathcal{P}(\mathfrak{a}_{\mathfrak{p}}',(\Phi^{+})')$ are $H$-conjugate, then $\mathfrak{a}_{\mathfrak{p}}$ and $\mathfrak{a}_{\mathfrak{p}}'$ are $K_{+}$-conjugate $(K_{+}=H\cap K)$.
\end{enumerate}
\end{theorem}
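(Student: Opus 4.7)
The plan is to follow Matsuki's strategy, built around the interplay between the Cartan involution $\theta$, the second involution $\sigma$, and the restricted root space decomposition of $\mathfrak{g}$. Part 1 gives existence of a standard representative under $H_{\circ}$-conjugation, and Part 2 gives the corresponding uniqueness of the Cartan.

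For Part 1, I would begin by producing $\sigma$-stable maximal abelian subspaces of $\mathfrak{p}$. Since $\sigma$ commutes with $\theta$, it preserves $\mathfrak{p}$, and $\mathfrak{p}$ decomposes as $(\mathfrak{p}\cap\mathfrak{h}) \oplus (\mathfrak{p}\cap\mathfrak{q})$, where $\mathfrak{q}$ is the $-1$-eigenspace of $\sigma$. I would take a maximal abelian $\mathfrak{a}^{-}\subset\mathfrak{p}\cap\mathfrak{q}$ and extend it to a maximal abelian $\mathfrak{a}_{\mathfrak{p}}\subset\mathfrak{p}$; a centralizer computation forces the complementary piece $\mathfrak{a}^{+}$ to lie inside $\mathfrak{p}\cap\mathfrak{h}$, so the extension $\mathfrak{a}_{\mathfrak{p}} = \mathfrak{a}^{-}\oplus\mathfrak{a}^{+}$ is automatically $\sigma$-stable. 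Given the minimal parabolic $P_{\circ}$ with Langlands decomposition yielding some maximal abelian $\mathfrak{a}_{0}\subset\mathfrak{p}$, standard theory says $\mathfrak{a}_{0}$ is $K$-conjugate to $\mathfrak{a}_{\mathfrak{p}}$; the remaining task is to promote this to an $H_{\circ}$-conjugation. This is where the real work sits: one uses the finiteness of $H_{\circ}$-orbits on the flag variety $G/P_{0}$ together with a generalized $HAK$-type decomposition adapted to the symmetric pair $(G,H)$, and shows that each $H_{\circ}$-orbit contains a parabolic of the desired standard form $\mathcal{P}(\mathfrak{a}_{\mathfrak{p}},\Phi^{+})$ after a suitable choice of positive system.

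For Part 2, suppose $\mathrm{Ad}(h)\mathcal{P}(\mathfrak{a}_{\mathfrak{p}},\Phi^{+}) = \mathcal{P}(\mathfrak{a}_{\mathfrak{p}}',(\Phi^{+})')$ for some $h\in H$. The uniqueness of Langlands decomposition of a minimal parabolic forces $\mathrm{Ad}(h)\mathfrak{a}_{\mathfrak{p}} = \mathfrak{a}_{\mathfrak{p}}'$ after possibly correcting by an element of the centralizer of $\mathfrak{a}_{\mathfrak{p}}'$ inside the $\mathfrak{m}'$-component. Using that $\sigma(h)=h$ (since $h\in H$) and that both Cartans are $\sigma$-stable, I would analyze the global Cartan decomposition $h = k\exp(X)$ with $k\in K$ and $X\in\mathfrak{p}$: the $\sigma$-stability constraints together with $h\in H$ force $\exp(X)$ to commute with $\mathfrak{a}_{\mathfrak{p}}'$, so the conjugation is realized already by the $K$-factor, which moreover must lie in $H$. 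This places the effective conjugator in $K_{+}=H\cap K$, as required.

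The main obstacle is the $H_{\circ}$-orbit analysis on $G/P_{0}$ in Part 1. Finiteness of these orbits is itself a nontrivial result (due to Matsuki and independently Rossmann), and exhibiting a representative of every orbit in the standard form demands a fine interaction between $\sigma$-stable Cartans of $\mathfrak{p}$, their restricted root systems, and the permissible positive systems; one typically isolates this by distinguishing the roots according to whether they are $\sigma$-fixed, $\sigma$-negated, or mixed, and then invoking a matching lemma on positive systems of these components. This case-by-case combinatorics is the technical heart of the argument.
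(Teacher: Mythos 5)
First, a point of comparison: the thesis does not prove this theorem at all --- it is quoted from Matsuki \cite{m:orbits} as background for the Bruhat--Matsuki decomposition --- so there is no in-paper argument to measure yours against, and I am judging the sketch on its own terms.

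As written, both parts have genuine gaps. In Part 1 the construction of a $\sigma$-stable $\mathfrak{a}_{\mathfrak{p}}=\mathfrak{a}^{-}\oplus\mathfrak{a}^{+}$ is fine, but the step you yourself flag as ``where the real work sits'' --- upgrading the $K$-conjugacy of $\mathfrak{a}_{0}$ and $\mathfrak{a}_{\mathfrak{p}}$ to an $H_{\circ}$-conjugacy of parabolics --- is precisely the content of the theorem, and ``finiteness of $H_{\circ}$-orbits on $G/P_{\circ}$ together with a generalized $HAK$-type decomposition'' names the desired conclusion rather than arguing for it. Moreover, in Matsuki's development the explicit double coset decomposition (and with it finiteness of orbits) is \emph{deduced from} this theorem, so taking finiteness as an input is circular unless you import an independent proof, and even then finiteness alone does not exhibit a standard representative $\mathcal{P}(\mathfrak{a}_{\mathfrak{p}},\Phi^{+})$ in each orbit. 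In Part 2 there are two problems. The preliminary correction making $\operatorname{Ad}(h)\mathfrak{a}_{\mathfrak{p}}=\mathfrak{a}_{\mathfrak{p}}'$ uses an element of the Levi $M'A'$ of the target parabolic, which need not lie in $H$; after that correction you no longer have $\sigma(h)=h$, so the Cartan-decomposition analysis that follows does not apply to the corrected element. And even granting $h\in H$ with $h=k\exp X$, $k\in K_{+}$, $X\in\mathfrak{p}\cap\mathfrak{h}$, the decisive claim --- that $\sigma$-stability of the two Cartans forces $\exp X$ to centralize $\mathfrak{a}_{\mathfrak{p}}'$ --- is asserted, not proved, and it is the whole difficulty: a priori $\operatorname{Ad}(\exp X)$ carries $\mathfrak{a}_{\mathfrak{p}}$ to some other maximal split abelian subalgebra inside the Levi, and one needs a genuine argument exploiting how $\theta$ and $\sigma$ interact with the pair of parabolics (e.g.\ recovering the Cartan from $\mathcal{P}$ together with its images under $\theta$ and $\sigma$) to force the effective conjugator into $K_{+}$. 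As it stands the proposal is a roadmap that defers the essential steps of both assertions.
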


If $\mathfrak{a}_{\mathfrak{p}}$ is a $\sigma$-stable maximal abelian subspace of $\mathfrak{p}$, we can define a subgroup $W(\mathfrak{a}_{\mathfrak{p}},K_{+})$ of the Wey group $W(\mathfrak{a}_{\mathfrak{p}})$ by $W(\mathfrak{a}_{\mathfrak{p}},K_{+}) = (M^{\ast}(\mathfrak{a}_{\mathfrak{p}})\cap K_{+}/(M(\mathfrak{a}_{\mathfrak{p}}\cap K_{+})$, where $M^{\ast}(\mathfrak{a}_{\mathfrak{p}})=N_{K}(\mathfrak{a}_{\mathfrak{p}})$, and $M(\mathfrak{a}_{\mathfrak{p}})=Z_{K}(\mathfrak{a}_{\mathfrak{p}})$.

\begin{corollary}
 Let $\{\mathfrak{a}_{\mathfrak{p}_{i}} \, | \, i\in I\}$ be representatives of the $K_{+}$-conjugacy classes of $\sigma$-stable maximal abelian subspaces of $\mathfrak{p}$. Then there exists a one-to-one correspondence between the $H$-conjugacy classes of minimal parabolic subalgebras of $\mathfrak{g}$ and
\[
 \cup_{i\in I} W(\mathfrak{a}_{\mathfrak{p}},K_{+}) \backslash W(\mathfrak{a}_{\mathfrak{p}_{i}})
\]
where the union is disjoint. The correspondence is given as follows. Fix a positive system $\Phi^{+_{i}}$ of $\Phi(\mathfrak{a}_{\mathfrak{p}_{i}})$ for each $i\in I$. Then $W(\mathfrak{a}_{\mathfrak{p}},K_{+})w\in \cup_{i\in I} W(\mathfrak{a}_{\mathfrak{p}},K_{+}) \backslash W(\mathfrak{a}_{\mathfrak{p}_{i}})$ corresponds to the $H$-conjugacy class of minimal parabolic subalgebras of $\mathfrak{g}$ containing $\mathcal{P}(\mathfrak{a}_{\mathfrak{p}},w\Phi^{+}_{i})$.
\end{corollary}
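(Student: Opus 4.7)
The plan is to show that the proposed map, sending each coset $W(\mathfrak{a}_{\mathfrak{p}_{i}},K_{+})w$ to the $H$-conjugacy class of $\mathcal{P}(\mathfrak{a}_{\mathfrak{p}_{i}},w\Phi^{+}_{i})$, is a well-defined bijection between $\bigcup_{i\in I}W(\mathfrak{a}_{\mathfrak{p}_{i}},K_{+})\backslash W(\mathfrak{a}_{\mathfrak{p}_{i}})$ and the set of $H$-conjugacy classes of minimal parabolic subalgebras of $\mathfrak{g}$. This requires checking three things: well-definedness on each quotient, surjectivity of the assembled map, and injectivity (both within a fixed $i$ and across distinct $i$'s).

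Well-definedness is essentially immediate. Any element of $W(\mathfrak{a}_{\mathfrak{p}_{i}},K_{+})$ has a representative $k\in M^{\ast}(\mathfrak{a}_{\mathfrak{p}_{i}})\cap K_{+}$, and since $K_{+}\subset H$, conjugation by this $k$ sends $\mathcal{P}(\mathfrak{a}_{\mathfrak{p}_{i}},w\Phi^{+}_{i})$ to $\mathcal{P}(\mathfrak{a}_{\mathfrak{p}_{i}},w_{k}w\Phi^{+}_{i})$, where $w_{k}$ is the Weyl class of $k$. Surjectivity is supplied by part 1 of the preceding theorem combined with the choice of the $\mathfrak{a}_{\mathfrak{p}_{i}}$: given any minimal parabolic subalgebra $P_{\circ}$, part 1 produces a $\sigma$-stable maximal abelian $\mathfrak{a}\subset\mathfrak{p}$ and a positive system $\Psi^{+}$ with $P_{\circ}$ $H_{\circ}$-conjugate (hence $H$-conjugate) to $\mathcal{P}(\mathfrak{a},\Psi^{+})$; choosing $k_{0}\in K_{+}$ and $i\in I$ with $\mathrm{Ad}(k_{0})\mathfrak{a}_{\mathfrak{p}_{i}}=\mathfrak{a}$ reduces us to the case $\mathfrak{a}=\mathfrak{a}_{\mathfrak{p}_{i}}$, and then the transitivity of $W(\mathfrak{a}_{\mathfrak{p}_{i}})$ on positive systems supplies a $w$ with $\Psi^{+}=w\Phi^{+}_{i}$.

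For injectivity, suppose $\mathcal{P}(\mathfrak{a}_{\mathfrak{p}_{i}},w\Phi^{+}_{i})$ and $\mathcal{P}(\mathfrak{a}_{\mathfrak{p}_{j}},w'\Phi^{+}_{j})$ are $H$-conjugate. Part 2 of the theorem tells us that $\mathfrak{a}_{\mathfrak{p}_{i}}$ and $\mathfrak{a}_{\mathfrak{p}_{j}}$ are $K_{+}$-conjugate, so by the choice of representatives $i=j$. Setting $i=j$, assume that $\mathrm{Ad}(h)\mathcal{P}(\mathfrak{a}_{\mathfrak{p}_{i}},w\Phi^{+}_{i})=\mathcal{P}(\mathfrak{a}_{\mathfrak{p}_{i}},w'\Phi^{+}_{i})$ for some $h\in H$; the goal is to conclude $w^{-1}w'\in W(\mathfrak{a}_{\mathfrak{p}_{i}},K_{+})$. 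The key step here is a refinement of part 2: when the two $\sigma$-stable maximal abelian subspaces coincide, the conjugating element $h$ can be replaced (without changing the $H$-conjugacy of the parabolics) by an element of $M^{\ast}(\mathfrak{a}_{\mathfrak{p}_{i}})\cap K_{+}$; such an element descends to a class in $W(\mathfrak{a}_{\mathfrak{p}_{i}},K_{+})$ carrying $w\Phi^{+}_{i}$ onto $w'\Phi^{+}_{i}$, yielding the desired coset identity.

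The main obstacle is exactly this refinement. When $\mathfrak{a}_{\mathfrak{p}}=\mathfrak{a}_{\mathfrak{p}}'=\mathfrak{a}_{\mathfrak{p}_{i}}$, the conclusion of part 2 as stated is tautological and does not by itself produce a $K_{+}$-representative of the parabolic conjugation. To close this gap I would unpack Matsuki's argument for part 2 and apply a Cartan-type decomposition for the pair $(G,H)$ with respect to $\mathfrak{a}_{\mathfrak{p}_{i}}$: one writes $h$ as a product in which the non-$K_{+}$ factors lie inside the target parabolic (and so fix it trivially by the Langlands decomposition), leaving a $K_{+}$-factor normalizing $\mathfrak{a}_{\mathfrak{p}_{i}}$ that realizes the required Weyl element in $W(\mathfrak{a}_{\mathfrak{p}_{i}},K_{+})$.
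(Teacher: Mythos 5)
The paper itself offers no proof of this corollary: the whole section is an explicit restatement of Matsuki's results from \cite{m:orbits}, so there is nothing internal to compare your argument against. Judged on its own terms, your skeleton is the right one, and you have correctly located the one genuinely hard point. Well-definedness (conjugate by a representative $k\in M^{\ast}(\mathfrak{a}_{\mathfrak{p}_{i}})\cap K_{+}\subset H$), surjectivity (part 1 of the theorem, then a $K_{+}$-element moving the resulting $\sigma$-stable subspace onto the chosen representative $\mathfrak{a}_{\mathfrak{p}_{i}}$, then simple transitivity of $W(\mathfrak{a}_{\mathfrak{p}_{i}})$ on positive systems), and injectivity across distinct $i$ (part 2 forces $i=j$) are all fine, modulo the harmless left/right coset slip $w^{-1}w'$ versus $w'w^{-1}$.

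The gap is in the final paragraph. The statement you need — if $h\in H$ carries $\mathcal{P}(\mathfrak{a}_{\mathfrak{p}_{i}},w\Phi^{+}_{i})$ to $\mathcal{P}(\mathfrak{a}_{\mathfrak{p}_{i}},w'\Phi^{+}_{i})$ then some element of $M^{\ast}(\mathfrak{a}_{\mathfrak{p}_{i}})\cap K_{+}$ does too — is exactly the content of Matsuki's auxiliary lemma, and your sketch does not prove it. A Cartan decomposition of $H$ itself writes $h=k\exp X$ with $k\in K_{+}$ and $X\in\mathfrak{p}\cap\mathfrak{h}$, but there is no reason for $\exp X$ to lie in, or normalize, the target parabolic; and a decomposition of the form $H=(H\cap P)\cdot(\cdots)$ adapted to the parabolic presupposes the orbit structure of $H$ on $G/P$, which is what the whole theorem is establishing. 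What is actually required is a two-step argument: first, that $\mathfrak{a}_{\mathfrak{p}_{i}}$ and $\operatorname{Ad}(h)\mathfrak{a}_{\mathfrak{p}_{i}}$, as two $\sigma$-stable maximally split abelian subspaces contained in the same minimal parabolic subalgebra $\mathfrak{q}$, are conjugate by an element of $H\cap Q$ (a conjugacy theorem for Cartan subspaces of the symmetric pair relative to a parabolic — nontrivial, and the place where the structure of $H\cap P$ recorded as part 3 of Theorem \ref{thm:matsukidecomposition} enters); and second, that $N_{H}(\mathfrak{a}_{\mathfrak{p}_{i}})=Z_{H}(\mathfrak{a}_{\mathfrak{p}_{i}})\,N_{K_{+}}(\mathfrak{a}_{\mathfrak{p}_{i}})$, which does follow from the Cartan decomposition of $H$ together with maximality of $\mathfrak{a}_{\mathfrak{p}_{i}}$ in $\mathfrak{p}$. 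Without the first step your reduction to a $K_{+}$-factor does not go through, so as written the injectivity within a fixed $i$ is asserted rather than proved; citing Matsuki's lemma directly, or supplying the conjugacy-within-a-parabolic argument, would close it.
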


Let $\mathfrak{a_{p}}$ be a $\sigma$-stable maximal abelian subspaces of $\mathfrak{p}$ such that $\mathfrak{a}_{\mathfrak{p}^{+}}=\mathfrak{a}_{\mathfrak{p}}\cap \mathfrak{p}$ is maximal abelian in $\mathfrak{p}_{+}=\mathfrak{p}\cap \mathfrak{h}$. put $q=\{X\in \mathfrak{g}\, | \, \sigma(X)=-X\}$, and $\Phi(\mathfrak{a}_{\mathfrak{p}^{+}})=\{\alpha \in \Phi(\mathfrak{a}_{\mathfrak{p}})\, | \, H_{\alpha}\in \mathfrak{a}_{\mathfrak{p}^{+}}\}$, where $H_{\alpha}$ is the unique element in $\mathfrak{a}_{\mathfrak{p}}$ such that $B(H_{\alpha},H)=\alpha(H)$ for all $H \in \mathfrak{a}_{\mathfrak{p}}$ ($B$ is the Killing form of $\mathfrak{g}$). Let $\alpha_{i}$, $i=1,\ldots, k$ be elements of $\Phi(\mathfrak{a}_{\mathfrak{p}^{+}})$ and $X_{\alpha_{i}}$, $i=1,\ldots, k$ be on-zero elements of $\mathfrak{g}_{\alpha_{i}}$. Then $\{X_{\alpha_{1}},\ldots,X_{\alpha_{k}}\}$ is said to be a $q$-orthogonal system of $\Phi(\mathfrak{a}_{\mathfrak{p}^{+}})$ if the following two conditions are satisfied:
(i) $X_{\alpha_{i}} \in \mathfrak{q}$ for $i=1,\dots, k,$,
(ii) $[X_{\alpha_{i}},X_{\alpha_{j}}]=0$ and $[X_{\alpha_{i}},\theta(X_{\alpha_{j}})]=0$ for $i,j=1,\dots, k,$ $i\neq j$.

Two $\mathfrak{q}$-orthogonal systems $\{X_{\alpha_{1}},\ldots,X_{\alpha_{k}}\}$ and $\{Y_{\beta_{1}},\ldots,Y_{\beta_{k}}\}$ are said to be conjugate under $W(\mathfrak{a}_{\mathfrak{p}},K_{+})$ if there is a $w\in W(\mathfrak{a}_{\mathfrak{p}},K_{+})$ such that $w(\sum_{i=1}^{k}\mathbb{R}H_{\alpha_{i}})=\sum_{i=1}^{k}\mathbb{R}H_{\beta_{i}}$. Then the following theorem gives a complete characterization of the $K_{+}$-conjugacy classes of $\sigma$-stable maximal abelian subspaces of $\mathfrak{p}$.

\begin{theorem}
Let $(G,H,\sigma)$ be an affine symmetric spaces such that $g$ is real semi-simple, $\theta$ a Cartan involution of $\mathfrak{g}$ commutative with $\sigma$, and $\mathfrak{g}=\mathfrak{k}\oplus \mathfrak{p}$ the corresponding Cartan decomposition of $\mathfrak{g}$. Let $\mathfrak{a}_{\mathfrak{p}^{+}}$ be a maximal abelian subspace of $\mathfrak{p}^{+}$ and $\mathfrak{a}_{\mathfrak{p}}$ a maximal abelian subspace of $\mathfrak{p}$ containing $\mathfrak{a}_{\mathfrak{p}^{+}}$. Then there exists a one-to-one correspondence between the $K_{+}$-conjugacy classes of $\sigma$-stable maximal abelian subspaces of $\mathfrak{p}$ and the $W(\mathfrak{a}_{\mathfrak{p}},K_{+})$-conjugacy classes of $q$ orthogonal systems of $\Phi(\mathfrak{a}_{\mathfrak{p}^{+}})$. The correspondence is given as follows: Let $Q=\{X_{\alpha_{1}},\ldots,X_{\alpha_{k}}\}$ be a $\mathfrak{q}$-orthogonal system of $\Phi(\mathfrak{a}_{\mathfrak{p}^{+}})$. Put $\mathfrak{r}=\sum_{i=1}^{k}\mathbb{R}H_{\alpha_{i}}$, $\mathfrak{a}_{\mathfrak{p}^{+}}'=\{H\in \mathfrak{a}_{\mathfrak{p}^{+}} \, | \, B(H,\mathfrak{r})=0\}$, $\mathfrak{a}_{\mathfrak{p}^{-}}'=\mathfrak{a}_{\mathfrak{p}_{-}}+\sum_{i=1}^{k}\mathbb{R}(X_{\alpha_{i}}-X_{-\alpha_{i}})$, where $\mathfrak{a}_{\mathfrak{p}_{-}}=\mathfrak{a}_{\mathfrak{p}}\cap \mathfrak{q}$, and $\mathfrak{a}_{\mathfrak{p}}'=\mathfrak{a}_{\mathfrak{p}^{+}}'+\mathfrak{a}_{\mathfrak{p}^{-}}'$. Then the $W(\mathfrak{a}_{\mathfrak{p}},K_{+})$-conjugacy class of $\mathfrak{q}$-orthogonal system of $\Phi(\mathfrak{a}_{\mathfrak{p}^{+}})$ containing $Q$ corresponds to the $K_{+}$-conjugacy class of $\sigma$-stable maximal abelian subspace of $\mathfrak{p}$ containing $\mathfrak{a}_{\mathfrak{p}}'$. Moreover if $X_{\alpha_{i}}$, $-i=1,\dots, k,$ is normalized such that $2\alpha_{i}(H_{\alpha_{i}})B(X_{\alpha_{i}},X_{-\alpha_{i}})=-1$, then 
\[
 \mathfrak{a}_{\mathfrak{p}}'=Ad(\exp{(\pi/2)(X_{\alpha_{1}}+X_{-\alpha_{1}})})\cdots Ad(\exp{(\pi/2)(X_{\alpha_{k}}+X_{-\alpha_{k}})})\mathfrak{a}_{\mathfrak{p}}, 
\]
where $X_{-\alpha_{i}}=\theta(X_{\alpha_{i}})$.
\end{theorem}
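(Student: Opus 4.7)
The plan is to build the correspondence explicitly from the formula at the end of the statement, then verify that the resulting $\mathfrak{a}_\mathfrak{p}'$ is $\sigma$-stable maximal abelian with the claimed $\mathfrak{a}_{\mathfrak{p}^+}'+\mathfrak{a}_{\mathfrak{p}^-}'$ splitting, that $W(\mathfrak{a}_\mathfrak{p},K_+)$-conjugate systems produce $K_+$-conjugate subspaces, and finally that every $K_+$-conjugacy class arises this way.

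For the forward construction set $c_i=\operatorname{Ad}(\exp(\tfrac{\pi}{2}(X_{\alpha_i}+X_{-\alpha_i})))$ and $c=c_1\cdots c_k$. Condition (ii) of $\mathfrak{q}$-orthogonality makes the $c_i$'s mutually commute and act as the classical Cayley transform on the $\mathfrak{sl}_2$-triple $\{\tfrac{2}{\alpha_i(H_{\alpha_i})}H_{\alpha_i},X_{\alpha_i},X_{-\alpha_i}\}$ while fixing the centralizer of that triple. A short computation gives three things: $c$ acts as the identity on $\mathfrak{a}_{\mathfrak{p}^+}'$ (whose elements annihilate every $\alpha_i$ and so commute with $X_{\pm\alpha_i}$); $c$ fixes $\mathfrak{a}_{\mathfrak{p}_-}=\mathfrak{a}_\mathfrak{p}\cap\mathfrak{q}$ pointwise (since $\sigma$-invariance of the Killing form $B$ forces each $\alpha_i$ to vanish on $\mathfrak{a}_{\mathfrak{p}_-}$); and $c$ sends each $H_{\alpha_i}$ into $\mathbb{R}(X_{\alpha_i}-X_{-\alpha_i})$. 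Combining the pieces reproduces $\mathfrak{a}_\mathfrak{p}'$; because $X_{\alpha_i}-X_{-\alpha_i}\in\mathfrak{p}\cap\mathfrak{q}$ the decomposition is manifestly $\sigma$-stable, and because $c$ is an inner automorphism the image is maximal abelian in $\mathfrak{p}$. The normalization $2\alpha_i(H_{\alpha_i})B(X_{\alpha_i},X_{-\alpha_i})=-1$ is exactly what is needed to make the Cayley transform carry $H_{\alpha_i}$ to $X_{\alpha_i}-X_{-\alpha_i}$ with unit scaling.

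For well-definedness up to conjugacy, a representative $k\in N_K(\mathfrak{a}_\mathfrak{p})\cap K_+$ of an element of $W(\mathfrak{a}_\mathfrak{p},K_+)$ carrying $Q$ to $Q'$ conjugates the associated product of Cayley transforms into the one for $Q'$ modulo $M(\mathfrak{a}_\mathfrak{p})\cap K_+$, so the two resulting subspaces differ by $\operatorname{Ad}(k)\in K_+$. For injectivity on conjugacy classes, the space $\mathfrak{r}=\sum\mathbb{R}H_{\alpha_i}$ is recovered intrinsically from $\mathfrak{a}_\mathfrak{p}'$ as the $B$-orthogonal complement in $\mathfrak{a}_{\mathfrak{p}^+}$ of $\mathfrak{a}_\mathfrak{p}'\cap\mathfrak{p}^+$; a standard $\mathfrak{sl}_2$-triple argument inside the centralizer of $\mathfrak{a}_{\mathfrak{p}^+}'$ then promotes $W(\mathfrak{a}_\mathfrak{p},K_+)$-conjugacy of the $\mathfrak{r}$'s to $W(\mathfrak{a}_\mathfrak{p},K_+)$-conjugacy of the full systems.

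The main obstacle is surjectivity. Given a $\sigma$-stable maximal abelian $\mathfrak{a}^*\subset\mathfrak{p}$, split $\mathfrak{a}^*=\mathfrak{a}^*_+\oplus\mathfrak{a}^*_-$ into $\sigma$-eigenspaces; after replacing $\mathfrak{a}^*$ by a $K_+$-conjugate we may assume $\mathfrak{a}^*_+\subset\mathfrak{a}_{\mathfrak{p}^+}$, and then $\mathfrak{a}^*_-$ is maximal abelian in the reductive subalgebra $\mathfrak{z}_\mathfrak{g}(\mathfrak{a}^*_+)\cap\mathfrak{p}\cap\mathfrak{q}$. The difficult step is to produce a basis of $\mathfrak{a}^*_-$ of the form $X_{\alpha_i}-X_{-\alpha_i}$ with $\alpha_i\in\Phi(\mathfrak{a}_{\mathfrak{p}^+})$ and $X_{\alpha_i}\in\mathfrak{g}_{\alpha_i}\cap\mathfrak{q}$: one argues inductively, at each stage working inside the centralizer of the previously chosen basis vectors, where the restricted root system of $\mathfrak{z}_\mathfrak{g}(\mathfrak{a}^*_+)$ supplies a root $\alpha_i$ with $\mathfrak{g}_{\alpha_i}\cap\mathfrak{q}\neq 0$, and where the maximality of $\mathfrak{a}^*$ forces the process to exhaust $\mathfrak{a}^*_-$ in exactly $\dim\mathfrak{a}^*_-$ steps without multiplicity obstructions. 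Once this basis is in hand, commutativity of the $X_{\alpha_i}-X_{-\alpha_i}$'s together with the condition $X_{\alpha_i}\in\mathfrak{q}$ translates into the $\mathfrak{q}$-orthogonality relations, the associated product of Cayley transforms carries $\mathfrak{a}_\mathfrak{p}$ to $\mathfrak{a}^*$, and the normalization formula falls out of the computation.
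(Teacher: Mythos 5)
First, a point of comparison: the paper itself contains no proof of this statement. It is quoted, as background on the Bruhat--Matsuki decomposition, directly from Matsuki \cite{m:orbits}, so your argument can only be judged on its own merits. On those terms, your forward construction is essentially right: the $c_{i}$ commute by condition (ii) of $\mathfrak{q}$-orthogonality, they fix $\mathfrak{a}_{\mathfrak{p}^{+}}'$ and $\mathfrak{a}_{\mathfrak{p}_{-}}$ pointwise, and the normalization makes $Ad(\exp\tfrac{\pi}{2}(X_{\alpha_{i}}+X_{-\alpha_{i}}))$ carry $H_{\alpha_{i}}$ into $\mathbb{R}(X_{\alpha_{i}}-X_{-\alpha_{i}})$. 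One step you should make explicit: you need $\alpha_{i}(H_{\alpha_{j}})=0$ for $i\neq j$, or else $c_{j}$ does not fix $H_{\alpha_{i}}$ and $c(\mathfrak{r})$ is not $\sum_{i}\mathbb{R}(X_{\alpha_{i}}-X_{-\alpha_{i}})$. This does follow from (ii), via $0=[[X_{\alpha_{j}},X_{-\alpha_{j}}],X_{\alpha_{i}}]=B(X_{\alpha_{j}},X_{-\alpha_{j}})\,\alpha_{i}(H_{\alpha_{j}})X_{\alpha_{i}}$ and $B(X_{\alpha_{j}},\theta X_{\alpha_{j}})\neq 0$, but it is not automatic and deserves a line.

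The genuine gap is surjectivity, which you correctly flag as ``the difficult step'' and then describe rather than prove. After reducing to $\mathfrak{a}^{*}_{+}\subset\mathfrak{a}_{\mathfrak{p}^{+}}$, the assertion that ``the restricted root system of $\mathfrak{z}_{\mathfrak{g}}(\mathfrak{a}^{*}_{+})$ supplies a root $\alpha_{i}$ with $\mathfrak{g}_{\alpha_{i}}\cap\mathfrak{q}\neq 0$'' is not the statement you need and is not justified. A general element of $\mathfrak{a}^{*}_{-}$ decomposes, under $ad(\mathfrak{a}_{\mathfrak{p}})$, into components in many root spaces plus a piece in the centralizer of $\mathfrak{a}_{\mathfrak{p}}$; what must be shown is that $\mathfrak{a}^{*}_{-}$ admits a basis, modulo a copy of $\mathfrak{a}_{\mathfrak{p}_{-}}$, consisting of vectors of the single-root form $X_{\alpha}-\theta X_{\alpha}$ with $X_{\alpha}\in\mathfrak{g}_{\alpha}\cap\mathfrak{q}$ and $\alpha\in\Phi(\mathfrak{a}_{\mathfrak{p}^{+}})$, and that these vectors automatically satisfy the commutation relations (ii) and force $\mathfrak{a}^{*}_{+}$ to be exactly the orthocomplement of the resulting $\mathfrak{r}$. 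You also never address why $\mathfrak{a}_{\mathfrak{p}_{-}}$ itself can be arranged to lie in $\mathfrak{a}^{*}_{-}$ after conjugation. This is the mathematical content of the theorem (Matsuki's key lemmas), and ``the maximality of $\mathfrak{a}^{*}$ forces the process to exhaust $\mathfrak{a}^{*}_{-}$ \dots without multiplicity obstructions'' is a restatement of the goal, not an argument.

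There is a smaller but real gap in injectivity. Since conjugacy of $\mathfrak{q}$-orthogonal systems is defined in terms of the spans $\mathfrak{r}=\sum_{i}\mathbb{R}H_{\alpha_{i}}$, recovering $\mathfrak{r}$ from $\mathfrak{a}_{\mathfrak{p}}'$ is indeed the right target; but from $Ad(k)\mathfrak{a}_{\mathfrak{p}}'=\mathfrak{a}_{\mathfrak{p}}''$ with $k\in K_{+}$ you only get a $K_{+}$-element carrying one subspace of $\mathfrak{a}_{\mathfrak{p}^{+}}$ to another, not an element of $N_{K_{+}}(\mathfrak{a}_{\mathfrak{p}})$. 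Producing a representative of $W(\mathfrak{a}_{\mathfrak{p}},K_{+})$ requires correcting $k$ so that it normalizes $\mathfrak{a}_{\mathfrak{p}}$ (a conjugacy argument inside the relevant centralizer), and the ``standard $\mathfrak{sl}_{2}$-triple argument'' you invoke is doing no identifiable work here. Both gaps would need to be filled for this to be a proof rather than an accurate outline of Matsuki's strategy.
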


\begin{theorem}\label{thm:matsukidecomposition}
 Let $(G,H,\sigma)$ be an affine symmetric space such that $G$ is real semi-simple, $\theta$ a Cartan involution commutative with $\sigma$, and $\mathfrak{g}=\mathfrak{k}\oplus\mathfrak{p}$ the corresponding Cartan decomposition. Let $\mathfrak{a}_{\mathfrak{p}}$ be a maximal abelian subspace of $\mathfrak{p}$ such that $\mathfrak{a}_{\mathfrak{p}^{+}}$ is maximal abelian in $\mathfrak{p}_{+}$, and $\{Q_{1},\ldots, Q_{m}\}$ be representatives of $W(\mathfrak{a}_{\mathfrak{p}},K_{+})$-conjugacy classes of $\mathfrak{q}$-orthogonal systems of $\Phi(\mathfrak{a}_{\mathfrak{p}^{+}})$. Suppose that $Q_{j}=\{X_{\alpha_{1}},\ldots ,X_{\alpha_{k}}\}$ is normalized such that $2\alpha_{i}(H_{\alpha_{i}})B(X_{\alpha_{i}},X_{-\alpha_{i}})=-1$, $i=1,\ldots, k$ for each $j=1,\ldots,m$. Put $c(Q_{j})=\exp{(\phi/2)(X_{\alpha_{1}}+X_{-\alpha_{1}})\cdots(\phi/2)(X_{\alpha_{k}}+X_{-\alpha_{k}})}$. Then
\begin{enumerate}
 \item We have the following decomposition of $G$.
\[
 G=\cup_{i=1}^{m}\cup_{v\in W(\mathfrak{a}_{\mathfrak{p}},K_{+})}\backslash W(\mathfrak{a}_{\mathfrak{p_{i}}}) Hw_{v}c(Q_{i})P \qquad \mbox{(disjoint union)}
\]
where $P=P(\mathfrak{a}_{\mathfrak{p}},\Phi^{+})$, $\Phi^{+}$ is a positive system of $\Phi(\mathfrak{a}_{\mathfrak{p}})$, $\mathfrak{a}_{\mathfrak{p}_{i}}=Ad(c(Q_{i}))\mathfrak{a}_{\mathfrak{p}}$, and $w_{v}$is an element of $M^{\ast}(\mathfrak{a}_{\mathfrak{p}_{i}})$ that represents an element of the left coset $v\subset W(\mathfrak{a}_{\mathfrak{p_{i}}})$.
\item Put $P_{i,w_{v}}=w_{v}c(Q_{i})Pc(Q_{i})^{-1}w_{v}^{-1}$. Let $h_{1}, h_{2}\in H$ and $p_{1},p_{2}\in P$. Then $h_{1}w_{v}c(Q_{i})p_{1}=h_{2}w_{v}c(Q_{i})p_{2}$ if and only if there exists an $x\in H\cap P_{i,w_{v}}$ such that $h_{2}=h_{1}x$ and that $p_{2}=c(Q_{i})^{-1}w_{v}^{-1}x^{-1}w_{v}c(Q_{i})p_{1}.$
\item Let $P=P(\mathfrak{a}_{\mathfrak{p}}',\Phi^{+})=MA_{\mathfrak{p}}'N^{+}$ be a minimal parabolic subgroup of $G$ such that $\mathfrak{a}_{\mathfrak{p}}'$ is $\sigma$-stable. Then
\[
 H\cap P = (K_{+}\cap M)A_{\mathfrak{p}}'\exp{(\mathfrak{h}\cap\mathfrak{n}^{+}\cap \sigma \mathfrak{n}^{+})}.
\]

\end{enumerate}

\end{theorem}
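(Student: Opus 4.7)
The plan is to assemble the theorem from the two preceding structural results: the corollary parameterizing $H$-conjugacy classes of minimal parabolic subalgebras by $\cup_i W(\mathfrak{a}_{\mathfrak{p}}, K_+)\backslash W(\mathfrak{a}_{\mathfrak{p}_i})$, and the theorem that realizes each representative Cartan subspace $\mathfrak{a}_{\mathfrak{p}_i}$ as $Ad(c(Q_i))\mathfrak{a}_{\mathfrak{p}}$ for a suitable $\mathfrak{q}$-orthogonal system $Q_i$. The identification $G/P \leftrightarrow \{\text{minimal parabolic subalgebras}\}$ turns the corollary into a description of the $H$-orbits on $G/P$, and so the task reduces to translating each orbit into a product of the form $Hw_v c(Q_i) P$.

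For part (1), I would take an $H$-conjugacy class of minimal parabolic subalgebras indexed by $W(\mathfrak{a}_{\mathfrak{p}}, K_+)w \subset W(\mathfrak{a}_{\mathfrak{p}_i})$. By the preceding corollary, a representative contains $\mathcal{P}(\mathfrak{a}_{\mathfrak{p}_i}, w\Phi_i^+)$. By the preceding theorem, after choosing $\Phi_i^+ = Ad(c(Q_i))\Phi^+$, this parabolic is exactly $w_v c(Q_i) P c(Q_i)^{-1} w_v^{-1}$, and the corresponding double coset in $H\backslash G/P$ is $Hw_v c(Q_i)P$. Disjointness of the union comes directly from disjointness of the Weyl-coset parameterization in the previous corollary, together with the fact that the $Q_i$ give inequivalent $K_+$-conjugacy classes of Cartans.

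Part (2) is a direct group-theoretic manipulation. If $h_1 w_v c(Q_i) p_1 = h_2 w_v c(Q_i) p_2$, then $x := h_1^{-1}h_2 = (w_v c(Q_i) p_1)(w_v c(Q_i) p_2)^{-1}\cdot w_v c(Q_i)\cdot w_v c(Q_i)^{-1}$; rearranging gives $x \in H$ and $x = w_v c(Q_i)(p_2 p_1^{-1})^{-1}c(Q_i)^{-1}w_v^{-1} \in P_{i,w_v}$, with $p_2 = c(Q_i)^{-1}w_v^{-1}x^{-1}w_v c(Q_i)p_1$. The converse is immediate.

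For part (3), I would argue on the Lie algebra level. Since $\mathfrak{a}_{\mathfrak{p}}'$ is $\sigma$-stable, $\sigma$ acts on the root system $\Phi(\mathfrak{a}_{\mathfrak{p}}')$, and $\mathfrak{h} = \mathfrak{g}^\sigma$ is a direct sum of its intersections with the root-space decomposition. Writing any element of $\mathfrak{h}\cap(\mathfrak{m}\oplus\mathfrak{a}_{\mathfrak{p}}'\oplus \mathfrak{n}^+)$ as $X_m + H + \sum_\alpha X_\alpha$ and applying $\sigma$, each $X_\alpha$ with $\sigma\alpha \notin \Phi^+$ must vanish (since its $\sigma$-image would land outside $\mathfrak{n}^+$ and the decomposition is direct). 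Thus $\mathfrak{h}\cap\mathfrak{n}^+ = \mathfrak{h}\cap\mathfrak{n}^+\cap\sigma\mathfrak{n}^+$, while $\mathfrak{h}\cap\mathfrak{m} = \mathfrak{k}_+\cap\mathfrak{m}$ and $\mathfrak{h}\cap\mathfrak{a}_{\mathfrak{p}}'=\mathfrak{a}_{\mathfrak{p}'^+}$; exponentiating and using the Iwasawa decomposition of $M$ relative to $\sigma$ yields the claimed product decomposition $H\cap P=(K_+\cap M)A_{\mathfrak{p}}'\exp(\mathfrak{h}\cap\mathfrak{n}^+\cap\sigma\mathfrak{n}^+)$.

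The main obstacle I anticipate is the careful bookkeeping in part (1): matching the choice of positive system $\Phi_i^+$ on $\mathfrak{a}_{\mathfrak{p}_i}$ with the transport of $\Phi^+$ via $c(Q_i)$, and verifying that the bijection obtained from the corollary really does send the double coset $W(\mathfrak{a}_{\mathfrak{p}}, K_+)w$ to $Hw_v c(Q_i)P$ rather than to some other $H$-translate. A secondary subtlety appears in part (3), where one must know that the $\sigma$-action on roots of $(\mathfrak{g},\mathfrak{a}_{\mathfrak{p}}')$ preserves positivity enough to give the stated intersection; this ultimately rests on the choice of a $\sigma$-stable Cartan and the compatibility of $\theta$ with $\sigma$.
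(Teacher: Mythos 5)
First, a point of comparison: the paper does not prove this theorem. The entire section is an expository restatement of Matsuki's results from the cited reference \cite{m:orbits} (``His construction goes as follows''), and this theorem, like the two theorems and the corollary preceding it, is quoted without proof. So there is no argument of the paper's to measure your proposal against; what follows is an assessment of your sketch on its own terms. Its overall architecture --- identify $G/P$ with the variety of minimal parabolic subalgebras, read the preceding corollary as a parameterization of the $H$-orbits, and realize each representative Cartan subspace as $Ad(c(Q_i))\mathfrak{a}_{\mathfrak{p}}$ --- is indeed the skeleton of Matsuki's original proof, and part (2) is the routine verification you describe (your displayed formula for $x$ contains a redundant factor $w_vc(Q_i)\cdot(w_vc(Q_i))^{-1}$, but the computation is right). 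For part (1) you should still record why $H$-conjugacy classes of subalgebras correspond bijectively to double cosets $HgP$: this uses that $P$ is exactly the normalizer of $\mathfrak{p}$ in $G$, and that $G$ acts transitively on minimal parabolic subalgebras, so that $Ad(g)\mathfrak{p}\mapsto HgP$ is well defined and injective.

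The genuine thin spot is part (3). The observation you make at the Lie algebra level --- that $X_\alpha$ must vanish when $\sigma^*\alpha\notin\Phi^+$, hence $\mathfrak{h}\cap\mathfrak{n}^+=\mathfrak{h}\cap\mathfrak{n}^+\cap\sigma\mathfrak{n}^+$ --- is correct but essentially tautological ($\sigma$ fixes $\mathfrak{h}$ pointwise, so $\mathfrak{h}\cap\mathfrak{n}^+\subset\sigma\mathfrak{n}^+$ automatically), and it is not the content of the claim. The claim is a \emph{group-level} product decomposition of $H\cap P$, and ``exponentiating'' a decomposition of $\mathfrak{h}\cap Lie(P)$ does not deliver it, since $H\cap P$ need not be connected and a sum of subalgebras does not automatically exponentiate to a product of subgroups. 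The standard route is: for $g\in H\cap P$ one has $g=\sigma(g)\in\sigma P$, hence $g\in P\cap\sigma P=MA_{\mathfrak{p}}'(N^+\cap\sigma N^+)$ by the intersection formula for two minimal parabolics sharing the Levi $MA_{\mathfrak{p}}'$; writing $g=man$ accordingly and applying $\sigma$, the uniqueness of this factorization (each factor group being $\sigma$-stable) forces $\sigma m=m$, $\sigma a=a$, $\sigma n=n$ separately, which gives $m\in M\cap H=K_+\cap M$ (using $M\subset K$ for a minimal parabolic), $a$ in the $\sigma$-fixed part of $A_{\mathfrak{p}}'$, and $n\in\exp(\mathfrak{h}\cap\mathfrak{n}^+\cap\sigma\mathfrak{n}^+)$ since $N^+\cap\sigma N^+$ is simply connected unipotent. (Note also that the factor $A_{\mathfrak{p}}'$ as printed in the thesis should be its $\sigma$-fixed part $\exp(\mathfrak{a}_{\mathfrak{p}}'\cap\mathfrak{h})$; as written the right-hand side is not even contained in $H$.)
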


In this section we will record the decomposition of some relevant symmetric spaces with respect to the action of a minimal parabolic subgroups. Let $G=GL(n,F)$, $H=U(p,q,F)$, for $F=\mathbb{R}$, $\mathbb{C}$, $\mathbb{H}$ or $\mathbb{O}$.
Let
\[
B^{i}_{p,q}=\left[\begin{array}{ccccc} 0_{i} & & & I_{i}& \\ & I_{q-i} & & & 0_{q-i} \\ & & I_{p-q} & & \\ I_{i} & & & 0_{i} & \\ & 0_{q-i} & & & I_{q-i}\end{array} \right]
\]
for $n=p+q$, $p\geq q \geq i$, and define $\sigma^{i}: G \longrightarrow G$ by
\[
\sigma^{i}(g)=B^{i}_{p,q}\theta(g)B^{i}_{p,q}.
\]
Then $\sigma^{i}$ is an involution, and $\sigma^{i}\theta=\theta\sigma^{i}$ for all $i$. Let $H^{i}=\{g\in G\, | \, \sigma^{i}(g)=g\}$. Then $(G,H^{i},\sigma^{i})$ is a symmetric space. Let $\mathfrak{h}^{i}=Lie(H^{i})$, then 
\[
\mathfrak{h}^{i}=\left.\left\lbrace \left[\begin{array}{ccc} A & -T^{\ast} & X \\ -S^{\ast} & Z & T\\ Y & S & -A^{\ast} \end{array}\right] \,\right| \, \begin{array}{c} \mbox{$A\in \mathfrak{gl}(i,F)$, $Z\in \mathfrak{u}(p-i,q-i,F)$},\\ \mbox{ $S,T \in End_{F}(F^{q-i},F^{p-i,q-i})$},\\ \mbox{ $X^{\ast}=-X$, $Y^{\ast}=-Y$ } \end{array}\right\rbrace 
\]
Let
\[
\mathfrak{a}=\left\lbrace \left. \left[\begin{array}{ccc} \lambda_{1} & & \\ & \ddots & \\ & & \lambda_{n}\end{array}\right]\, \right| \, \lambda_{i}\in \mathbb{R}\right\rbrace,
\]
and observe that $\mathfrak{a}\cap \mathfrak{h}^{q}$ is maximal abelian in $\mathfrak{p}\cap\mathfrak{h}^{q}$, where
\[
\mathfrak{g}=\mathfrak{k}\oplus \mathfrak{p}
\]
is the Cartan decomposition of $\mathfrak{g}$. Let
\[
\mathfrak{q}^{q}=\{X\in \mathfrak{g} \, | \, \sigma^{q}(X)=-X \},
\]
then
\[
\mathfrak{q}^{q}=\left.\left\lbrace \left[\begin{array}{ccc} A & T^{\ast} & X \\ S^{\ast} & Z & T\\ Y & S & A^{\ast} \end{array}\right] \,\right| \, \begin{array}{c} \mbox{$A\in \mathfrak{gl}(i,F)$, $Z^{\ast}=Z$},\\ \mbox{ $S,T \in End_{F}(F^{q-i},F^{p-i,q-i})$},\\ \mbox{ $X^{\ast}=X$, $Y^{\ast}=Y$ } \end{array} \right\rbrace.
\]
Let 
\[
X_{i}=\left[\begin{array}{ccc} 0 & & E_{ii} \\ & 0 & \\ & & 0 \end{array} \right],
\]
then $\{X_{1},\ldots,X_{q}\}$, is a representative of the unique conjugacy class of maximal $\mathfrak{q}$-orthogonal systems. Let
\[
c_{i}=\left[\begin{array}{ccccc} \frac{\sqrt{2}}{2}I_{i} & & & \frac{\sqrt{2}}{2}I_{i}& \\ & I_{q-i} & & & 0_{q-i} \\ & & I_{p-q} & & \\ -\frac{\sqrt{2}}{2}I_{i} & & & \frac{\sqrt{2}}{2}I_{i} & \\ & 0_{q-i} & & & I_{q-i}\end{array} \right]
\]
then $c_{i}B^{0}_{p,q}c_{i}^{-1}=B^{i}_{p,q}$, $c_{i}H^{0}c_{i}^{-1}=H^{i}$, and
\begin{eqnarray*}
G& = &\bigcup_{i=0}^{q}\bigcup_{v\in W(\mathfrak{a},K^{i}_{+})\backslash W(\mathfrak{a})} P_{\circ}w_{v}c_{i}H^{0}, \qquad \mbox{with $K^{i}_{+}=H^{i}\cap K$} \\
 & = & \bigcup_{i=0}^{q}\bigcup_{v\in W(\mathfrak{a},K^{i}_{+})\backslash W(\mathfrak{a})} w_{v}P^{v}_{\circ}H^{i}c_{i} \qquad \mbox{with $P^{v}_{\circ}=w_{v}^{-1} P_{\circ} w_{v}$.}
\end{eqnarray*}
Observe that
\[
\mathfrak{h}^{i}\cap \mathfrak{a}=\left\lbrace\left.\left[\begin{array}{ccccccc} \lambda_{1} & & & & & & \\ & \ddots & & & & & \\ & & \lambda{i} & & & & \\ & & & 0_{p+q-2i}& & & \\ & & & & -\lambda_{1} & & \\ & & & & & \ddots \\ & & & & & & -\lambda_{i}\end{array} \right] \, \right| \, \lambda{i}\in \mathbb{R} \right\rbrace.
\]

Let $\chi$ be a unitary character of $\mathbb{R}$, and let $B^{i}_{p,q}$ be as before. Define a character $\chi^{i}_{p,q}$ on $N$ by
\[
\chi^{i}_{p,q}\left(\left[\begin{array}{cc} I & X \\ & I \end{array}\right]\right)= \chi(\operatorname{tr}{(B^{i}_{p,q}X)})
\]
Let $M_{\chi^{i}_{p,q}}$ be the stabilizer of $\chi^{i}_{p,q}$ in $M$. According to the Bruhat decomposition.
\begin{eqnarray}
G & = & \bigcup_{w\in W^{M}} P_{\circ}w^{\ast}P=\bigcup{w\in W^{M}}P_{\circ} w^{\ast} MN \nonumber \\
 & = & \bigcup_{w\in W^{M}} \bigcup_{i=0}^{q}\bigcup_{v\in W(\mathfrak{a},K^{i}_{+})\backslash W(\mathfrak{a})} P_{\circ}w^{\ast}P_{M}v^{\ast}c_{i}M_{\chi^{0}_{p,q}}N \nonumber \\
  & = & \bigcup_{w\in W^{M}} \bigcup_{i=0}^{q}\bigcup_{v\in W(\mathfrak{a},K^{i}_{+})\backslash W(\mathfrak{a})} P_{\circ}w^{\ast}v^{\ast}c_{i}M_{\chi^{0}_{p,q}}N \label{eq:bruhatmatsukidecomposition}\\
   & = &\bigcup_{i=0}^{q} \bigcup_{w\in W^{M}} \bigcup_{v\in W(\mathfrak{a},K^{i}_{+})\backslash W(\mathfrak{a})} w^{\ast}v^{\ast}P^{wv}_{\circ}M_{\chi^{i}_{p,q}}Nc_{i}.
\end{eqnarray}
Observe that
\[
\mathfrak{a}\cap \mathfrak{m}_{\chi^{i}_{p,q}}=\left\lbrace\left[\begin{array}{ccccccc}\lambda_{1} & & & & & & \\ & \ddots & & & & & \\ & & \lambda{i} & & & & \\ & & & 0_{p+q-2i}& & & \\ & & & & -\lambda_{1} & & \\ & & & & & \ddots \\ & & & & & & -\lambda_{i}\end{array} \right]\right\rbrace
\]
and hence $\dim{(A\cap M_{\chi^{i}_{p,q}})}=i$.

\section{The vanishing of certain invariant distributions}

\begin{dfn}
 Let $(\pi, V)$ be a smooth,  Fr\'echet, moderate growth representation of a semi-simple Lie group $G$ with finite center. We say that $V$ has a \emph{split eigenvector} if the following holds: There exists an Iwasawa decomposition $\mathfrak{g=k\oplus a_{\circ} \oplus n_{\circ}}$, where $\mathfrak{g}=Lie(G)$, and an element $H\in \mathfrak{a_{\circ}}$ such that 

\begin{enumerate}
                                                                                                                                                                                                                                                                                             \item The projection of $H$ to any simple factor of $\mathfrak{g}$ is non-zero.

\item There exists $v\in V$ such that $H\cdot v = \lambda v$ for some $\lambda \in \mathbb{C}$.
                                                                                                                                                                                                                                                                                            \end{enumerate}

\end{dfn}

\begin{lemma}
 Let $G$ be a connected semi-simple Lie group with finite center. Let $(\pi, V)$ be a smooth, irreducible, admissible  Fr\'echet representation of moderate growth with a split eigenvector. Then $\dim V < \infty$.
\end{lemma}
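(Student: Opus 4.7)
The plan is to argue by contradiction: assume $V$ is infinite-dimensional, and derive a contradiction from the existence of the split eigenvector $v$. First, I would reduce to the case where $G$ is simple. If $G = G_{1} \cdots G_{s}$ is an almost direct product of simple factors with finite center, then the irreducible admissible smooth Fr\'echet representation $V$ decomposes as a completed tensor product $V \cong V_{1} \widehat{\otimes} \cdots \widehat{\otimes} V_{s}$ of irreducibles of the factors, $H$ decomposes as $H = H_{1} + \cdots + H_{s}$ with each $H_{i} \in \mathfrak{a}_{\circ,i}$ nonzero, and the eigenvalue equation $\pi(H) v = \lambda v$ projects factor-by-factor to yield a non-zero $H_{i}$-eigenvector in some $V_{i}^{\infty}$; thus it suffices to handle $G$ simple.

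Assuming $G$ simple and $V$ infinite-dimensional, I would invoke the Casselman-Wallach theorem to identify $V$ with the canonical smooth Fr\'echet globalization of its Harish-Chandra module $V_{K}$. Any smooth vector admits a $K$-isotypic decomposition $v = \sum_{\tau \in \hat{K}} v_{\tau}$ with $v_{\tau} \in V[\tau]$ and coefficients decreasing faster than any polynomial in $\|\tau\|$. The eigenvalue equation projects, for each $\tau$, to the finite-term recurrence
\begin{equation*}
\sum_{\tau' \,:\, \tau \subset \tau' \otimes \mathfrak{p}_{\mathbb{C}}} \operatorname{pr}_{\tau} \pi(H) v_{\tau'} = \lambda v_{\tau},
\end{equation*}
in which the matrix elements of $\pi(H)$ between adjacent $K$-isotypic components grow at most polynomially in $\|\tau\|$. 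The first key step is to combine this recurrence with the rapid decay of the $v_{\tau}$ to conclude that only finitely many $v_{\tau}$ are non-zero, so $v$ is $K$-finite; i.e., $v \in V_{K}$.

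With $v \in V_{K}$ and finite $K$-support $S$, I would choose a $K$-type $\tau^{\ast} \in S$ maximal in an appropriate partial order on $\hat{K}$ refining the adjacency by $\mathfrak{p}_{\mathbb{C}}$, and a $\tau \notin S$ adjacent to $\tau^{\ast}$ but to no other element of $S$. Evaluating the recurrence at $\tau$ gives $\operatorname{pr}_{\tau} \pi(H) v_{\tau^{\ast}} = 0$. Since $V_{K}$ is infinite-dimensional and irreducible, and $H$ has nontrivial projection to every simple factor, the raising map $\operatorname{pr}_{\tau} \pi(H) \colon V[\tau^{\ast}] \to V[\tau]$ has no kernel on an extremal $K$-isotypic component (otherwise, iterated application of $\mathfrak{p}$ starting from $v_{\tau^{\ast}}$ would produce a proper nontrivial $(\mathfrak{g},K)$-submodule of $V_{K}$), forcing $v_{\tau^{\ast}} = 0$ and contradicting $\tau^{\ast} \in S$. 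The main obstacle is the analytic step of passing from rapid decay to $K$-finiteness: it requires precise control on the Clebsch-Gordan matrix elements of $\pi(H)$ between $K$-types and a careful analysis of the resulting difference equation on $\hat{K}$, which is elementary in rank one but requires more care in general.
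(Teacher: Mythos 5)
The central analytic step of your argument is where it breaks down. You claim that the rapid decay of the $K$-isotypic components $v_{\tau}$, combined with the finite-term recurrence coming from $\pi(H)v=\lambda v$ and polynomial bounds on the matrix elements of $\pi(H)$ between adjacent $K$-types, forces all but finitely many $v_{\tau}$ to vanish. That implication is false as a statement about recurrences: a difference equation whose leading coefficient grows polynomially admits rapidly decaying solutions with \emph{all} terms non-zero (already the scalar model $c_{n+2}=c_{n}/(n+2)$ gives $c_{2k}\sim 1/(2^{k}k!)$, which decays faster than any polynomial). Indeed, for $SL(2,\mathbb{R})$ the equation $\pi(H)v=\lambda v$ on $K$-types is exactly such a second-order recurrence with linearly growing coefficients, so formal rapidly-decaying solutions of infinite support exist; the reason they do not yield actual split eigenvectors in an infinite-dimensional irreducible is representation-theoretic (in the noncompact picture it is a smoothness condition at the boundary point at infinity), not a growth phenomenon. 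Since your endgame (the injectivity of the ``raising'' part of $\pi(H)$ on an extremal $K$-type, and the existence of a $K$-type adjacent to only one element of the support) also rests on unproved structural claims about the adjacency graph of $\hat{K}$, the proof does not go through even in outline.

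For comparison, the paper's proof is purely algebraic and avoids the $K$-type recurrence entirely. From one eigenvector for $H$ it produces, using that the restricted Weyl group orbit $WH$ spans $\mathfrak{a}_{\circ}$ and that each $s\in W$ is implemented by some $s^{\ast}\in K$, a simultaneous weight vector $u$ for all of $\mathfrak{a}_{\circ}$; since $U(\mathfrak{g})u=V$ and $\mathrm{ad}(\mathfrak{a}_{\circ})$ is diagonalizable on $U(\mathfrak{g})$, the $\mathfrak{a}_{\circ}$-action on $V$ is then semisimple. The Casselman--Wallach theorem identifies $V/\overline{\mathfrak{n}_{\circ}V}$ with $V_{K}/\mathfrak{n}_{\circ}V_{K}$, whose finitely many weights $\xi_{1},\dots,\xi_{m}$ bound all weights of $V$ from one side (they lie in $\bigcup_{i}\{\xi_{i}+\alpha\}$ with $\alpha$ a sum of positive roots); this forces $\bar{\mathfrak{n}}_{\circ}$ to act locally nilpotently, so $U(\bar{\mathfrak{n}}_{\circ})U(\mathfrak{a}_{\circ})u$ is finite dimensional and $V_{K}=U(\mathfrak{k})U(\bar{\mathfrak{n}}_{\circ})U(\mathfrak{a}_{\circ})u$ is finite dimensional. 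If you want to salvage your approach you would need to replace the decay argument by something of this strength.
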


\begin{proof}
Let $K$ be a maximal compact subgroup of $G$ associated with a Cartan involution $\theta$.
Let $V_{K}$ be the space of $K$-finite vectors of $V$. Then the Casselman-Wallach theorem implies that if $\lambda \in (V_{K}/\mathfrak{n}_{\circ}V_{K})'$ then $\lambda$ extends to a continuous map of $V$ to $\mathbb{C}$. This implies that $V_{k}/n_{\circ}V_{K}\cong V/\overline{\mathfrak{n}_{\circ}V}$ as an $\mathfrak{a}_{\circ}$-module.

 Let $v\neq 0$ be such that $Hv=\lambda v$ with $\lambda \in \mathbb{C}$. Let $W$ be the restricted Weyl group of $G$ acting on $\mathfrak{a}_{\circ}$. If $\mathfrak{g}$ is simple over $\mathbb{R}$, then the action of $W$ on $\mathfrak{a}_{\circ}$ would be irreducible. Hence our assumption on $H$ implies that $WH$ spans $\mathfrak{a}_{\circ}$. If $s\in W$, then there exists $s^{\ast}\in K$ such that $Ad(s^{\ast})H=sH$, and $d(s^{\ast})Hs^{\ast}v=\lambda s^{\ast}v$. This implies that there exists $u\in V$ and $\xi\in (\mathfrak{a}_{\circ})_{\mathbb{C}}'$ such that if $H\in \mathfrak{a}_{\circ}$ then $Hu=\xi(H)u$. Now since the action of $\mathfrak{a}_{\circ}$ diagonalizes on $U(\mathfrak{g})$ and $U(\mathfrak{g})u=V$, the action of $\mathfrak{a}_{\circ}$ diagonalizes on $V$. Let $\xi_{1},\ldots, \xi_{m}\in \mathfrak{a}_{\mathbb{C}}'$ be the weights of $\mathfrak{a}_{\circ}$ on $V/\overline{\mathfrak{n}_{\circ}V} $. Then we see that the set of weights of $\mathfrak{a}_{\circ}$ on $V$ is contained in
\[
 \cup_{i=1}^{m}\{\xi_{i}+\alpha \, | \, \mbox{$\alpha$ a non-zero sum of positive roots of $\mathfrak{a}_{\circ}$ in $\mathfrak{n}_{\circ}$}\}.
\]
Note that $m<\infty$. Let $\bar{\mathfrak{n}}_{\circ}=\theta\mathfrak{n}_{\circ}$. These observations imply that $\bar{\mathfrak{n}}_{\circ}$ acts locally nilpotently on $V$. Now let $u$ be a weight vector for $\mathfrak{a}_{\circ}$ on $V_{K}$. (Since $V/\overline{\mathfrak{n}_{\circ}V} \cong V_{k}/n_{\circ}V_{K}$ the above argument implies that such a $u$ exists). Then we have
\[
 U(\bar{\mathfrak{n}}_{\circ})U(\mathfrak{a}_{\circ})u=U(\bar{\mathfrak{n}}_{\circ})u
\]
which is finite dimensional. Thus $V_{K}=U(\mathfrak{k})U(\bar{\mathfrak{n}}_{\circ})U(\mathfrak{a}_{\circ})u$ is finite dimensional.
\end{proof}

Observe that this lemma implies, in particular, that if $G$ is as above, and $(\pi, V)$ is an irreducible, tempered representation of $G$, then $V$ has no split eigenvectors.

\begin{proposition}\label{prop:vanishinggeneral}
Let $(\eta, V_{\eta})$ be an admissible representation  of $P$ such that $N$ acts locally unipotently, an $ V_{\eta}$ has finite length. Let
\[
I_{P,\eta}=\{f:g\longrightarrow  V_{\eta} \, |\, \mbox{$f$ is $C^{\infty}$, and $f(pg)=\eta(p)f(g)$ for all $p\in P$}\}
\]
and let
\[
U_{P,\eta}=\{f\in I_{P,\eta} \, | \, \operatorname{supp} f \subset Pw^{\ast}_{M} N\}.
\]
Let $V_{\tau}$ be a representation of $M_{\chi}$ with no split eigenvectors, and let
\[
Wh_{\chi,\tau}(I_{P,\eta}):= \{\lambda: I_{P,\eta} \longrightarrow V_{\tau}\, |\, \mbox{$\lambda(nm \cdot f)=\chi(n)\tau(m)\lambda(f)$, $\forall n\in N$, $m\in M_{\chi}$}\}.
\]
Let $\lambda \in Wh_{\chi,\tau}(I_{P,\eta})$ be such that $\lambda|_{U_{P,\eta}}=0$, then $\lambda=0$
\end{proposition}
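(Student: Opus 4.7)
The plan is to mimic the proof of Proposition 1.4.2, replacing the ordinary Bruhat decomposition by the Bruhat--Matsuki decomposition recorded in Section 1.5 and replacing the compactness of $M_\chi$ by the no-split-eigenvector hypothesis on $V_\tau$. As a first step I would reduce, via a Casselman--Wallach type argument (exactly as in the opening paragraph of the proof of Proposition 1.4.2), to the case in which $\eta$ is induced from a finite dimensional representation $(\xi,F)$ of the minimal parabolic $P_\circ$: one produces a surjective $G$-intertwiner $\tilde L:I^\infty_{P_\circ,\xi}\twoheadrightarrow I_{P,\eta}$ whose pullback $\tilde\lambda=\lambda\circ\tilde L$ lies in $Wh_{\chi,\tau}(I^\infty_{P_\circ,\xi})$ and vanishes on $U_{P_\circ,\xi}$, so it suffices to prove the proposition for $\tilde\lambda$.

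Second, I would form the $V_\tau$-valued distribution $\bar\lambda$ on $G$ defined by $\bar\lambda(f)=\tilde\lambda(f_{P_\circ,\xi})$, which by construction is equivariant under left translation by $P_\circ$ (through $\xi$) and under right translation by $M_\chi N$ (through $\tau\otimes\chi$). The hypothesis $\lambda|_{U_{P,\eta}}=0$ translates into the assertion that $\bar\lambda$ vanishes on the open $(P_\circ,M_\chi N)$-orbit $Pw^M N$. By the Bruhat--Matsuki decomposition recorded at the end of Section 1.5, the complement of this open orbit is a finite union of lower-dimensional $(P_\circ,M_\chi N)$-orbits of the form $\O_{i,v,w}=P_\circ\, w^*v^*c_i\, M_\chi N$ with $(i,v,w)\neq(0,e,v^M)$. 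A standard induction on dimension reduces the proof to showing
\[
D'_{\O_{i,v,w}}(G:F\otimes V_\tau^*\otimes \mathbb{C}_\chi)^{P_\circ\times M_\chi N}=0
\]
for each such non-open orbit.

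Third, I would apply the Kolk--Varadarajan transverse symbol theorem (part 1 of Theorem 1.6.2) at a base point $y=w^*v^*c_i$. Part 3 of Theorem 1.5.4 describes the isotropy group $(P_\circ\times M_\chi N)_y$: it contains a conjugate of a unipotent subgroup of $N_\circ$ together with a conjugate of the split torus $A\cap M_{\chi^i_{p,q}}$, whose dimension is $i$. Two cases then occur. When the isotropy contains a unipotent subgroup of $N_\circ$ on which the restriction of $\chi$ is nontrivial, the argument of Lemma 1.3.3 and Proposition 1.4.2 applies verbatim and kills the Kolk--Varadarajan invariants $(M_y^{(r)}\otimes F\otimes V_\tau^*\otimes \mathbb{C}_\chi)^{(P_\circ\times M_\chi N)_y}$. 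When instead $i\geq 1$, the isotropy contains a nontrivial split one-parameter subgroup of $M_\chi$ coming from the Cayley element $c_i$; a nonzero invariant would then force the existence of a common eigenvector for this subgroup in $V_\tau$, i.e.\ a split eigenvector of $V_\tau$, contradicting the hypothesis. Either way, Kolk--Varadarajan yields $\bar\lambda|_{\O_{i,v,w}}=0$, and combining this with the vanishing on the open cell gives $\bar\lambda=0$, hence $\lambda=0$.

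The main obstacle is the case analysis in the third step: one must verify, across all non-open Bruhat--Matsuki cells arising in each of the examples of Section 1.2, that the isotropy at a chosen base point actually exhibits one of the two features above. The structural information recorded in Section 1.5 --- in particular the explicit Cayley transforms $c_i$, the identity $\dim(A\cap M_{\chi^i_{p,q}})=i$, and the description of $H\cap P$ in part 3 of Theorem 1.5.4 --- should make the dichotomy exhaustive: cells with $i=0$ reduce to the compact-stabilizer case handled by Lemma 1.3.3, while cells with $i\geq 1$ furnish the split torus needed to invoke the no-split-eigenvector hypothesis on $\tau$.
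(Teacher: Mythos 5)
Your proposal follows essentially the same route as the paper: build a vector-valued distribution on $G$ from $\lambda$, observe it is $(P_\circ\times M_\chi N)$-equivariant and supported off the open cell, enumerate the non-open orbits via the Bruhat--Matsuki decomposition, and kill each one with Kolk--Varadarajan using exactly your dichotomy --- for $i\geq 1$ the Cayley element puts a split one-parameter subgroup of $M_\chi$ into the isotropy, so a nonzero invariant would produce a split eigenvector of $V_\tau$; for $i=0$, $w\neq w_M$ one falls back on the unipotent-plus-nontrivial-character argument of the compact-stabilizer case. The one place you deviate is the preliminary reduction to a representation induced from $P_\circ$: the paper skips this entirely and constructs the distribution directly from $C_c^\infty(G)\otimes V_\eta\to I_{P,\eta}$, precisely because the standing hypotheses on $\eta$ (finite length, $N$ acting locally unipotently) already supply what the orbit analysis needs. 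That reduction is also the only shaky step as you state it --- Casselman--Wallach gives surjections from induced representations for the reductive group $M$, not for an arbitrary admissible representation of the parabolic $P$ --- so you are better off dropping it and arguing directly, as the paper does.
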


\begin{proof}
Given $f\in C_{c}^{\infty}(G)$ and $v\in V_{\sigma}$, define $f_{v}\in I_{p,\eta}$ by
\[
f_{v}(g)=\int_{P} \eta(p)^{-1}f(pg)v \,d_{l}p
\]
It's easy to check that $f\otimes v \mapsto f_{v}$ defines a surjective continuous map from $C_{c}^{\infty}(G)\bar{\otimes} V_{\sigma}$ to $I_{P,\eta}$. Let $\lambda \in Wh_{\chi,\tau}(I_{P,\eta})$ be such that $\lambda|_{U_{P,\eta}}=0$, and define a $Hom(V_{\sigma},V_{\tau})$-valued distribution $\tilde{\lambda}$ on $G$ by
\[
\tilde{\lambda}(f)(v)=\lambda(f_{v}).
\]
To prove the proposition it suffices to show that $\tilde{\lambda}=0$.

By the definition of $\tilde{\lambda}$
\begin{eqnarray}
\tilde{\lambda}(L_{p}f)(v) & = &\tilde{\lambda}(\eta(p)^{-1}v)  \qquad \forall p \in P_{\circ}  \label{eq:equivariantP} \\
\tilde{\lambda}(R_{nm}f)(v) & = &\chi(n)\tau(m) \tilde{\lambda} \qquad \forall n\in N, \, \forall m\in M_{\chi} \label{eq:equivariantN} 
\end{eqnarray}
and $\operatorname{supp} \tilde{\lambda}\subset G\backslash Pw_M^{\ast}N$. Recall that, according to equation (\ref{eq:bruhatmatsukidecomposition})
\[
G=\bigcup_{i=0}^{q}\bigcup_{w\in W^{M}}\bigcup_{w_{v}\in W(\mathfrak{a},K_{+}^{i})\backslash W(\mathfrak{a})} P_{\circ}w^{\ast}w_{v}c_{i}M_{\chi}N,
\]
So the condition $\operatorname{supp} \tilde{\lambda}\subset G\backslash Pw_M^{\ast}N$ says that $\operatorname{supp}\tilde{\lambda}$ is contained outside the union of the open orbits. We will show that all the other orbits can't support distributions satisfying conditions (\ref{eq:equivariantP}) and (\ref{eq:equivariantN}). Then the usual Bruhat theoretic argument show that $\tilde{\lambda}=0$.

Let $\O_{w,v,i}=P_{\circ}w^{\ast}w_{v}c_{i}M_{\chi}N $ be an orbit that is not an open orbit. Then either $w^{\ast}\neq W_{M}^{\ast}$, $c_{i}\neq e$ or both. Let
\[
H_{(w,v,i)}=\{mn\,|\,\mbox{$m\in M_{\chi}$ and $w^{-1}w_{v}c{i}m^{-1}c_{i}^{-1}w_{v}w^{-1}\in P_{\circ}$}\}
\]
and let
\[
\tilde{H}_{(w,v,i)}=\{(ww_{v}c{i}m^{-1}c_{i}^{-1}w_{v}^{-1}w^{-1},m) \,| \, m\in H_{(w,v,i)}\}.
\]
Observe that $\tilde{H}_{(w,v,i)}$ is the stabilizer of $ww_{v}c_{i}$ on $P_{\circ}\times M_{\chi}N$. Define a representation $(\eta_{(w,v,i)},V_{\eta})$ of $H_{(w,v,i)}$ by
\[
h\cdot v = (ww_{v}c_{i}mc{i}^{-1}w_{v}^{-1}w^{-1})\cdot v.
\]
Let $M_{(w,v,i)}^{(r)}$ be the $r$-th transverse bundle on $\O_{(w,v,i)}$ defined by Kolk-Varadarajan \cite{kv:transverse}. Then theorem 3.9 of the aforementioned paper says that if
\begin{equation}
Hom_{H_{(w,v,i)}}(V_{\eta}\otimes M_{(w,v,i)}^{(r)},V_{t\chi})=0, \qquad \forall r, \label{eq:vectorinvariant}
\end{equation}
then
\begin{equation}
D_{\omega}(G;Hom(V_{\eta},V_{\tau}))^{P_{\circ}\times M_{\chi}N}=0.             \label{eq:distributioninvariant}
\end{equation}
Assume that $c_{i}\neq e$. Let $\mu\in Hom_{H_{(w,v,i)}}(V_{\eta}\otimes M_{(w,v,i)}^{(r)},V_{t\chi})$. For each $t\in \mathbb{R}$ define
\[
h_{t}=\left[\begin{array}{ccc} \cosh t & & \sinh t \\ & I & \\ \sinh t & & \cosh t \end{array}\right].
\]
Observe that the $h_{t}$ define a one parameter subgroup of $M_{\chi}$. Observe also that
\[
ww_{v}c_{i} \left[\begin{array}{ccc} \cosh t & & \sinh t \\ & I & \\ \sinh t & & \cosh t \end{array}\right] c_{i}^{-1}w_{v}^{-1}w^{-1}= ww_{v} \left[\begin{array}{ccc} e^{-t} & &  \\ & I & \\  & & e^{t} \end{array}\right]w_{v}^{-1}w^{-1} \in A_{\circ}
\]
Since the action of $A_{\circ}$ on $V_{\eta}\otimes M^{(r)}_{(w,v,i)}$ is semi-simple, there exists a nonzero vector $v\in V_{\eta} \otimes  M^{(r)}_{(w,v,i)}$, such that
\[
 ww_{v} \left[\begin{array}{ccc} e^{-t} & &  \\ & I & \\  & & e^{t} \end{array}\right]w_{v}^{-1}w^{-1} \cdot v = e^{\alpha t} v
 \]
 for some $\alpha \in \mathbb{C}$. On the other hand
\begin{eqnarray*}
\left[\begin{array}{ccc} \cosh t & & \sinh t \\ & I & \\ \sinh t & & \cosh t \end{array}\right] \mu(v) & = & \mu(ww_{v} \left[\begin{array}{ccc} e^{-t} & &  \\ & I & \\  & & e^{t} \end{array}\right]w_{v}^{-1}w^{-1} \cdot v) \\
                                                                                                        & = & \mu(e^{\alpha t}v)=e^{\alpha t}\mu(v).
\end{eqnarray*}
which contradicts our assumption on $V_{\tau}$. Hence if $i\neq 0$
\[
Hom_{H_{(w,v,i)}}(V_{\eta}\otimes M_{(w,v,i)}^{(r)},V_{t\chi})=0 \qquad \forall r.
\]
Now assume that $i=0$.Then
\[
\O_{(w,v,0)}=P_{\circ}ww_{v}M_{\chi}N.
\]
Since $w\neq w_{M}$, there exists $1\leq i,j,\leq n$ such that $ww_{v}\left[\begin{array}{cc} I & xE_{i,i} \\ & I\end{array}\right]w_{v}^{-1}w^{-1}= \left[\begin{array}{cc} I & xE_{j,j} \\ & I\end{array}\right]\in N_{\circ}$. Now since $N_{\circ}$ acts locally unipotently on $V_{\eta}\otimes M_{(w,v,i)}^{(r)}$ there is a $v\in V_{\eta}\otimes M_{(w,v,i)}^{(r)}$ such that
\[
\left[\begin{array}{cc} I & xE_{j,j} \\ & I\end{array}\right]\cdot v=v+\tilde{v}
\]
with $\tilde{v} \in \operatorname{Ker} \mu$, and $\mu(v)\neq 0$. On the other hand
\begin{eqnarray*}
\mu(\left[\begin{array}{cc} I & xE_{j,j} \\ & I\end{array}\right] v) & = & \mu(ww_{v}\left[\begin{array}{cc} I & xE_{i,i} \\ & I\end{array}\right]w_{v}^{-1}w^{-1}) \\
\mu(v) & = & \chi(\left[\begin{array}{cc} I & xE_{i,i} \\ & I\end{array}\right])\mu(v)
\end{eqnarray*}
for all $x\in \mathbb{R}$, but this is only possible if $\mu(v)=0$ which is a contradiction. From all this we see that if $\O_{(w,v,i)}$ is not an open orbit, then
\[
D_{\O_{(w,v,i)}}(G;Hom(V_{\eta},V_{\tau}))^{P_{\circ}\times M_{\chi}N}=0.
\]

Now the standard Bruhat theoretic argument shows that $\tilde{\lambda} = 0$ which implies that $\lambda =0$.
\end{proof}

We are now ready to state the main result of this chapter.

\begin{theorem}
Let $(\sigma,V_{\sigma})$ be an admissible, smooth, Fr\'echet representation of $M$, and let $(\tau,V_{\tau})$ be an smooth, irreducible, tempered representation of $M_{\chi}$.
\begin{enumerate}
 \item The map
\[
\Phi_{\sigma,\nu}^{\tau}:Wh_{\chi,\tau}(I_{P,\sigma_{\nu}}^{\infty}) \longrightarrow Hom_{M_{\chi}}(V_{\sigma},V_{\tau})
\]
defines a linear isomorphism for all $\nu \in \mathfrak{a}'_{\mathbb{C}}$.
\item For all $\mu\in Hom_{M_{\chi}}(V_{\sigma},V_{\tau})$, the map $\nu\mapsto \mu\circ J_{\sigma,\nu}^{\chi}$ extends to a weakly holomorphic map of $\mathfrak{a}'_{\mathbb{C}}$ into $Hom(I^{\infty}_{\sigma},V_{\tau})$.
\end{enumerate}
\end{theorem}

Before starting the proof of this theorem, we will need some technical results that we will develop in the next section.

\section{Tensoring with finite dimensional representation}

Let $G$ be a simple Lie group of tube type, and let $P=MAN$ be a Siegel parabolic. Observe that $\dim A=1$, hence, we can choose $H\in Lie(A)$ such that, if $(\eta,F)$ is a finite dimensional representation of $G$, then there exists an integer $r$ such that $F=\oplus_{j=0}^{r}F_{2j-r}$ where
\[
 F_{j}=\{v \in F | H \cdot v = jv\}.
\]
Given such a finite dimensional representation $(\eta,F)$, set $X_{j}= \oplus_{k=j}^{r}F_{2k-r}$, then
\[
 F=X_{0}\supset X_{1}\supset \cdots \supset X_{r}\supset X_{r+1}=(0) \label{eq:filtration}
\]
is a $P$-invariant filtration. 
If we now set $Y^{j}=\{\phi \in F'|\phi|_{X_{j}=0}\}$, then
\[
 F'=Y^{r+1}\supset Y^{r}\supset \cdots \supset Y^{0}=(0)
\]
is the corresponding dual filtration.

We will identify $I_{P,\sigma_{\nu}}^{\infty}\otimes F$ with the space
\[
 \{\phi:G \longrightarrow V_{\sigma_{\nu}}\otimes F |\phi(pg)=(\sigma_{\nu}(p)\otimes I)\phi(g) \}
\]
and observe that
\[
 I_{P,\sigma_{\nu}\otimes\eta}^{\infty}=\{\phi:G \longrightarrow V_{\sigma_{\nu}}\otimes F |\phi(pg)=(\sigma_{\nu}(p)\otimes \eta(p))\phi(g)\}.
\]
With this conventions there is an isomorphism of $G$-modules
\begin{eqnarray*}
 I_{P,\sigma_{\nu}}^{\infty}\otimes F &\cong & I_{P,\sigma_{\nu}\otimes\eta}^{\infty}  \\
                                   \phi & \rightarrow & \hat{\phi}  \\
                                   \check{\phi} & \leftarrow & \phi,
\end{eqnarray*}
where
\[
 \hat{\phi}(g)=(I\otimes\eta(g))\phi(g), \qquad \mbox{and} \qquad \check{\phi}(g)=(I\otimes\eta(g)^{-1})\phi(g).
\]

Let $(\eta_{j},X_{j})$ be the restriction of $\eta$ to $P$ acting on $X_{j}$, and let $(\bar{\eta}_{j},X_{j}/X_{j+1})$ be the representation induced on the quotient. 

Then we have the following $G$-invariant filtration
\[
 I_{P,\sigma_{\nu}\otimes\eta}^{\infty}=I_{P,\sigma_{\nu}\otimes\eta_{0}}^{\infty}\supset\ldots\supset I_{P,\sigma_{\nu}\otimes\eta_{r+1}}^{\infty}=(0).
\]

Moreover, it can be checked that
\[
 I_{P,\sigma_{\nu}\otimes\eta_{j}}^{\infty}/I_{P,\sigma_{\nu}\otimes\eta_{j+1}}^{\infty} \cong I_{P,\sigma_{\nu}\otimes\bar{\eta}_{j}}^{\infty}.
\]

The next theorem is a restatement of some of the results given in \cite{w:hol} for the case at hand.

\begin{theorem}
Let $G$ be a Lie group of tube type, and let $P=MAN$ be a Siegel parabolic subgroup with given Langlands decomposition. Let $\chi$ be a generic character of $N$, and set $\mathfrak{g}=Lie(G)$. There exists an element $\Gamma \in U(\mathfrak{g})^{M_{\chi}}$, depending only on $F$, such that
\begin{enumerate}
 \item The map 
\[
\Gamma: Wh_{\chi}(I_{P,\sigma_{\nu}}^{\infty}) \otimes F' \longrightarrow   Wh_{\chi}(I_{P,\sigma_{\nu}}^{\infty} \otimes F)
\]
is an isomorphism.
\item If $\lambda\in Wh_{\chi}(I_{P,\sigma_{\nu}}^{\infty}) \otimes Y^{j}$, then $\Gamma(\lambda)=\lambda+\tilde{\lambda}$ with $\tilde{\lambda}\in (I_{P,\sigma_{\nu}}^{\infty})'\otimes Y^{j-1}$.
\end{enumerate}
\end{theorem}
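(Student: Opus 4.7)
The strategy is to show first that (2) implies (1), and then to construct $\Gamma$ satisfying (2).

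Observe that, after fixing a basis of $F'$ compatible with the filtration $Y^{0}\subset Y^{1}\subset \cdots \subset Y^{r+1}=F'$, condition (2) says that $\Gamma$ acts on $Wh_{\chi}(I_{P,\sigma_{\nu}}^{\infty})\otimes F'$ as the identity on the associated graded plus strictly lower‐filtration corrections. Any such unitriangular endomorphism of a finitely filtered space is invertible, so once $\Gamma$ is constructed and its image is known to lie in $Wh_{\chi}(I_{P,\sigma_{\nu}}^{\infty}\otimes F)$, assertion (1) is immediate. The fact that the image lies in $Wh_{\chi}(I_{P,\sigma_{\nu}}^{\infty}\otimes F)$ will follow because $\Gamma$ acts through the derived twisted action $n\mapsto \sigma_{\nu}(n)\otimes \eta(n)$ when we use the identification $I_{P,\sigma_{\nu}}^{\infty}\otimes F\cong I_{P,\sigma_{\nu}\otimes \eta}^{\infty}$, and the Whittaker equivariance is preserved under application of any $U(\mathfrak{g})$‐element.

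To construct $\Gamma$, I would work with the $TDS$‐triple $(h,e,f)$ fixed at the beginning of the chapter: $e$ generates the abelian Lie algebra $\mathfrak{n}$ under the $M$‐action, and the genericity of $\chi$ gives $d\chi(e)\neq 0$. On a Whittaker functional $\lambda$ one has $\lambda\circ\pi(X)=d\chi(X)\lambda$ for $X\in \mathfrak{n}$, so modulo the ideal generated by $\{X-d\chi(X)\mid X\in\mathfrak{n}\}$, the action of $U(\mathfrak{g})$ on $Wh_{\chi}$ collapses to a character on $U(\mathfrak{n})$. When we pass to $I_{P,\sigma_{\nu}}^{\infty}\otimes F$, the element $\pi(X)$ becomes $\pi(X)\otimes I+I\otimes d\eta(X)$; thus $\pi(X)-d\chi(X)$ acts on the $F$‐factor by $d\eta(X)-d\chi(X)\cdot I$. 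Because $e$ shifts the $H$‐grading of $F$ upward by $2$ and $f$ downward by $2$, the operator $d\eta(f)$ raises the dual filtration $Y^{j}$ by exactly one level. My plan is to build $\Gamma$ as an $M_{\chi}$‐invariant polynomial in $U(\mathfrak{g})$ whose leading piece, when evaluated on $\lambda\otimes \phi$ with $\phi\in Y^{j}\setminus Y^{j-1}$, produces the scalar $\phi$ up to a nonzero power of $d\chi(e)$, with all other terms landing in $Y^{j-1}$. After rescaling, this is exactly (2).

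The main obstacle is producing an $M_{\chi}$‐invariant $\Gamma$ with the correct leading behavior: $e$ itself is not $M_{\chi}$‐fixed, so naive powers of $f$ and $e$ will not be $M_{\chi}$‐invariant. The standard remedy is to work with $M_{\chi}$‐invariants of $U(\mathfrak{n})\cdot U(\bar{\mathfrak{n}})$ built from dual bases of $\mathfrak{n}$ and $\bar{\mathfrak{n}}$ (for instance contractions $\sum_{i}f_{i}e_{i}$ and their higher analogues), reduce modulo the Whittaker ideal to recover the leading factor $d\chi(e)^{k}$, and collect the error terms on the $F$‐side using the grading under $\mathrm{ad}(h)$. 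Verifying that the errors are strictly lower in the $Y^{\bullet}$‐filtration is then a direct computation in $U(\mathfrak{g})$ using the bracket relations $[h,e]=2e$, $[h,f]=-2f$, $[e,f]=h$, together with the fact that the $F$‐grading is bounded by $r$ so the recursion terminates. Once this is done, (2) holds by construction and (1) follows from the unitriangularity argument.
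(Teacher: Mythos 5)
The paper does not prove this statement at all: it is introduced with the sentence ``The next theorem is a restatement of some of the results given in \cite{w:hol} for the case at hand,'' so the intended justification is a citation of Wallach's \emph{Lie algebra cohomology and holomorphic continuation of generalized Jacquet integrals}. Your attempt to actually construct $\Gamma$ is therefore more ambitious than what the thesis does, but as written it has two genuine gaps.

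First, your claim that the image lies in $Wh_{\chi}(I_{P,\sigma_{\nu}}^{\infty}\otimes F)$ ``because the Whittaker equivariance is preserved under application of any $U(\mathfrak{g})$-element'' is false, and it hides the entire difficulty. The vector $\lambda\otimes\phi$ is \emph{not} a Whittaker functional for the diagonal action: one has $\bigl((\pi\otimes\eta)'(X)-\chi(X)\bigr)(\lambda\otimes\phi)=-\lambda\otimes\eta'(X)\phi$ for $X\in\mathfrak{n}$, which is nonzero and only ``lower order'' in the $Y^{\bullet}$-filtration. Applying a general element of $U(\mathfrak{g})$ to such a vector does not produce an $N$-eigenvector; $\Gamma$ must be chosen so that the defect is cancelled order by order, and exhibiting an $M_{\chi}$-invariant element of $U(\mathfrak{g})$ that does this on \emph{every} graded piece of $F'$ simultaneously is precisely the content of Wallach's theorem, not a ``direct computation'' with $\sum_{i}f_{i}e_{i}$. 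Second, even granting (2), the unitriangularity argument only yields injectivity of $\Gamma$ on $Wh_{\chi}(I_{P,\sigma_{\nu}}^{\infty})\otimes F'$: the correction terms $\tilde{\lambda}$ lie in $(I_{P,\sigma_{\nu}}^{\infty})'\otimes Y^{j-1}$, not in $Wh_{\chi}(I_{P,\sigma_{\nu}}^{\infty})\otimes Y^{j-1}$, so the image is not transparently all of $Wh_{\chi}(I_{P,\sigma_{\nu}}^{\infty}\otimes F)$. Surjectivity requires a separate argument, e.g.\ a descending induction along the $P$-stable filtration $F=X_{0}\supset X_{1}\supset\cdots$ comparing $Wh_{\chi}(I_{P,\sigma_{\nu}}^{\infty}\otimes F)$ with the Whittaker spaces of the graded quotients $I_{P,\sigma_{\nu}\otimes\bar{\eta}_{j}}^{\infty}$ and using left-exactness of $Wh_{\chi}$; nothing in your proposal supplies this.
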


Define
\[
 \check{\Gamma}:Wh_{\chi}(I_{P,\sigma_{\nu}}^{\infty}) \otimes F' \longrightarrow Wh_{\chi}(I_{P,\sigma_{\nu}\otimes \eta}^{\infty})
\]
by $\check{\Gamma}(\lambda)(\phi)=\Gamma(\lambda)(\check{\phi})$.

Then it's clear, from the above lemma, that $\check{\Gamma}$ defines an $M_{\chi}$-equivariant isomorphism.

 If $(\xi,V)$ is a representation of $P$, let
\begin{eqnarray*}
 I_{P,\xi}^{\infty} & = & \{\phi:G\longrightarrow V\, | \, \phi(pg)=\delta_{P}(p)^{\frac{1}{2}}\xi(p)\phi(g)\} \\
          U_{P,\xi} & = & \{\phi \in I_{P,\xi}^{\infty}\, | \, \operatorname{supp} \phi \subset P(w^{M})^{\ast}N\}.
\end{eqnarray*}
Observe that if $\phi\in U_{P,\xi}$ then $\phi$ has compact support modulo $P$.

If $(\tau,V_{\tau})$ is a representation of $M_{\chi}$, define
\[
 \Phi_{\xi}^{\tau}: Wh_{\chi,\tau}(I_{P,\xi}^{\infty}) \longrightarrow Hom_{M_{\chi}}(V,V_{\tau})
\]
in the same way we defined $\Phi_{\sigma,\nu}^{\tau}$.

Now observe that, since $\tilde{\Gamma}$ is an $M_{\chi}$-equivariant isomorphism, $\tilde{\Gamma}$ induces an isomorphism, which we wil also denote by $\tilde{\Gamma}$, 
\[
 \begin{array}{ccc}
  \tilde{\Gamma}: (Wh_{\chi}(I_{\sigma_{\nu}}\otimes F'\otimes V_{\tau}))^{M_{\chi}} & \longrightarrow & (Wh_{\chi}(I_{\sigma_{\nu}\otimes\eta})\otimes V_{\tau})^{M_{\chi}}  \\
  \mbox{ \begin{sideways}$ \cong $ \end{sideways}} & & \mbox{ \begin{sideways}$ \cong $ \end{sideways}}\\
Wh_{\chi,\eta'\otimes\tau}(I_{\sigma_{\nu}}) & \longrightarrow & Wh_{\chi,\tau}(I_{\sigma_{\nu}\otimes\eta})
 \end{array}
\]
Here we are using the fact that $Wh_{\chi,\tau}(V_{\pi}) \cong (Wh_{\chi}(V_{\pi}\otimes V_{\tau})^{M_{\chi}}$ for all representations $(\pi, V_{\pi})$ of $G$, and $(\tau,V_{\tau})$ of $M_{\chi}$.

We will identify
\[
 Wh_{\chi,\eta'\otimes \tau}(I_{\sigma_{\nu}}) \cong \left\lbrace\lambda:I_{\sigma_{\nu}}\otimes F \longrightarrow V_{\sigma}\, \left| \, \begin{array}{c} \lambda(\pi(n)\otimes I)\phi)=\chi(n) \lambda(\phi), \\ \lambda(\pi(m)\otimes \eta(m)\phi)=\tau(m)\lambda(\phi) \end{array} \right\rbrace\right. .
\]
Then we can identify $\Phi_{\sigma_{\nu}}^{\eta'\otimes\tau}$ with a map
\[
\Phi_{\sigma_{\nu}}^{\eta'\otimes\tau} : Wh_{\chi,\eta'\otimes \tau}(I_{\sigma_{\nu}}) \longrightarrow Hom_{M_{\chi}}(V_{\sigma}\otimes F, V_{\tau}),
\]
such that, if $\lambda \in Wh_{\chi,\eta'\otimes\tau}(I_{\sigma_{\nu}})$, $\phi\in U_{\sigma_{\nu}\otimes\eta}$, and we set $\mu_{\lambda}=\Phi_{\sigma_{\nu}}^{\eta'\otimes\tau}(\lambda)$, then
\[
 \lambda(\phi)=\mu_{\lambda}(\int_{N} \chi(n)^{-1}\phi(w_{M}n)\, dn).
\]
Let $\nu \in \mathfrak{a}_{\mathbb{C}}'$ be such that $\Phi_{\sigma_{\nu}}^{\tilde{\tau}}$
is an isomorphism for every representation $(\tilde{\tau},V_{\tilde{\tau}})$ of $M_{\chi}$. Then $\Phi_{\sigma_{\nu}}^{\eta'\otimes\tau}$ is an isomorphism, and if we set $\tilde{\Gamma}=\Phi_{\sigma_{\nu}\otimes\eta}^{\tau} \circ \check{\Gamma} \circ (\Phi_{\sigma_{\nu}}^{\eta'\otimes\tau})^{-1}$, then the following diagram commutes
\begin{equation}\label{diag1}
 \begin{picture}(250,155)
 \put(-10,10){$Hom_{M_{\chi}}(V_{\sigma}\otimes F, V_{\tau})$}
\put(0,120){$Wh_{\chi,\eta'\otimes\tau}(I_{\sigma_{\nu}}^{\infty})$}
\put(170,120){$Wh_{\chi,\tau}(I_{\sigma_{\nu}\otimes\eta}^{\infty})$}
\put(160,10){$Hom_{M_{\chi}}(V_{\sigma}\otimes F, V_{\tau})$}
\put(195,113){\vector(0,-1){90}}
\put(75,125){\vector(1,0){85}}
\put(95,15){\vector(1,0){60}}
\put(40,113){\vector(0,-1){90}}
\put(120,130){$\check{\Gamma}$}
\put(120,20){$\tilde{\Gamma}$}
\put(45,65){$\Phi_{\sigma_{\nu}}^{\eta'\otimes\tau}$}
\put(200,65){$\Phi_{\sigma_{\nu}\otimes\eta}^{\tau}$}
\end{picture}
\end{equation}

\begin{lemma}
 $\tilde{\Gamma}$ is an isomorphism.
\end{lemma}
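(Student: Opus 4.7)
The plan is to exhibit $\tilde{\Gamma}$ as the identity plus a nilpotent operator with respect to a finite filtration, which forces $\tilde{\Gamma}$ to be an isomorphism. The relevant filtration is induced by the $A$-weight structure $F = X_0 \supset X_1 \supset \cdots \supset X_{r+1} = 0$ and its dual $F' = Y^{r+1} \supset \cdots \supset Y^0 = 0$ already introduced at the start of the section.

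First I would install compatible filtrations on all four corners of diagram~(\ref{diag1}). On $\mathrm{Hom}_{M_\chi}(V_\sigma \otimes F, V_\tau)$ take $\mathcal{H}^j$ to be the subspace of $\mu$ that vanish on $V_\sigma \otimes X_j$; equivalently, via the identification $\mathrm{Hom}(V_\sigma \otimes F, V_\tau) \cong \mathrm{Hom}(V_\sigma, F' \otimes V_\tau)$, these are the $\mu$ landing in $Y^j \otimes V_\tau$. At the top left, filter $Wh_{\chi, \eta' \otimes \tau}(I_{\sigma_\nu}^\infty)$ by $Wh_\chi(I_{\sigma_\nu}^\infty) \otimes Y^j$ (after the $M_\chi$-invariants identification). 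At the top right, filter $Wh_{\chi, \tau}(I_{\sigma_\nu \otimes \eta}^\infty)$ by the subspaces of functionals that annihilate $I_{\sigma_\nu \otimes \eta_j}^\infty$, using the $G$-invariant filtration $I_{\sigma_\nu \otimes \eta}^\infty \supset I_{\sigma_\nu \otimes \eta_1}^\infty \supset \cdots \supset I_{\sigma_\nu \otimes \eta_{r+1}}^\infty = 0$. Because each $X_j$ is $P$-invariant, the Jacquet integral preserves values in $X_j$, and a direct check from the defining formula of $\Phi$ shows that both vertical arrows in~(\ref{diag1}) respect these filtrations and induce the identity between corresponding graded pieces. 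Part (2) of the theorem on $\Gamma$ provides the analogue for the top arrow: $\check{\Gamma}(\lambda) - \lambda$ drops one step in the filtration.

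Chasing these facts around~(\ref{diag1}) via commutativity yields $\tilde{\Gamma}(\mu) - \mu \in \mathcal{H}^{j-1}$ whenever $\mu \in \mathcal{H}^j$. Since $\mathcal{H}^\bullet$ has finite length $r+1$, the operator $\tilde{\Gamma} - \mathrm{Id}$ is nilpotent, and consequently $\tilde{\Gamma}$ is an isomorphism. The main obstacle I anticipate is precisely the verification that the vertical $\Phi$ arrows are filtered and identity-on-graded: we cannot appeal to $\Phi_{\sigma_\nu \otimes \eta}^\tau$ being an isomorphism, because that is precisely the conclusion this lemma will be used to deduce in the proof of the main theorem, so the compatibility has to be extracted directly from the explicit description of $\Phi$ via the Jacquet integral, together with the support arguments on the open Bruhat cell $Pw^MN$ that were used earlier in the chapter.
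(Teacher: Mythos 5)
Your proposal follows essentially the same route as the paper: fix the filtration $F=X_0\supset\cdots\supset X_{r+1}=0$ and its dual, use part (2) of the theorem on $\Gamma$ to see that $\check{\Gamma}(\lambda)-\lambda$ drops one step, transport this through diagram~(\ref{diag1}) using the Jacquet-integral description of $\Phi$ on functions supported in $Pw^MN$ (via an approximate identity on $N$), and conclude by unipotence of a filtered operator. One correction you would discover in carrying out that computation: because the identification $\phi\mapsto\hat{\phi}$ between $I^{\infty}_{P,\sigma_{\nu}}\otimes F$ and $I^{\infty}_{P,\sigma_{\nu}\otimes\eta}$ twists by $\eta(g)$, evaluating the Jacquet integrals on the open cell produces a factor $\eta(w_{M})$, so what is actually filtration-unipotent is $(I\otimes\eta(w_{M})')\circ\tilde{\Gamma}$ rather than $\tilde{\Gamma}$ itself; since the twist is invertible, the conclusion that $\tilde{\Gamma}$ is an isomorphism is unaffected. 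Your concern about circularity is also resolved exactly as you suggest: the argument only uses that $\Phi_{\sigma_{\nu}}^{\eta'\otimes\tau}$ is an isomorphism (the hypothesis on $\nu$) together with the explicit formula for $\Phi_{\sigma_{\nu}\otimes\eta}^{\tau}$ on the open cell, never its bijectivity.
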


Observe that this lemma immediately implies that $\Phi_{\sigma_{\nu}\otimes\eta}^{\tau}$ is an isomorphism. 

\begin{proof}
Let $\mu\in Hom_{M_{\chi}}(V_{\sigma}\otimes F, V_{\tau})$. Since $\Phi_{\sigma_{\nu}}^{\eta'\otimes\tau}$ is an isomorphism, there exists $\lambda \in Wh_{\chi,\eta'\otimes\tau}(I_{\sigma_{\nu}}^{\infty})$ such that $\mu=\Phi_{\sigma_{\nu}}^{\eta'\otimes\tau}(\lambda)=:\mu_{\lambda}$, i.e., if $\check{\phi}\in U_{P,\sigma_{\nu}\otimes\eta}$, then
\[
 \lambda(\phi)=\mu_{\lambda}(\int_{N}\chi(n)^{-1}\phi(w_{M}n)\, dn ).
\]

To prove the lemma, we will show that if $\mu_{\lambda}\in Hom_{M_{\chi}}(V_{\sigma}\otimes F, V_{\tau})$, is such that $\mu_{\lambda}(v)=0$ for all $v\in V_{\sigma}\otimes X_{j}$, then 
\[
(\mu_{\lambda}-(I\otimes\eta(w_{M})')\tilde{\Gamma}(\mu_{\lambda}))(v)=0 \qquad \forall v\in V_{\sigma}\otimes X_{j-1}
\]
For any such $\mu_{\lambda}$, $\lambda \in (Wh_{\chi}(I_{\sigma_{\nu}}^{\infty})\otimes Y^{j} \otimes V_{\tau})^{M_{\chi}}$, and hence $\Gamma(\lambda)=\lambda + \tilde{\lambda}$, with $\tilde{\lambda}\in (I_{P,\sigma_{\nu}}^{\infty})'\otimes Y^{j-1}\otimes V_{\tau}$. Therefore, if $\phi\in I_{\sigma_{\nu}}^{\infty} \otimes X_{j-1}$, then $\tilde{\lambda}(\phi)=0$ and
\[
 \Gamma(\lambda)(\phi)=\lambda(\phi)+\tilde{\lambda}(\phi)=\lambda(\phi).
\]
Now by definition
\[
 \check{\Gamma}(\lambda)(\hat{\phi})=\Gamma(\lambda)(\phi)=\mu_{\lambda}(\int_{N}\chi(n)^{-1}\phi(w_{M}n)\, dn ).
\]
But on the other hand since $\hat{\phi}\in U_{P,\sigma_{\nu}\otimes\eta}$
\begin{eqnarray*}
 \check{\Gamma}(\lambda)(\hat{\phi})& = &\mu_{\check{\Gamma}(\lambda)}(\int_{N}\chi(n)^{-1}\hat{\phi}(w_{M}n)\, dn ) \\
                                 & = & \tilde{\Gamma}(\mu_{\lambda})(\int_{N}\chi(n)^{-1}(I\otimes\eta(w_{M}n))\phi(w_{M}n)\, dn ). 
\end{eqnarray*}
Given $v_{j-1}\in V_{\sigma}\otimes X_{j-1}$ choose a $\phi\in I^{\infty}_{\sigma_{\nu}}\otimes X_{j-1}$ such that $\phi(w_{M})=v_{j-1}$. Let $\{u^{k}\}_{k}$ be an approximate identity on $N$, and define $\phi^{k}\in U_{P,\sigma_{\nu}}$ by
\[
 \phi^{k}(w_{M}n)=u^{k}(n)\chi(n)\phi(w_{M}n).
\]
Then
\begin{eqnarray*}
 \lim_{k\rightarrow \infty} \mu_{\lambda}(\int_{N}\chi(n)^{-1}\phi^{k}(w_{M}n)\, dn ) & = & \lim_{k\rightarrow \infty} \tilde{\Gamma}(\mu_{\lambda})(\int_{N}\chi(n)^{-1}I\otimes\eta(w_{M}))\phi^{k}(w_{M})\, dn ) \\
 \mu_{\lambda}(\lim_{k\rightarrow \infty} \int_{N}u^{k}(n)\phi(w_{M}n)\, dn ) & = & \tilde{\Gamma}(\mu_{\lambda})(\lim_{k\rightarrow \infty} \int_{N}u^{k}(n)I\otimes\eta(w_{M}))\phi(w_{M})\, dn ) \\
\mu_{\lambda}(\phi(w_{M}))& = & \tilde{\Gamma}(\mu_{\lambda})(I\otimes\eta(w_{M}))\phi(w_{M})) \\
\mu_{\lambda}(v_{j-1}) & = & [(I\otimes\eta(w_{M}^{-1})')\tilde{\Gamma}(\mu_{\lambda})](v_{j-1}). 
\end{eqnarray*}
Since this holds for all $v_{j-1}\in V_{\sigma}\otimes X_{j-1}$, we conclude that $\mu_{\lambda}-(I\otimes\eta(w_{M}^{-1})')\tilde{\Gamma}(\mu_{\lambda}) \in V_{\sigma}' \otimes Y^{j-1}\otimes V_{\tau}$ as we wanted to prove.
\end{proof}

We will now choose a representation $(\eta,F)$, such that the action of $M$ on $F_{r}$ is trivial, then $\sigma_{\nu}\otimes\bar{\eta}_{0} \cong \sigma_{\nu-r}$, and hence
\[
 I_{\sigma_{\nu}\otimes\bar{\eta}_{0}}^{\infty} \cong I_{\sigma_{\nu-r}}^{\infty}.
\]
Let
\[
 W^{j}=\{\lambda \in Wh_{\chi,\tau}(I_{\sigma_{\nu}\otimes\eta}^{\infty}) | \lambda|_{I_{\sigma_{\nu}\otimes \eta_{j}}^{\infty}}=0\}.
\]
Observe that if $\lambda \in W^{j+1},$ then $ \lambda|_{I_{\sigma_{\nu}\otimes \eta_{j}}^{\infty}}$ defines an element in $Wh_{\chi,\tau}(I_{\sigma_{\nu}\otimes \bar{\eta}_{j}}^{\infty})$. 

\begin{lemma}
There exists and isomorphism
\[
 S:Wh_{\chi,\tau}(I_{\sigma_{\nu}\otimes\eta}^{\infty})\longrightarrow \bigoplus_{j=0}^{r} W^{j+1}|_{I_{\sigma_{\nu}\otimes\eta_{j}}^{\infty}}
\]
such that the following diagram is commutative:
\begin{equation}\label{diag2}
 \begin{picture}(300,220)
\put(10,200){$Wh_{\chi,\tau}(I_{\sigma_{\nu}\otimes\eta}^{\infty})$}
\put(180,200){$\oplus_{j=0}^{r}W^{j+1}|_{I_{\sigma_{\nu}\otimes\eta_{j}}^{\infty}}$}
\put(180,140){$\oplus_{j=0}^{r}Wh_{\chi,\tau}(I_{P,\sigma_{\nu}\otimes \bar{\eta}_{j}}^{\infty})$}
\put(180,80){$\oplus_{j=0}^{r} Hom_{M_{\chi}}(V_{\sigma}\otimes X_{j}/X_{j+1}, V_{\tau})$}
\put(180,20){$Hom_{M_{\chi}}(V_{\sigma}\otimes F, V_{\tau})$}
\put(85,203){\vector(1,0){90}}
\put(210,190){\vector(0,-1){35}}
\put(210,130){\vector(0,-1){35}}
\put(210,70){\vector(0,-1){35}}
\put(40,190){\vector(1,-1){155}}
\put(130,210){$S$}
\put(120,120){$\Phi_{\sigma_{\nu}\otimes\eta}^{\tau}$}
\put(220,110){$\Phi_{\sigma_{\nu}\otimes\bar{\eta}_{j}}^{\tau}$}
\put(220,50){$X_{j}/X_{j+1}\cong F_{r-2j}$}
\end{picture}
\end{equation}
\end{lemma}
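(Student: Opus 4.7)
The plan is to construct $S$ by pulling back a natural direct-sum decomposition of $Hom_{M_\chi}(V_\sigma \otimes F, V_\tau)$ through the isomorphism $\Phi_{\sigma_{\nu}\otimes\eta}^{\tau}$, which is available by the previous lemma. Since $H \in \mathfrak{a}$ centralizes $M \supset M_{\chi}$, the weight decomposition $F = \bigoplus_{j=0}^{r}F_{r-2j}$ is $M_{\chi}$-equivariant and satisfies $X_j/X_{j+1} \cong F_{r-2j}$ as $M_{\chi}$-modules, so
\[
Hom_{M_{\chi}}(V_{\sigma}\otimes F, V_{\tau}) \;=\; \bigoplus_{j=0}^{r} Hom_{M_{\chi}}(V_{\sigma}\otimes X_j/X_{j+1}, V_{\tau}).
\]

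The key step is to identify $W^{j}$, under $\Phi_{\sigma_{\nu}\otimes\eta}^{\tau}$, with the partial sum $\bigoplus_{k<j}Hom_{M_{\chi}}(V_{\sigma}\otimes F_{r-2k}, V_{\tau}) = Hom_{M_{\chi}}(V_{\sigma}\otimes F/X_j, V_{\tau})$. The explicit formula $\lambda(\phi) = \mu_\lambda\bigl(\int_N \chi(n)^{-1}\phi(w_M n)\,dn\bigr)$ on $U_{P,\sigma_{\nu}\otimes\eta}$ makes it immediate that $\lambda$ vanishes on $U_{P,\sigma_{\nu}\otimes\eta_j}$ iff $\mu_\lambda := \Phi_{\sigma_{\nu}\otimes\eta}^{\tau}(\lambda)$ vanishes on $V_\sigma \otimes X_j$; to upgrade this from the open Bruhat cell to all of $I_{\sigma_{\nu}\otimes\eta_j}^{\infty}$ I would apply Proposition \ref{prop:vanishinggeneral} to the finite-length $P$-representation $\sigma_{\nu}\otimes\eta_j$, on which $N$ acts locally unipotently, and where $V_\tau$ has no split eigenvectors since $\tau$ is irreducible tempered. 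Pulling the decomposition back yields a canonical splitting $Wh_{\chi,\tau}(I_{\sigma_{\nu}\otimes\eta}^{\infty}) = \bigoplus_j Wh^{j}$ with $W^{j+1} = W^{j} \oplus Wh^{j}$, and restriction to $I_{\sigma_{\nu}\otimes\eta_j}^{\infty}$ identifies $Wh^{j}$ with $W^{j+1}|_{I_{\sigma_{\nu}\otimes\eta_j}^{\infty}}$. Define $S$ to be the sum of these restrictions; then $S$ is tautologically an isomorphism.

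To verify commutativity of diagram (\ref{diag2}), write $\lambda = \sum_j \lambda_j$ with $\lambda_j \in Wh^{j}$. For $\phi \in U_{P,\sigma_{\nu}\otimes\bar{\eta}_j}$ choose a lift $\tilde\phi \in U_{P,\sigma_{\nu}\otimes\eta_j}$. Since $\lambda_j \in W^{j+1}$ and $\mu_{\lambda_j} := \Phi_{\sigma_{\nu}\otimes\eta}^{\tau}(\lambda_j)$ annihilates $V_\sigma \otimes X_{j+1}$ by construction, only the $F_{r-2j}$-component of $\int_N \chi(n)^{-1}\tilde\phi(w_M n)\,dn$ survives, giving
\[
S(\lambda)_j(\phi) \;=\; \lambda_j(\tilde\phi) \;=\; \mu_{\lambda_j}\Bigl(\int_N \chi(n)^{-1}\phi(w_M n)\,dn\Bigr).
\]
Hence $\Phi_{\sigma_{\nu}\otimes\bar{\eta}_j}^{\tau}(S(\lambda)_j)$ is precisely the $F_{r-2j}$-summand of $\Phi_{\sigma_{\nu}\otimes\eta}^{\tau}(\lambda)$, and summing over $j$ reproduces the diagonal.

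The main obstacle is exactly the identification of $W^{j}$ as a preimage under $\Phi_{\sigma_{\nu}\otimes\eta}^{\tau}$: on the open cell $U_{P,\sigma_{\nu}\otimes\eta_j}$ the correspondence is transparent, but extending the vanishing to all of $I_{\sigma_{\nu}\otimes\eta_j}^{\infty}$ genuinely requires the vanishing-of-invariant-distributions machinery of Proposition \ref{prop:vanishinggeneral}; once that identification is in hand, the splitting, the construction of $S$, and the commutativity of the diagram are essentially formal.
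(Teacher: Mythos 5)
Your proposal is correct and follows essentially the same route as the paper: both arguments pull the weight-space decomposition of $Hom_{M_{\chi}}(V_{\sigma}\otimes F, V_{\tau})$ back through the isomorphism $\Phi_{\sigma_{\nu}\otimes\eta}^{\tau}$ via the projections onto $V_{\sigma}\otimes F_{r-2j}$, use the transparency of the Jacquet-integral formula on the open cell to see that each component kills $U_{\sigma_{\nu}\otimes\eta_{j+1}}$, and invoke Proposition \ref{prop:vanishinggeneral} to upgrade this to vanishing on all of $I_{\sigma_{\nu}\otimes\eta_{j+1}}^{\infty}$, after which $S$ is restriction. You correctly identify the one non-formal ingredient (the vanishing proposition, with the hypotheses that $N$ acts locally unipotently on the finite-dimensional factor and that the tempered $\tau$ has no split eigenvectors), and your commutativity check is in fact more explicit than the paper's.
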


\begin{corollary}
 $\Phi_{\sigma_{\nu}}^{\tau}$ is an isomorphism for all $\nu \in \mathfrak{a}_{\mathbb{C}}'$
\end{corollary}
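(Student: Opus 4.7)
The plan is to bootstrap the isomorphism property of $\Phi_{\sigma_\nu}^\tau$ from the half-plane $\operatorname{Re}\nu > q_\sigma$, where it is already established, to all of $\mathfrak{a}_{\mathbb{C}}'$ by shifting $\nu$ via tensoring with finite-dimensional representations and chasing the two commutative diagrams (\ref{diag1}) and (\ref{diag2}).

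First I would settle the base case: for $\operatorname{Re}\nu > q_\sigma$ and any smooth representation $\tilde\tau$ of $M_\chi$ without split eigenvectors, Lemma \ref{lemma:hyperplane} guarantees absolute convergence of $\mu \circ J_{\sigma,\nu}^\chi$, producing a section of $\Phi_{\sigma_\nu}^{\tilde\tau}$ and hence surjectivity, while Proposition \ref{prop:vanishinggeneral} supplies injectivity. The tempered $(\tau, V_\tau)$ has no split eigenvectors, and neither does $\eta' \otimes \tau$: decomposing an alleged split eigenvector of $F' \otimes V_\tau$ against an $\mathfrak{a}_\circ^{M_\chi}$-weight basis of $F'$ would force a split eigenvector in $V_\tau$. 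Thus both $\Phi_{\sigma_\nu}^\tau$ and $\Phi_{\sigma_\nu}^{\eta' \otimes \tau}$ are isomorphisms on that half-plane.

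Given arbitrary $\nu_0 \in \mathfrak{a}_{\mathbb{C}}'$, choose a positive integer $r$ with $\operatorname{Re}(\nu_0 + r) > q_\sigma$ and take $(\eta, F)$ as constructed just before diagram (\ref{diag2}), so that $\sigma_{\nu_0 + r} \otimes \bar\eta_0 \cong \sigma_{\nu_0}$. Set $\nu = \nu_0 + r$. The preceding lemma combined with diagram (\ref{diag1}) then promotes the isomorphism $\Phi_{\sigma_\nu}^{\eta' \otimes \tau}$ into an isomorphism $\Phi_{\sigma_\nu \otimes \eta}^\tau$. Feeding this into diagram (\ref{diag2}) and using that $S$, the restriction-to-quotient identifications, and the weight decomposition $F \cong \bigoplus_j F_{r-2j}$ are all isomorphisms, the diagonal map $\Phi_{\sigma_\nu \otimes \eta}^\tau$ factors as a composition of isomorphisms with $\bigoplus_j \Phi_{\sigma_\nu \otimes \bar\eta_j}^\tau$, forcing each $\Phi_{\sigma_\nu \otimes \bar\eta_j}^\tau$ to be an isomorphism individually. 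Specializing to $j = 0$ yields $\Phi_{\sigma_{\nu_0}}^\tau$ as an isomorphism, as desired.

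The main obstacle is the structural verification, carried out in the preceding lemma, that $S$ and the restriction-plus-descent maps along the right column of diagram (\ref{diag2}) are bona fide isomorphisms; once this is in hand the corollary is a short diagram chase combined with the integer-shift trick. A subsidiary point is the existence of a finite-dimensional $\eta$ realising any prescribed integer shift $r$, which is arranged by taking tensor powers of a highest-weight representation whose top weight space is an $M$-fixed line.
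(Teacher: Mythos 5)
Your proposal is correct and follows essentially the same route as the paper: establish the isomorphism on the half-plane $\operatorname{Re}\nu > q_{\sigma}$ (surjectivity from the convergent Jacquet integral, injectivity from the vanishing proposition), then propagate it to all of $\mathfrak{a}_{\mathbb{C}}'$ by the shift $\sigma_{\nu}\otimes\bar{\eta}_{0}\cong\sigma_{\nu-r}$ obtained from chasing diagrams (\ref{diag1}) and (\ref{diag2}). The only difference is that you spell out two points the paper leaves implicit — that $\eta'\otimes\tau$ inherits the no-split-eigenvector property and that suitable $(\eta,F)$ exist for arbitrarily large shifts — both of which are verified correctly.
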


\begin{proof}[Proof (of corollary)]
Let $\nu \in \mathfrak{a}_{\mathbb{C}}'$ be such that $\Phi_{\sigma_{\nu}}^{\tau}$ is an isomorphism. Then we know that $\Phi_{\sigma_{\nu}\otimes \eta}^{\tau}$ is an isomorphism, and from the above diagram $\Phi_{\sigma_{\nu}\otimes\bar{\eta}_{j}}$ is an isomorphism for all $j$. In particular, if $(\eta,F)$ is as before, then  $\Phi_{\sigma_{\nu-r}}^{\tau}$ is an isomorphism. Proceeding by induction, it can now be shown that $\Phi_{\sigma_{\nu}}^{\tau}$ is an isomorphism for all $\nu \in \mathfrak{a}_{\mathbb{C}}'$. 
\end{proof}

\begin{proof}[Proof (of lemma)]
Let $\lambda\in Wh_{\chi,\tau}(I_{\sigma_{\nu}\otimes\eta}^{\infty})$. Since $\Phi_{\sigma_{\nu}\otimes\eta}^{\tau}$ is an isomorphism, there exists $\mu_{\lambda}\in Hom_{M_{\chi}}(V_{\sigma}\otimes F, V_{\tau})$ such that if $\phi\in U_{\sigma_{\nu}\otimes\eta}$, then
\[
 \lambda(\phi)=\mu_{\lambda}(\int_{N}\chi(n)^{-1}\phi(w_{M}n)\, dn).
\]
Let $p_{j}$ be the natural projection of $V_{\sigma}\otimes F$ onto $V_{\sigma}\otimes F_{r-2j}$. Since $p_{j}$ is $M_{\chi}$-equivariant, $\mu_{\lambda}\circ p_{j}\in Hom_{M_{\chi}}(V_{\sigma}\otimes F, V_{\tau})$. Let $\lambda_{j}=(\Phi_{\sigma_{\nu}\otimes\eta}^{\tau})^{-1}(\mu_{\lambda}\circ p_{j})$. Then it is clear that $\lambda=\sum \lambda_{j}$. Furthermore if $\phi \in U_{\sigma_{\nu}\otimes \eta_{j+1}}$, then
\[
 \lambda_{j}(\phi)=\mu_{\lambda}\circ p_{j}(\int_{N}\chi(n)^{-1}\phi(w_{M}n)\, dn)=0,
\]
 since all the values of $\phi$ are in $X_{j+1}$. Now since $\lambda_{j}|_{U_{\sigma_{\nu}\otimes \eta_{j+1}}}=0$ proposition \ref{prop:vanishinggeneral} implies that $\lambda_{j}|_{I_{\sigma_{\nu}\otimes\eta_{j+1}}^{\infty}}=0$, i.e., $\lambda_{j}\in W^{j+1}$. Define
\[
 S(\lambda)=(\tilde{\lambda}_{0},\ldots,\tilde{\lambda}_{r}), \qquad \mbox{with $\tilde{\lambda}_{j}=\lambda_{j}|_{I_{\sigma_{\nu}\otimes\eta_{j+1}}^{\infty}}$}.
\]
It's then clear, from the above observations, that $S$ is an isomorphism that makes diagram \ref{diag2} commute.
\end{proof}

\section{Holomorphic Continuation of Certain Jacquet Integrals} \label{sec:holomorphiccontjacquet}
\begin{theorem}
Let $(\sigma,V_{\sigma})$ be an admissible, smooth, Fr\'echet representation of $M$, and let $(\tau,V_{\tau})$ be an smooth, irreducible, tempered representation of $M_{\chi}$.
\begin{enumerate}
 \item The map
\[
\Phi_{\sigma,\nu}^{\tau}:Wh_{\chi,\tau}(I_{P,\sigma_{\nu}}^{\infty}) \longrightarrow Hom_{M_{\chi}}(V_{\sigma},V_{\tau})
\]
defines a linear isomorphism for all $\nu \in \mathfrak{a}'_{\mathbb{C}}$.
\item For all $\mu\in Hom_{M_{\chi}}(V_{\sigma},V_{\tau})$, the map $\nu\mapsto \mu\circ J_{\sigma,\nu}^{\chi}$ extends to a weakly holomorphic map of $\mathfrak{a}'_{\mathbb{C}}$ into $Hom(I^{\infty}_{\sigma},V_{\tau})$.
\end{enumerate}
\end{theorem}
\begin{proof}
We have already seen that $\Phi_{\sigma,\nu}^{\tau}$ is an isomorphism for all $\nu \in \mathfrak{a}'_{\mathbb{C}}$. Given  $\mu\in Hom_{M_{\chi}}(V_{\sigma}, V_{\tau})$, and $\phi\in I_{\sigma}^{\infty}$ define
\[
 \gamma_{\mu}^{\tau}(\nu)(\phi):=(\Phi_{\sigma,\nu}^{\tau})^{-1}(\mu)(\phi_{\sigma,\nu}).
\]
Observe that, if $\operatorname{Re} \nu \gg 0$, then $\gamma_{\mu}(\nu)(\phi)=\mu\circ J_{\sigma,\nu}^{\chi}(\phi)$. We will show that $\gamma$ has holomorphic continuation to all $\nu \in \mathfrak{a}_{\mathbb{C}}'$, by showing that it satisfies a shift equation.

Let $\nu\in \mathfrak{a}'_{\mathbb{C}}$ and $\phi\in I^{\infty}_{\sigma}$ be arbitrary. By definition
\[
 \gamma_{\mu}(\nu-r)(\phi)=(\Phi_{\sigma,\nu-r}^{\tau})^{-1}(\mu)(\phi_{\sigma,\nu-r})=\lambda(\phi_{\sigma,\nu-r})
\]
for some $\lambda \in Wh_{\chi,\tau}(I_{\sigma,\nu-r}^{\infty})$. But now, according to (\ref{diag1}) and (\ref{diag2}), there exists $\delta\in Wh_{\chi,\eta'\otimes\tau}(I_{\sigma_{\nu}})$, and $\psi\in I_{\sigma_{\nu}\otimes \eta}^{\infty}$, such that
\begin{eqnarray*}
 \gamma_{\mu}(\nu-r)(\phi)& = & \lambda(\phi_{\sigma,\nu-r})=\check{\Gamma}(\delta)(\psi) \\
 & = & \delta(\Gamma^{T}\check{\psi}) = \gamma_{\mu_{\delta}}^{\eta'\otimes\tau}(\nu)(\Gamma^{T}\check{\psi}),
\end{eqnarray*}
where $\mu_{\delta}=\Phi_{\sigma_{\nu}}^{\eta'\otimes\tau}(\delta).$
This is the desired shift equation which shows that $\gamma_{\mu}$ is weakly holomorphic everywhere.
\end{proof}

\chapter{The Bessel-Plancherel theorem} \label{chapter:Bessel-Plancherel}

\section{The Bessel-Plancherel theorem for rank 1 Lie groups of tube type}

Let $G$ be a Lie group of tube type and let $P=MAN$ be a Siegel parabolic subgroup with given Langlands decomposition. Given a character $\chi$ of $N$, we will set $C_{c}^{\infty}(N\backslash G ; \chi)$ to be equal to the set of smooth functions $f:G \longrightarrow \mathbb{C}$ such that $f(ng)=\chi(n)f(g)$, for all $n\in N$, $g\in G$, and such that $|f|$ has compact support modulo $N$. We will also set 
\[
 L^2(N\backslash G;\chi)=\left\lbrace f:G \longrightarrow \mathbb{C} \, \left| \,\begin{array}{c}
				\mbox{$f(ng)=\chi(n)f(g)$, for $n\in N$, $g\in G$, } \\
                                \mbox{ and $\int_{N\backslash G} |f(g)|^2 \, dNg < \infty$}
                              \end{array}  \right\rbrace\right. .
\]
Here $dNg$ is the measure on $N\backslash G$ such that, if $f\in C_{c}^{\infty}(G)$, then
\begin{equation}
 \int_{G} f(g) \, dg = \int_{N\backslash G}\int_{N}f(ng) \, dn \, dNg. \label{eq:quotient}
\end{equation}
Observe that $L^2(N\backslash G;\chi)$ has a natural inner product, defined by
\[
 \langle f, h \rangle := \int_{N\backslash G} \overline{f(g)} h(g) \,dNg, \qquad \mbox{for $f,h \in L^2(N\backslash G;\chi)$.}
\]
Let $\lambda$ be the measure on $\hat{N}$ such that, if $f\in C_{c}^{\infty}(N)$, then
\begin{equation}
 f(e)=\int_{\hat{N}} \hat{f}(\chi) d\lambda(\chi), \label{eq:PlancheremeasureN}
\end{equation}
where $\hat{f}$ is the Fourier transform of $f$.

 Given $f\in C_{c}^{\infty}(G)\subset L^{2}(G)$, define 
$s_{f}(\chi) \in C_{c}^{\infty}(N\backslash G ; \chi)\subset L^{2}(N\backslash G;\chi)$, by
\begin{equation}
 s_{f}(\chi)(g)= \int_{N} \chi(n)^{-1}f(ng) \, dn. \label{eq:Fouriertransform}
\end{equation}
We would like to use this equation to identify $f$ with a ``section'' $s_{f}$ on a ``vector bundle'' $E$ over $\hat{N}$ with fibers $L^{2}(N\backslash G;\chi)$. Unfortunately, this ``bundle'' fails to satisfy the local triviality property. We will work around this problem in the following way: Given $f\in C_{c}^{\infty}(N\backslash G;\chi)$ and $p \in P$ set $L_{p}f(g)=f(p^{-1}g)$. Then 
\[
L_{p}f(ng)=f(p^{-1}ng)=f((p^{-1}np)p^{-1}g)=\chi(p^{-1}np)f(p^{-1}g)=(p\cdot\chi)(n)L_{p}f(g),
\]
i.e, $L_{p}f \in C_{c}^{\infty}(N\backslash G ; p \cdot \chi)$. Moreover, according to section $\ref{sec:preliminary}$,
\begin{eqnarray*}
 \langle L_{p}f, L_{p}f \rangle & = & \int_{N\backslash G} \overline{L_{p}f(g)}L_{p}f(g) \, dNg = \int_{N\backslash G} \overline{f(p^{-1}g)}f(p^{-1}g) \, dNg\\
 & = &  \int_{N\backslash G} \overline{f(g)}f(g)\delta_{P}(p)^{-1} \, dNg = \delta_{P}(p)^{-1} \langle f, f \rangle,
\end{eqnarray*}
where $\delta_{P}$ is the modular function of $P$. What this couple of equations say is that we can extend $L_{p}$ to a conformal transformation from $L^{2}(N\backslash G;\chi)$ to $L^{2}(N\backslash G;p\cdot \chi)$. Now from equation (\ref{eq:Fouriertransform})
\begin{eqnarray}
 s_{R_{g_{1}}f}(\chi)(g)&=&\int_{N}\chi(n)^{-1}R_{g_{1}}f(ng) \, dn=\int_{N}\chi(n)^{-1}f(ngg_{1}) \, dn \nonumber \\ &=& s_{f}(\chi)(gg_{1})=:(R_{g_{1}}s_{f}(\chi))(g) \label{eq:leftbundleaction}
\end{eqnarray}
and
\begin{eqnarray}
 s_{L_{p}f}(\chi)(g)& = & \int_{N}\chi(n)^{-1}L_{p}f(ng) \, dn=\int_{N}\chi(n)^{-1}f(p^{-1}npp^{-1}g) \, dn \nonumber \\
                    & = & \int_{N}\chi(pnp^{-1})^{-1}\delta_{P}(p)f(np^{-1}g) \, dn \nonumber \\ 
&=& \int_{N}(p^{-1}\chi)(n)^{-1}\delta_{P}(p)f(np^{-1}g) \, dn  \nonumber \\
                    & = & \delta_{P}(p)s_{f}(p^{-1}\chi)(p^{-1}g). \label{eq:rightbundleaction}
\end{eqnarray}
Let $\Omega$ be the set of open orbits for the action of $P$ on $\hat{N}$. Then we can use equations (\ref{eq:leftbundleaction}) and (\ref{eq:rightbundleaction}) to define a $P\times G$-vector bundle
\[
 \begin{array}{c}
  E \\ \downarrow \\ \omega,
 \end{array}
\]
with fibers $E_{\chi}=L^{2}(N\backslash G;\chi)$. Now observe that, if $f\in C_{c}^{\infty}(G)$, then, according to equations (\ref{eq:quotient}), (\ref{eq:PlancheremeasureN}) and (\ref{eq:Fouriertransform}),
\begin{eqnarray}
 \langle f, f \rangle & = & \int_{N\backslash G} \int_{N} \overline{f(ng)}f(ng) \, dn \, dNg \nonumber \\
 & = & \int_{N\backslash G} \int_{\hat{N}} \overline{s_{f}(\chi)(g)}s_{f}(\chi)(g) \,d\lambda(\chi) \, dNg \nonumber \\
& = & \int_{\hat{N}} \langle s_{f}(\chi) , s_{f}(\chi) \rangle d\lambda(\chi) \nonumber \\
&  =  &\bigoplus_{\omega \in \Omega} \int_{\omega} \langle s_{f}(\chi) , s_{f}(\chi) \rangle d\lambda(\chi), \label{eq:isometry}
\end{eqnarray}
 where the last equality follows from the fact that the complement of the union of the open orbits of $P$ in  $\hat{N}$ has measure zero. Therefore, if we set
\[
 L^{2}(\omega,E, \lambda)=\{s:\omega\longrightarrow E \, | \, \mbox{$s(\chi)\in E_{\chi}$, and $\int_{\omega} \|s(\chi)\|^{2}\, d\lambda(\chi) < \infty$}\},
\]
then the map $f\mapsto s_{f}$ extends to a $P\times G$-equivariant isometry between $L^{2}(G)$ and $\oplus_{\omega \in \Omega}L^{2}(\omega,E, \lambda)$. This is the identification we were after when we defined $s_{f}$.

We will now use the material developed in section \ref{sec:representationparabolics} to continue the study of the decomposition of $L^{2}(G)$ with respect to the action of $P\times G$. For every $\omega \in \Omega$, we will fix a character $\chi_{\omega}\in \omega$. Then, according to the material in section \ref{sec:representationparabolics}, there is a natural $P\times G$-equivariant isomorphism
\[
 \bigoplus_{\omega \in \Omega}L^{2}(\omega,E, \lambda) \cong \bigoplus_{\omega \in \Omega} \operatorname{Ind}_{M_{\chi}N\times G}^{P\times G}L^{2}(N\backslash G ; \chi_{\omega}).
\]
Therefore
\begin{eqnarray}
 L^{2}(G)  & \cong & \bigoplus_{\omega \in \Omega}L^{2}(\omega,E, \lambda)  
           \cong \bigoplus_{\omega \in \Omega} \operatorname{Ind}_{M_{\chi}N\times G}^{P\times G}L^{2}(N\backslash G ; \chi_{\omega}) \nonumber\\
           & \cong & \bigoplus_{\omega \in \Omega} \int_{\hat{M}_{\chi_{\omega}}}\operatorname{Ind}_{M_{\chi_{\omega}}N}^{P}(\overline{\chi}_{\omega}\otimes \tau^{\ast})\otimes L^2(M_{\chi_{\omega}}N \backslash G ;\tau\otimes\chi_{\omega}) d\eta(\tau) \nonumber\\
           & \cong & \bigoplus_{\omega \in \Omega} \int_{\hat{M}_{\chi_{\omega}}} \operatorname{Ind}_{M_{\chi_{\omega}}N}^{P}(\overline{\chi}_{\omega}\otimes \tau^{\ast})\otimes \int_{\hat{G}} W_{\chi_{\omega},\tau}(\pi)\otimes \pi\, d\mu_{\omega,\tau}(\pi)d\eta(\tau)\nonumber \\
	  & \cong & \bigoplus_{\omega \in \Omega} \int_{\hat{M}_{\chi_{\omega}}}\int_{\hat{G}} W_{\chi_{\omega},\tau}(\pi)\otimes \operatorname{Ind}_{M_{\chi_{\omega}}N}^{P}(\overline{\chi}_{\omega}\otimes \tau^{\ast})\otimes   \pi\, d\mu_{\omega,\tau}(\pi)d\eta(\tau). \label{eq:PtimesGgeometrical}
\end{eqnarray}
Here $\eta$ is the Plancherel measure of $M_{\chi}$, whereas $\mu_{\omega,\tau}$ is a measure on $\hat{G}$ that depends on $\omega$ and $\tau$, and $W_{\chi_{\omega},\tau}(\pi)$ is some multiplicity space, that also depends on $\omega$ and $\tau$. On the other hand, Harish-Chandra's Plancherel theorem says that
\begin{equation}
 L^2(G)\cong \int_{\hat{G}} \pi^{\ast}|_{P} \otimes \pi \,d\mu(\pi), \label{eq:Plancherelrestricted}
\end{equation}
where $\mu$ is the Plancherel measure on $G$. 

We will now restrict to the case where $G$ has rank 1. In this case if $\chi$ is generic, then $M_{\chi}$ is compact, and hence equation (\ref{eq:PtimesGgeometrical}) reads
\[
 L^{2}(G) = \bigoplus_{\omega \in \Omega} \bigoplus_{\hat{M}_{\chi_{\omega}}}\int_{\hat{G}} W_{\chi_{\omega},\tau}(\pi)\otimes \operatorname{Ind}_{M_{\chi_{\omega}}N}^{P}(\overline{\chi}_{\omega}\otimes \tau^{\ast})\otimes   \pi\, d\mu_{\omega,\tau}(\pi).
\]
From this and equation (\ref{eq:Plancherelrestricted}) we conclude that, for $\mu$-almost all $\pi$,
\begin{equation}
 \pi^{\ast}|_{P}  \cong  \bigoplus_{\omega \in \Omega} \bigoplus_{\tau \in \hat{M}_{\chi_{\omega}}} W_{\chi_{\omega},\tau}(\pi)\otimes \operatorname{Ind}_{M_{\chi_{\omega}}N}^{P}\overline{\chi}_{\omega}\otimes \tau^{\ast},         \label{eq:rank1restrictiontoparabolic}
\end{equation}
and that $\mu_{\omega,\tau}$ is absolutely continuous with respect to the Plancherel measure $\mu$, for all $\omega$, $\tau$, i.e.,
\begin{equation}
 L^2(M_{\chi}N \backslash G ;\tau\otimes\chi) \cong \ \int_{\hat{G}} W_{\chi,\tau}(\pi) \otimes \pi \,d\mu(\pi). \label{eq:rank1besselplancherel}
\end{equation}
Observe that in the rank 1 case $P$ is both, a maximal and a minimal parabolic subgroup, and hence this case is contained in the calculation of the Whittaker-Plancherel measure given in \cite{w:vol2}. There the multiplicity spaces $W_{\chi,\tau}(\pi)$ are explicitly computed to be isomorphic to the space of Whittaker models $Wh_{\chi,\tau}(\pi)$.

Looking at equations (\ref{eq:PtimesGgeometrical}), (\ref{eq:Plancherelrestricted}), (\ref{eq:rank1restrictiontoparabolic}) and (\ref{eq:rank1besselplancherel}) it is natural to state the following conjecture:
\begin{conjecture} \label{conj:mainconjecture}
 For $\mu$-almost all $\pi$,
\begin{equation}
 \pi^{\ast}|_{P}  \cong  \bigoplus_{\omega \in \Omega} \int_{\tau \in \hat{M}_{\chi_{\omega}}} W_{\chi_{\omega},\tau}(\pi)\otimes \operatorname{Ind}_{M_{\chi_{\omega}}N}^{P}\overline{\chi}_{\omega}\otimes \tau^{\ast}d\eta(\tau).         \label{eq:restrictiontoparabolic}
\end{equation}
Furthermore, if $\chi$ is a generic character of $N$,  then
\begin{equation}
 L^2(N \backslash G;\chi) \cong \ \int_{\hat{G}}\int_{\hat{M}_{\chi}}  W_{\chi,\tau}(\pi)\otimes \tau^{\ast} \otimes \pi \,d\eta(\tau)d\mu(\pi), \label{eq:besselplancherel}
\end{equation}
where $\eta$ is the Plancherel measure of $M_{\chi}$, and $\mu$ is the Plancherel measure of $G$.
\end{conjecture}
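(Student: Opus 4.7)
The plan is to derive both parts of Conjecture \ref{conj:mainconjecture} by matching two independent spectral decompositions of $L^2(G)$ as a $P \times G$-module: the ``geometric'' decomposition \eqref{eq:PtimesGgeometrical} just obtained from Fourier inversion along $N$, and the decomposition furnished by Harish-Chandra's Plancherel formula \eqref{eq:Plancherelrestricted} after restricting the second copy of $G$ to $P$. Uniqueness of direct integrals over $\hat G$ will then force a $\mu$-almost everywhere equality of $\pi$-isotypic components, from which \eqref{eq:restrictiontoparabolic} falls out by reading off the first tensor factor; evaluating at a single generic character $\chi_\omega$ and applying Frobenius reciprocity together with Plancherel on $M_\chi$ should yield \eqref{eq:besselplancherel}.

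The first key step is to prove that each fibre measure $\mu_{\omega,\tau}$ appearing in \eqref{eq:PtimesGgeometrical} is absolutely continuous with respect to the Plancherel measure $\mu$ on $\hat G$, and that the abstract multiplicity space $W_{\chi_\omega,\tau}(\pi)$ may be canonically identified with the Bessel model space $Wh_{\chi_\omega,\tau}(\pi)$. For this I would mimic Wallach's wave-packet construction of the Whittaker-Plancherel measure in \cite{w:vol2}: given a tempered $\pi$ realized as a Langlands quotient of some $I_{P,\sigma_\nu}^\infty$, the main theorem of Section \ref{sec:holomorphiccontjacquet} provides canonical Bessel functionals $\lambda(\nu) \in Wh_{\chi,\tau}(I_{P,\sigma_\nu}^\infty)$ depending weakly holomorphically on $\nu$. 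Wave packets of the form $f_\phi(g) = \int \lambda(\nu)(\pi_\nu(g)v)\,\phi(\nu)\, d\nu$, integrated against Schwartz test functions $\phi$ on the tempered parameter space, produce a dense family of elements of $L^2(N \backslash G;\chi)$, and the standard $\Xi$-function estimates control $L^2$-convergence.

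Once absolute continuity is in place, the comparison becomes formal. Regrouping the right-hand side of \eqref{eq:PtimesGgeometrical} by Fubini as
\begin{equation*}
\int_{\hat G} \Bigl( \bigoplus_{\omega \in \Omega} \int_{\hat M_{\chi_\omega}} W_{\chi_\omega,\tau}(\pi) \otimes \operatorname{Ind}_{M_{\chi_\omega}N}^P(\bar\chi_\omega \otimes \tau^{\ast})\, d\eta(\tau) \Bigr) \otimes \pi \, d\mu(\pi)
\end{equation*}
and matching with \eqref{eq:Plancherelrestricted} in the category of unitary $P \times G$-modules yields \eqref{eq:restrictiontoparabolic}. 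To obtain \eqref{eq:besselplancherel}, express $L^2(N \backslash G;\chi)$ as the $\chi$-isotypic Mackey component of $L^2(G)$, apply \eqref{eq:restrictiontoparabolic} fibrewise in $\pi$, and collapse the $P$-induction by Frobenius reciprocity against $\chi$ combined with Plancherel on $M_\chi$; the finiteness of $W_{\chi,\tau}(\pi)$ when $M_\chi$ is compact then follows from Theorem \ref{thm:compact} and its tempered refinement Theorem \ref{thm:noncompact}.

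The main obstacle I anticipate is the noncompact-stabilizer case of the wave-packet construction. When $M_\chi$ is compact, so that every irreducible $\tau$ is tempered, the analytic estimates essentially reduce to those already carried out in \cite{w:vol2} for the minimal parabolic. When $M_\chi$ is noncompact, Theorem \ref{thm:noncompact} furnishes Bessel models only against tempered $\tau$, so one must separately verify that the complement of the tempered dual of $M_\chi$ contributes nothing to the spectral support of $L^2(N \backslash G;\chi)$; this should reduce, via Proposition \ref{prop:vanishinggeneral} applied fibrewise and the Bruhat-Matsuki decomposition developed for the symmetric space $M_\chi \backslash M$, to the analogous statement for $M_\chi$ acting on its own $L^2$, which is Harish-Chandra's theorem. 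The other delicate point is establishing $\tau$-uniform estimates on wave packets so that the outer direct integral over $\hat M_\chi$ may be interchanged with the inner one over $\hat G$; this I would handle by expressing both integrals as fibre integrals of a single joint Plancherel measure for $M_\chi \times G$ supported on the graph cut out by the Bessel correspondence.
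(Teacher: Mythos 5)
Your overall strategy---matching the geometric decomposition \eqref{eq:PtimesGgeometrical} against Harish-Chandra's formula \eqref{eq:Plancherelrestricted}---is the same as the paper's, but the proposal is missing the step that carries essentially all of the analytic content, and the step you substitute for it is circular. In \eqref{eq:PtimesGgeometrical} the direct integral over $\hat{G}$ sits \emph{inside} the integral over $\hat{M}_{\chi_\omega}$, with fibre measures $\mu_{\omega,\tau}$ about which nothing is known a priori, while in \eqref{eq:Plancherelrestricted} the integral over $\hat{G}$ is on the outside. You cannot ``regroup by Fubini'' and invoke uniqueness of direct integrals until you have produced a single joint decomposition valid in both orders; your closing remark that you would handle this ``by expressing both integrals as fibre integrals of a single joint Plancherel measure for $M_\chi\times G$'' assumes exactly what has to be proved. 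The paper breaks this circle by computing the restriction to $P$ of each representation occurring in \eqref{eq:explicitplancherel} \emph{directly}: Proposition \ref{prop:descompositionparabolicsquareintegrable} for the discrete series (resting on Lemmas \ref{lemma:matrixcoefficientinSchwartspace} and \ref{lemma:NL1}, which make $\int_N\chi(n)^{-1}(\phi,\pi(n)v)\,dn$ converge), and Proposition \ref{prop:descompositionparabolic} for the tempered induced representations $I_{\check{\sigma},-\nu}$, whose proof requires the Bruhat--Matsuki decomposition of $M_\chi\backslash M$ (Theorem \ref{thm:matsukidecomposition}) to split the inner product into orbit integrals (Lemma \ref{lemma:inducedfouriertransform}). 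This yields $I_{\check{\sigma},-\nu}|_P\cong\bigoplus_{\omega,w}\operatorname{Ind}_{M_{(\chi_\omega)_1}N}^{P}W_{(\chi_\omega)_1}(H_{\sigma^w})$ with the crucial feature that the measure $\eta^w_{\sigma,\chi}$ in \eqref{eq:taudecomposition} is independent of $\nu$, which is what legitimizes the interchange of the $\nu$- and $\tau$-integrations and, upon comparison with \eqref{eq:PtimesGgeometrical}, delivers the absolute continuity of $\mu_{\omega,\tau}$ with respect to $\mu$ and of $\eta^w_{\sigma,\chi}$ with respect to $\eta$. None of this appears in your proposal.

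A second, smaller point: your ``first key step'' of identifying $W_{\chi_\omega,\tau}(\pi)$ with $Wh_{\chi_\omega,\tau}(\pi)$ via wave packets is not needed for the conjecture as stated---the $W_{\chi,\tau}(\pi)$ there are abstract multiplicity spaces---and it is not available in the generality you need. The wave-packet argument (holomorphic continuation of Jacquet integrals, Fourier transform of a wave packet) is precisely what the paper defers to Sections \ref{sec:fouriertransform} and \ref{sec:explicitbesselplancherel}, and carries it out only when $M_\chi$ is compact. Your sketch for the noncompact case (reducing the non-tempered part of $\hat{M}_\chi$ to ``Harish-Chandra's theorem for $M_\chi$ acting on its own $L^2$'') does not correspond to a workable argument; in the paper the fact that only tempered $\tau$ contribute is a \emph{consequence} of the comparison (absolute continuity of $\eta^w_{\sigma,\chi}$ with respect to the Plancherel measure of $M_\chi$), not an input to it.
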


Proving this conjecture will be the goal of this chapter. We will start in section \ref{sec:asymptotic} where we will restate the asymptotic expansion of certain matrix coefficients developed by Wallach \cite{w:vol1}, \cite{w:vol2}. Then, in section \ref{sec:Schwartz}, we will define the Schwartz space, $\C(N\backslash G;\chi)$, of $L^{2}(N\backslash G;\chi)$, analogous to the Schwartz space, $\C(G)$, of $L^{2}(G)$, and define a ``Fourier transform'' map between $\C(G)$ and $\C(N\backslash G;\chi)$. In section \ref{sec:besselplancherel} we will use the results developed in the previous two sections to prove equations (\ref{eq:restrictiontoparabolic}) and (\ref{eq:besselplancherel}). This is the \emph{generalized Bessel-Plancherel theorem}, generalized in the sense that the group $M_{\chi}$ may not be compact.

In the case where $M_{\chi}$ is compact, we can give a more explicit description of the isomorphism appearing in equation (\ref{eq:besselplancherel}). In section \ref{sec:fouriertransform} we will use the holomorphic continuation of the generalized Jacquet integrals proved in section \ref{sec:holomorphiccontjacquet} to calculate the Fourier transform of a wave packet, in the case where $M_{\chi}$ is compact. This calculations will allow us to ``push forward'' the decomposition of $\C(G)$ to obtain a decomposition of $\C(N\backslash G;\chi)$ with explicit intertwiner operators. Finally in section \ref{sec:explicitbesselplancherel} we will use this results, together with the asymptotic expansions developed in section \ref{sec:asymptotic}, to identify the multiplicity spaces $W_{\chi,\tau}(\pi)$ with the space of Bessel models $Wh_{\chi,\tau}(\pi)$.

The chapter closes with three appendixes that, although not necessary for the main results of this chapter, are related to the material discussed here. In appendix \ref{sec:representationparabolics} we discuss the basic representation theory of parabolic subgroups. Although this results are known from the work of Wolf et.\ al.\ \cite{lw:parabolic},  it is convenient to have them here to set the notation, and for easy reference. Appendix 
\ref{sec:plancherelparabolic} discusses the Plancherel measure of the space $L^{2}(P)$. Although not directly related to the Bessel-Plancherel measure, some of the ideas discussed there are used through this chapter. Finally in appendix \ref{sec:preliminary} it is proved that the support of the Whittaker-Plancherel measure with respect to any parabolic subgroup is contained in the tempered spectrum. This result may be useful in future calculations of other Whittaker-Plancherel measures. At least they give us hope that this Whittaker-Plancherel measures may not be ``too awful''.

\section{The asymptotic expansion of certain matrix coefficients}\label{sec:asymptotic}

Let $(\pi,H)$ be an admissible, finitely generated, Hilbert representation of $G$. Let $V$ be the space of smooth vectors of $H$. Given $v\in V$ and $\lambda \in V'$ we can define a smooth function $c_{\lambda,v}(g)=\lambda(\pi(g)v)$ for $g \in G$. This function is called a matrix coefficient function. In this section we will describe some asymptotic expansions of certain matrix coefficients, $c_{\lambda,v}$, where $\lambda$ satisfies certain properties. The exposition of this sections follows very closely the material developed in \cite{w:vol1} and \cite{w:vol2}. I decided to put it here as it is convenient to develop this results together, but if the reader so desires he can also look at the original exposition by looking at the references in the appropriate places.

Let's start by specifying what we mean by an asymptotic expansion.

\begin{dfn}
 By a \emph{formal exponential polynomial power series} we will mean a formal sum of the form
\begin{equation}
 \sum_{1\leq j\leq r} e^{z_{j}t}\sum_{n\geq 0}p_{j,n}(t)e^{-nt} \label{eq:formalpowerseries}
\end{equation}
where $p_{j,n}$ is a polynomial in $t$ for each $j$, $n$.
\end{dfn}
The point here is that we do not care if the series converges. Fix such a formal series, then we may rearrange it in the following way:
\begin{equation}
 \sum_{j\geq 1} e^{u_{j}t}p_{u_{j}}(t),
\end{equation}
with $u_{j}\in\{z_{k}-n\, | \,\mbox{ $1 < k < r$, $n > 0$, $n\in \mathbb{N}$}\}$, $\operatorname{Re} u_{1} \geq \operatorname{Re} u_{2}, \ldots$, and $p_{u_{j}}$ is the sum of the $p_{k,n}$ with $z_{k}-n=u_{j}$. We will call $N$ a gap of the series if $u_{N} > u_{N+1}$.

If $f$ is a function on $\mathbb{R}$, we say that $f$ is asymptotic as $t\rightarrow +\infty$ to the formal exponential polynomial power series given as in (\ref{eq:formalpowerseries}) if, for each gap $N$, there exists constants $C$ and $\epsilon$, depending on $N$, such that
\[
 |f(t)-\sum_{j\leq N}e^{u_{j}t}p_{u_{j}}(t)| \leq Ce^{(\operatorname{Re} u_{N}-\epsilon)t} \qquad \mbox{for $t\geq 1$}.
\]
Notice that if $N$ is a gap then
\[
 \lim_{t\rightarrow +\infty} e^{-(\operatorname{Re} u_{N})t}|f(t)-\sum_{j\leq N}e^{u_{j}t}p_{u_{j}}(t)|=0.
\]
\begin{lemma}
 Let
\[
 \sum_{1\leq j\leq r} e^{z_{j}t}\sum_{n\geq 0}p_{j,n}(t)e^{-nt}
\]
and
\[
 \sum_{1\leq j\leq s} e^{w_{j}t}\sum_{n\geq 0}q_{j,n}(t)e^{-nt}
\]
be two formal exponential polynomial series such that $z_{j}-z_{k}$, (respectively $w_{j}-w_{k}$) is not an integer for $j\neq k$ and $p_{j,0}\neq 0$, $q_{j,0}\neq 0$. If both formal exponential polynomial series are asymptotic to the same function $f$, then $r=s$, and after relabeling $w_{j}=z_{j}$, $p_{j,n}=q_{j,n}$.
\end{lemma}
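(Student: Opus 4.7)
The plan is to reduce the statement to a single uniqueness lemma: \emph{a formal exponential polynomial series asymptotic to zero has all coefficient polynomials equal to zero}. Granting that, the theorem will follow from matching exponents and coefficients in the difference of the two series, using the non-integer hypothesis.

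I would form the formal difference series
\[
D(t) = \sum_{j,n} p_{j,n}(t) e^{(z_j - n)t} - \sum_{k,m} q_{k,m}(t) e^{(w_k - m)t}.
\]
Since $z_j - z_l \notin \mathbb{Z}$ for $j \neq l$, the map $(j,n) \mapsto z_j - n$ is injective (similarly for $(k,m) \mapsto w_k - m$), so grouping $D$ by exponent yields at most one $p_{j,n}$ and one $q_{k,m}$ per exponent. Since both series are asymptotic to $f$, the difference $D$ is asymptotic to $0$.

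For the key lemma, I would arrange $D$ as $\sum_j e^{u_j t} P_j(t)$ with $\operatorname{Re} u_j$ non-increasing and argue by contradiction: suppose $k$ is the smallest index with $P_k \neq 0$. Because every $u_j$ lies in $\{z_i - n\} \cup \{w_i - m\}$, only finitely many $u_j$ share the real part $\operatorname{Re} u_k$; let $N$ be the largest such index, so $N$ is a gap. The defining gap estimate, combined with $P_j \equiv 0$ for $j < k$, yields
\[
\left| \sum_{k \leq j \leq N} e^{u_j t} P_j(t) \right| \leq C e^{(\operatorname{Re} u_N - \epsilon) t}.
\]
Writing $u_j = \operatorname{Re} u_k + i\beta_j$ with distinct reals $\beta_j$, factoring out $e^{(\operatorname{Re} u_k)t}$, and dividing by $t^d$ (where $d = \max \deg P_j$ and $c_j$ is the coefficient of $t^d$ in $P_j$) produces
\[
\left| \sum_{k \leq j \leq N} c_j e^{i \beta_j t} + O(1/t) \right| \leq C e^{-\epsilon t}/t^d.
\]
Letting $t \to \infty$ forces the trigonometric polynomial $\sum c_j e^{i\beta_j t}$ to tend to zero. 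The standard averaging identity $\frac{1}{T} \int_0^T h(t) e^{-i\beta_l t}\, dt \to c_l$ then forces every $c_j = 0$, contradicting the maximality of $d$. Hence $D \equiv 0$.

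Finally, $D \equiv 0$ means every $(j,n)$ with $p_{j,n} \neq 0$ must be matched by some $(k,m)$ with $z_j - n = w_k - m$ and $q_{k,m} = p_{j,n}$. Applied at $n = 0$, combined with $q_{k,0} \neq 0$ and a symmetric argument using the non-integer hypothesis on $w_k - w_l$, this yields $m = 0$ and a bijection $\sigma$ with $w_{\sigma(j)} = z_j$; after relabeling one takes $\sigma = \mathrm{id}$. The non-integer hypothesis on $z_j - z_l$ then forces $k = j$ and $m = n$ in every subsequent matching, giving $p_{j,n} = q_{j,n}$ throughout. The main difficulty is the key lemma: it hinges on carefully exploiting the gap estimate together with the classical uniqueness of trigonometric polynomials with distinct real frequencies.
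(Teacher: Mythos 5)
Your argument is essentially correct, and it is worth noting that the thesis itself offers no proof of this lemma: it simply cites Lemma 4.A.1.2 of Wallach's \emph{Real Reductive Groups I}. What you have written is, in substance, the standard argument behind that citation: reduce to showing that a formal exponential polynomial series asymptotic to $0$ has all coefficients zero, isolate the top nonvanishing real-part level, and kill the leading trigonometric polynomial $\sum_j c_j e^{i\beta_j t}$ by Ces\`aro averaging against $e^{-i\beta_l t}$ (the frequencies $\beta_j$ are distinct because the exponents $u_j$ at a fixed real part are distinct). The final bookkeeping is also right: non-integrality of $z_j-z_k$ and of $w_j-w_k$ makes $(j,n)\mapsto z_j-n$ injective, the two-sided matching at $n=0$ together with $p_{j,0},q_{j,0}\neq 0$ forces $n'+m=0$, hence $m=0$ and $w_{\sigma(j)}=z_j$ for a bijection $\sigma$, and the remaining identifications $p_{j,n}=q_{j,n}$ follow.

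The one step you should not pass over is the assertion that ``the difference $D$ is asymptotic to $0$,'' i.e.\ the estimate $\bigl|\sum_{j\le N}e^{u_jt}P_j(t)\bigr|\le Ce^{(\operatorname{Re}u_N-\epsilon)t}$ at the gap $N$ you use. The subtlety is that a gap of the merged exponent set need not sit at a gap of either original series: truncating $S_1$ at real part $\ge\rho:=\operatorname{Re}u_N$ ends at a gap $N_1$ of $S_1$ whose last exponent may have real part $\rho_1>\rho$, and the defining estimate there only provides $|f-S_{1,\le N_1}|\le C_1e^{(\rho_1-\epsilon_1)t}$ for some unspecified $\epsilon_1>0$, which need not beat $e^{(\rho-\epsilon)t}$. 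The repair is short but should be recorded: truncate each $S_i$ one level lower, at the largest exponent real part $\rho_i''<\rho$ (such a level exists because $n$ runs over all of $\mathbb{Z}_{\ge 0}$). The gap estimate at that level gives
\[
|f-S_{i,\ge\rho_i''}(t)|\le C\,e^{(\rho_i''-\epsilon)t},
\]
while the finitely many terms of $S_i$ with exponent real part in $[\rho_i'',\rho)$ are $O\bigl((1+t)^{d}e^{\rho_i''t}\bigr)$; adding these back yields $|f-S_{i,\ge\rho}(t)|\le C'e^{(\rho-\epsilon')t}$ for some $\epsilon'>0$, and subtracting the two bounds gives the estimate you invoke. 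With that inserted, the contradiction argument and the averaging step go through exactly as you wrote them.
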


This is precisely lemma 4.A.1.2 in \cite{w:vol1}.

\begin{dfn}
 Let $(\pi,H)$ be as above, and let $(P_{\circ},A_{\circ})$ be a minimal $p$-pair for $G$. We say that $\lambda \in V'$ is tame with respect to $(P_{\circ},A_{\circ})$ if there exists $\delta\in \mathfrak{a}_{\circ}'$ such that
\[
 |X\lambda(\pi(a)v)|\leq C_{X,v}a^{\delta},
\]
for all $X\in U(\mathfrak{g})$, $v\in V$ and $a\in Cl(A_{\circ}^{+})$.
\end{dfn}
The important point in this definition is that $\delta$ doesn't depend on the element $X\in U(\mathfrak{g})$. Observe that if $\lambda \in (\check{H})^{\infty}$, then $\lambda$ is tame for all minimal $p$-pairs $(P_{\circ},A_{\circ})$. The following proposition provides more examples of tame linear functionals.

\begin{proposition}
 Let $(\pi,H)$ be an admissible, finitely generated, Hilbert representation of $G$. Let $P=NAM$ be the Siegel parabolic described in section \ref{sec:classification}, and let $\chi$ be a character of $N$ whose stabilizer, $M_{\chi}$, is compact. If $\lambda \in Wh_{\chi}(V)(\check{\tau})$ (the $\check{\tau}$-isotypic component) for some $\tau \in \hat{M}_{\chi}$, then $\lambda$ is tame for every minimal $p$-pair $(P_{\circ},A_{\circ})$ such that $A\subset A_{\circ}$.
\end{proposition}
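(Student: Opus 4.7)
The strategy is to combine Casselman's subrepresentation theorem with the characterization of Bessel functionals on induced representations already established in this chapter; this realizes $\lambda$ explicitly as (the holomorphic continuation of) a Jacquet integral, which can then be estimated directly. A preliminary observation is that, since $M_{\chi}$ is compact and $\check\tau$ is finite dimensional, the $M_{\chi}$-orbit of $\lambda$ spans a finite-dimensional $\check\tau$-isotypic subspace of $Wh_{\chi}(V)$; this finite-dimensionality is what will ultimately allow the growth exponent $\delta\in\mathfrak{a}_{\circ}'$ in the tameness estimate to be chosen uniformly in $X\in U(\mathfrak{g})$.

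First I would apply Casselman's subrepresentation theorem to the Harish-Chandra module $V_{K}$ to embed it into $I^{\infty}_{P_{\circ},\sigma_{\circ},\nu_{\circ},K}$ for some minimal parabolic $P_{\circ}=M_{\circ}A_{\circ}N_{\circ}$ (with $A\subset A_{\circ}$), some finite-dimensional $\sigma_{\circ}$ of $M_{\circ}$, and some $\nu_{\circ}\in(\mathfrak{a}_{\circ})'_{\mathbb{C}}$. By Casselman-Wallach this lifts to a continuous $G$-equivariant topological embedding $V\hookrightarrow I^{\infty}_{P_{\circ},\sigma_{\circ},\nu_{\circ}}$, whose continuous dual yields a surjection on the distributional globalizations; through this surjection $\lambda$ lifts to a Bessel functional $\tilde\lambda\in Wh_{\chi}(I^{\infty}_{P_{\circ},\sigma_{\circ},\nu_{\circ}})$, still in the $\check\tau$-isotypic component (the lift is $M_{\chi}$-equivariant because $M_{\chi}$ is compact, so isotypic projections are bounded). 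Theorem \ref{thm:compact} then realizes $\tilde\lambda$ explicitly as
\[
\tilde\lambda(f_{\nu_{\circ}})=\mu\bigl(J^{\chi}_{\sigma_{\circ},\nu_{\circ}}(f)\bigr),\qquad f\in I^{\infty}_{P_{\circ},\sigma_{\circ}},
\]
for a uniquely determined $\mu\in V_{\sigma_{\circ}}'$ lying in the (finite-dimensional) $\check\tau$-isotypic subspace, where $J^{\chi}_{\sigma_{\circ},\nu_{\circ}}$ is interpreted via holomorphic continuation when necessary.

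With this formula in hand I would estimate directly. For $v\in V$ lifted to $f_{\nu_{\circ}}\in I^{\infty}_{P_{\circ},\sigma_{\circ},\nu_{\circ}}$ and $a\in Cl(A_{\circ}^{+})$, write $a=a_{A}a_{M}$ with $a_{A}\in Cl(A^{+})$ and $a_{M}\in Cl(A_{M}^{+})$, where $A_{M}:=A_{\circ}\cap M$. Then
\[
\lambda(\pi(a)v)=\mu\Bigl(\int_{N}\chi(n)^{-1}f_{\nu_{\circ}}(w^{M}na)\,dn\Bigr),
\]
and after the change of variables $n\mapsto a_{A}na_{A}^{-1}$ on the abelian group $N$ (which introduces a Jacobian $a_{A}^{-2\rho_{N}}$) and using the $a^{\nu_{\circ}+\rho}$-equivariance of induced sections, the integral factors into an $a_{A}$-translate of the Jacquet integral on $f$ times an explicit $A$-weight. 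This produces an estimate $|\lambda(\pi(a)v)|\leq C(v)\,a^{\delta}$ with $\delta$ depending only on $\nu_{\circ}$, $\rho$ and $\rho_{N}$. Since replacing $v$ by $\pi(X)v$ for $X\in U(\mathfrak{g})$ only changes the lifted section and thus the constant $C(v)$ but not the exponent $\delta$, this is precisely tameness.

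The main obstacle is ensuring that the estimate in the previous paragraph is genuinely uniform in $v$ --- equivalently, that the family of continuous functionals $\{\mu\circ J^{\chi}_{\sigma_{\circ},\nu_{\circ}}\circ\pi(a):a\in Cl(A_{\circ}^{+})\}$ on $I^{\infty}_{P_{\circ},\sigma_{\circ}}$ is equicontinuous once rescaled by $a^{-\delta}$. In the half-plane of $\nu_{\circ}$ where the Jacquet integral converges absolutely this is immediate from lemma \ref{lemma:hyperplane}; outside that region one invokes the explicit presentation of the holomorphic continuation via the Kolk-Varadarajan transverse symbol, which writes the continued functional as a finite sum of pieces each satisfying the same a priori bound with constants controlled by the fixed finite-dimensional $\check\tau$-isotypic subspace containing $\mu$.
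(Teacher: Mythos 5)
Your strategy---realize $\lambda$ via Casselman's theorem and Theorem \ref{thm:compact} as a (continued) Jacquet integral and then estimate the integral---is genuinely different from the paper's proof, but as written it has two real gaps. First, the direction of the Casselman presentation matters: an embedding $V\hookrightarrow I^{\infty}_{P_{\circ},\sigma_{\circ},\nu_{\circ}}$ dualizes to a surjection on \emph{all} continuous functionals (Hahn--Banach), but the extension of $\lambda$ it provides has no reason to be $\chi$-equivariant, and you cannot average over the noncompact group $N$ to fix this; your parenthetical only addresses $M_{\chi}$-equivariance, which is not the hard part. The correct move is to present $V$ as a \emph{quotient} of an induced representation (as the paper does elsewhere via Casselman--Wallach), so that $\tilde\lambda=\lambda\circ q$ is automatically a Bessel functional. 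Second, and more seriously, the parameter $\nu_{\circ}$ is dictated by $V$ and will generically lie outside the half-plane where $J^{\chi}_{\sigma_{\circ},\nu_{\circ}}$ converges, so the integral formula you differentiate under is simply unavailable; the assertion that the Kolk--Varadarajan transverse-symbol presentation of the continued functional ``satisfies the same a priori bound'' is exactly the estimate you need to prove, not something that machinery supplies (in the paper it is used only to show injectivity of $\Phi_{\sigma,\nu}$, and the continuation itself is built from shift equations, from which growth in $a$ is not extracted). Even inside the convergent range your change of variables replaces $\chi$ by $a^{-1}\cdot\chi$, a family of characters that degenerates toward the boundary of the open $P$-orbit as $a\to\infty$ in $Cl(A_{\circ}^{+})$, so uniformity of $\mu\circ J^{a^{-1}\cdot\chi}$ over this family is an additional unverified claim.

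For contrast, the paper's proof is elementary and avoids realizing $\lambda$ at all. It writes $U(\mathfrak{g})=U(\mathfrak{a}\oplus\mathfrak{a}_{M}\oplus\mathfrak{n}_{M})\,U(\bar{\mathfrak{n}})\,U(\mathfrak{m}_{\chi}\oplus\mathfrak{n})$ and treats the three factors separately: $U(\mathfrak{m}_{\chi}\oplus\mathfrak{n})$ preserves the finite-dimensional space $Wh_{\chi}(V)(\check{\tau})$ (this is where compactness of $M_{\chi}$ enters and where the exponent $\delta$ is fixed uniformly); the factors $U(\mathfrak{a}_{\circ})$, $U(\mathfrak{n}_{M})$ and the root spaces of $\bar{\mathfrak{n}}\cap\mathfrak{n}_{\circ}$ only improve the exponent after conjugating by $a$; and for $X$ in a root space $(\bar{\mathfrak{n}}\cap\bar{\mathfrak{n}}_{\circ})_{-\beta}$, which naively costs a factor $a^{\beta}$, one first uses $Y\in(\mathfrak{n}\cap\mathfrak{n}_{\circ})_{\beta}$ with $d\chi(Y)\neq 0$ and the $N$-equivariance of $\lambda$ to sharpen the base estimate to $C\,a^{\delta-\beta}$, exactly cancelling the loss. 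This compensation trick, which is where genericity of $\chi$ is used, is the key idea your proposal does not capture; if you want to salvage your route you would need to supply the growth estimate for the holomorphically continued Jacquet functional, which is essentially of the same difficulty as the proposition itself.
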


\begin{proof}
 By the Gelfand-Neimark decomposition $\mathfrak{g}=\bar{\mathfrak{n}}\oplus\mathfrak{m}\oplus\mathfrak{a}\oplus\mathfrak{n}$. On the other hand, according to the Iwasawa decomposition, $\mathfrak{m}=\mathfrak{m}_{\chi}\oplus\mathfrak{a}_{M}\oplus\mathfrak{n}_{M}$,  where $\mathfrak{m}_{\chi}=Lie(M_{\chi})$, and $\mathfrak{a}_{M}\oplus\mathfrak{n}_{M}\subset Lie(P_{\circ})$. Therefore
$\mathfrak{g}=\bar{\mathfrak{n}}\oplus(\mathfrak{m}_{\chi}\oplus\mathfrak{a}_{M}\oplus\mathfrak{n}_{M})\oplus\mathfrak{a}\oplus \mathfrak{n}$, and hence
\begin{equation}
 U(\mathfrak{g})=U(\mathfrak{a}\oplus\mathfrak{a}_{M}\oplus\mathfrak{n}_{M})U(\bar{\mathfrak{n}})U(\mathfrak{m}_{\chi}\oplus\mathfrak{n}). \label{eq:universalenveloping}
\end{equation}
The plan for the proof of this proposition is the following: we will show that if $\lambda \in Wh_{\chi}(V)(\check{\tau})$, then $U(\mathfrak{m}_{\chi}\oplus\mathfrak{n})\lambda \subset Wh_{\chi}(V)(\check{\tau})$, and there exists $\delta \in \mathfrak{a}_{\circ}'$ such that $|X\lambda (a\cdot v)| \leq C_{X,v}a^{\delta}$, for all $a\in Cl(A_{\circ}^{+})$. We will also show that if $\lambda \in Wh_{\chi}(V)(\check{\tau})$ satisfies that $|\lambda (a\cdot v)| \leq C_{v}a^{\delta}$ for all $a\in Cl(A_{\circ}^{+})$, and $X\in U(\bar{\mathfrak{n}})$, then $|X \lambda (a\cdot v) |\leq C_{X,v}a^{\delta}$, for all $a\in Cl(A_{\circ}^{+})$. Observe that in this case $X\lambda$ may no longer be in $Wh_{\chi}(V)(\check{\tau})$. Finally, if $|\lambda (a\cdot v)| \leq C_{v}a^{\delta}$ for all $a\in Cl(A_{\circ}^{+})$, and $X\in U(\mathfrak{a}\oplus\mathfrak{a}_{M}\oplus\mathfrak{n}_{M})$, we will show that $|X \lambda (a\cdot v) |\leq C_{X,v}a^{\delta}$, for all $a\in Cl(A_{\circ}^{+})$. It is then clear that all this statements, together with equation (\ref{eq:universalenveloping}), are enough to prove the proposition.
 
Given $\lambda \in Wh_{\chi}(V)(\check{\tau})$, set $W_{\lambda}=U(\mathfrak{m}_{\chi}\oplus \mathfrak{n})\lambda$ and observe that this is a finite dimensional subspace of $Wh_{\chi}(V)(\check{\tau})$. Let $\{\lambda_{1},\ldots,\lambda_{k}\}$ be a basis of $W_{\lambda}$. Then for any given $v\in V$, and every $1\leq i \leq k$, there exists $\delta_{i}\in\mathfrak{a}_{\circ}^{\prime}$, and a constant $C_{i}$, such that
\[
 |\lambda_{i}(a\cdot v)|\leq C_{i}a^{\delta_{i}} \qquad \mbox{for all $a\in Cl(A_{\circ}^{+}).$}
\]
 Let $\delta \in \mathfrak{a}_{\circ}^{\prime}$ be such that $a^{\delta_{i}} \leq a^{\delta}$, for all $1\leq i \leq k$, $a\in Cl(A_{\circ}^{+})$. If $X\in  U(\mathfrak{m}_{\chi}\oplus \mathfrak{n})$, then there exists some constants $b_{i}(X)$ such that
\begin{equation}
 |X\lambda(\pi(a)v)|=|\sum_{i=1}^{k}b_{i}(X)\lambda_{i}(\pi(a)v)|\leq \sum_{i=1}^{k}|b_{i}(X)|C_{i}a^{\delta_{i}} \leq Ca^{\delta} \label{eq:mchin}
\end{equation}
for some constant $C$.

Let $\lambda\in Wh_{\chi}(V)(\check{\tau})$ be such that $|\lambda (a\cdot v) |\leq C_{v}a^{\delta}$ for all $a\in Cl(A_{\circ}^{+})$. Since $A\subset A_{\circ}$, we have that $\bar{\mathfrak{n}}=\bar{\mathfrak{n}}\cap\mathfrak{n}_{\circ} \oplus \bar{\mathfrak{n}}\cap\bar{\mathfrak{n}}_{\circ}$. Let $X\in (\bar{\mathfrak{n}}\cap\mathfrak{n}_{\circ})_{\alpha}$ for some root $\alpha \in \Phi(P_{\circ},A_{\circ})$. Then
\begin{eqnarray}
 |X\lambda(a \cdot v)| & = & |\lambda(X^{T}a \cdot v)|=|\lambda(a Ad(a^{-1})(X^{T}) \cdot v)|=|\lambda(a (a^{-\alpha})X^{T} \cdot v)|\nonumber \\
& \leq &  C_{X,v}a^{\delta-\alpha}\leq C_{X,v}a^{\delta}. \label{eq:barn}
\end{eqnarray}
Now let $X\in (\bar{\mathfrak{n}}\cap\bar{\mathfrak{n}}_{\circ})_{-\beta}$ for some root $\beta \in \Phi(P_{\circ},A_{\circ})$. In this case we can always find $Y\in (\mathfrak{n}\cap\mathfrak{n}_{\circ})_{\beta}$ such that $\chi(Y)\neq 0$. For such a $Y$
\begin{eqnarray}
 |Y\lambda(a \cdot v)|& = & |\lambda(Y^{T}a \cdot v)|=|\lambda(a Ad(a^{-1})(Y^{T}) \cdot v)| \nonumber \\ 
|\chi(Y)| |\lambda(a\cdot v)|& =  & |\lambda(a(a^{-\beta})Y^{T}\cdot v)|\leq C_{Y,v}a^{\delta-\beta} \nonumber \\
|\lambda (a\cdot v)| &\leq & C_{Y,v}^{\prime}a^{\delta-\beta}, \label{eq:estimatebarndirection}
\end{eqnarray}
where in the last step we are using that $\chi(Y)\neq 0$. Using this improved estimate we see that
\begin{eqnarray}
 |X\lambda(a \cdot v)|& = & |\lambda(X^{T}a \cdot v)|=|\lambda(a Ad(a^{-1})(X^{T}) \cdot v)| \nonumber \\ 
  & =  & |\lambda(a(a^{\beta})X^{T}\cdot v)|\leq C_{X,v}^{\prime}a^{\delta-\beta + \beta} \leq  C_{X,v}^{\prime}a^{\delta}. \label{eq:barnnormalncirc}
\end{eqnarray}
 Since $\bar{\mathfrak{n}}$ is a direct sum of spaces of the form $(\bar{\mathfrak{n}}\cap\mathfrak{n}_{\circ})_{\alpha}$ and $(\bar{\mathfrak{n}}\cap\bar{\mathfrak{n}}_{\circ})_{-\beta}$ with $\alpha, \beta \in \Phi(P_{\circ},A_{\circ})$, from (\ref{eq:barn}) and (\ref{eq:barnnormalncirc}) we conclude that, if $X\in U(\bar{\mathfrak{n}})$, then $|X\lambda(a\cdot v)|\leq C_{X,v} a^{\delta}$, for all $a\in Cl(A_{\circ}^{+})$.

Now assume that $\lambda$ is a linear functional such that $|\lambda (a\cdot v) |\leq C_{v}a^{\delta}$ for all $a\in Cl(A_{\circ}^{+})$, and let $X\in U(\mathfrak{a}_{\circ})=U(\mathfrak{a}\oplus \mathfrak{a}_{M})$. Then
\begin{equation}
 |X \lambda(\pi(a)v)|=|\lambda(\pi(X^{T})\pi(a)v)|=|\lambda(\pi(a)\pi(X^{T})v)|\leq C_{X,v}a^{\delta}. \label{eq:estimateacirc}
\end{equation}
Finally, let $X\in (\mathfrak{n}_{M})_{\alpha}$ for some root $\alpha \in \Phi(P_{\circ},A_{\circ})$. Then
\begin{eqnarray}
 |X\lambda(a \cdot v)| & = & |\lambda(X^{T}a \cdot v)|=|\lambda(a Ad(a^{-1})(X^{T}) \cdot v)|=|\lambda(a(a^{-\alpha})X^{T}\cdot  v)|\nonumber \\
& \leq & C_{X,v}a^{\delta-\alpha}\leq C_{X,v}a^{\delta}.\label{eq:estimatenM}
\end{eqnarray}
Since $\mathfrak{n}_{M}$ is a direct sum of its weight spaces, from equations (\ref{eq:estimateacirc}) and (\ref{eq:estimatenM}) we conclude that $|X\lambda(a \cdot v)| \leq  C_{X,v}a^{\delta}$ for all $X\in U(\mathfrak{a}\oplus \mathfrak{a}_{M}\oplus \mathfrak{n}_{M})$. We have thus proved all the statements that we made at the beginning of the proof, so we are done.
\end{proof}

Let $K\subset G$ be a maximal compact subgroup. If $(\pi, V)$ is as above, we will denote by $V_{K}$ the space of $K$-finite vectors of $V$. Let $F_{1}$ be a subset of $\Phi(P_{0},A_{0})$, $(P_{1},A_{1})$ be the corresponding standard $p$-pair, and let $\bar{P}_{1}=M_{1}A_{1}\bar{N_{1}}$ be the parabolic subgroup opposite to $P_{1}$. Set 
$$
E(P_{1},V)=\{\mu\in (\mathfrak{a}_{1})_{\mathbb{C}}' \, | \, (V_{K}/\bar{\mathfrak{n}}_{1}V_{K})_{\mu}\neq 0 \},
$$
where $(V_{K}/\bar{\mathfrak{n}}_{1}V_{K})_{\mu}$ is the generalized $\mu$-weight space of $V_{K}/\bar{\mathfrak{n}}_{1}V_{K}$, i.e., there exists $d\geq 1$ such that for all $H\in \mathfrak{a}_{1}$
\[
 (H-\mu(H))^{d}(V_{K}/\bar{\mathfrak{n}}_{1}V_{K})_{\mu}=0.
\]
Now assume that $\Phi(P_{0},A_{0})=\{\alpha_{1},\ldots, \alpha_{r}\}$. Define $H_{1},\ldots H_{r}\in \mathfrak{a}_{0}$ by $\alpha_{i}(H_{j})=\delta_{i,j}$. Then we can define $\Lambda_{V}\in \mathfrak{a}_{0}'$ by
\[
 \Lambda_{V}(H_{j})=\max\{\operatorname{Re} \mu(H_{j})\, | \, \mu \in E(P_{0},V)\}.
\]

The following theorem provides an asymptotic expansion for the matrix coefficient function $c_{\lambda,v}$, where $v\in V$ and $\lambda$ is a tame linear functional, and it's essentially a combination of theorems 15.2.4 and 15.2.5 in \cite{w:vol2}.

\begin{theorem}\label{thm:asymptoticexpansion}
  Let $(\pi,H)$ be an admissible, finitely generated, Hilbert representation of $G$, and let $(P_{1},A_{1})$ be a standard $p$-pair with respect to the minimal $p$-pair $(P_{\circ},A_{\circ})$. If $\lambda \in V'$ is tame with respect to $(P_{\circ},A_{\circ})$ then
\begin{description}
 \item[i)] There exists $d \geq 0$ such that if $v\in V$ then for all $a\in Cl(A_{0}^{+})$
\[
 |\lambda(a\cdot v)| \leq (1+\log \|a\|)^{d}a^{\Lambda_{V}}\sigma_{\lambda_{V}}(v),
\]
for some continuous seminorm, $\sigma_{\lambda_{V}}$, of $V$.
\item[ii)] If $H\in \mathfrak{a_{1}}^{+}$, $m\in M_{1}$ and $v\in V$ then
\[
 \lambda(\exp(tH)m\cdot v)\sim \sum_{\mu\in E(P_{1},V)}e^{t\mu(H)}\sum_{Q\in L_{1}^{+}}e^{-tQ(H)}p_{\lambda,\mu,Q}(tH,m,v)
\]
as $t\rightarrow \infty$, where $p_{\lambda,\mu,Q}:\mathfrak{a_{1}}\times M_{1}\times V\rightarrow \mathbb{C}$ is a function that is polynomial in $\mathfrak{a_{1}}$, real analytic in $M_{1}$ and linear in $V$, and 
$$
L_{1}^{+}=\{\sum_{j}n_{j}\alpha_{j}\, | \, \mbox{$\alpha_{j}\in \Phi(P_{1},A_{1})$ and $n_{j}$ is a nonnegative integer}\}.
$$
\end{description}
 \end{theorem}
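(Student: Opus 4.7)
The plan is to reduce the theorem to the Casselman--Milicic theory of asymptotic expansions for matrix coefficients, handling the tameness hypothesis carefully to replace the usual smooth dual condition by the weaker condition that $\lambda$ grows polynomially on $\mathrm{Cl}(A_\circ^+)$ uniformly in the differential operators applied.

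First, for part (i), I would exploit the decomposition of $U(\mathfrak{g})$ used in the proof of the previous proposition, $U(\mathfrak{g})=U(\mathfrak{a}\oplus\mathfrak{a}_{M}\oplus\mathfrak{n}_{M})U(\bar{\mathfrak{n}})U(\mathfrak{m}_{\chi}\oplus\mathfrak{n})$, but now adapted to $(P_\circ,A_\circ)$. The tameness of $\lambda$ gives a bound $|\lambda(av)|\leq C_v a^\delta$ with \emph{some} $\delta\in\mathfrak{a}_\circ'$, and my task is to sharpen $\delta$ to $\Lambda_V$. The key step is the Casselman--Wallach--Milicic analysis of the Jacquet module $V_K/\bar{\mathfrak{n}}_\circ V_K$: since $V$ is admissible and finitely generated, this module is finite dimensional and its generalized $\mathfrak{a}_\circ$-eigenvalues are precisely $E(P_\circ,V)$. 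Pairing $\lambda$ with elements modulo $\bar{\mathfrak{n}}_\circ V$ and exploiting the tameness (which ensures derivatives stay in the same growth class) shows that the initial $\delta$ cannot exceed the exponents coming from $E(P_\circ,V)$, yielding the exponent $\Lambda_V$. The factor $(1+\log\|a\|)^d$ absorbs the possible nilpotent part of the $\mathfrak{a}_\circ$-action on the Jacquet module, and the continuous seminorm $\sigma_{\Lambda_V}$ comes from the smooth Fr\'echet structure via Casselman--Wallach.

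For part (ii), the strategy is a differential equation argument on $\phi(t,m,v):=\lambda(\exp(tH)m\cdot v)$. Decompose $\mathfrak{g}=\bar{\mathfrak{n}}_1\oplus\mathfrak{m}_1\oplus\mathfrak{a}_1\oplus\mathfrak{n}_1$ and use $\mathrm{Ad}(\exp(-tH))X=e^{\mp t\alpha(H)}X$ on root vectors. Since $V$ is finitely generated, one can find finitely many vectors $v_1,\ldots,v_N$ so that the $\mathfrak{a}_1$-action on $V$ modulo $\bar{\mathfrak{n}}_1V$ is described by a finite-dimensional system with generalized eigenvalues in $E(P_1,V)$; pulling this back via $\mathrm{Ad}(\exp(-tH))$ produces a first-order ODE system of the form
\[
\frac{d}{dt}\phi(t,m,v_j) = \sum_k A_{jk}\phi(t,m,v_k) + \sum_{\alpha\in\Phi(P_1,A_1)} e^{-t\alpha(H)}R_{j,\alpha}(t,m,v_k),
\]
where $A_{jk}\in U(\mathfrak{m}_1\oplus\mathfrak{a}_1)$ and the remainder terms are exponentially damped along $H\in\mathfrak{a}_1^+$. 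Formally solving this system by iteration in the ``perturbation parameters'' $e^{-t\alpha(H)}$ yields the formal exponential polynomial series with leading exponents $\mu(H)$, $\mu\in E(P_1,V)$, polynomial coefficients $p_{\lambda,\mu,Q}(tH,m,v)$ (polynomial in $tH$ from Jordan blocks, real-analytic in $m$ from the $M_1$-action, linear in $v$ by construction), and the $e^{-tQ(H)}$ factors for $Q\in L_1^+$ coming from the iteration.

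The main obstacle, and the one requiring the most work, is showing that this formal series is actually asymptotic to $\phi$ in the precise sense of the definition. The plan is an induction on gaps in the formal series: at each gap $N$, I would apply part (i) to the remainder $\phi(t,m,v)-\sum_{j\leq N}e^{u_jt}p_{u_j}(t)$, viewed as the matrix coefficient of a modified vector lying in the kernel of suitable polynomials in $H$. Because this kernel corresponds to a smaller $E(P_1,V_{\text{mod}})$, the exponent bound from part (i) automatically improves to $\operatorname{Re} u_{N+1}$, which is strictly less than $\operatorname{Re} u_N$ by the definition of a gap, yielding the required $e^{(\operatorname{Re} u_N-\epsilon)t}$ control. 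Uniqueness of the coefficients then follows from the lemma on uniqueness of formal exponential polynomial expansions stated earlier in this section.
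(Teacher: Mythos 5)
Your outline reproduces the general mechanism of the paper's argument (a first-order ODE system coming from the action of $\mathfrak{a}_{1}$ on a Jacquet-type quotient, iteration in the damped parameters $e^{-t\alpha(H)}$, and uniqueness of formal exponential polynomial expansions), but it has a genuine gap at the point the paper itself flags as the hard one: the passage from $K$-finite vectors to arbitrary smooth vectors $v\in V$. The finite-dimensional system you set up -- ``finitely many vectors $v_{1},\ldots,v_{N}$ so that the $\mathfrak{a}_{1}$-action modulo $\bar{\mathfrak{n}}_{1}V$ is finite dimensional'' -- is only available when $v\in V_{K}$, because it is the Jacquet module $V_{K}/\bar{\mathfrak{n}}_{1}V_{K}$ of the underlying $(\mathfrak{g},K)$-module that is finite dimensional; for a general smooth vector one cannot choose such a finite generating set directly, and ``$V$ finitely generated'' does not supply one. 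The paper handles this by first proving the entire statement for $v\in V_{K}$ (Lemma \ref{lemma:asymptoticexpansion}) and then, for general $v$, replacing the Jacquet-module generators by the Harish-Chandra elements $e_{1},\ldots,e_{d}\in Z((\mathfrak{m}_{0})_{\mathbb{C}}\oplus(\mathfrak{a}_{0})_{\mathbb{C}})$ with $Z((\mathfrak{m}_{0})_{\mathbb{C}}\oplus(\mathfrak{a}_{0})_{\mathbb{C}})\simeq\oplus_{i}p(Z(\mathfrak{g}_{\mathbb{C}}))e_{i}$: since $Z(\mathfrak{g})$ acts by scalars, the vectors $e_{i}\cdot v$ span an $H$-stable finite system modulo $\overline{\bar{\mathfrak{n}}V}$ for \emph{any} $v\in V$, with continuous (in $v$) coefficients; uniqueness of asymptotic expansions plus density of $V_{K}$ then identifies the exponents with $E(P_{1},V)$. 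Without some such device your argument only proves the lemma, not the theorem.

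Two smaller points. First, your plan to control the remainder at each gap by ``applying part (i) to the remainder, viewed as the matrix coefficient of a modified vector'' does not quite work as stated: the truncated remainder $\lambda(\exp(tH)v)-\sum_{j\le N}e^{u_{j}t}p_{u_{j}}(t)$ is not a matrix coefficient of an element of $V$. The paper instead estimates $\|F(\exp tH\cdot v)-e^{tB(H)}F^{k}_{\epsilon,H}(v)\|$ directly from the explicit solution formula of the ODE, using the spectral projections $P^{k}_{\epsilon,H}$, $Q^{k}_{\epsilon,H}$ of $B(H)$ and the improved bound on vectors killed by $q_{k}$. Second, you assert that the coefficients are polynomial in $tH$ ``from Jordan blocks,'' but a Jordan-block argument only gives, for each fixed $H$, polynomials in $t$; showing that the $p_{\lambda,\mu,Q}$ are genuinely polynomial on $\mathfrak{a}_{1}$ requires the compatibility identity $P^{k}_{\epsilon,H_{2}}F^{k}_{\epsilon,H_{1}}(v)=P^{k}_{\epsilon,H_{1}}F^{k}_{\epsilon,H_{2}}(v)$ for distinct $H_{1},H_{2}\in\mathfrak{a}^{+}_{\epsilon}$, which occupies the last page of the paper's proof of the lemma and is absent from your proposal.
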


The proof of this result is complicated and first we will need to prove the same result, but for $v \in V_{K}$ instead of $v\in V$. Namely we need to first prove the following lemma (which is equivalent to theorem 15.2.2 in \cite{w:vol2}). 

\begin{lemma}\label{lemma:asymptoticexpansion}
  Let $(\pi,H)$ be an admissible, finitely generated, Hilbert representation of $G$, and let $(P_{1},A_{1})$ be a standard $p$-pair with respect to the minimal $p$-pair $(P_{\circ},A_{\circ})$. If $\lambda \in (V)'$ is tame with respect to $(P_{\circ},A_{\circ})$ then
\begin{description}
 \item[i)] There exists $d \geq 0$ such that if $v\in V_{K}$ then for all $a\in Cl(A_{0}^{+})$
\[
 |\lambda(a\cdot v)| \leq (1+\log \|a\|)^{d}a^{\Lambda_{V}}\sigma_{\lambda_{V}}(v),
\]
for some continuous seminorm, $\sigma_{\lambda_{V}}$, of $V$.
\item[ii)] If $H\in \mathfrak{a_{1}}^{+}$, $m\in M_{1}$ and $v\in V_{K}$ then
\[
 \lambda(\exp(tH)m\cdot v)\sim \sum_{\mu\in E(P_{1},V)}e^{t\mu(H)}\sum_{Q\in L_{1}^{+}}e^{-tQ(H)}p_{\lambda,\mu,Q}(tH,m,v)
\]
as $t\rightarrow \infty$, where $p_{\lambda,\mu,Q}:\mathfrak{a_{1}}\times M_{1}\times V_{K}\rightarrow \mathbb{C}$ is a function that is polynomial in $\mathfrak{a_{1}}$, real analytic in $M_{1}$ and linear in $V_{K}$, and 
$$
L_{1}^{+}=\{\sum_{j}n_{j}\alpha_{j}\, | \, \mbox{$\alpha_{j}\in \Phi(P_{1},A_{1})$ and $n_{j}$ is a nonnegative integer}\}.
$$
\end{description}
 \end{lemma}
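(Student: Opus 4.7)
The plan is to prove the lemma by the Jacquet-module / leading-exponent method of Casselman--Milicic, as adapted by Wallach in \cite{w:vol1,w:vol2}. The strategy is that the tameness of $\lambda$ lets $\lambda(\exp(tH)m\cdot v)$ be computed, up to exponentially small error in $t$, from the image of $v$ in the $\bar{\mathfrak{n}}_1$-adic Jacquet completion of $V_K$, on which the $\mathfrak{a}_1$-action is locally finite with generalized weights in $E(P_1,V)+L_1^+$. The expansion (ii) then comes from the Jordan block decomposition of this action, and the bound (i) is its leading-term estimate.

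First, I would exploit admissibility and finite generation of $V_K$ as a $(\mathfrak{g},K)$-module to show that each quotient $V_K/\bar{\mathfrak{n}}_1^{N+1}V_K$ is a finite-length $(\mathfrak{m}_1\oplus\mathfrak{a}_1,K\cap M_1)$-module. This rests on Casselman's theorem that the Jacquet module $V_K/\bar{\mathfrak{n}}_1 V_K$ is a finite length Harish-Chandra module, together with the observation that the associated graded piece $\bar{\mathfrak{n}}_1^j V_K/\bar{\mathfrak{n}}_1^{j+1}V_K$ is a quotient of $S^j(\bar{\mathfrak{n}}_1)\otimes V_K/\bar{\mathfrak{n}}_1 V_K$, shifting $\mathfrak{a}_1$-weights by elements of $L_1^+$ of height $j$. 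Taking the inverse limit produces the Jacquet completion $V_K^\wedge$, on which $\mathfrak{a}_1$ acts locally finitely with generalized weights in $E(P_1,V)+L_1^+$ and into which $V_K$ embeds.

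Second, I would show that the tameness of $\lambda$ forces $\lambda$ to extend continuously to $V_K^\wedge$. The key estimate is that if $X\in\bar{\mathfrak{n}}_1$ is a weight vector of weight $-\beta$ with $\beta(H)>0$ on $\mathfrak{a}_1^+$, then $\mathrm{Ad}(\exp tH)X=e^{-t\beta(H)}X$, so $\lambda(\exp(tH)\cdot Xv)$ picks up an extra factor of $e^{-t\beta(H)}$ relative to $\lambda(\exp(tH)\cdot v)$. Iterating with PBW monomials of degree $N$ in $\bar{\mathfrak{n}}_1$, and combining with the tameness bound $|Y\lambda(a\cdot u)|\leq C_{Y,u}a^{\delta}$, yields
\[
|\lambda(\exp(tH)m\cdot w)|\leq C_{m}(\exp tH)^{\delta-N\beta_{\min}(H)},\qquad w\in\bar{\mathfrak{n}}_1^{N}V_K,
\]
so $\lambda$ factors through $V_K^\wedge$ with exponentially small error on the $N$th tail. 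Decomposing the image $\bar v=\sum_{\mu,Q}\bar v_{\mu,Q}$ in $V_K^\wedge$ into generalized weight components with $\mu\in E(P_1,V)$, $Q\in L_1^+$, on which $H$ acts as $(\mu-Q)(H)$ plus a nilpotent, gives
\[
\lambda(\exp(tH)m\cdot\bar v_{\mu,Q})=e^{t(\mu-Q)(H)}\sum_{j\geq 0}\frac{t^j}{j!}\,\lambda\!\left(m\cdot(H-(\mu-Q)(H))^j\bar v_{\mu,Q}\right),
\]
which is precisely the form of the polynomial $p_{\lambda,\mu,Q}(tH,m,v)$ appearing in (ii); the real-analytic dependence on $m$ comes from $M_1$ acting smoothly, hence with real-analytic matrix coefficients, on each finite-dimensional quotient $V_K/\bar{\mathfrak{n}}_1^{N+1}V_K$. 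For (i) one observes that the maximal $\mathrm{Re}(\mu-Q)(H_j)$ over all surviving weights is by definition $\leq\Lambda_V(H_j)$, and the log-power $d$ is bounded by the maximal Jordan length of $H$ on $V_K/\bar{\mathfrak{n}}_1 V_K$.

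The main obstacle is promoting the formal series to a genuine asymptotic expansion in the sense of Section 4.A.1 of \cite{w:vol1}, with error bounds of the prescribed polynomial-exponential type uniformly in $v$. This requires verifying that the truncation error in replacing $\bar v$ by its image in $V_K/\bar{\mathfrak{n}}_1^{N+1}V_K$ decays exponentially at a rate linear in $N$, which is exactly the content of the displayed estimate; the uniformity in $v$ through a continuous seminorm $\sigma_{\Lambda_V}$ in (i) is then obtained by a Casselman--Wallach closed-graph argument, promoting pointwise tameness of $\lambda$ to a bound continuous in $v\in V_K$.
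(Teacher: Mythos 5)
Your skeleton --- filter $V_{K}$ by the powers $\bar{\mathfrak{n}}_{1}^{N}V_{K}$, show that the $N$-th tail contributes an extra exponential decay factor, and read the exponents $\mu-Q$ with $\mu\in E(P_{1},V)$, $Q\in L_{1}^{+}$ off the $\mathfrak{a}_{1}$-action on the quotients --- is the same one the paper uses, and your displayed tail estimate is exactly the paper's statements (\ref{eq:nV}) and (\ref{eq:nkV}). The genuine gap is at the step where you actually define the coefficients $p_{\lambda,\mu,Q}$. You apply $\lambda$ to the generalized weight components $\bar{v}_{\mu,Q}$ of the image of $v$ in the Jacquet completion $V_{K}^{\wedge}$. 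But $\lambda$ does not descend to $V_{K}^{\wedge}$: tameness gives extra \emph{decay} of $\lambda(\exp(tH)\cdot w)$ for $w\in\bar{\mathfrak{n}}_{1}^{N}V_{K}$, not vanishing, so $\lambda$ does not factor through $V_{K}/\bar{\mathfrak{n}}_{1}^{N+1}V_{K}$, let alone through the inverse limit, and the expression $\lambda\bigl(m\cdot(H-(\mu-Q)(H))^{j}\bar{v}_{\mu,Q}\bigr)$ is not meaningful as written. Asserting that $\lambda$ ``extends continuously to $V_{K}^{\wedge}$'' is assuming precisely the point that has to be proved.

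The paper avoids this by never leaving the honest matrix coefficient: choosing $v=v_{1},\dots,v_{r}$ so that the $q_{k}(v_{j})$ span $U(\mathfrak{a})q_{k}(v)$, the vector $F(t)=(\lambda(\exp(tH)\cdot v_{j}))_{j}$ satisfies $F'=B(H)F+G$ with $G$ supported on $\bar{\mathfrak{n}}^{k}V_{K}$, and the coefficient data are then \emph{defined} by the limit $F^{k}_{\epsilon,H}(v)=\lim_{t\rightarrow\infty}e^{-tB(H)}P^{k}_{\epsilon,H}F(\exp tH\cdot v)$, equivalently by the convergent integral (\ref{eq:defFkepsilonH}). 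This limiting construction is the rigorous substitute for ``evaluating $\lambda$ on a weight component,'' and it is also where the remaining work sits: one must verify $F^{k}_{\epsilon,H}(\exp sH\cdot v)=e^{sB(H)}F^{k}_{\epsilon,H}(v)$ and the compatibility $P^{k}_{\epsilon,H_{1}}F^{k}_{\epsilon,H_{2}}(v)=P^{k}_{\epsilon,H_{2}}F^{k}_{\epsilon,H_{1}}(v)$ for two directions $H_{1},H_{2}$, which is what makes $p_{\lambda,\mu,Q}$ genuinely polynomial on $\mathfrak{a}_{1}$ rather than defined only ray by ray; your proposal does not address this consistency in $H$ at all. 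A smaller omission: for part (i), an asymptotic expansion along each ray does not by itself yield the uniform bound on all of $Cl(A_{0}^{+})$; the paper obtains it by running the differential-equation argument one simple root at a time, writing $a=a_{t}\tilde{a}$ and iterating the improvement of $\delta_{i}$ toward $\Lambda_{i}$ finitely many times.
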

\begin{proof}
 We will start with the proof of i.

Since $\lambda$ is tame with respect to $(P_{\circ},A_{\circ})$, there exists $\delta \in \mathfrak{a}'$ such that
\begin{equation}
 |\lambda(a\cdot v)|\leq a^{\delta}\sigma_{\lambda}(v) \label{eq:initialestimate}
\end{equation}
for some seminorm $\sigma_{\lambda}$ of $V$. Let $\Delta(P_{\circ},A_{\circ})=\{\alpha_{1},\ldots\alpha_{l}\}$ and choose elements $H_{j}\in \mathfrak{a_{0}}$ such that $\alpha_{i}(H_{j})=\delta_{ij}$. Then $\Lambda=\sum \Lambda_{i}\alpha_{i}$ and $\delta=\sum \delta_{i}\alpha_{i}$, with $\Lambda_{i}=\Lambda(H_{i})$, $\delta_{i}=\delta(H_{i})$. The idea of the proof is to show that if $\delta_{i} >\Lambda_{i}$ then we can replace $\delta_{i}$ with $\Lambda_{i}$ at the cost of possibly changing the seminorm $\sigma_{\lambda}$ and adding a polynomial term.

Fix $\alpha_{i}\in \Delta(P_{\circ},A_{\circ})$ and set $F_{i}=\Delta(P_{\circ},A_{\circ})-\{\alpha_{i}\}$ and $P_{i}=P_{F_{i}}$. Then $\mathfrak{a_{i}}=\mathbb{R}H_{i}$ and any given $a\in CL(A^{+}_{0})$ can be expressed uniquely as $a=a_{t}\tilde{a}$, with $a_{t}=\exp(tH_{i})$, $t\geq 0$, and $\tilde{a}=\exp(\sum c_{j}H_{j})$, with $c_{j}\geq 0$, $c_{i}=0$. We will now make use of the $K$-finiteness of $v$. Let $q_{i}$ be the canonical projection of $V_{K}$ onto $V_{K}/\bar{\mathfrak{n}}_{i}V_{K}$. We claim that
\begin{equation}
 \mbox{if $q_{i}(v)=0$,}\qquad \mbox{then \qquad $|\lambda(a \cdot v)|\leq a^{\delta-\alpha_{i}}\sigma_{\lambda}'(v)$, } \label{eq:nV}
\end{equation}
for some seminorm $\sigma_{\lambda}'(v)$. Effectively, let $\bar{X}_{1},\ldots,\bar{X}_{p}$ be a basis of $\bar{\mathfrak{n}}_{i}$ consisting of root vectors with corresponding roots $\beta_{1},\ldots,\beta_{p}$. If $q_{i}(v)=0$, then $v=\sum \bar{X}_{j} v_{j}$ and hence
\begin{eqnarray*}
 |\lambda(a\cdot v)|&=&|\sum \lambda(a\bar{X_{j}} v_{j})| \\
&\leq&\sum |\lambda(Ad(a)(\bar{X}_{j})a\cdot v_{j})| \\
&\leq&\sum a^{\beta_{j}}|\bar{X}_{j}^{T}\lambda(a\cdot v_{j})| \\
&\leq&\sum a^{\delta+\beta_{j}}\sigma_{\lambda,\bar{X}_{j}}(v_{j}) \\
&\leq& a^{\delta-\alpha_{i}}\sigma_{\lambda}'(v).
\end{eqnarray*}
Here we are using the fact that if $\beta_{j}$ is a root of $\bar{\mathfrak{n}}_{i}$ and $a\in Cl(A^{+})$ then $a^{\beta_{j}}\leq a^{-\alpha_{i}}$.

Given $v\in V_{K}$, choose vectors $v=v_{1},\ldots,v_{r}\in V_{K}$, such that $\{q(v_{1}),\ldots, q(v_{r})\}$ is a basis of $U(\mathfrak{a_{i}})q_{i}(v)$. Then
\begin{equation}
 H_{i}v_{j}=\sum_{k}b_{jk}v_{k}+w_{j} \label{eq:matrixB}
\end{equation}
with $w_{j}\in \bar{\mathfrak{n}}_{i}V_{K}$. Given $\tilde{a}\in Cl(A_{0}^{+})$ such that $\tilde{a}^{\alpha_{i}}=1$, set 
\[
 F(t,\tilde{a},v)=\left[\begin{array}{c} \lambda(a_{t}\tilde{a}\cdot v_{1}) \\ \vdots \\\lambda(a_{t}\tilde{a}\cdot v_{r})\end{array}\right]
\]
and
\[
 G(t,\tilde{a},v)=\left[\begin{array}{c} \lambda(a_{t}\tilde{a}\cdot w_{1}) \\ \vdots \\\lambda(a_{t}\tilde{a}\cdot w_{r})\end{array}\right].
\]
Then equation (\ref{eq:matrixB}) says that
\[
 \frac{d}{dt}F(t,\tilde{a},v)=BF(t,\tilde{a},v)+G(t,\tilde{a},v),
\]
where $B=[b_{jk}]$. Solving this differential equation explicitly we get that
\[
 F(t,\tilde{a},v)=e^{tB}F(0,\tilde{a},v)+e^{tB}\int_{0}^{t}e^{-sB}G(s,\tilde{a},v)\, ds.
\]
Observe that, since $v=v_{1}$,
\begin{equation}
 |\lambda(a_{t}\tilde{a}\cdot v)|\leq \|F(t,\tilde{a},v)\| \leq \|e^{tB}F(0,\tilde{a},v)\|+\|e^{tB}\int_{0}^{t}e^{-sB}G(s,\tilde{a},v)\|, \label{eq:unidirinequality}
\end{equation}
here we are using the usual norm in $\mathbb{C}^{r}$. We will now estimate the two summands in the right hand side of (\ref{eq:unidirinequality}).

By equation (\ref{eq:initialestimate})
\begin{equation}
 \|F(0,\tilde{a},v)\|\leq \tilde{a}^{\delta}\sigma_{\lambda}(v). \label{eq:estimateF}
\end{equation}
On the other hand by the definition of $\Lambda=\Lambda_{V}$
\begin{equation}
 \|e^{tB}\|\leq (1+t)^{d_{i}}e^{t\max\{\mu(H_{i})\, |\, \mu \in E(P_{i},V)\}}=(1+t)^{d_{i}}e^{t\Lambda_{i}}. \label{eq:estimateB}
\end{equation}
Using equations (\ref{eq:estimateF}) and (\ref{eq:estimateB}) we can estimate the first summand in the right hand side of (\ref{eq:unidirinequality}), namely
\begin{equation}
 \|e^{tB}F(0,\tilde{a},v)\|\leq(1+t)^{d_{i}}e^{t\Lambda_{i}}\tilde{a}^{\delta}\sigma_{\lambda}(v). \label{eq:firstsummandestimate}
\end{equation}
To estimate the second summand of (\ref{eq:unidirinequality}) observe that by equation (\ref{eq:nV})
\begin{equation}
 \|G(t,\tilde{a},v)\|\leq \tilde{a}^{\delta}e^{t(\delta_{i}-1)}\sigma_{\lambda}''(v). \label{eq:estimateG}
\end{equation}
Breaking $\mathbb{C}^{r}$ as a direct sum of the invariant subspaces of $B$, and using (\ref{eq:estimateG}), it can be shown that
\begin{equation}
\|e^{tB}\int_{0}^{t}e^{-sB}G(s,\tilde{a},v)\, ds\|\leq (1+t)^{d_{i}'}\tilde{a}^{\delta}e^{t(\delta_{i}-1)}\sigma_{\lambda}''(v) + C_{i}(1+t)^{d_{i}}e^{t\Lambda_{i}}\tilde{a}^{\delta}\sigma_{\lambda}''(v). \label{eq:secondsummandestimate}  
\end{equation}
Finally, from (\ref{eq:unidirinequality}), (\ref{eq:firstsummandestimate}) and (\ref{eq:secondsummandestimate}) we obtain the following bound
\begin{equation}
 |\lambda(a_{t}\tilde{a}\cdot v)|\leq(1+t)^{d_{i}''}e^{t\max\{\Lambda_{i},\delta_{i}-1\}}\tilde{a}^{\delta}\sigma_{\lambda}'''(v). \label{eq:recurringbound}
\end{equation}

If $\Lambda_{i}\leq \delta_{i}-1$ we can use the fact that $(1+t)^{d_{i}''}\leq Ce^{t/2}$, for some constant $C$, to get equation (\ref{eq:initialestimate}) again but for a new linear functional $\delta$ gotten from replacing $\delta_{i}$ by $\delta_{i}-1/2$. We can then repeat this same argument a finite number of times until we get $\Lambda_{i}>\delta_{i}-1$. Observe that in the last step $e^{t\Lambda_{i}}$ dominates $e^{t(\delta_{i}-1)}$, so, again from equations (\ref{eq:unidirinequality}), (\ref{eq:firstsummandestimate}) and (\ref{eq:secondsummandestimate}), we get
\begin{equation}
 |\lambda(a_{t}\tilde{a}\cdot v)|\leq (1+t)^{d_{i}}e^{t\Lambda_{i}}\tilde{a}^{\delta}\tilde{\sigma}_{\lambda}(v) \label{eq:finalestimateidirection}
\end{equation}
where $d_{i}$ is as in equation (\ref{eq:estimateB}). Observe that this $d_{i}$ is independent of $v$.

To finish the proof of i, observe that if $a\in Cl(A_{0}^{+})$ then $a=\exp(\sum t_{j}H_{j})$, for some $t_{j}\geq 0$. If we repeat the argument leading up to equation (\ref{eq:finalestimateidirection}) for all $i=1,\ldots,l$, we get that
\[
  |\lambda(a\cdot v)|\leq \bar{\sigma}_{\lambda}(v)\prod_{i=1}^{l}(1+t_{i})^{d_{i}}e^{t_{i}\Lambda_{i}}\leq \bar{\sigma}_{\lambda}'(v)(1+\log \|a\|)^{d}a^{\Lambda}
\]
for some $d\geq 0$ and some seminorm $\bar{\sigma}_{\lambda}'$.

We will now start the proof of ii. We will only prove this result for $P_{1}=P_{0}$ the remaining cases being a consequence of this result. Fix a tame linear functional $\lambda$. Let
\[
 \mathfrak{a}_{\epsilon}^{+}=\{H \in \mathfrak{a_{0}}^{+} \,|\, \mbox{$\|H\|=1$ and $\alpha(H)\geq \epsilon$ for all $\alpha\in \Phi(P_{\circ},A_{\circ})$}\},
\]
and set $L=\max \{\Lambda(H) \,|\, \mbox{$H\in \mathfrak{a_{0}}^{+}$, $\|H\|=1$}\}$. Let $q_{k}$ be the canonical projection of $V_{K}$ onto $V_{k}/\bar{\mathfrak{n}}_{0}^{k}V_{K}$. Arguing as in the proof of (\ref{eq:nV}) we can show that 
\begin{equation}
\begin{array}{c}
 \mbox{if $H\in \mathfrak{a}_{\epsilon}^{+}$ and $q_{k}(v)=0$, then}\\ \mbox{$|\lambda(\exp tH\cdot v)| \leq (1+t)^{d}e^{t(L-k\epsilon)}\sigma_{\lambda}(v)$ for all $t >0$}, \label{eq:nkV}
\end{array}
\end{equation}
where $\sigma_{\lambda}$ is a continuous seminorm on $V$. 

Given $v\in V_{K}$, choose vectors $v=v_{1},\ldots,v_{r}\in V_{K}$, such that $\{q_{k}(v_{1}),\ldots, q_{k}(v_{r})\}$ is a basis of $U(\mathfrak{a_{0}})q_{k}(v)$. Then, as in the proof of part i, we have that
\begin{equation}
 Hv_{j}=\sum_{k}b_{jk}(H)v_{k}+w_{j} \label{eq:matrixBH}
\end{equation}
with $w_{j}\in \bar{\mathfrak{n}}_{0}^{k}V_{K}$. Hence, if we set 
\[
 F(v)=\left[\begin{array}{c} \lambda( v_{1}) \\ \vdots \\ \lambda(v_{r})\end{array}\right]
\]
and
\[
 G(v)=\left[\begin{array}{c} \lambda(w_{1}) \\ \vdots \\\lambda(w_{r})\end{array}\right],
\]
then, from equation (\ref{eq:matrixBH}), we get that
\[
 \frac{d}{dt}F(\exp tH\cdot v)=B(H)F(\exp tH\cdot v)+G(\exp tH\cdot v),
\]
where $B(H)=[b_{jk}(H)]$. Solving this differential equation explicitly we get that
\begin{equation}
 F(\exp tH\cdot v)=e^{tB(H)}F(v)+e^{tB(H)}\int_{0}^{t}e^{-sB(H)}G(\exp sH\cdot v)\, ds. \label{eq:solutiondiffequation}
\end{equation}
We will use this equation to derive an asymptotic expansion for $\lambda(\exp tH\cdot v)$ for every $H\in \mathfrak{a}_{\epsilon}^{+}$.

 Given $H\in \mathfrak{a}_{\epsilon}^{+}$, define $E^{k}_{\epsilon,H}(P,V)=\{\mu\in E^{k}(P,V) \, | \, \mu(H) > L-k\epsilon\}$. Observe that, since $E^{k}_{\epsilon,H}(P,V)$ is finite, there exists $\delta >0$ such that $\mu(H)>L-k\epsilon+\delta$, for all $\mu\in E^{k}_{\epsilon,H}(P,V)$. Let $P^{k}_{\epsilon,H}$ be the projection of $\mathbb{C}^{r}$ onto the generalized eigenspaces of $B(H)$ with eigenvalues of the form $\mu(H)$, for $\mu\in E^{k}_{\epsilon,H}(P,V)$. Set $Q^{k}_{\epsilon,H}=I-P^{k}_{\epsilon,H}$, where $I$ is the indentity map on $\mathbb{C}^{r}$. Starting with equation (\ref{eq:solutiondiffequation}), and arguing as in the proof of i, we can show that
\begin{equation}
 \|Q^{k}_{\epsilon,H}F(\exp tH \cdot v) \| \leq (1+t)^{2d}e^{t(L-k\epsilon)}\sigma_{\lambda}(v).
\end{equation}
On the other hand, from the fact that $\|e^{-sB(H)}P^{k}_{\epsilon,H}\|\leq C(1+s)^{d}e^{s(k\epsilon-L-\delta)}$, and statement (\ref{eq:nkV}), we get that
\begin{eqnarray}
 \|e^{-sB(H)}P^{k}_{\epsilon,H}G(\exp sH \cdot v) \| & \leq & (1+s)^{2d}e^{s(k\epsilon-L-\delta)}e^{s(L-k\epsilon)}\sigma_{\lambda}(v) \nonumber \\
& = & (1+s)^{2d}e^{-s\delta}\sigma_{\lambda}(v). \label{eq:eminussbGsvlessthandelta}
\end{eqnarray}
From the above equation, the integral
\[
 \int_{0}^{\infty}\|e^{-sB(H)}P^{k}_{\epsilon,H}G(\exp sH \cdot v)\|\, ds <\infty.
\]
Set
\begin{equation}
 F^{k}_{\epsilon,H}(v)=P^{k}_{\epsilon,H}F(v)+\int_{0}^{\infty}e^{-sB(H)}P^{k}_{\epsilon,H}G(\exp sH \cdot v)\, ds. \label{eq:defFkepsilonH}
\end{equation}
Then the above estimates imply that
\begin{equation}
 \|F(\exp tH \cdot v)-e^{tB(H)}F^{k}_{\epsilon,H}(v)\|\leq (1+t)^{2d}e^{t(L-k\epsilon)}\sigma_{\lambda}(v). \label{eq:asymptoticestimate}
\end{equation}
Using again that $\|e^{-tB(H)}P^{k}_{\epsilon,H}\|\leq C(1+t)^{d}e^{t(k\epsilon-L-\delta)}$, we obtain that 
\begin{equation}
 \lim_{t\rightarrow \infty}e^{-tB(H)}P^{k}_{\epsilon,H}F(\exp tH \cdot v)=F^{k}_{\epsilon,H}(v).  \label{eq:fkepsilonH}
\end{equation}
From this equation we can get the following identity
\begin{eqnarray}
 F^{k}_{\epsilon,H}(v) & = & \lim_{t\rightarrow \infty}e^{-(s+t)B(H)}P^{k}_{\epsilon,H}F(\exp (s+t)H \cdot v) \nonumber \\
 & = & e^{-sB(H)}\lim_{t\rightarrow \infty}e^{-tB(H)}P^{k}_{\epsilon,H}F(\exp tH \cdot [\exp sH \cdot v]) \nonumber \\
 & = & e^{-sB(H)}F^{k}_{\epsilon,H}(\exp sH \cdot v), \label{eq:identity}
\end{eqnarray}
or equivalently, $F^{k}_{\epsilon,H}(\exp sH \cdot v)=e^{sB(H)}F^{k}_{\epsilon,H}(v)$.

Set $f^{k}_{\epsilon,H}(t,v)$ equal to the first component of $F^{k}_{\epsilon,H}(\exp tH \cdot v)$. Then
\begin{equation}
 f^{k}_{\epsilon,H}(t,v)=\sum_{ \stackrel{\mu\in E(P,V),\,\, Q\in L^{+}}{\scriptscriptstyle \operatorname{Re} (\mu-Q)(H)>L-k\epsilon + \delta}} e^{t(\mu-Q)(H)}p^{k}_{\epsilon,H,\mu,Q}(t,v) \label{eq:deffkepsilonHmuQ}
\end{equation}
for some polynomials $p^{k}_{\epsilon,H,\mu,Q}(t,v)$. Observe that, as long as $k\epsilon>L + \delta+\operatorname{Re}(Q-\mu)(H)$, this polynomials are independent of $k$, so, if we let $k\rightarrow \infty$, we can use equations (\ref{eq:asymptoticestimate}) and (\ref{eq:deffkepsilonHmuQ}) to define an asymptotic expansion for $\lambda(\exp tH\cdot v)$. Since asymptotic expansions are unique, the polynomials appearing in the expansion are independent of the $\epsilon > 0$ chosen. Summarizing, we have shown that, given $H\in \mathfrak{a}_{\epsilon}^{+}$, there exists polynomials $p_{H,\mu,Q}(t,v)$, such that
\[
 \lambda(\exp tH\cdot v)\sim  \sum_{ \mu\in E(P_{0},V),\,\, Q\in L_{0}^{+}} e^{t(\mu-Q)(H)}p_{H,\mu,Q}(t,v),
\]
as $t\rightarrow \infty$. To finish the proof, we need to show that the $p_{H,\mu,Q}(t,v)$ are actually polynomial on $\mathfrak{a}_{0}$.

 Let $H_{1}, H_{2} \in \mathfrak{a}_{\epsilon}^{+}$. If we can show that $P^{k}_{\epsilon,H_{2}}F^{k}_{\epsilon,H_{1}}(v)=P^{k}_{\epsilon,H_{1}}F^{k}_{\epsilon,H_{2}}(v)$, then we have finished the proof of the lemma. To simplify notation, we will write $P_{1}$ for $P^{k}_{\epsilon,H_{1}}$, $F_{1}$ for $F^{k}_{\epsilon,H_{1}}$, and analogously for $P_{2}$ and $F_{2}$. We will start by making the following simple observations: Since $H_{1}, H_{2} \in \mathfrak{a}_{\epsilon}^{+}$, then $1\geq \langle H_{1},H_{2}\rangle = c > 0$. Therefore, if $s,t \geq 0$,
\begin{eqnarray*}
 \langle sH_{1}+tH_{2} ,sH_{1}+tH_{2}\rangle &= & s^{2}\langle H_{1},H_{1}\rangle +2st \langle H_{1},H_{2}\rangle +t^{2}\langle H_{2},H_{2}\rangle \\ & = & s^{2} +2stc +t^{2}\geq c^{2}s^{2}+2stc +t^{2}=(sc+t)^{2}.
\end{eqnarray*}
The upshot is that we have shown $s+t \geq \|sH_{1}+tH_{2}\| \geq cs+t$. Now observe that, if $\mu \in E^{k}_{\epsilon,H_{1}}(P,V)\cap E^{k}_{\epsilon,H_{2}}(P,V)$, then
 $\mu(H_{1}), \mu(H_{2}) > L-k\epsilon+\delta$, and hence
\begin{equation}
 \mu\left(\frac{sH_{1}+tH_{2}}{\|sH_{1}+tH_{2}\|}\right) \geq \frac{s\mu(H_{1})+t\mu(H_{2})}{s+r} >L-k\epsilon+\delta. \label{eq:muinequality}
\end{equation}
Making use of this observations, and of equation (\ref{eq:eminussbGsvlessthandelta}), we can check that
\begin{eqnarray}
 \|e^{-B(sH_{1}+tH_{2})}P_{2}P_{1}G(\exp(sH_{1}+tH_{2})\cdot v)\| & \leq  &   (1+ \|sH_{1}+tH_{2}\|)^{2d}e^{-\delta\|sH_{1}+tH_{2}\|} \nonumber \\ & & {} \times \sigma_{\lambda}(v) \nonumber\\ & \leq&   (1+ s+t)^{2d}e^{-\delta (cs+ t)}\sigma_{\lambda}(v). \label{eq:estimatesplust}
\end{eqnarray}
Hence, by the definition of $F_{1}$,
\begin{eqnarray*}
 e^{-tB(H_{2})}P_{2}F_{1}(\exp t H_{2} \cdot v) & = &  e^{-tB(H_{2})}P_{2}P_{1}F(\exp t H_{2}\cdot v) \\
 & & {} + \int_{0}^{\infty}e^{-B(sH_{1}+tH_{2})}P_{2}P_{1}G(\exp(sH_{1}+tH_{2})\cdot v)\, ds,
\end{eqnarray*}
where the convergence of the last integral is guaranteed by equation (\ref{eq:estimatesplust}). Taking the limit as $t\rightarrow \infty$, and using again equation (\ref{eq:estimatesplust}), we get
\begin{eqnarray}
 \lim_{t\rightarrow\infty} e^{-tB(H_{2})}P_{2}F_{1}(\exp t H_{2}\cdot v) & = & \lim_{t\rightarrow\infty} e^{-tB(H_{2})}P_{2}P_{1}F(\exp t H_{2}\cdot v) + 0 \nonumber \\
& = & P_{1}F_{2}(v), \label{eq:H1toH2}
\end{eqnarray}
where the last equality follows from equation (\ref{eq:fkepsilonH}). But now, using equation (\ref{eq:identity}), we have that 
\begin{eqnarray*}
e^{-sB(H_{1})}\lim_{t\rightarrow\infty}e^{-tB(H_{2})}P_{2}F_{1}(\exp t H_{2}\cdot[\exp s H_{1}\cdot v]) & = &  e^{-sB(H_{1})} P_{1}F_{2}(\exp s H_{1}\cdot v) \\
\lim_{t\rightarrow\infty} e^{-tB(H_{2})}P_{2}F_{1}(\exp t H_{2}\cdot v) & = &   P_{1}F_{2}(v).
\end{eqnarray*}
Using this identity, and reversing the roles of $H_{1}$ and $H_{2}$ in equation (\ref{eq:H1toH2}), we get that
\begin{eqnarray*}
 \lim_{t\rightarrow \infty}e^{-tB(H_{1})}P_{1}F_{2}(\exp t H_{1}\cdot v) & = & P_{2}F_{1}(v) \\
\lim_{t\rightarrow \infty} P_{1}F_{2}(v) & = & P_{2}F_{1}(v),
\end{eqnarray*}
as we wanted to show. 
\end{proof}

With this result at hand we will now start the proof of the theorem.

\begin{proof}[Proof (of theorem)]
 Once again we will only consider the case $P_{1}=P_{0}$, the remaining cases being a consequence of this result. The plan for the proof is teh following: Given $H\in \mathfrak{a}_{0}^{+}$ such that $\alpha(H)$ is an integer for all $\alpha \in \Phi(P_{0},A)$, we will show that, for any $v\in V$, the matrix coefficient $c_{\lambda,v}$ has asymptotic expansion
\begin{equation}
 \lambda(\exp tH\cdot  v) \sim \sum_{i=1}^{p}e^{z_{i}t}\sum_{n\geq 0}p_{i,n}(t,v)e^{-nt}, \label{eq:generalasymptoticexpansion}
\end{equation}
where $p_{i,n}(t,v)$ are functions that are polynomial on $t$ and continuous on $v$. Since asymptotic expansions are unique, if $v$ is a $K$-finite vector, this asymptotic expansion must coincide with the asymptotic expansion given in lemma \ref{lemma:asymptoticexpansion}. Now using that the $p_{j,n}$ are continuous, and that $V_{K}$ is dense in $V$, we see that this asymptotic expansion should be of the form specified in the statement of the theorem. 

To get the asymptotic expansion described in (\ref{eq:generalasymptoticexpansion}), we will first need to make some observations. By the Gelfand-Naimark decomposition $\mathfrak{g}=\bar{\mathfrak{n}}_{0}\oplus \mathfrak{m}_{0} \oplus \mathfrak{a}_{0} \oplus \mathfrak{n}_{0}$. Let $p:\mathfrak{g}\longrightarrow \mathfrak{m}_{0}\oplus \mathfrak{a}_{0}$ be the canonical projection with respect to this decomposition. It's a known result of Harish-Chandra that there exist elements $1=e_{1},\ldots,e_{d}\in Z((\mathfrak{m}_{0})_{\mathbb{C}}\oplus (\mathfrak{a}_{0})_{\mathbb{C}})$ such that
\[
 Z((\mathfrak{m}_{0})_{\mathbb{C}}\oplus (\mathfrak{a}_{0})_{\mathbb{C}})\simeq \bigoplus_{i} p(Z(\mathfrak{g}_{\mathbb{C}}))e_{i}.
\]
In particular for all $H\in \mathfrak{a}_{0}^{+}$, there exist elements $z_{ij}\in Z(\mathfrak{g})$, such that 
$He_{i}=\sum_{j}p(z_{ij})e_{j}. $
Let $\{\bar{X}_{1},\ldots,\bar{X}_{p}\}$ be a basis of $\bar{\mathfrak{n}}_{0}$ consisting of root vectors with corresponding roots $-\alpha_{1},\ldots,-\alpha_{p}$, $\alpha_{i} \in \Phi(P_{0},A)^{+}$. Then, there exists $Y_{ijk}\in U(\mathfrak{g})$ such that $z_{ij}=p(z_{ij})+\sum_{k}\bar{X}_{k}Y_{ijk}$, and hence
\[
 He_{i}=\sum_{j}z_{ij}e_{j}-\sum_{j,k}\bar{X}_{k}Y_{ijk}e_{j}.
\]
Therefore, given $v\in V$, if we set $U_{ik}=\sum_{j}g_{ijk}e_{j}$, then we get
\[
 He_{i} \cdot v=\sum_{j}\chi(z_{ij})e_{j}\cdot v-\sum_{k}\bar{X}_{k}U_{ik}\cdot v. 
\]
 Let $q$ be the canonical projection of $V$ onto $V/\overline{\bar{\mathfrak{n}}V}$. The above equation implies that $U(\mathfrak{a}_{0})q(v)$ is contained in the span of $e_{1}\cdot q(v),\ldots,e_{d}\cdot q(v)$.
Set $\chi_{ij}=\chi(z_{ij})$, and let $z_{1},\ldots,z_{p}$ be the generalized eigenvalues of the matrix $[\chi_{ij}]$. 
Now observe that 
\[
 H\bar{X}_{k}e_{j}U_{ik}\cdot v = \alpha_{k}(H)\bar{X}_{k}e_{j}U_{ik}\cdot v + \sum_{l}\bar{X}_{k}\chi_{jl}e_{l}U_{ik}\cdot v + \sum_{s}\bar{X}_{k}\bar{X}_{s}U_{sj}U_{ik}\cdot v.
\]
Let $q_{2}$ be the canonical projection of $V$ onto $V/\overline{\bar{\mathfrak{n}}^{2}V}$. Then, from the above equation, we see that the span of the $e_{i}\cdot v$'s and the $X_{k}e_{j}u_{ik}\cdot v$'s contains the subspace $U(\mathfrak{a}_{0})q_{2}(v)$ of $V/\overline{\bar{\mathfrak{n}}^{2}V}$. Furthermore for fixed $i,k$ the subspaces generated by the $X_{k}e_{j}u_{ik}\cdot v$ are invariant under the action of $H$, and the generalized eigenvalues associated with this action are of the form $z_{s}-\alpha_{k}(H)$. If we continue with this process, we can find a finite number of generating vectors for $U(\mathfrak{a}_{0})q_{k}(v)$ on $V/\overline{\bar{\mathfrak{n}}^{k}V}$. Furthermore the action of $H$ on this subspace has generalized eigenvalues of the form $z_{s}-m$ with $m$ a non negative integer (because $\alpha_{k}(H)$ is a non negative integer for all $k$). If we now proceed as in the proof of lemma \ref{lemma:asymptoticexpansion}, we would get an asymptotic expansion as the one described in (\ref{eq:generalasymptoticexpansion}). To finish the proof of the theorem, we just need to refine the above argument to include all $H\in \mathfrak{a}_{0}^{+}$, which is easily done.
\end{proof}



\section{The Schwartz space for $L^2(N\backslash G;\chi)$} \label{sec:Schwartz}

Before giving the definition of the Schwartz space for $L^2(N\backslash G;\chi)$, we will first recall the definition of the Schwartz space for $L^{2}(G)$, and some of its properties.

\begin{dfn}
 If $f\in C^{\infty}(G)$, $X$, $Y \in U(\mathfrak{g}_{\mathbb{C}})$, and $d \in \mathbb{N}$, set
\[
q_{X,Y,d}(f)=\sup_{g\in G}\,|L_{Y}R_{X}f(g)| \Xi(g)^{-1}(1+\log{\|g\|})^{d},
\]
where $\Xi$ is Harish-Chandra's ``$\Xi$-function''. We define the Schwartz space of $G$ to be
\[
 \C(G)=\{f\in C^{\infty}(G)\, | \, \mbox{$q_{X,Y,d}(f) < \infty$ for all $X$, $Y \in U(\mathfrak{g}_{\mathbb{C}})$, $d \in \mathbb{N}$} \}.
\]
\end{dfn}
If we endow $\C(G)$ with the topology induced by the seminorms $q_{X,Y,d}$, then $\C(G)$ becomes a Fr\'echet space. Furthermore, it is well known that $C_{c}^{\infty}(G)\subset \C(G) \subset L^{2}(G)$, and that this inclusions are continuous and dense if we use the usual topologies on this spaces.

Let $P=MAN$ be a Siegel parabolic subgroup of $G$, with given Langlands decomposition. Then we will fix, as usual, a generic character $\chi$ of $N$.
\begin{dfn}
 If $f\in C^{\infty}(N\backslash G;\chi)$, $X\in U(\mathfrak{g}_{\mathbb{C}})$, and $d_{1}, d_{2} \in \mathbb{N}$, set
\[
q_{X,d_{1},d_{2}}(f)=\sup_{g\in G}a(g)^{-\rho}\Xi_{M}(m(g))^{-1}(1+\log{\|a(g)\|})^{d_{1}}((1+\log{\|m(g)\|})^{d_{2}} |R_{X}f(g)|.                                                                                                                          
\]
Here $\Xi_{M}$ is the ``$\Xi$-function'' for the group $M$. Define the Schwartz space for $L^2(N\backslash G;\chi)$ to be the space
$$
\C(N\backslash G;\chi)=\{f\in C^{\infty}(N\backslash G;\chi)\, |\,\mbox{$q_{X,d_{1},d_{2}}(f) < \infty$ for all $X \in U(\mathfrak{g}_{\mathbb{C}})$, $d_{1}$, $d_{2}\in \mathbb{N}$}\}.
$$
\end{dfn}
 We will endow $\C(N\backslash G;\chi)$ with the topology induced by the seminorms $q_{X,d_{1},d_{2}}$. Then it is easily seen that $\C(N\backslash G;\chi)$ is a Fr\'echet space and that the space $C_{c}^{\infty}(N\backslash G;\chi)$ of all $f\in C^{\infty}(N\backslash G;\chi)$, such that $|f|\in C_{c}^{\infty}(N\backslash G)$, is dense in $\C(N\backslash G;\chi)$.

\begin{lemma}\label{lemma:NschwartzthenL2}
 If $f\in \C(N\backslash G;\chi)$, then $f\in L^2(N\backslash G;\chi)$. Furthermore, there exists $d_{1}, d_{2} \in \mathbb{N}$ and $0\leq C < \infty$ such that $\|f\|_{2}\leq C q_{1,d_{1},d_{2}}(f)$.
\end{lemma}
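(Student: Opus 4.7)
The plan is to estimate $\|f\|_2^2$ pointwise via the defining seminorm $q_{1,d_1,d_2}$ and then reduce the resulting integral to two standard convergence facts: polynomial decay on $A$, and integrability of a weighted Harish-Chandra $\Xi$-function on $M$.

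First I would use the Iwasawa decomposition adapted to $P=MAN$: every $g\in G$ factors as $g=n(g)a(g)m(g)k(g)$ with $n(g)\in N$, $a(g)\in A$, $m(g)\in M$, $k(g)\in K$, and the decomposition of Haar measure relative to the defining formula $\int_G F\,dg=\int_{N\backslash G}\int_N F(ng)\,dn\,dNg$ identifies $dNg$ on $A\times M\times K$ (modulo the finite overlap $K\cap M$) with $a^{-2\rho}\,da\,dm\,dk$. Since $|f|^2$ is $N$-invariant and $a(amk)=a$, $m(amk)=m$, the definition of $q_{1,d_1,d_2}$ gives the pointwise bound
\[
|f(amk)| \;\le\; q_{1,d_1,d_2}(f)\cdot a^{\rho}\,\Xi_M(m)\,(1+\log\|a\|)^{-d_1}(1+\log\|m\|)^{-d_2}.
\]

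Plugging this into the $L^2$ integral, the $a^{2\rho}$ factor cancels the $a^{-2\rho}$ in the measure, so that
\[
\|f\|_2^2 \;\le\; q_{1,d_1,d_2}(f)^{2}\cdot\mathrm{vol}(K)\cdot I_A(d_1)\cdot I_M(d_2),
\]
where
\[
I_A(d_1)=\int_A (1+\log\|a\|)^{-2d_1}\,da,\qquad I_M(d_2)=\int_M \Xi_M(m)^{2}(1+\log\|m\|)^{-2d_2}\,dm.
\]
The integral $I_A(d_1)$ is finite as soon as $2d_1>\dim A=1$, since parameterizing $a=\exp(tH)$ with $H$ the distinguished generator of $\mathfrak{a}$ turns it into $\int_{\mathbb{R}}(1+|t|)^{-2d_1}\,dt$.

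The main obstacle, and the step that actually requires work, is the finiteness of $I_M(d_2)$. This is a standard but nontrivial fact: on a real reductive group, $\Xi_M$ is not in $L^2$, but becomes $L^2$ once damped by a sufficiently high power of $(1+\log\|\cdot\|)^{-1}$. I would invoke the corresponding result in Wallach's \textit{Real Reductive Groups}, Vol.~I (the standard integrability estimate for $\Xi_M^{2}(1+\log\|\cdot\|)^{-2d_2}$), which guarantees $I_M(d_2)<\infty$ for $d_2$ large enough depending only on $M$. Combining these three convergence statements with the pointwise bound yields the inclusion $\mathcal{C}(N\backslash G;\chi)\subset L^{2}(N\backslash G;\chi)$ together with the continuous estimate $\|f\|_2\le C\,q_{1,d_1,d_2}(f)$ for suitably chosen $d_1,d_2$.
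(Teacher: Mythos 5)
Your proposal is correct and follows essentially the same route as the paper's proof: the pointwise bound $|f(amk)|\le q_{1,d_1,d_2}(f)\,a^{\rho}\Xi_M(m)(1+\log\|a\|)^{-d_1}(1+\log\|m\|)^{-d_2}$, cancellation of $a^{2\rho}$ against the $a^{-2\rho}$ in the quotient measure, and reduction to the finiteness of $\int_A(1+\log\|a\|)^{-2d_1}da$ and $\int_M\Xi_M(m)^2(1+\log\|m\|)^{-2d_2}dm$ for $d_1,d_2$ large. The only difference is that you justify these two convergence facts explicitly, whereas the paper simply chooses $d_1,d_2$ large enough for them to hold.
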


\begin{proof}
 Let $d_{1}$ and $d_{2}$ be so large that
\[
 \int_{A}(1+\log{\|a\|})^{-2d_{1}} \, da =C_{A}^{2} < \infty,
\]
and
\[
 \int_{M} \Xi_{M}(m)^{2}(1+\log{\|m(g)\|})^{-2d_{2}}\, dm = C_{M}^{2} < \infty,
\]
for some positive constants $C_{A}$ and $C_{M}$. If $f\in \C(N\backslash G;\chi)$, then
\[
 |f(namk)|\leq a^{\rho}(1+\log{\|a(g)\|})^{-d_{1}}\Xi_{M}(m)((1+\log{\|m(g)\|})^{-d_{2}}q_{1,d_{1},d_{2}}(f).
\]
Thus,
\begin{eqnarray*}
\|f\|^{2}& = & \int_{N\backslash G}|f(g)|^{2}\, dg =\int_{A}\int_{M}\int_{K}a^{-2\rho}|f(amk)|^{2}\,da\,dm\,dk \\
 & \leq & q_{1,d_{1},d_{2}}(f)^{2}\int_{A}(1+\log{\|a\|})^{-2d_{1}} \, da\int_{M} \Xi(m)^{2}(1+\log{\|m(g)\|})^{-2d_{2}}\, dm\\
& \leq & C_{A}^{2}C_{M}^{2}q_{1,d_{1},d_{2}}^{2}.
\end{eqnarray*}
\end{proof}

\begin{lemma}
 Let $\phi\in \C(N\backslash G;\chi)$ and $f\in \C(G)$. Then
\[
 \int_{G}\phi(g)f(g)\, dg=: (\phi,f)
\]
converges absolutely and there exist continuous seminorms $q_{1}$ and $q_{2}$ on $\C(N\backslash G;\chi)$ and $\C(G)$, respectively, such that $|(\phi,f)|\leq q_{1}(\phi)q_{2}(f)$.
\end{lemma}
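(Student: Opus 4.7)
The strategy is to reduce the integral over $G$ to an integral over $N\backslash G$ by disintegrating against $N$, bound each factor by the appropriate seminorm in Iwasawa coordinates, and then exploit the cancellation of powers of $a$ that results from the product $a^{\rho}\cdot a^{-\rho}$.

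First I would replace the integrand by its absolute value and apply the quotient formula (\ref{eq:quotient}). Since $|\chi(n)|=1$ and $\phi(ng)=\chi(n)\phi(g)$, the function $|\phi|$ descends to $N\backslash G$, so
\[
\Big|\int_G \phi(g)f(g)\,dg\Big|\le \int_{N\backslash G}|\phi(g)|\,F_f(g)\,dNg,\qquad F_f(g):=\int_N|f(ng)|\,dn.
\]
Using the Iwasawa parametrization from the proof of Lemma \ref{lemma:NschwartzthenL2}, $N\backslash G\cong AMK$ with $dNg=a^{-2\rho}\,da\,dm\,dk$, so it suffices to bound $|\phi(amk)|\,a^{-2\rho}\,F_f(amk)$ by an integrable majorant times $q_1(\phi)q_2(f)$. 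The seminorm definition gives
\[
|\phi(amk)|\le a^{\rho}\,\Xi_M(m)\,(1+\log\|a\|)^{-d_1}(1+\log\|m\|)^{-d_2}\,q_{1,d_1,d_2}(\phi).
\]

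Next I would estimate $F_f$. Since $f\in\C(G)$, for every $D\in\mathbb{N}$ there is a continuous seminorm $\mu_D$ on $\C(G)$ with $|f(g)|\le \Xi_G(g)(1+\log\|g\|)^{-D}\mu_D(f)$. Using $\Xi_G(namk)=\Xi_G(nam)$ ($K$-biinvariance of $\Xi_G$) together with the standard Harish-Chandra estimate
\[
\int_N \Xi_G(nam)(1+\log\|nam\|)^{-D}\,dn\;\le\; C\,a^{\rho}\,\Xi_M(m)\,(1+\log\|a\|)^{-D_1}(1+\log\|m\|)^{-D_2},
\]
valid for $D$ sufficiently large in terms of any prescribed $D_1,D_2$, one obtains
\[
F_f(amk)\le C\,a^{\rho}\Xi_M(m)(1+\log\|a\|)^{-D_1}(1+\log\|m\|)^{-D_2}\,\mu_D(f).
\]

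Combining the two bounds, the factor $a^{\rho}\cdot a^{-2\rho}\cdot a^{\rho}=1$ collapses and we are left with
\[
|\phi(amk)|\,a^{-2\rho}\,F_f(amk)\le C\,q(\phi)\,\mu_D(f)\,\Xi_M(m)^2\,(1+\log\|a\|)^{-d_1-D_1}(1+\log\|m\|)^{-d_2-D_2}.
\]
Choosing $D_1$ so that $d_1+D_1$ is large enough to make $\int_A(1+\log\|a\|)^{-d_1-D_1}\,da<\infty$, and $D_2$ so that $d_2+D_2$ makes $\int_M\Xi_M(m)^2(1+\log\|m\|)^{-d_2-D_2}\,dm<\infty$ (by Harish-Chandra's weak inequality for $\Xi_M$, as already invoked in the proof of Lemma \ref{lemma:NschwartzthenL2}), the triple integral over $AMK$ converges. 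Setting $q_1(\phi):=q_{1,d_1,d_2}(\phi)$ and $q_2(f):=\mu_D(f)$ yields the required bound $|(\phi,f)|\le q_1(\phi)q_2(f)$, with absolute convergence as a byproduct.

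The main technical obstacle is the Harish-Chandra type estimate on $\int_N\Xi_G(nam)(1+\log\|nam\|)^{-D}\,dn$: the integral of $\Xi_G$ along $N$ itself diverges, so one must genuinely use the polynomial weight in the Schwartz seminorms to secure the factor $a^{\rho}\Xi_M(m)$ with prescribed logarithmic decay. This is a standard but delicate application of the integral representation $\Xi_G(g)=\int_K a_0(kg)^{-\rho_0}\,dk$ for the minimal $p$-pair $(P_\circ,A_\circ)$ refined to the Siegel parabolic; once it is in place, the rest of the argument is bookkeeping with Iwasawa coordinates.
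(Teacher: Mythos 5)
Your argument is correct and follows essentially the same route as the paper's proof: pass to Iwasawa coordinates with the measure $a^{-2\rho}\,da\,dm\,dk$, bound $|\phi(amk)|$ by $q_{1,d_1,d_2}(\phi)\,a^{\rho}\Xi_M(m)(1+\log\|a\|)^{-d_1}(1+\log\|m\|)^{-d_2}$, control $\int_N|f(namk)|\,dn$ by $a^{\rho}\Xi_M(m)$ times a continuous seminorm via Harish-Chandra's estimate (the paper cites the proof of Theorem 7.2.1 of Wallach's \emph{Real Reductive Groups I}), and conclude from the convergence of $\int_A(1+\log\|a\|)^{-e}\,da$ and $\int_M\Xi_M(m)^2(1+\log\|m\|)^{-e}\,dm$ for $e$ large. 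The only cosmetic difference is that you extract additional logarithmic decay from the $f$-factor, whereas the paper takes all of it from the $\phi$-seminorms and needs only boundedness of the $N$-integral of $f$.
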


\begin{proof}
 We are looking at 
\[
 \int_{N}\int_{A}\int_{M}\int_{K}a^{-2\rho}\chi(n)\phi(amk)f(namk)\,dn\,da\,dm\,dk.
\]
Now for each $d_{1},d_{2}$, we have
\[
 |\phi(amk)|\leq q_{1,d_{1},d_{2}}(\phi)a^{\rho}(1+\log\|a\|)^{-d_{1}}\Xi(m)(1+\log{\|m(g)\|})^{-d_{2}}.
\]
Thus,
\begin{eqnarray*}
 \int_{G}|\phi(g)f(g)|dg & \leq & q_{1,d_{1},d_{2}}(\phi)\int_{N\times A \times M \times K}a^{-\rho}(1+\log\|a\|)^{-d_{1}} \\
 & & {} \times \Xi_{M}(m)(1+\log{\|m(g)\|})^{-d_{2}}|f(namk)|\,dn\,da\,dm\,dk.
\end{eqnarray*}
In the proof of theorem 7.2.1 of \cite{w:vol1}, it is shown that there exists $p$, a continuous semi-norm on $\C(G)$ such that
\begin{equation}
 a^{-\rho}\Xi_{M}(m)^{-1}\int_{N}|f(nam)|\, dn\leq p(f). \label{eq:harishchandratransformiscontinuous}
\end{equation}
Thus, if we take $q_{2}(f)=\sup_{k\in K}p(R_{k}f)$, then we have
\begin{eqnarray*}
 \int_{G}|\phi(g)f(g)|\,dg & \leq & q_{1,d_{1},d_{2}}(\phi)q_{2}(f)\int_{A}(1+\log\|a\|)^{-d_{1}} \, da \\
& & {} \times \int_{M} \Xi(m)^{2}(1+\log{\|m(g)\|})^{-d_{2}})\, dm.
\end{eqnarray*}
If $d_{1}$ and $d_{2}$ are sufficiently large, then the integral on the right converges. Therefore, if we take $q_{1}=q_{1,d_{1},d_{2}}$, with $d_{1}$ and $d_{2}$ large enough, we obtain the statement of the lemma.
\end{proof}

Note that equation (\ref{eq:harishchandratransformiscontinuous}) immediately implies the following lemma:
\begin{lemma}\label{lemma:NL1}
 If $f\in \C(G)$, set 
\[
 f_{\chi}(g)=\int_{N}\chi(n)^{-1}f(ng)\, dn.
\]
The integral converges absolutely and the map $f\mapsto f_{\chi}$ is continuous from $\C(G)$ to $\C(N\backslash G;\chi)$.
\end{lemma}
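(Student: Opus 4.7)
The plan is to verify the three defining properties of $\C(N\backslash G;\chi)$ for $f_\chi$ in turn, and then deduce continuity by checking that each Schwartz seminorm $q_{X,d_1,d_2}(f_\chi)$ is bounded by a continuous seminorm of $f$ on $\C(G)$. First I would establish absolute convergence: writing any $g\in G$ in Iwasawa-type form $g=amk$ (mod $N$) and applying equation (\ref{eq:harishchandratransformiscontinuous}) to $R_k f$ gives $|f_\chi(g)|\leq p(R_k f)\, a^{\rho}\Xi_M(m)<\infty$. The $N$-equivariance $f_\chi(n_0 g)=\chi(n_0)f_\chi(g)$ is a one-line change of variable $n\mapsto nn_0^{-1}$ using left-invariance of $dn$.

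Next I would establish smoothness and the fact that right differentiation passes through the integral. Since left translation by $N$ commutes with right differentiation, one has formally $R_X f_\chi=(R_X f)_\chi$ for every $X\in U(\mathfrak{g}_{\mathbb C})$; the absolute bound from the previous step provides a locally uniform dominator, so dominated convergence justifies both the smoothness of $f_\chi$ and the identity $R_X f_\chi=(R_X f)_\chi$.

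The main step is the seminorm estimate. Thanks to the identity $R_X f_\chi=(R_X f)_\chi$, it suffices to control
\[
a(g)^{-\rho}\Xi_M(m(g))^{-1}(1+\log\|a(g)\|)^{d_1}(1+\log\|m(g)\|)^{d_2}|f_\chi(g)|
\]
by a continuous seminorm of $f$ (applied then to $R_X f$). Writing $g=amk$ and pulling the $g$-dependent weights inside the integral, this reduces to a strengthening of equation (\ref{eq:harishchandratransformiscontinuous}) of the form
\[
a^{-\rho}\Xi_M(m)^{-1}(1+\log\|a\|)^{d_1}(1+\log\|m\|)^{d_2}\int_N|f(nam)|\,dn\leq p_{d_1,d_2}(f),
\]
valid for a continuous seminorm $p_{d_1,d_2}$ on $\C(G)$. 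This refinement is obtained by bounding the integrand via $|f(nam)|\leq \Xi(nam)(1+\log\|nam\|)^{-D}q_{1,1,D}(f)$ for arbitrary $D$, and then using that $\log\|a\|+\log\|m\|\lesssim \log\|nam\|$ outside a compact set of $N$, so that the extra logarithmic weights can be absorbed into the $(1+\log\|nam\|)^{-D}$ factor while the residual integral of $\Xi(nam)$ still yields $a^{\rho}\Xi_M(m)$ as in the original estimate. Taking $\sup_{k\in K}p_{d_1,d_2}(R_k R_X f)$ then produces the desired continuous seminorm on $\C(G)$, since $K$ is compact and right translation is continuous on $\C(G)$.

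The one non-routine point is precisely this polynomial-weight refinement of (\ref{eq:harishchandratransformiscontinuous}); it is not literally the inequality already proved, but it is a standard Harish-Chandra style strengthening that is implicit in the proof of [W3, Thm 7.2.1] from which (\ref{eq:harishchandratransformiscontinuous}) was extracted. Once that estimate is in hand, continuity of $f\mapsto f_\chi$ from $\C(G)$ to $\C(N\backslash G;\chi)$ follows at once.
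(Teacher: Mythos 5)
Your proof is correct and follows essentially the same route as the paper, which derives the lemma in one line from the estimate (\ref{eq:harishchandratransformiscontinuous}) extracted from the proof of [W3, Thm.~7.2.1]. You are right that the literal inequality quoted there must be strengthened to carry the polynomial weights $(1+\log\|a\|)^{d_1}(1+\log\|m\|)^{d_2}$; that strengthening is precisely the full content of Wallach's Theorem 7.2.1 (continuity of the $N$-integral map into the Schwartz space of the Levi), so the ``non-routine point'' you flag is indeed standard and your argument closes.
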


Let $(\pi,H_{\pi})$ be an irreducible, square integrable, Hilbert representation of $G$, and let $V_{\pi}$ be the space of $C^{\infty}$-vectors of $H_{\pi}$. Then $(\pi,V_{\pi})$ is an irreducible, Fr\'echet representation of moderate growth. Now, let $H_{\pi}'$ be the dual of $H_{\pi}$, and let $(\pi',H_{\pi}')$ be the contragradient representation defined by $(\pi'(g)\phi)(v)=\phi(\pi(g)^{-1}v)$, for $\phi\in H_{\pi}'$, $v \in H_{\pi}$, $g\in G$. Let $\check{V_{\pi}}=V_{\check{\pi}}$ be the space of $C^{\infty}$-vectors of $H_{\pi}'$, and let $\check{\pi}=\pi'|_{V_{\check{\pi}}}$. Then $(\check{\pi},V_{\check{\pi}})$ is also an irreducible, Fr\'echet representation of moderate growth. Observe that there is a natural $G$-invariant bilinear pairing
\[
 (\cdot,\cdot):V_{\check{\pi}} \times V_{\pi} \longrightarrow \mathbb{C}
\]
given by $(\phi,v)=\phi(v)$. 

\begin{lemma} \label{lemma:matrixcoefficientinSchwartspace} Given $\phi\in V_{\check{\pi}}$, and $v\in V_{\pi}$, define
 \[
  c_{\phi,v}(g)=(\phi,\pi(g)v).
 \]
Then $c_{\phi,v} \in \C(G)$.
\end{lemma}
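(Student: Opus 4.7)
The plan is to reduce the Schwartz-bound for $c_{\phi,v}$ to a pointwise estimate on matrix coefficients of smooth vectors, then extract that estimate from the asymptotic expansion of Theorem~\ref{thm:asymptoticexpansion} combined with Casselman's square-integrability criterion.

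First I observe that every invariant derivative of a matrix coefficient is again a matrix coefficient of smooth vectors: a direct computation gives $L_{Y}R_{X}c_{\phi,v} = c_{\phi', v'}$ with $\phi' = \check\pi(Y^{\sharp})\phi \in V_{\check\pi}$ and $v' = \pi(X)v \in V_{\pi}$ (for an appropriate sign/transpose $Y^{\sharp}$). Hence the finiteness of $q_{X,Y,d}(c_{\phi,v})$ for all $X,Y,d$ reduces to producing, for an arbitrary pair of smooth vectors $\phi,v$ and every $d\in\mathbb{N}$, a constant $C_{d}$ with
$$|c_{\phi,v}(g)| \le C_{d}\,\Xi(g)\,(1+\log\|g\|)^{-d}, \qquad g\in G.$$
By the Cartan decomposition $G = K\,\mathrm{Cl}(A_{\circ}^{+})\,K$ and the compactness of $K$, it suffices to prove the estimate for $g=a\in \mathrm{Cl}(A_{\circ}^{+})$, the estimate on $K$-translates being absorbed by taking a supremum (continuously) over $k_1,k_2\in K$ of the right-hand side applied to the transported vectors.

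For the bound on $\mathrm{Cl}(A_{\circ}^{+})$, note that the smooth functional $\phi \in V_{\check\pi}$ is tame with respect to every minimal $p$-pair, in particular with respect to $(P_{\circ},A_{\circ})$. Theorem~\ref{thm:asymptoticexpansion}(i) then yields
$$|c_{\phi,v}(a)| = |\phi(\pi(a)v)| \le (1+\log\|a\|)^{d_{0}}\,a^{\Lambda_{V_{\pi}}}\,\sigma_{\Lambda_{V_\pi}}(v), \qquad a\in \mathrm{Cl}(A_{\circ}^{+}),$$
where $\Lambda_{V_{\pi}}(H_{j}) = \max\{\operatorname{Re}\mu(H_{j}) : \mu \in E(P_{\circ}, V_{\pi})\}$. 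Because $\pi$ is square integrable, Casselman's criterion for discrete series asserts that $\operatorname{Re}\mu(H_{j}) < -\rho(H_{j})$ for every exponent $\mu \in E(P_{\circ},V_{\pi})$ and every $j$; hence each number $\delta_{j} := -(\Lambda_{V_{\pi}}(H_{j}) + \rho(H_{j}))$ is strictly positive. Writing $a = \exp(\sum_{j} t_{j}H_{j})$ with $t_{j}\ge 0$, one finds
$$a^{\Lambda_{V_{\pi}} + \rho} = \exp\!\Bigl(-\sum_{j} t_{j}\delta_{j}\Bigr) \le \exp\bigl(-\delta\,\|\log a\|\bigr)$$
for $\delta = \min_{j}\delta_{j} > 0$, which is exponential decay in $\|\log a\|\asymp \log\|a\|$ and hence dominates any polynomial factor in $\log\|a\|$.

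To finish, I use Harish-Chandra's two-sided asymptotic $\Xi(a) \ge c\,a^{-\rho}(1+\log\|a\|)^{-N}$ on $\mathrm{Cl}(A_{\circ}^{+})$ for suitable $c>0$ and $N\ge 0$. Combining this with the previous paragraph gives
$$\frac{|c_{\phi,v}(a)|}{\Xi(a)} \le C\,(1+\log\|a\|)^{d_{0}+N}\,a^{\Lambda_{V_{\pi}} + \rho}\,\sigma_{\Lambda_{V_\pi}}(v)\le C'_{d}\,(1+\log\|a\|)^{-d}\,\sigma_{\Lambda_{V_\pi}}(v)$$
for any prescribed $d$, and behaviour near the identity is controlled by continuity of $c_{\phi,v}$ and the fact that $\Xi$ is bounded below on compacta. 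The main obstacle is really only the appeal to Casselman's square-integrability criterion — it is what converts the unitary hypothesis into the strict inequality $\Lambda_{V_{\pi}} + \rho < 0$ on the open Weyl chamber that is needed to defeat all polynomial losses in $\log\|a\|$; once this is in hand, the already-developed asymptotic machinery of Theorem~\ref{thm:asymptoticexpansion} and the standard estimate on $\Xi$ complete the proof.
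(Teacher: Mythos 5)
Your proof is correct and follows essentially the same route as the paper's: reduce via the $KAK$ decomposition to the positive chamber, apply part (i) of Theorem \ref{thm:asymptoticexpansion} to the tame functional $\phi\in V_{\check\pi}$, use square integrability to get $\Lambda+\rho_{\circ}$ strictly negative on the chamber, and absorb the polynomial losses with the standard two-sided $\Xi$-estimate. The only cosmetic difference is that you cite Casselman's criterion by name where the paper simply asserts $\Lambda+\rho_{\circ}\in -{}^{+}\mathfrak{a}_{\circ}'$.
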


\begin{proof}
 Let $g\in G$. According to the $KAK$ decomposition, there exists $k_{1},k_{2}\in K$, and $a\in A_{\circ}^{+}$ such that $g=k_{1}ak_{2}$. Hence according to part i) of theorem \ref{thm:asymptoticexpansion}, there exists $d_{1}\geq 0$, and continuous seminorms $q_{1}$, $q_{2}$ on $V_{\check{\pi}}$, $V_{\pi}$, respectively, such that, for all $X$, $Y \in U(\mathfrak{g}_{\mathbb{C}})$,
\begin{eqnarray*}
|L_{Y}R_{X}c_{\phi,v}(g)| & = & |(\check{\pi}(k_{1})^{-1}\check{\pi}(Y)\phi,\pi(a)\pi(k_{2})\pi(X)v)|\\
&  \leq & (1+\log{\|a\|})^{d_{1}} a^{\Lambda} q_{1}(\check{\pi}(k_{1})^{-1}\check{\pi}(Y)\phi)q_{2}(\pi(k_{2})\pi(X)v)  \\
& \leq &  (1+\log{\|a\|})^{d_{1}} a^{\Lambda} q_{Y}(\phi)q_{X}(v),
\end{eqnarray*}
where $q_{Y}(\phi)=\sup_{k\in K}q_{1}(\check{\pi}(k)^{-1}\check{\pi}(Y)\phi)$, and $q_{X}(v)=\sup_{k\in K}q_{2}(\check{\pi}(k)\check{\pi}(X)v)$. On the other hand theorem 4.5.3 of \cite{w:vol1} says that there exist constants $C$, $d_{2}$ such that
\[
 a^{-\rho_{\circ}}\leq \Xi(g) \leq Ca^{-\rho_{\circ}}(1+\log{\|a\|})^{d_{2}}.
\]
Therefore, for all $d\geq 0$
\[
 |L_{Y}R_{X}c_{\phi,v}(g)|\Xi(g)^{-1}(1+\log{\|g\|})^{d}\leq q_{Y}(\phi)q_{X}(v) (1+\log{\|a\|})^{d_{1}+d} a^{\Lambda+\rho_{\circ}}.
\]
But now since $V_{\pi}$ is square integrable, $\Lambda + \rho_{\circ} \in - {}^{+}\mathfrak{a}_{\circ}'$. Hence there exists a constant, $C_{X,Y,d}$, such that $|L_{Y}R_{X}c_{\phi,v}(g)|\Xi(g)^{-1}(1+\log{\|g\|})^{d} \leq C_{X,Y,d}$ for all $g\in G$. Since $X,Y$ and $d$ were arbitrary, we conclude that $c_{\phi,v} \in \C(G)$ as we wanted to show.
\end{proof}

\begin{proposition}\label{prop:squareintegrablemultiplicity}
 Let $(\pi, H_{\pi})$ be a square integrable Hilbert representation of $G$, and let $P=MAN$ be a Siegel parabolic subgroup with given Langlands decomposition. Let $\chi$ be a character of $N$ whose stabilizer $M_{\chi}$ in $M$ is compact, $(\tau, H_{\tau})$ an irreducible, finite dimensional representation of $M_{\chi}$ and let $Wh_{\chi}(V_{\pi})(\check{\tau})$ be the $\check{\tau}$ isotypic component of $Wh_{\chi}(V_{\pi})$ under the action of $M_{\chi}$.  Given $\lambda \in Wh_{\chi}(V_{\pi})(\check{\tau})$ we will set $T_{\lambda}(v)=c_{\lambda,v}$. Then $T_{\lambda}$ defines a continuous intertwiner operator between $V_{\pi}$ and $\C(N\backslash G;\chi)(\check{\tau})$ (the $\check{\tau}$ isotypic component of $\C(N\backslash G;\chi)$ under the left action of $M_{\chi}$). Furthermore, each $T_{\lambda}$ extends to a continuous intertwining operator from $H$ to $L^2(N\backslash G;\chi)(\check{\tau})$, and
\[
 \operatorname{Hom}_{G}(H,L^{2}(N\backslash G;\chi)(\check{\tau}))=\{T_{\lambda}\, | \, \lambda \in Wh_{\chi}(V_{\pi})(\check{\tau}) \}.
\]
\end{proposition}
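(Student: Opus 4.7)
The proof parallels the classical statement for Whittaker functionals on discrete series, and I would follow an analogous three-step strategy. For the first claim that $T_{\lambda}$ maps continuously into $\C(N\backslash G;\chi)(\check{\tau})$, the left transformation laws are immediate from $\lambda \in Wh_{\chi}(V_{\pi})(\check{\tau})$; only the Schwartz estimates require real work. For $g=namk$ I would Iwasawa-decompose $m=m_{0}a_{M}k_{0}$ inside $M$ and rewrite $R_{X}c_{\lambda,v}(g)=\chi(n)\lambda(\pi(aa_{M})w)$ with $w$ depending boundedly on $k$, $m_{0}$, $k_{0}$, $X$ and continuously on $v$. The preceding proposition ensures that $\lambda$ is tame with respect to any minimal $p$-pair $(P_{\circ},A_{\circ})$ containing $A$, and Theorem~\ref{thm:asymptoticexpansion}(i) then supplies $|\lambda(\pi(aa_{M})w)|\leq (1+\log\|aa_{M}\|)^{d}(aa_{M})^{\Lambda_{V}}\sigma(w)$. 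Square integrability forces $\Lambda_{V}+\rho_{\circ}$ strictly negative on $\mathfrak{a}_{\circ}^{+}$, and projecting this onto $\mathfrak{a}$ and $\mathfrak{a}_{M}$ respectively produces decay that dominates the weights $a^{\rho}$ and $\Xi_{M}(m)^{-1}\asymp a_{M}^{\rho_{M}}(1+\log\|a_{M}\|)^{-d_{M}}$ appearing in the seminorms $q_{X,d_{1},d_{2}}$.

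Composing with the continuous inclusion $\C(N\backslash G;\chi)\hookrightarrow L^{2}(N\backslash G;\chi)$ of Lemma~\ref{lemma:NschwartzthenL2} yields a continuous $G$-intertwiner $V_{\pi}\to L^{2}(N\backslash G;\chi)(\check{\tau})$ in the Fr\'echet topology; to promote this to a bounded map on the Hilbert completion $H_{\pi}$ I would apply a Schur-type uniqueness argument. The sesquilinear form $B(v_{1},v_{2}):=\langle T_{\lambda}v_{1},T_{\lambda}v_{2}\rangle_{L^{2}}$ on $V_{\pi}\times V_{\pi}$ is jointly continuous in the Fr\'echet topology and $G$-invariant, so since continuous $G$-invariant Hermitian forms on the smooth vectors of an irreducible unitary representation are unique up to scalar, $B(v_{1},v_{2})=c_{\lambda}\langle v_{1},v_{2}\rangle_{H_{\pi}}$ for some $c_{\lambda}\geq 0$. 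This yields the Hilbert bound $\|T_{\lambda}v\|_{L^{2}}=c_{\lambda}^{1/2}\|v\|_{H_{\pi}}$, and $T_{\lambda}$ extends by density to a continuous $G$-intertwiner $H_{\pi}\to L^{2}(N\backslash G;\chi)(\check{\tau})$.

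For the final identification, given any continuous $G$-intertwiner $T:H_{\pi}\to L^{2}(N\backslash G;\chi)(\check{\tau})$, $G$-equivariance together with elliptic regularity sends $V_{\pi}$ into genuine smooth functions on $N\backslash G$, so $\lambda_{T}(v):=(Tv)(e)$ is well-defined and continuous on $V_{\pi}$. Then $\lambda_{T}(\pi(n)v)=(T\pi(n)v)(e)=(Tv)(n)=\chi(n)(Tv)(e)=\chi(n)\lambda_{T}(v)$, so $\lambda_{T}\in Wh_{\chi}(V_{\pi})$, and the $\check{\tau}$-isotypy of the image of $T$ carries over to $\lambda_{T}$. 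Unfolding, $(Tv)(g)=(T\pi(g)v)(e)=\lambda_{T}(\pi(g)v)=c_{\lambda_{T},v}(g)$, so $T=T_{\lambda_{T}}$ on $V_{\pi}$ and hence on $H_{\pi}$ by continuity and density; injectivity of $\lambda\mapsto T_{\lambda}$ is immediate from $(T_{\lambda}v)(e)=\lambda(v)$. The main obstacle is the Schur-uniqueness step: while the assertion that Fr\'echet-continuous $G$-invariant Hermitian forms on the smooth vectors of an irreducible unitary representation are unique up to scalar is standard, it rests on the automatic continuity machinery of Casselman--Wallach rather than admitting a short self-contained proof.
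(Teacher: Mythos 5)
Your overall strategy coincides with the paper's proof: the Schwartz estimate comes from tameness of $\lambda$ together with part i) of Theorem \ref{thm:asymptoticexpansion} and square integrability; the extension to $H_{\pi}$ comes from Schur's lemma applied to the $G$-invariant form $\langle T_{\lambda}v_{1},T_{\lambda}v_{2}\rangle$; and the reverse inclusion comes from evaluating $Tv$ at the identity. Your second and third steps are correct and are essentially what the paper does.

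There is, however, one step in your Schwartz estimate that fails as written. You Iwasawa-decompose $m=m_{0}a_{M}k_{0}$ inside $M$ and assert that the resulting vector $w$ depends \emph{boundedly} on $m_{0}$. Here $m_{0}$ ranges over the full unipotent radical $N_{M}$ of a minimal parabolic of $M$, which is unbounded, and $\lambda$ has no transformation law under $N_{M}$ (it is only $(N,\chi)$- and $M_{\chi}$-equivariant); conjugating $m_{0}$ across $aa_{M}$ merely contracts it and does not confine it to a compact set, so the claimed uniformity in $m\in M$ is lost. The paper instead uses the Cartan decomposition $m=k_{1}a_{m}k_{2}$ of $M$, available with $k_{1},k_{2}\in K_{M}=M_{\chi}$ precisely because $M_{\chi}$ is a maximal compact subgroup of $M$ in the compact-stabilizer case; the left factor is then handled by expanding $\check{\pi}(k_{1})^{-1}\lambda=\sum_{i}\phi_{i}(k_{1})\lambda_{i}$ in a basis of the finite-dimensional isotypic component $Wh_{\chi}(V_{\pi})(\check{\tau})$, each $\lambda_{i}$ being tame. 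A smaller omission: $aa_{m}$ need not lie in $\operatorname{Cl}(A_{\circ}^{+})$ for a fixed minimal parabolic (the $A$-component can point in either direction), so for each $g$ one must pick a minimal $p$-pair $Q$ with $A\subset A_{Q}$ and $aa_{m}\in\operatorname{Cl}(A_{Q}^{+})$ and work with $\Lambda_{Q}$; your observation that $\lambda$ is tame for \emph{every} minimal $p$-pair containing $A$ is exactly what licenses this, but the choice of chamber should be made explicit before invoking part i) of the theorem.
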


\begin{proof}
We will first show $T_{\lambda}(v)\in \C(N\backslash G;\chi)(\check{\tau})$ for all $v \in V_{\pi}$. According to the Iwasawa decomposition, given $g\in G$, we can find $n\in N$, $a\in A$, $m\in M$, and $k\in K$ such that $g=namk$. If we now use the $KAK$ decomposition for $M$, we can find $k_{1}$, $k_{2}\in M_{\chi}$ and $a_{m}\in (A_{\circ}\cap M)^{+}$ such that $m=k_{1}a_{m}k_{2}$. Therefore 
\begin{eqnarray}
|R_{X}c_{\lambda,v}(g)| & = & |\pi^{T}(k_{1}n)^{-1}\lambda(\pi(aa_{m})\pi(k_{2}k)\pi(X)v)|\nonumber\\
&  = & |\chi(n)\pi^{T}(k_{1})^{-1}\lambda(\pi(aa_{m})\pi(k_{2}k)\pi(X)v)|\nonumber\\
&  = & |\pi^{T}(k_{1})^{-1}\lambda(\pi(aa_{m})\pi(k_{2}k)\pi(X)v)| \label{eq:leavingaamalone}
\end{eqnarray}
Now, since $Wh_{\chi}(V_{\pi})(\check{\tau})$ is finite dimensional, there exists $\lambda_{1},\ldots, \lambda_{r}\in Wh_{\chi}(V_{\pi})(\check{\tau})$ such that
\[
 \pi^{T}(k)\lambda = \sum \phi_{i}(k)\lambda_{i},
\]
for some functions $\phi_{1},\ldots,\phi_{r}\in C^{\infty}(M_{\chi})$. From this and equation (\ref{eq:leavingaamalone})
\begin{eqnarray*}
 |R_{X}c_{\lambda,v}(g)| & = & |\sum \phi_{i}(k)\lambda_{i}(\pi(aa_{m})\pi(k_{2}k)\pi(X)v)| \\
 & \leq & \sum |\phi_{i}(k)||\lambda_{i}(\pi(aa_{m})\pi(k_{2}k)\pi(X)v)|.
\end{eqnarray*}
Now let $Q$ be a minimal parabolic subgroup, such that $aa_{m}\in \operatorname{Cl}(A_{Q}^{+})$. Then, by part i) of theorem \ref{thm:asymptoticexpansion}, there exists $d\geq 0$, and continuous seminorms $\sigma_{\lambda_{i}}$ such that
\begin{eqnarray*}
 |R_{X}c_{\lambda,v}(g)| & \leq & \sum |\phi_{i}(k)|(1+\log \|aa_{m}\|)^{d}(aa_{m})^{\Lambda_{Q}}\sigma_{\lambda_{i}}(\pi(k_{2}k)\pi(X)v)) \\
 & \leq & C_{X}(1+\log \|aa_{m}\|)^{d}(aa_{m})^{\Lambda_{Q}} \\
 & \leq & C_{X}(1+\log \|a\|)^{2d}(1+\log \|a_{m}\|)^{2d}(aa_{m})^{\Lambda_{Q}}
\end{eqnarray*}
where
\[
 C_{X}= \sup_{\begin{array}{c} \scriptstyle k_{1}\in M_{\chi}\\ \scriptstyle k_{2}\in K\end{array}} \sum_{i} |\phi_{i}(k_{1})||\sigma_{\lambda_{i}}(\pi(k_{2})Xv).
\]
Observe that $\Lambda_{Q}=-\mu_{Q}-\rho_{Q}$ with $\mu_{Q} \in {}^{+}\mathfrak{a}_{Q}'$. Now, since 
$$
\mathfrak{n}_{\circ}=\mathfrak{n}_{\circ}\cap \bar{\mathfrak{n}}_{Q}\oplus \mathfrak{n}_{\circ}\cap \mathfrak{n}_{Q}= \mathfrak{n}_{\circ}\cap \mathfrak{n}_{Q} \oplus \mathfrak{n}_{\circ}\cap \bar{\mathfrak{n}}_{Q},
$$ 
then $\rho_{\circ}=-\rho_{Q}+\delta_{Q}$, with $\delta_{Q}\in \operatorname{Cl}({}^{+}\mathfrak{a}_{Q}')$ and also $\rho_{\circ}=\rho_{Q}-\gamma_{Q}$, with $\gamma_{Q}\in \operatorname{Cl}({}^{+}\mathfrak{a}_{Q}')$. Therefore $\Lambda=-\mu_{Q}-\delta_{Q}+\rho_{\circ}=-\mu_{Q}-\gamma_{Q}-\rho_{\circ}$.
On the other hand theorem 4.5.3 of \cite{w:vol1} says that there exist constants $C$, $d_{2}$ such that
\[
 a^{-\rho_{\circ}}\leq \Xi_{M}(m) \leq Ca^{-\rho_{m}}(1+\log{\|a\|})^{d_{2}}.
\]
Therefore for all $d_{1}$, $d_{2}\geq 0$
\begin{eqnarray*}
\lefteqn{|R_{X}c_{\lambda,v}(g)|\Xi_{M}(m)^{-1}(1+\log{\|a_{m}\|})^{d_{1}}(1+\log{\|a\|})^{d_{2}}a^{-\rho}}\\
& \leq & C_{X} (1+\log{\|a_{m}\|})^{d_{1}+2d}(1+\log{\|a\|})^{d_{2}+2d} a^{-\mu_{Q}-\delta_{Q}+\rho_{\circ}}a_{m}^{-\mu_{Q}-\gamma_{Q}-\rho_{\circ}}a^{-\rho}a_{m}^{\rho_{m}} \\
& = & C_{X} (1+\log{\|a_{m}\|})^{d_{1}+2d}(1+\log{\|a\|})^{d_{2}+2d} a^{-\mu_{Q}-\delta_{Q}}a_{m}^{-\mu_{Q}-\gamma_{Q}}.
\end{eqnarray*}
Now, since $\mu_{Q}\in {}^{+}\mathfrak{a}_{Q}'$, and $\delta_{Q}$, $\gamma_{Q} \in \operatorname{Cl}({}^{+}\mathfrak{a}_{Q}')$, we conclude that there exists a constant $C_{X,d_{1},d_{2}}$ such that
\[
 |R_{X}c_{\lambda,v}(g)|\Xi_{M}(m)^{-1}(1+\log{\|a_{m}\|})^{d_{1}}(1+\log{\|a\|})^{d_{2}}a^{-\rho} \leq C_{X,d_{1},d_{2}}.
\]
Since $X$, $d_{1}$ and $d_{2}$ were arbitrary we conclude that $c_{\lambda,v}\in \C(N\backslash G;N)$ as we wanted to show.

We will now define a new $G$-invariant inner product on $V_{\pi}$ in the following way: given $v_{1}$, $v_{2}\in V_{\pi}$, we have that $T_{\lambda}(v_{1})$, $T_{\lambda}(v_{2}) \in \C(N\backslash G;N)$, and hence, by lemma \ref{lemma:NschwartzthenL2}, $T_{\lambda}(v_{2})$, $T_{\lambda}(v_{2})\in L^{2}(N\backslash G)$. Set
\[
 \langle v_{1}, v_{2} \rangle_{\lambda} = \langle T_{\lambda}(v), T_{\lambda}(w) \rangle.
\]
Then it is clear that $\langle \cdot, \cdot \rangle_{\lambda}$ is $G$-invariant. Therefore, by Schur lemma,
\[
 \langle v_{1}, v_{2} \rangle_{\lambda}=\langle T_{\lambda}(v_{1}), T_{\lambda}(v_{2}) \rangle = c(\lambda)\langle v_{1}, v_{2} \rangle
\]
for some constant $c(\lambda)$. Thus $T_{\lambda}$ extends to a bounded operator from $H$ to $L^{2}(N\backslash G;\chi)(\check{\tau})$. We therefore see that
\[
 \operatorname{Hom}_{G}(H,L^{2}(N\backslash G;\chi)(\check{\tau}))\supset \{T_{\lambda}\, | \, \lambda \in Wh_{\chi}(V_{\pi})(\check{\tau}) \}.
\]
To prove the other inclusion observe that if $T\in \operatorname{Hom}_{G}(H,L^{2}(N\backslash G;\chi)(\check{\tau}))$, then $T$ maps $C^{\infty}$ vectors to $C^{\infty}$ vectors and defines a continuous intertwining operator on smooth Frechet representations. Now $L^{2}(N\backslash G;\chi)(\check{\tau})^{\infty} \subset C^{\infty}(N\backslash G;\chi)(\check{\tau})$ and evaluation at $1$ is continuous on $L^{2}(N\backslash G;\chi)(\check{\tau})^{\infty}$. Define $\lambda_{T}(v)=T(v)(1)$ for $v\in V_{\pi}$. Then, $\lambda_{T}\in Wh_{\chi}(V_{\pi})(\check{\tau})$ and $T=T_{\lambda_{T}}$. The result now follows.
\end{proof}

\section{The generalized Bessel-Plancherel theorem} \label{sec:besselplancherel}
After the work done in the previous two sections, we are finally ready to tackle conjecture \ref{conj:mainconjecture}. We will focus on proving the decomposition given in equation (\ref{eq:restrictiontoparabolic}), and we will then show how we can use this result to prove the generalized Bessel-Plancherel theorem given in (\ref{eq:besselplancherel}).

We will start by considering an irreducible, square integrable, Hilbert representation $(\pi, H_{\pi})$ of $G$. Let $V_{\pi}$, $V_{\check{\pi}}$ be as in the past section, and let $\phi \in V_{\check{\pi}}$ and $v \in V_{\pi}$ be arbitrary. By theorem \ref{lemma:matrixcoefficientinSchwartspace}, the function $c_{\phi,v}\in \C(G)$, and hence, by lemma \ref{lemma:NL1}, the integral
\[
 \int_{N}\chi(n)^{-1} ( \phi ,\pi(n) v ) \, dn
\]
converges absolutely. Therefore, we can define a map
\[
W_{\pi}^{\chi}: V_{\check{\pi}} \longrightarrow Wh_{\chi}(V_{\pi})                                               
\]
by
\[
 W_{\pi}^{\chi}(\phi)(v)=\int_{N}\chi(n)^{-1}(\phi,\pi(n) v) \, dn.
\]
When it's clear from the context what the the representation $\pi$ is, we will sometimes drop the suffix $\pi$ and denote this map simply by $W^{\chi}$. Observe that the matrix coefficient function $c_{W^{\chi}(\phi),v}\in \C(N\backslash G; \chi)$, and also observe that $R(g)c_{W^{\chi}(\phi),v}=c_{W^{\chi}(\phi),\pi(g)v}$. 

\begin{lemma}
 With notation and assumptions as above:
\begin{enumerate}
\item \label{l1}$W^{\chi}_{\pi}$ is $M_{\chi}N$ equivariant.
 \item \label{l2} There exists a non-degenerate, hermitian form $\langle \cdot, \cdot \rangle_{\pi}$ on $W^{\chi}(V_{\check{\pi}})$ such that 
\[
 \langle c_{W^{\chi}(\phi_{1}),v_{1}}, c_{W^{\chi}(\phi_{2}),v_{2}}\rangle = \frac{1}{d_{\pi}} \langle W^{\chi}(\phi_{1}) ,W^{\chi}(\phi_{2})\rangle_{\pi} \langle v_{1}, v_{2} \rangle,
\]
where $d_{\pi}$ is the formal degree of $\pi$.
\item \label{l3} For all $\phi_{1}$, $\phi_{2} \in V_{\check{\pi}}$,
\[
  \langle \phi_{1}, \phi_{2} \rangle = \int_{\hat{N}} \langle W^{\chi}(\phi_{1}), W^{\chi}(\phi_{2}) \rangle_{\pi} d\chi.
\]
Here $d\chi=d\lambda(\chi)$ in the notation of equation (\ref{eq:PlancheremeasureN}).
\item \label{l4} If $\phi \in V_{\check{\pi}}$, $v\in V_{\pi}$, then
\[
 W^{\chi}(\phi)(v)= \langle W^{\chi}(\check{v}), W^{\chi}(\phi) \rangle_{\pi},
\]
where $\check{v}$ is the element in $V_{\check{\pi}}$ defined by $\check{v}(w)=\langle v, w \rangle$, for all $w \in H_{\pi}$.
\item \label{l5} There is a non-degenerate bilinear pairing
\[
 (\cdot, \cdot )_{\pi} : W^{\chi}(V_{\check{\pi}}) \times W^{\overline{\chi}}(V_{\pi}) \longrightarrow \mathbb{C},
\]
 given by 
\[
 (W^{\chi}(\phi), W^{\overline{\chi}}(v) )_{\pi} := \langle W^{\chi}(\check{v}), W^{\chi}(\phi) \rangle_{\pi}.
\]
\end{enumerate}
\end{lemma}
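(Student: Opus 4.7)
The proof is a sequence of mostly direct calculations whose central input is the Schur orthogonality relations for matrix coefficients of the square-integrable representation $\pi$. Part \eqref{l1} is verified by change of variables in the defining integral: for $n_0\in N$, substituting $n\mapsto nn_0^{-1}$ gives $W^\chi(\phi)(\pi(n_0)v)=\chi(n_0)W^\chi(\phi)(v)$; for $m\in M_\chi$, substituting $n\mapsto mnm^{-1}$ and using $\chi\circ\mathrm{Ad}(m)=\chi$ together with the fact that $M_\chi$ acts on $N$ preserving Haar measure (automatic since $M_\chi$ is compact in the rank-one setting at hand) yields $W^\chi(\check\pi(m)\phi)=m\cdot W^\chi(\phi)$.

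The heart of the argument is \eqref{l2}. First I would unfold the $L^2$-inner product $\langle c_{W^\chi(\phi_1),v_1},c_{W^\chi(\phi_2),v_2}\rangle$ into an integral over $G$. Writing $\tilde c_i(g):=(\phi_i,\pi(g)v_i)$ for the ambient matrix coefficient, the $N$-equivariance of $c_1$ combined with Fubini gives $\langle c_1,c_2\rangle=\int_G \overline{\tilde c_1(g)}\,c_2(g)\,dg$; a further unfolding of $c_2$ and the substitution $g\mapsto n^{-1}g$ yield
\[
\langle c_1,c_2\rangle=\int_N \chi(n)^{-1}\Bigl(\int_G \overline{(\check\pi(n)\phi_1,\pi(g)v_1)}\,(\phi_2,\pi(g)v_2)\,dg\Bigr)\,dn.
\]
The inner integral is the Schur orthogonality pairing of two matrix coefficients of the square-integrable $\pi$, which factors as $\frac{1}{d_\pi}\langle v_1,v_2\rangle\,S(\check\pi(n)\phi_1,\phi_2)$, where $S$ denotes the inner product on $V_{\check\pi}$. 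Defining
\[
\langle W^\chi(\phi_1),W^\chi(\phi_2)\rangle_\pi:=\int_N \chi(n)^{-1}S(\check\pi(n)\phi_1,\phi_2)\,dn
\]
produces the required identity. Well-definedness on $W^\chi(V_{\check\pi})$ is automatic, since the left side depends only on $W^\chi(\phi_i)$ and $v_i$. Hermitian symmetry follows by applying complex conjugation to the defining integral, using $\overline{S(a,b)}=S(b,a)$ and unitarity of $\check\pi$, and then substituting $n\mapsto n^{-1}$; non-degeneracy is deferred to step \eqref{l4}.

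Part \eqref{l3} is immediate from the explicit formula for the form together with Fourier inversion on the abelian group $N$ (equation \eqref{eq:PlancheremeasureN}) applied to the matrix coefficient $F(n):=S(\check\pi(n)\phi_1,\phi_2)$: the required integral over $\hat N$ evaluates $F$ at the identity, giving $S(\phi_1,\phi_2)=\langle\phi_1,\phi_2\rangle$. Convergence of all integrals is justified by the Schwartz decay of matrix coefficients furnished by lemma \ref{lemma:matrixcoefficientinSchwartspace}. Part \eqref{l4} is a direct comparison: under the Riesz identification $\check v\leftrightarrow v$ and the compatibility $\check\pi(n)\check v\leftrightarrow\pi(n)v$ (which uses unitarity of $\pi$), one checks $S(\check\pi(n)\check v,\phi)=\phi(\pi(n)v)=(\phi,\pi(n)v)$; integrating against $\chi(n)^{-1}$ identifies $\langle W^\chi(\check v),W^\chi(\phi)\rangle_\pi$ with $W^\chi(\phi)(v)$.

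Finally, \eqref{l5} is essentially tautological given \eqref{l4}. The identity in \eqref{l4} shows at once that the prospective pairing $(W^\chi(\phi),W^{\overline\chi}(v))_\pi:=\langle W^\chi(\check v),W^\chi(\phi)\rangle_\pi$ equals $W^\chi(\phi)(v)$; a parallel calculation substituting $n\mapsto n^{-1}$ and using $\overline{\chi}(n)^{-1}=\chi(n)$ shows $W^{\overline\chi}(v)(\phi)=W^\chi(\phi)(v)$, so that this expression descends to a well-defined map on $W^\chi(V_{\check\pi})\times W^{\overline\chi}(V_\pi)$. Non-degeneracy is then clear: if the pairing vanishes against every $W^{\overline\chi}(v)$, then $W^\chi(\phi)(v)=0$ for every $v\in V_\pi$, forcing $W^\chi(\phi)=0$, and symmetrically in the other slot. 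The main obstacle throughout is the careful tracking of conjugation conventions and of absolute convergence at each Fubini step; both are handled once lemma \ref{lemma:matrixcoefficientinSchwartspace} is invoked, since the matrix coefficients live in $\mathscr{C}(G)$ and the $c_i$ in $\mathscr{C}(N\backslash G;\chi)$.
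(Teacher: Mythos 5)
Your proof is correct and follows essentially the same route as the paper: Schur orthogonality for the square-integrable $\pi$, unfolding the $L^2(N\backslash G;\chi)$ pairing over $N$, and Fourier analysis on $\hat{N}$, with convergence supplied by the Schwartz-space lemmas. The only difference is organizational: you extract the explicit formula $\langle W^{\chi}(\phi_{1}),W^{\chi}(\phi_{2})\rangle_{\pi}=\int_{N}\chi(n)^{-1}\langle \check{\pi}(n)\phi_{1},\phi_{2}\rangle\,dn$ up front in part 2, whereas the paper defines the form abstractly via Schur's lemma at that stage and carries out the same unfolding computation only in part 4.
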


\begin{proof}
Part \ref{l1} of this lemma follows directly from the definition of $W^{\chi}_{\pi}$ so we will start with the proof of \ref{l2}. Given $\lambda_{1}, \lambda_{2} \in W^{\chi}(V_{\check{\pi}})$ define a $G$-invariant inner product on $V_{\pi}$ by
\[
 \langle v_{1}, v_{2} \rangle_{\lambda_{1},\lambda_{2}}:= d_{\pi}\langle c_{\lambda_{1},v_{1}}, c_{\lambda_{2},v_{2}} \rangle.
\]
Since this inner product is $G$-invariant, then, by Schur lemma, there exists a constant $\langle \lambda_{1},\lambda_{2} \rangle_{\pi}$ such that
\[
 d_{\pi}\langle c_{\lambda_{1},v_{1}}, c_{\lambda_{2},v_{2}} \rangle= \langle v_{1}, v_{2} \rangle_{\lambda_{1},\lambda_{2}}=\langle \lambda_{1},\lambda_{2}\rangle_{\pi} \langle v_{1}, v_{2} \rangle.
\]
It is then clear that the bilinear form $\langle \cdot, \cdot \rangle_{\pi}$ defined this way is hermitian and non-degenerate.

We will now prove part \ref{l3}.  By classical Fourier analysis, if  $f\in L^{1}(N)\cap L^{2}(N)$, then
\begin{equation}
 \int_{N} \overline{f(n)}f(n)\, dn = \langle f, f \rangle = \langle \hat{f}, \hat{f} \rangle = \int_{\hat{N}} \overline{\hat{f}(\chi)}\hat{f}(\chi)\, d\chi.
\end{equation}
Hence, if $\phi_{1}$, $\phi_{2} \in V_{\check{\pi}}$, and $v_{1}, v_{2} \in V_{\pi}$, 
\begin{eqnarray*}
 \langle c_{\phi_{1},v_{1}}, c_{\phi_{2},v_{2}}\rangle & = & \int_{\hat{N}} \langle c_{W^{\chi}(\phi_{1}),v_{1}}, c_{W^{\chi}(\phi_{2}),v_{2}}\rangle \, d\chi \\
\frac{1}{d_{\pi}}\langle \phi_{1},\phi_{2} \rangle \langle v_{1},v_{2} \rangle & = & \int_{\hat{N}} \frac{1}{d_{\pi}}  \langle W^{\chi}(\phi_{1}),W^{\chi}(\phi_{2})\rangle_{\pi} \langle v_{1},v_{2}\rangle \,d\chi. 
\end{eqnarray*}
Here we are using that $c_{\phi_{1},v_{1}}$, $c_{\phi_{2},v_{2}}$ are in $L^{1}(N)\cap L^{2}(N)$ when restricted to $N$, according to lemma \ref{lemma:NL1} and lemma \ref{lemma:matrixcoefficientinSchwartspace}.
Since this equation holds for all $v_{1}, v_{2}\in V_{\pi}$, we conclude that
 \[
\langle \phi_{1}, \phi_{2} \rangle = \int_{\hat{N}}\langle W^{\chi}(\phi_{1}),W^{\chi}(\phi_{2})\rangle_{\pi} \, d\chi. \label{eq:pimeasure}  
 \]

We will now move to part \ref{l4}. From part \ref{l2},
\begin{equation} \label{eq:innerproduct}
 \langle c_{W^{\chi}(\phi_{1}),v_{1}}, c_{W^{\chi}(\phi_{2}),v_{2}} \rangle = \frac{1}{d_{\pi}}  \langle W^{\chi}(\phi_{1}),W^{\chi}(\phi_{2})\rangle_{\pi} \langle v_{1},v_{2}\rangle.
\end{equation}
On the other hand, by definition
\begin{eqnarray}
 \lefteqn{\langle c_{W^{\chi}(\phi_{1}),v_{1}}, c_{W^{\chi}(\phi_{2}),v_{2}} \rangle} \nonumber \\
& = & \int_{N\backslash G} \overline{W^{\chi}(\phi_{1})(\pi(g) v_{1})}W^{\chi}(\phi_{2})(\pi(g) v_{2}) dNg \nonumber \\
& = & \int_{N\backslash G} \int_{N}\chi(n_{1})\overline{( \phi_{1},\pi({n_{1}g})v_{1})}\, dn_{1} \int_{N}\chi(n_{2})^{-1}( \phi_{2},\pi({n_{2}g})v_{2})\, dn_{2} \,dNg \nonumber \\
& = & \int_{N\backslash G} \int_{N}\int_{N}\chi(n_{1}n_{2}^{-1}) \overline{( \phi_{1},\pi({n_{1}g})v_{1})}\, ( \phi_{2},\pi(n_{2}g)v_{2})\, dn_{1}\,dn_{2}\,dNg \nonumber \\
& = & \int_{N\backslash G} \int_{N}\int_{N}\chi(n_{1}) \overline{( \phi_{1},\pi({n_{1}n_{2}g})v_{1})}\, ( \phi_{2},\pi({n_{2}g})v_{2})\, dn_{1}\,dn_{2}\,dNg \nonumber \\
& = &\int_{N}\chi(n_{1}) \int_{N\backslash G} \int_{N} \overline{( \check{\pi}({n_{1})^{-1}\phi_{1},\pi(n_{2}g})v_{1})}\, ( \phi_{2},\pi({n_{2}g})v_{2}) \,dn_{2}\,dNg \, dn_{1} \nonumber \\
& = &\int_{N}\chi(n_{1}) \int_{G}  \overline{( \check{\pi}(n_{1})^{-1}\phi_{1},\pi(g)v_{1})}\, ( \phi_{2},\pi(g)v_{2}) \,dg \, dn_{1}\nonumber \\
& = & \int_{N}\chi(n_{1})^{-1}\frac{1}{d_{\pi}} \langle \check{\pi}(n_{1})\phi_{1},\phi_{2}\rangle \langle v_{1}, v_{2}\rangle \, dn_{1}\nonumber \\
& = & \int_{N}\chi(n_{1})^{-1}\frac{1}{d_{\pi}} (\phi_{2}, \pi(n_{1})\check{\phi}_{1}) \langle v_{1}, v_{2}\rangle \, dn_{1}\nonumber \\
& = & \frac{1}{d_{\pi}} W^{\chi}(\phi_{2})(\check{\phi_{1}}) \langle v_{1}, v_{2}\rangle.\label{eq:evaluation}
\end{eqnarray}
Now, since equations (\ref{eq:innerproduct}) and (\ref{eq:evaluation}) hold for all $v_{1}$, $v_{2}\in V_{\pi}$, we conclude that
\[
 W^{\chi}(\phi_{2})(\check{\phi}_{1})=\langle W^{\chi}(\phi_{1}), W^{\chi}(\phi_{2}) \rangle_{\pi},
\]
which is equivalent to the equation appearing in part \ref{l4}.

Finally, for part \ref{l5} we only need to check that the definition only depends on $W^{\overline{\chi}}(v)$, but by part \ref{l4}
\[
 (W^{\chi}(\phi), W^{\overline{\chi}}(v) )_{\pi} :=   \langle W^{\chi}(\check{v}), W^{\chi}(\phi) \rangle_{\pi}
  =   W^{\chi}(\phi)(v)=W^{\overline{\chi}}(v)(\phi),
\]
where the last equality follows from the intrinsic symmetry between $\phi$ and $v$ in the definition of $ W^{\chi}(\phi)(v)$.
\end{proof}
We will denote by $W_{\chi}(H_{\pi})$ the closure of $W^{\chi}(V_{\check{\pi}})$ with respect to the inner product $\langle \cdot, \cdot \rangle_{\pi}$. When it is clear what the representation $\pi$ is, we will also drop the suffix $\pi$ in the notation of this inner product.

Let $P=MAN$ be a Siegel parabolic subgroup of $G$, with given Langlands decomposition. Given $\chi \in \hat{N}$ and $p\in P$, define
\[
 \check{\pi}(p): Wh_{\chi}(V_{\pi})\longrightarrow Wh_{p\cdot \chi}(V_{\pi}),
\]
by $(\check{\pi}(p)\lambda)(v)=\lambda(\pi(p)^{-1}v)$, for $\lambda \in Wh_{\chi}(V_{\pi})$, and $v \in V_{\pi}$. Observe that, if $p_{1}$, $p_{2}\in P$, then $\check{\pi}(p_{1})\check{\pi}(p_{2})=\check{\pi}(p_{1}p_{2})$.  Also observe that, if $\phi \in V_{\check{\pi}}$, $v\in V_{\pi}$, then
\begin{eqnarray*}
 (\check{\pi}(p) W^{\chi}(\phi))(v) & = &  W^{\chi}(\phi)(\pi(p)^{-1}v)= \int_{N} \chi(n)^{-1} (\phi, \pi(n)\pi(p)^{-1}v)\, dn\\
& = & \int_{N} \chi(n)^{-1} (\phi, \pi(p)^{-1}\pi(pnp^{-1})v)\, dn \\
& = & \int_{N} \chi(p^{-1}np)^{-1} (\check{\pi}(p)\phi, \pi(n)v)\delta_{P}(p) \, dn \\
& = &  \delta_{P}(p)W^{p\cdot \chi}(\check{\pi}(p)\phi)(v),
\end{eqnarray*}
where $\delta_{P}$ is the modular function of $P$. Since $v\in V_{\pi}$ was arbitrary, we conclude that $\check{\pi}(p) W^{\chi}(\phi)=\delta_{P}(p)W^{p\cdot \chi}(\check{\pi}(p)\phi)$.

 Let $\Omega$ be the set of open orbits for the action of $P$ on $\hat{N}$. For every $\omega \in \Omega$, we will fix a character $\chi_{\omega}\in \omega$. Now, given $\phi \in V_{\check{\pi}}$, we will define
$f_{\phi,\omega}(p)\in W^{\chi}(V_{\check{\pi}})$ by
\[
 f_{\phi,\omega}(p)=\check{\pi}(p)W^{p^{-1}\cdot\chi_{\omega}}(\phi).
\]
Set $f_{\phi}(p)=\sum_{\omega \in \Omega} f_{\phi,\omega}(p)$.

\begin{proposition}\label{prop:descompositionparabolicsquareintegrable}
 The map $\phi \mapsto f_{\phi}$ induces a $P$-equivariant isometry between $H_{\check{\pi}}$ and $\oplus_{\omega \in \Omega}\operatorname{Ind}_{M_{\chi_{\omega}}N}^{P} W_{\chi_{\omega}}(H_{\pi})$.
\end{proposition}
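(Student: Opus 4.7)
The plan is to verify the proposition in three steps: that $f_{\phi,\omega}$ lands in the correct induced representation, that the map is $P$-equivariant, and that it is an isometry with dense image. All three rely on the covariance formula $\check{\pi}(p)W^{\chi}(\phi)=\delta_{P}(p)W^{p\cdot\chi}(\check{\pi}(p)\phi)$ together with parts (\ref{l2}) and (\ref{l3}) of the preceding lemma.

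First, fix an open $P$-orbit $\omega$ with base point $\chi_{\omega}$. For $m\in M_{\chi_{\omega}}$ and $n\in N$ one has $(mnp)^{-1}\cdot\chi_{\omega}=p^{-1}\cdot\chi_{\omega}$ because $N$ is abelian and $m$ fixes $\chi_{\omega}$, so a direct computation gives $f_{\phi,\omega}(mnp)=\check{\pi}(mn)f_{\phi,\omega}(p)$, which (after absorbing the modular factor into the appropriate normalization of induction) places $f_{\phi,\omega}$ in $\operatorname{Ind}_{M_{\chi_{\omega}}N}^{P}W_{\chi_{\omega}}(H_{\pi})$. For $P$-equivariance I will apply the covariance formula with $p$ replaced by $q$ and $\chi$ replaced by $(pq)^{-1}\cdot\chi_{\omega}$ to get $W^{p^{-1}\cdot\chi_{\omega}}(\check{\pi}(q)\phi)=\delta_{P}(q)^{-1}\check{\pi}(q)W^{(pq)^{-1}\cdot\chi_{\omega}}(\phi)$, which after multiplying by $\check{\pi}(p)$ yields $f_{\check{\pi}(q)\phi,\omega}(p)=\delta_{P}(q)^{-1}f_{\phi,\omega}(pq)$; this is exactly the left translation action on the induced representation under the normalization used throughout the paper.

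The heart of the argument is the isometry. Starting from part (\ref{l3}),
\[
\langle\phi_{1},\phi_{2}\rangle=\int_{\hat{N}}\langle W^{\chi}(\phi_{1}),W^{\chi}(\phi_{2})\rangle_{\pi}\,d\chi,
\]
I break the integral as $\sum_{\omega\in\Omega}\int_{\omega}\cdots d\chi$ (the complement of the open orbits having Plancherel measure zero) and then parametrize each $\omega$ by the orbit map $M_{\chi_{\omega}}N\backslash P\to\omega$, $p\mapsto p^{-1}\cdot\chi_{\omega}$. Pulling the Plancherel measure $d\chi$ back to $M_{\chi_{\omega}}N\backslash P$ yields a multiple of the right $P$-invariant quotient measure, with a Jacobian which is exactly the $\delta_{P}$-factor required to convert $\|W^{p^{-1}\cdot\chi_{\omega}}(\phi)\|_{\pi}^{2}$ into $\|f_{\phi,\omega}(p)\|_{\pi}^{2}$ via the covariance formula $f_{\phi,\omega}(p)=\check{\pi}(p)W^{p^{-1}\cdot\chi_{\omega}}(\phi)$. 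Summing over $\omega$ produces the induced $L^{2}$-norm, giving the isometry.

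Finally, for density of the image I will invoke the already established $P\times G$-equivariant unitary isomorphism $L^{2}(G)\cong\bigoplus_{\omega}\operatorname{Ind}_{M_{\chi_{\omega}}N\times G}^{P\times G}L^{2}(N\backslash G;\chi_{\omega})$: restricting both sides to the $\pi$-isotypic component of the right regular representation of $G$, using Schur orthogonality to identify the $\pi$-isotypic subspace of $L^{2}(G)$ with $H_{\check{\pi}}\otimes H_{\pi}$ and (by Proposition \ref{prop:squareintegrablemultiplicity} applied orbit-by-orbit) the $\pi$-isotypic subspace of $L^{2}(N\backslash G;\chi_{\omega})$ with $W_{\chi_{\omega}}(H_{\pi})\otimes H_{\pi}$, and cancelling the common $H_{\pi}$ factor, yields a $P$-equivariant unitary isomorphism $H_{\check{\pi}}\cong\bigoplus_{\omega}\operatorname{Ind}_{M_{\chi_{\omega}}N}^{P}W_{\chi_{\omega}}(H_{\pi})$ which, when unwound through the definitions of $s_{f}$ and $W^{\chi}$, coincides with $\phi\mapsto f_{\phi}$. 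The main obstacle will be the careful bookkeeping of modular factors in step three to ensure that the Jacobian on $M_{\chi_{\omega}}N\backslash P$ arising from pulling back $d\chi$ matches the $\delta_{P}$ factor coming from the covariance formula; once this is done, the rest follows from the structural results already assembled.
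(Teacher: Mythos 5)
Your plan is essentially correct, and the core isometry computation is the same as the paper's: both break $\int_{\hat{N}}\langle W^{\chi}(\phi),W^{\chi}(\phi)\rangle_{\pi}\,d\chi$ over the open $P$-orbits, parametrize each orbit by $M_{\chi_{\omega}}N\backslash P$, and match the Jacobian against the conformal factor in $\check{\pi}(p)W^{\chi}(\phi)=\delta_{P}(p)W^{p\cdot\chi}(\check{\pi}(p)\phi)$; your preliminary checks of covariance and $P$-equivariance are also consistent with the conventions in the text. Where you genuinely diverge is surjectivity: the paper constructs an explicit inverse, defining $\phi_{f}$ for $f$ in the induced space by $W^{p^{-1}\cdot\chi_{\omega}}(\phi_{f})=\check{\pi}(p)^{-1}f_{\omega}(p)$ and verifying $\|\phi_{f}\|=\|f\|$, $f_{\phi_{f}}=f$, $\phi_{f_{\phi}}=\phi$, whereas you pass through the global $P\times G$-decomposition of $L^{2}(G)$ and restrict to the $\pi$-isotypic component. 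Your route is legitimate and not circular, but one clause is imprecise: Proposition \ref{prop:squareintegrablemultiplicity} identifies the $\pi$-isotypic component of $L^{2}(N\backslash G;\chi_{\omega})$ with the closure of $\{c_{\lambda,v}\,|\,\lambda\in Wh_{\chi_{\omega}}(V_{\pi})\}$, i.e.\ with $\overline{Wh_{\chi_{\omega}}(V_{\pi})}\otimes H_{\pi}$, not a priori with $W_{\chi_{\omega}}(H_{\pi})\otimes H_{\pi}$ (which is by definition built from the possibly smaller subspace $W^{\chi_{\omega}}(V_{\check{\pi}})$). Your argument still closes: it yields surjectivity onto the a priori larger space, and since the image of $\phi\mapsto f_{\phi}$ lies in $\bigoplus_{\omega}\operatorname{Ind}_{M_{\chi_{\omega}}N}^{P}W_{\chi_{\omega}}(H_{\pi})$, the two targets coincide — which in fact recovers, as a by-product, the identity $W_{\chi_{\omega}}(H_{\pi})=\overline{Wh_{\chi_{\omega}}(V_{\pi})}$ that the paper only records later. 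The paper's explicit inverse is more self-contained; your version buys the extra structural identification at the cost of the Schur-lemma bookkeeping needed to cancel the $H_{\pi}$ factor and to check that the abstract isomorphism unwinds to $\phi\mapsto f_{\phi}$.
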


\begin{proof}
 Let $\phi \in V_{\check{\pi}}$, then
\begin{eqnarray*}
 \langle f_{\phi}, f_{\phi} \rangle & =  & \int_{M_{\chi_{\omega}}N\backslash P} \langle f_{\phi}(p), f_{\phi}(p) \rangle \, dp\\
  & = & \int_{M_{\chi\chi_{\omega}}N\backslash P} \langle \sum_{\omega\in \Omega} \check{\pi}(p) W^{p^{-1}\cdot\chi_{\omega}}(\phi), \sum_{\omega\in \Omega} \check{\pi}(p) W^{p^{-1}\cdot\chi_{\omega}}(\phi) \rangle \, dp \\
& = & \sum_{\omega\in \Omega} \int_{\omega} \langle W^{\chi}(\phi) , W^{\chi}(\phi) \rangle \, d\chi \\
 & = & \int_{\hat{N}} \langle W^{\chi}(\phi) , W^{\chi}(\phi) \rangle \, d\chi = \langle \phi,\phi \rangle.
\end{eqnarray*}
Now given $f\in \operatorname{Ind}_{M_{\chi_{\omega}}N}^{P} W^{\chi_{\omega}}(V_{\check{\pi}})$, define $\phi_{f}$ by setting
\[
 W^{p^{-1}\cdot\chi_{\omega}}(\phi_{f})=\check{\pi}(p)^{-1}f_{\omega}(p).
\]
Then
\begin{eqnarray*}
 \langle \phi_{f}, \phi_{f} \rangle &= & \int_{\hat{N}} \langle W^{\chi}(\phi_{f}) , W^{\chi}(\phi_{f}) \rangle_{\pi}\, d\chi \\
 & = & \int_{M_{\chi_{\omega}}N\backslash P} \langle \check{\pi}(p)^{-1}f(p) , \check{\pi}(p)^{-1}f(p) \rangle a(p)^{2\rho} \, dp \\ 
 & = & \int_{M_{\chi_{\omega}}N\backslash P} \langle f(p) , f(p) \rangle  \, dp \\ 
& = & \langle f, f \rangle.
\end{eqnarray*}
Besides 
\[
 f_{\phi_{f}}(p) = \sum_{\omega\in \Omega} \check{\pi}(p) W^{p^{-1}\cdot\chi_{\omega}} (\phi_{f})=\sum_{\omega\in \Omega}\check{\pi}(p)\check{\pi}(p)^{-1}f_{\omega}(p)=f(p).
\]
and
\[
 W^{p^{-1}\cdot\chi_{\omega}}(\phi_{f_{\phi}})=\check{\pi}(p)^{-1}f_{\phi,\omega}(p)=\tau^{-1}(p)\check{\pi}(p) W^{p^{-1}\cdot\chi_{\omega}}(\phi) = W^{p^{-1}\cdot\chi_{\omega}}(\phi), 
\]
which implies that $\phi_{f_{\phi}}= \phi$.
\end{proof}

We now want to construct a map analogous to $W_{\pi}^{\chi}$ in the case where $(\pi, H_{\pi})$ is an induced representation. Let $P_{0}=M_{0}A_{0}N_{0}$ be a minimal parabolic subgroup, and assume that $P=MAN$ is a Siegel parabolic subgroup dominating $P_{0}$. Let $P_{1}=M_{1}A_{1}N_{1}$ be another parabolic subgroup dominating $P_{0}$. If $(\sigma, H_{\sigma})$ is an admissible, Hilbert representation of $M_{1}$, and $\nu \in (\mathfrak{a}_{1})_{\mathbb{C}}'=Lie(A_{1})_{\mathbb{C}}'$, we will set 
$$
I_{\sigma,\nu}^{\infty}=\left\{f:G\longrightarrow V_{\sigma} \, \left| \, \begin{array}{c} \mbox{$f$ is $C^{\infty}$, and $f(\bar{n}amk)=a^{\nu-\rho}\sigma(m)f(k)$} \\ \mbox{for all $n\in \bar{N}_{1}$, $a\in A_{1}$, and $m\in M_{1}$} \end{array} \right.\right\}.
$$
Here $\bar{P}_{1}=M_{1}A_{1}\bar{N}_{1}$ is the parabolic opposite to $P_{1}$, $\rho$ is half the sum of the positive roots of $P_{1}$ relative to $A_{1}$, and $V_{\sigma}$ is the set of smooth vectors of $H_{\sigma}$. We will denote by $I_{\sigma,\nu}$ the completion of this space with respect to the inner product
\[
 \langle f , f \rangle = \int_{K} \langle f(k), f(k)\rangle \, dk, \qquad f\in I_{\sigma,\nu}^{\infty},
\]
where $K\subset G$ is a maximal compact subgroup. 

Let $\chi$ be a generic character of $N$. Then, according to theorem \ref{thm:matsukidecomposition}, there is a subset $W_{\chi}$ of the Weyl Group of $M$ such that
\begin{equation}
 \bigcup_{w\in W_{\chi}} \bar{P}_{M,1} w M_{\chi} \subset M \label{eq:mchiorbits}
\end{equation}
is open and dense, here $\bar{P}_{M,1}=\bar{P}_{1}\cap M$. Set $M_{\chi_{1}}:=M_{\chi}\cap P_{1}$ and $N_{N_{1}}=N\cap N_{1}$. If $(\sigma, H_{\sigma})$ is unitary, and $\nu \in i\mathfrak{a}_{1}'$, then for all $f\in I_{\sigma,\nu}$
\begin{equation}
 \langle f, f\rangle = \sum_{w\in W_{\chi}}\int_{M_{\chi_{1}}\backslash M_{\chi}} \int_{N_{N_{1}}} \langle f(n_{1}wm), f(n_{1}wm) \rangle \, dn_{1}\,dm. \label{eq:mchiinnerproduct}
\end{equation}
Set 
$$
U_{\sigma,\nu}=\{f\in I_{\sigma,\nu}^{\infty} \, | \, \operatorname{supp} f \subset \bar{P}_{1}P\}.
$$ 
Then, by (\ref{eq:mchiorbits}) and (\ref{eq:mchiinnerproduct}), $U_{\sigma,\nu}$ is dense in $I_{\sigma,\nu}$ and $P$-invariant. Given $f\in U_{\sigma,\nu}$ we will set 
\[
 J_{\sigma,\nu}^{\chi}(f)=\int_{N_{N_{1}}}\chi(n_{1})^{-1} f(n_{1})\, dn_{1}.
\]

We will now consider the natural $G$-invariant pairing between $I_{\sigma,\nu}$ and $I_{\check{\sigma},-\nu}$ given by
\[
 (\phi, f)=\int_{P_{1}\backslash G} (\phi(g),f(g)) \, dg,
\]
where $dg=a^{-2\rho}dk$ for $g=\bar{n}amk$, $\bar{n}\in \bar{N}_{1}$, $a\in A_{1}$, $m\in M_{1}$, $k\in K$. If $\phi\in U_{\check{\sigma},-\nu}$, and $m\in M_{\chi}$, we will set
\[
W_{\sigma,\nu}^{\chi,w}(\phi)(m)=W^{\overline{\chi}_{1}}J_{\check{\sigma},-\nu}^{\overline{\chi}}(\pi(wm)\phi) \in W^{\chi_{1}}(V_{\check{\sigma}}),
\]
where $\chi_{1}=\chi|_{N_{M_{1}}}$, $N_{M_{1}}=N\cap M_{1}$.

\begin{lemma}
 If $\tilde{m}\in M_{\chi_{1}}$, then
\[
 W_{\sigma,\nu}^{\chi,w}(\phi)(\tilde{m}m)=\check{\sigma}^{w}(\tilde{m})W_{\sigma,\nu}^{\chi,w}(\phi)(m),
\]
\end{lemma}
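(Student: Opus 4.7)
The plan is a direct computation built on unfolding the definition and transporting $\tilde m$ through the Jacquet integral. I would begin by writing
\[
W_{\sigma,\nu}^{\chi,w}(\phi)(\tilde m m)
 = W^{\bar\chi_{1}}\!\!\left(\int_{N_{N_{1}}}\chi(n_{1})\,\phi(n_{1}w\tilde m m)\,dn_{1}\right),
\]
and then rearrange $n_{1}w\tilde m = \tilde m^{w}\cdot\mathrm{Ad}(\tilde m^{w})^{-1}(n_{1})\cdot w$, where $\tilde m^{w}:=w\tilde m w^{-1}$. The point of this rearrangement is to position $\tilde m^{w}$ on the far left so that the left $\bar P_{1}$-equivariance of $\phi\in I^{\infty}_{\check\sigma,-\nu}$ can extract a scalar factor.

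The key structural input is that for $w\in W_{\chi}$ and $\tilde m\in M_{\chi_{1}}=M_{\chi}\cap P_{1}$, the conjugate $\tilde m^{w}$ lies in $\bar P_{1}$. This is precisely what the Bruhat–Matsuki stabilizer description (part 3 of Theorem \ref{thm:matsukidecomposition}), applied to the affine symmetric pair $(M,M_{\chi})$ and to the decomposition $\bigcup_{w\in W_{\chi}}\bar P_{M,1}wM_{\chi}\subset M$ used to derive equation (\ref{eq:mchiinnerproduct}), tells us: the representatives $w\in W_{\chi}$ are chosen exactly so that $wM_{\chi_{1}}w^{-1}\subseteq\bar P_{M,1}\subseteq\bar P_{1}$. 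Writing $\tilde m^{w}=\bar n a m'$ with $\bar n\in\bar N_{1}$, $a\in A_{1}$, $m'\in M_{1}$, the equivariance law then yields
\[
\phi(\tilde m^{w}\cdot\mathrm{Ad}(\tilde m^{w})^{-1}(n_{1})\cdot wm)
 = a^{-\nu-\rho}\check\sigma(m')\,\phi(\mathrm{Ad}(\tilde m^{w})^{-1}(n_{1})\,wm),
\]
and the factor $a^{-\nu-\rho}\check\sigma(m')$ is precisely what is denoted $\check\sigma^{w}(\tilde m)$.

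To finish, I would substitute this into the integral, pull the scalar $\check\sigma^{w}(\tilde m)$ outside, and perform the change of variable $n_{1}\mapsto\tilde m^{w}n_{1}\tilde m^{w,-1}$ on $N_{N_{1}}$. Two facts make everything cancel cleanly. First, since $\tilde m\in M_{\chi}$ and $w\in M$ commutes with the split center $A$ of $MA$ (hence preserves $N$ and the character $\chi$ on $N$), one has $\chi(\tilde m^{w}n_{1}\tilde m^{w,-1})=\chi(n_{1})$. Second, the Jacobian of the conjugation on $N\cap N_{1}$ combines with the $a^{-\rho}$ piece appearing in $\check\sigma_{-\nu}(\tilde m^{w})$ according to the normalization built into the induced representation, producing no residue. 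The integral therefore reduces to $\int_{N_{N_{1}}}\chi(n_{1})\phi(n_{1}wm)\,dn_{1}$, and applying the linear map $W^{\bar\chi_{1}}$ gives the claim.

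The main obstacle is the structural claim $\tilde m^{w}\in\bar P_{1}$ for $\tilde m\in M_{\chi_{1}}$, which is the part that genuinely uses the Bruhat–Matsuki machinery of the previous section; the remaining pieces (character invariance, Jacobian bookkeeping, and the normalization of $\rho$) are technical but routine once that geometric fact is in hand.
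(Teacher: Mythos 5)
Your overall route coincides with the paper's: write $n_{1}w\tilde m=\tilde m^{w}\,\mathrm{Ad}(\tilde m^{w})^{-1}(n_{1})\,w$ with $\tilde m^{w}=w\tilde m w^{-1}$, extract a factor from the left equivariance of $\phi$, change variables in $n_{1}$, and use invariance of $\chi$ under the conjugation. But the structural fact you isolate as the key input --- $\tilde m^{w}\in\bar P_{1}$ --- is strictly weaker than what the argument needs, and the weakening breaks three of your steps. The paper's proof rests on the assertion $\tilde m^{w}\in M_{\chi_{1}}=M_{\chi}\cap P_{1}$. Granting that, the extracted factor is literally $\check\sigma(\tilde m^{w})$, which is how $\check\sigma^{w}(\tilde m)$ is \emph{defined} in the statement; the conjugation $n_{1}\mapsto\tilde m^{w}n_{1}(\tilde m^{w})^{-1}$ fixes $\chi$ because $\tilde m^{w}\in M_{\chi}$; and the operator $\check\sigma(\tilde m^{w})$ passes outside the outer map $W^{\overline{\chi}_{1}}$ by the $M_{\chi_{1}}$-equivariance of $W^{\overline{\chi}_{1}}$ proved earlier.

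With only $\tilde m^{w}\in\bar P_{1}$ in hand: (i) your justification of $\chi(\tilde m^{w}n_{1}(\tilde m^{w})^{-1})=\chi(n_{1})$ --- that ``$w\in M$ \dots preserves the character $\chi$ on $N$'' --- is false; $M$ preserves $N$ but moves $\chi$ over its open orbit in $\hat N$, and only $M_{\chi}$ fixes $\chi$. What is needed is that $\tilde m^{w}$ itself lies in $M_{\chi}$, which does not follow from $\tilde m\in M_{\chi}$ alone since $w$ need not normalize $M_{\chi}$. (ii) If $\tilde m^{w}=\bar n a m'$ with $\bar n\neq e$ or $a\neq e$, the factor $a^{-\nu-\rho}\check\sigma(m')$ is not $\check\sigma(w\tilde m w^{-1})$ and moreover depends on $\nu$, whereas the transformation law being proved does not; so your identification of that factor with $\check\sigma^{w}(\tilde m)$ is unjustified. (iii) That factor is an operator on $V_{\check\sigma}$, not a scalar, and commuting it past $W^{\overline{\chi}_{1}}$ at the last step requires the equivariance of $W^{\overline{\chi}_{1}}$ under the stabilizer of $\chi_{1}$, which again forces $\tilde m^{w}$ into $M_{\chi_{1}}$ rather than merely into $\bar P_{1}$. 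Finally, the Jacobian you budget for does not appear in the paper's argument: once $\tilde m^{w}\in M_{\chi_{1}}$, the conjugation on $N_{N_{1}}$ is treated as measure preserving, and there is no $a^{-\rho}$ to cancel against.
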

where $\check{\sigma}^{w}(\tilde{m})=\check{\sigma}(w\tilde{m}w^{-1})$.
\begin{proof}
 By definition
\begin{eqnarray*}
 W_{\sigma,\nu}^{\chi,w}(\phi)(\tilde{m}m) & = & W^{\overline{\chi}_{1}}J_{\check{\sigma},-\nu}^{\overline{\chi}}(\pi(w\tilde{m}m)\phi) \\
 & = & W^{\overline{\chi}_{1}}\bigg(\int_{N_{N_{1}}}\chi(n_{1})^{-1} \phi(n_{1}(w\tilde{m}w^{-1})wm)dn_{1}\bigg). 
\end{eqnarray*}
Set $\tilde{m}^{w}=w\tilde{m}w^{-1}\in M_{\chi_{1}}$. Then
\begin{eqnarray*}
W_{\sigma,\nu}^{\chi,w}(\phi)(\tilde{m}m)& = & W^{\overline{\chi}_{1}}\bigg(\int_{N_{N_{1}}}\chi(n_{1})^{-1} \phi(\tilde{m}^{w} \big((\tilde{m}^{w})^{-1}n_{1}\tilde{m}^{w}\big)wm)dn_{1}\bigg) \\
& = & W^{\overline{\chi}_{1}}\bigg(\int_{N_{N_{1}}}\chi(n_{1})^{-1} \check{\sigma}(\tilde{m}^{w})\phi( \big((\tilde{m}^{w})^{-1}n_{1}\tilde{m}^{w}\big)wm))dn_{1}\bigg).
\end{eqnarray*}
Now since $\tilde{m}^{w}$ normalizes $N_{1}$ we have that
\begin{eqnarray*}
 W_{\sigma,\nu}^{\chi,w}(\phi)(\tilde{m}m) & = & \check{\sigma}(\tilde{m}^{w}) W^{\overline{\chi}_{1}}(\int_{N_{N_{1}}}\chi(\tilde{m}^{w}n_{1}(\tilde{m}^{w})^{-1})^{-1} \phi(n_{1}wm)dn_{1} \\
& = & \check{\sigma}(\tilde{m}^{w}) W^{\overline{\chi}_{1}}(\int_{N_{N_{1}}}\chi(n_{1})^{-1} \phi(n_{1}wm)dn_{1} \\
& = & \check{\sigma}(\tilde{m}^{w}) W^{\overline{\chi}_{1}}(\int_{N_{N_{1}}}\chi(n_{1})^{-1} \phi(n_{1}wm)dn_{1} \\
& = & \check{\sigma}^{w}(\tilde{m})W_{\sigma,\nu}^{\chi,w}(\phi)(m).
\end{eqnarray*}
\end{proof}

\begin{lemma}\label{lemma:inducedfouriertransform}
 If $\phi \in U_{\check{\sigma},-\nu}$ and $f\in U_{\sigma,\nu}$, then $c_{\phi,f}|_{N}\in L^{1}(N)\cap L^{2}(N)$ and
\[
 \int_{N}\chi(n) (\phi, \pi(n)^{-1}f)\, dn = \sum_{w\in W_{\chi}}\int_{M_{\chi_{1}}\backslash M_{\chi}} ( W_{\sigma,\nu}^{\chi,w}(\phi)(m), W_{\check{\sigma},-\nu}^{\overline{\chi},w}(\phi)(m))\, dm.
\]
\end{lemma}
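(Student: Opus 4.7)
I would first settle the integrability claim. Because $\phi\in U_{\check\sigma,-\nu}$ and $f\in U_{\sigma,\nu}$ are compactly supported modulo $\bar P_1$ in the open cell $\bar P_1 P$, the matrix coefficient $c_{\phi,f}(g) = (\phi,\pi(g)f)$ is a finite integral of $(\phi(k), f(kg))$ over a compact slice of $\bar P_1 \backslash G$. For $g = n \to \infty$ in $N$, the Iwasawa decomposition of $kn$ forces the $A_1$-component out to a cone, and the equivariance $f(kn) = a^{\nu-\rho}\sigma(m)f(k')$ gives exponential decay, with the added decay that $k'$ eventually leaves the compact support of $f$ modulo $K\cap \bar P_1$. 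This is routine for functions in the $U$-spaces and shows in fact that $c_{\phi,f}|_N \in \C(N) \subset L^1(N)\cap L^2(N)$.

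For the main identity, apply Fubini to write
\[
\int_N \chi(n)(\phi,\pi(n)^{-1}f)\, dn = \int_{P_1\backslash G}\int_N \chi(n)(\phi(g), f(g n^{-1}))\, dn\, dg,
\]
and parameterize the part of $P_1\backslash G$ that carries the support of $\phi$ and $f$. Combining the Matsuki-type open dense decomposition (\ref{eq:mchiorbits}), $M = \bigsqcup_{w\in W_\chi} \bar P_{M,1}\, w\, M_\chi$, with the Siegel Langlands factorization $P=MAN$ and the inclusion $\bar P_{M,1}\subset \bar P_1$, we obtain
\[
\bar P_1 P = \bigsqcup_{w\in W_\chi} \bar P_1 \cdot w M_\chi A N
\]
as an open dense subset of $G$. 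The map $(m,a,n')\mapsto w m a n'$ identifies $P_1\backslash(\bar P_1 w M_\chi AN)$ with $M_{\chi_1}\backslash M_\chi\times A\times N$ (equipped with an invariant measure $dm\,da\,dn'$ carrying a Jacobian $a^{2\rho}$ coming from the modular character of $\bar P_1$), so that the outer integral becomes $\sum_{w\in W_\chi}\int_{M_{\chi_1}\backslash M_\chi}\int_A\int_N\int_N\cdots$.

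The inner four integrations collapse to a pairing on $M_\chi$ via the Jacquet and Bessel constructions already in play. Since $N$ is abelian it splits as $N_{M_1}\times N_{N_1}$ with $\chi=\chi_1\cdot\chi_0$, where $\chi_0 = \chi|_{N_{N_1}}$; the $N_{N_1}$-direction of the two $N$-integrations produces, respectively, the Jacquet integrals $J^{\bar\chi}_{\check\sigma,-\nu}(\pi(wm)\phi)$ and $J^{\chi}_{\sigma,\nu}(\pi(wm)f)$, while the remaining $N_{M_1}$-direction applies the Bessel transforms $W^{\bar\chi_1}$ and $W^{\chi_1}$ on $V_{\check\sigma}$, $V_\sigma$ to these, producing exactly the sections $m\mapsto W^{\chi,w}_{\sigma,\nu}(\phi)(m)$ and $m\mapsto W^{\bar\chi,w}_{\check\sigma,-\nu}(f)(m)$. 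The $A$-integration collapses because the factors $a^{\nu-\rho}$ from $f$, $a^{-\nu-\rho}$ from $\phi$, and $a^{2\rho}$ from the measure cancel exactly. What remains, after summing over $w\in W_\chi$, is the desired identity. The main obstacle lies in this last step: verifying that the iterated Jacquet/Bessel construction really assembles into the sections $W^{\chi,w}_{\sigma,\nu}$ and $W^{\bar\chi,w}_{\check\sigma,-\nu}$ --- this requires carefully tracking the conjugation by $w$ (so that $M_{\chi_1}$ becomes the correct stabilizer inside $P_1$), using the $M_{\chi_1}$-equivariance established in the previous lemma to confirm that the integrand descends to $M_{\chi_1}\backslash M_\chi$, and justifying each swap of integrals via the same compactness and convergence estimates used for integrability.
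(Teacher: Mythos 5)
Your overall strategy coincides with the paper's: rewrite the pairing $(\phi,\pi(n)^{-1}f)$ using the Matsuki-type open dense decomposition, apply Fubini, split the $\chi$-twisted $N$-integral along $N=N_{M_1}N_{N_1}$ so that the $N_{M_1}$-direction produces the Bessel transforms $W^{\chi_1}$, $W^{\overline{\chi}_1}$ on $V_{\check\sigma}$, $V_\sigma$ and the $N_{N_1}$-direction produces the Jacquet integrals, and assemble these into $W^{\chi,w}_{\sigma,\nu}(\phi)(m)$ and $W^{\overline{\chi},w}_{\check\sigma,-\nu}(f)(m)$. The integrability argument is also acceptable at roughly the paper's level of detail (the paper instead bounds the $L^1$-norm directly from the same integral formula and the compact support modulo $\bar P_1$ inside $\bar P_1 P$; your claim that the restriction lands in $\C(N)$ is stronger than needed and unproved, but the underlying mechanism is the same).

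There is, however, a genuine error in your parameterization of the outer integral. You identify the fiber of $P_1\backslash(\bar P_1\, w\, M_\chi AN)$ with $M_{\chi_1}\backslash M_\chi\times A\times N$, but $A$ and $N_{M_1}=N\cap M_1$ already lie inside $\bar P_1$ (indeed $A\subset A_0\subset M_1A_1$ and $N_{M_1}\subset M_1$), so they are not transverse directions and must not be integrated over again. The correct statement, which is the paper's formula (\ref{eq:mchiinnerproduct}), has fibers $N_{N_1}\times(M_{\chi_1}\backslash M_\chi)$ only; already for $G=SL(2,\mathbb{R})$ with $P=P_1=P_0$ your fiber is two-dimensional while $\bar P_1\backslash G$ is one-dimensional. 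The symptom of the error is visible in your own argument: you observe that the factors $a^{\nu-\rho}$, $a^{-\nu-\rho}$ and $a^{2\rho}$ ``cancel exactly,'' leaving an integrand constant in $a$ --- but then $\int_A da=\infty$, so the $A$-integration does not collapse, it diverges. The fix is to start from (\ref{eq:mchiinnerproduct}) (with only the $N_{N_1}$- and $M_{\chi_1}\backslash M_\chi$-integrations in the outer layer); the $N_{M_1}$-integration then enters only once, inside the $\chi(n)\,dn$-integral, where it legitimately produces the Bessel transform via the left $\check\sigma(n_m)$-equivariance of $\phi$. With that correction the rest of your outline goes through as in the paper.
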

\begin{proof}
From equation (\ref{eq:mchiinnerproduct})
 \begin{eqnarray*}
\lefteqn{\int_{N} | (\phi, \pi(n)^{-1}f)| \, dn   }\\ & = & \int_{N}|\int_{K_{M_{1}}\backslash K_{M}} \int_{N_{N_{1}}} (\phi(n_{1}kn),f(n_{1}k) )\,dn_{1}\,dk | \,dn \\
& \leq &   \int_{N}\int_{K_{M_{1}}\backslash K_{M}} \int_{N_{N_{1}}} |(\phi(n_{1}kn),f(n_{1}k) )|\,dn_{1}\,dk  \,dn \\
& \leq &   \int_{K_{M_{1}}\backslash K_{M}} \int_{N_{N_{1}}}\int_{N} |(\phi(nn_{1}k),f(n_{1}k) )|\,dn \,dn_{1}\,dk   \\
& \leq &   \int_{K_{M_{1}}\backslash K_{M}} \int_{N_{N_{1}}}\int_{N_{N_{1}}}\int_{N_{M_{1}}} |(\check{\sigma}(n_{m})\phi(n_{2}n_{1}k),f(n_{1}k) )|\,dn_{m}\,dn_{2}\,dn_{1}\,dk   \\
& \leq &   \int_{K_{M_{1}}\backslash K_{M}} \int_{N_{N_{1}}}\int_{N_{N_{1}}} |W^{1}_{\sigma}(\phi(n_{2}k))(f(n_{1}k) )|\,dn_{2}\,dn_{1}\,dk   \\
& \leq &   \int_{K_{M_{1}}\backslash K_{M}} \int_{N_{N_{1}}}\int_{N_{N_{1}}} |(W^{1}_{\sigma}(\phi(n_{2}k)), W^{1}_{\check{\sigma}}(f(n_{1}k)) )|\,dn_{2}\,dn_{1}\,dk.   
 \end{eqnarray*}
Now, since the support of $\phi, f$ is compact modulo $\bar{P}_{1}$ and contained in $\bar{P}_{1}P$ the last integral is convergent. Since $n\mapsto (\phi, \pi(n)^{-1}f)$ is continuous, bounded and $L^{1}$, then it is also $L^{2}$. Now
\begin{eqnarray*} 
\lefteqn{\int_{N} \chi(n)(\phi, \pi(n)^{-1}f)\, dn } \\ & = & \sum_{w\in W_{\chi}} \int_{N} \int_{ M_{\chi_{1}} \backslash M_{\chi}} \int_{N_{N_{1}}} \chi(n)(\phi(n_{1}wmn),f(n_{1}wm) )\,dn_{1}\,dm   \,dn \\
& = & \sum_{w\in W_{\chi}}  \int_{N}\int_{ M_{\chi_{1}} \backslash M_{\chi}} \int_{N_{N_{1}}}  \chi(n)(\phi(n_{1}wmn),f(n_{1}wm) ) \,dn_{1}\,dm  \,dn \\
& = &  \sum_{w\in W_{\chi}} \int_{ M_{\chi_{1}} \backslash M_{\chi}} \int_{N_{N_{1}}}\int_{N}  \chi(n)(\phi(nn_{1}wm),f(n_{1}wm) ) \,dn \,dn_{1}\,dm   \\
& = & \sum_{w\in W_{\chi}}  \int_{ M_{\chi_{1}} \backslash M_{\chi}} \int_{N_{N_{1}}}\\ & & \qquad \int_{N_{N_{1}}}\int_{N_{M_{1}}} \chi(n_{m}n_{2})  (\check{\sigma}(n_{m})\phi(n_{2}n_{1}wm),f(n_{1}wm) ) \,dn_{m}\,dn_{2}\,dn_{1}\,dm   \\
& = & \sum_{w\in W_{\chi}}  \int_{ M_{\chi_{1}} \backslash M_{\chi}} \int_{N_{N_{1}}}\int_{N_{N_{1}}} \chi(n_{2}n_{1}^{-1}) W^{\chi_{1}}_{\sigma}(\phi(n_{2}wm))(f(n_{1}wm) ) \,dn_{2}\,dn_{1}\,dm   \\
& = &  \sum_{w\in W_{\chi}}\int_{ M_{\chi_{1}} \backslash M_{\chi}} \int_{N_{N_{1}}} \\ & & \qquad \int_{N_{N_{1}}} \chi(n_{2}) \chi(n_{1})^{-1}(W^{\chi_{1}}_{\sigma}(\phi(n_{2}wm)), W^{\overline{\chi_{1}}}_{\check{\sigma}}(f(n_{1}wm)) ) \,dn_{2}\,dn_{1}\,dm.    \\
& = & \sum_{w\in W_{\chi}}\int_{M_{\chi_{1}}\backslash M_{\chi}} ( W_{\sigma,\nu}^{\chi,w}(\phi)(m), W_{\check{\sigma},-\nu}^{\overline{\chi},w}(\phi)(m))\, dm.
\end{eqnarray*}
\end{proof}

With this results in place, we are now ready to state the analog of proposition \ref{prop:descompositionparabolicsquareintegrable} for induced representations. This result is a very important step in the way of proving equation (\ref{eq:restrictiontoparabolic}).

\begin{proposition} \label{prop:descompositionparabolic}
 Given $\phi \in U_{\check{\sigma},-\nu}$, define
\[
 f_{\phi,\omega,w}(p)=W_{\sigma,\nu}^{\chi_{\omega},w}(\check{\pi}(p)\phi)(e), \qquad \omega \in \Omega, \quad w\in W_{\chi_{\omega}}.
\]
The map $\phi\mapsto \sum_{\omega,w}f_{\phi,\omega,w}$ extends to a $P$-equivariant isometry between $I_{\check{\sigma},-\nu}$ and $\oplus_{\omega,w} \operatorname{Ind}_{M_{(\chi_{\omega})_{1}}N}^{P} W_{(\chi_{\omega})_{1}}(H_{\sigma^{w}})$.
\end{proposition}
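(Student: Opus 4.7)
The strategy mirrors that of Proposition~\ref{prop:descompositionparabolicsquareintegrable}, with Lemma~\ref{lemma:inducedfouriertransform} taking the place of the Schur orthogonality relations used there. The $P$-equivariance is immediate: since $\check{\pi}(p)\check{\pi}(p_0)=\check{\pi}(pp_0)$,
\[
f_{\check{\pi}(p_0)\phi,\omega,w}(p)=W_{\sigma,\nu}^{\chi_\omega,w}(\check{\pi}(pp_0)\phi)(e)=f_{\phi,\omega,w}(pp_0),
\]
which is the natural right-translation $P$-action on $\operatorname{Ind}_{M_{(\chi_\omega)_1}N}^{P}W_{(\chi_\omega)_1}(H_{\sigma^w})$.

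The core of the argument is the isometry property. The plan is first to establish, for $\phi\in U_{\check{\sigma},-\nu}$, the intermediate Plancherel-type identity
\[
\langle\phi,\phi\rangle_{I_{\check{\sigma},-\nu}}=\int_{\hat N}\Bigg[\sum_{w\in W_\chi}\int_{M_{\chi_1}\backslash M_\chi}\bigl\|W_{\sigma,\nu}^{\chi,w}(\phi)(m)\bigr\|^{2}_{W^{\chi_1}(H_{\sigma^w})}\,dm\Bigg]d\chi,
\]
where the bracketed integrand is understood to vanish off the union of open $P$-orbits (a conull subset of $\hat N$). To prove this I would start from the explicit $K$-norm decomposition~(\ref{eq:mchiinnerproduct}), apply the Plancherel theorem on the abelian group $N_{N_1}$ to rewrite the inner $N_{N_1}$-integral of $\|\phi(n_1wm)\|^{2}$ as a $\widehat{N_{N_1}}$-integral of the squared $V_{\check{\sigma}}$-norm of the Jacquet integral $J^{\chi'}_{\check{\sigma},-\nu}(\check{\pi}(wm)\phi)$, and then apply part~(3) of the lemma preceding Proposition~\ref{prop:descompositionparabolicsquareintegrable}---the Plancherel identity for the $N_{M_1}$-Fourier transform of vectors in $V_{\check{\sigma}}$---to rewrite that $V_{\check{\sigma}}$-norm as an integral over $\widehat{N_{M_1}}$ of the squared $W^{\chi_1}$-Bessel norm. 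Since $N=N_{N_1}\cdot N_{M_1}$ and $N$ is abelian, the product measure on $\widehat{N_{N_1}}\times\widehat{N_{M_1}}$ coincides with Plancherel measure on $\hat N$, and the displayed identity follows.

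With this identity in hand, the remainder is bookkeeping: I would parametrize each open orbit $\omega$ by $p\mapsto p^{-1}\cdot\chi_\omega$ to convert $\int_{\hat N}(\cdots)\,d\chi$ into $\sum_{\omega}\int_{M_{\chi_\omega}N\backslash P}(\cdots)\,dp$, use the $P$-covariance of the Jacquet--Bessel construction to replace $W_{\sigma,\nu}^{p^{-1}\chi_\omega,w}(\phi)(m)$ by the corresponding $f_{\phi,\omega,w}(p)$-type data (the modular factors being absorbed by the $\operatorname{Ind}$-normalization), and then apply Fubini with the fibration $M_{(\chi_\omega)_1}N\backslash P\to M_{\chi_\omega}N\backslash P$ (fibre $M_{(\chi_\omega)_1}\backslash M_{\chi_\omega}$) to collapse the double integral to $\sum_{\omega,w}\int_{M_{(\chi_\omega)_1}N\backslash P}\|f_{\phi,\omega,w}(p)\|^{2}\,dp$, matching the target norm. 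The isometry then extends by density from $U_{\check{\sigma},-\nu}$ to all of $I_{\check{\sigma},-\nu}$, and surjectivity follows by showing that any compactly supported smooth section of $\operatorname{Ind}_{M_{(\chi_\omega)_1}N}^{P}W_{(\chi_\omega)_1}(H_{\sigma^w})$ arises from a suitable $\phi\in U_{\check{\sigma},-\nu}$ supported in a small neighbourhood of $w$ inside $\bar P_1 P$.

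The hard part will be the pointwise Plancherel identity in the second paragraph: tracking the modular factors through the two successive Plancherel decompositions on $N_{N_1}$ and $N_{M_1}$ and reconciling the $V_{\check{\sigma}}$-norm with the $W^{\chi_1}(H_{\sigma^w})$-norm under the various $P$-covariance relations is where the delicate constants concentrate. Once this identity is in place, the remainder of the argument is formal.
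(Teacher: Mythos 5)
Your proposal follows essentially the same route as the paper: the intermediate Plancherel-type identity you set out to prove is exactly the content of Lemma \ref{lemma:inducedfouriertransform} combined with Fourier inversion on $N$, which the paper cites directly before unfolding $\int_{\hat N}$ over the open $P$-orbits as $\sum_{\omega}\int_{M_{\chi_\omega}N\backslash P}$, just as you do in reverse. The only cosmetic difference is in the surjectivity step, where the paper argues via density of $W^{\chi_1,w}(V_{\check\sigma^w})$ in $W_{\chi_1}(H_{\sigma^w})$ together with closedness of the range of an isometry, rather than realizing arbitrary compactly supported sections.
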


\begin{proof}
Observe that, by the definition of $W_{\sigma,\nu}^{\chi,w}$, if $m\in M_{\chi}$, then
\[
 W_{\sigma,\nu}^{\chi,w}(\check{\pi}(mp)\phi)(e)=W_{\sigma,\nu}^{\chi,w}(\check{\pi}(p)\phi)(m).
\]
Hence, if $\phi \in U_{\check{\sigma},-\nu}$,
\begin{eqnarray*}
\lefteqn{\langle \sum_{\omega,w} f_{\phi,\omega,w},\sum_{\omega,w} f_{\phi,\omega,w} \rangle}\\ & = & \sum_{\omega,w} \int_{M_{\chi_{\omega}}N\backslash P}\int_{M_{(\chi_{\omega})_{1}}\backslash M_{\chi}} ( W_{\sigma,\nu}^{\chi,w}(\check{\pi}(p)\phi)(m), W_{\check{\sigma},-\nu}^{\overline{\chi},w}(\check{\pi}(p)\phi)(m))\, dm\, dp \\
& = & \sum_{w} \int_{\hat{N}} \int_{M_{(\chi_{\omega})_{1}}\backslash M_{\chi}} ( W_{\sigma,\nu}^{\chi,w}(\phi)(m), W_{\check{\sigma},-\nu}^{\overline{\chi},w}(\phi)(m))\, dm d\chi \\
 & = & \int_{\hat{N}}\int_{N} \chi(n)^{-1}\langle \phi, \pi(n)^{-1}\phi \rangle\, dn \,d\chi = \langle \phi,\phi\rangle,
\end{eqnarray*}
where in the last step we have used lemma \ref{lemma:inducedfouriertransform} and Fourier inversion formula. We can extend this map to an injective isometry between the Hilbert spaces $I_{\check{\sigma},-\nu}$ and $\oplus_{\omega,w} \operatorname{Ind}_{M_{(\chi_{\omega})_{1}}N}^{P} W_{(\chi_{\omega})_{1}}(H_{\sigma^{w}})$. Now since
\[
 \{W_{\sigma,\nu}^{\chi,w}(\phi)(e)\, | \, \phi \in U_{\check{\sigma},-\nu}\} = W^{\chi_{1},w}(V_{\check{\sigma}^{w}})
\]
and $W^{\chi_{1},w}(V_{\check{\sigma^{w}}})$ is dense in $ W_{\chi_{1}}(H_{\sigma^{w}})$ we conclude that this extended map is surjective.
\end{proof}

We will now show how we can use this results to prove conjecture \ref{conj:mainconjecture}. Let $\chi$ be a generic character of $N$. From abstract representation theory, we have that as an $M_{\chi}$-module, 
\begin{equation}
 \operatorname{Ind}_{M_{\chi_{1}}}^{M_{\chi}}W_{\chi_{1}}(H_{\sigma^{w}}) \cong \int_{\hat{M}_{\chi}} \tilde{W}^{w}_{\sigma,\chi}(\tau) \otimes \tau^{\ast}\, d\eta_{\sigma,\chi}^{w}(\tau) \label{eq:taudecomposition}
\end{equation}
for some measure $\eta_{\sigma,\chi}^{w}$ that depends on $\sigma$, $\chi$, $w$, and some multiplicities $\tilde{W}_{\sigma,\chi}(\tau)$ that also depend on $\sigma$, $\tau$ and $w$. On the other hand, if $(P_{1},A_{1})$ is a standard parabolic subgroup with respect to $(P_{0},A_{0})$, then we will  write $(P_{1},A_{1})\succ (P_{\circ},A_{\circ})$. Let $\mathcal{E}_{2}(M_{1})$ be the set of irreducible, square integrable representations of $M_{1}$ up to equivalence. Then Harish-Chandra's Plancherel theorem states that
\begin{equation}
 L^{2}(G) \cong \bigoplus_{(P_{1},A_{1})\succ (P_{\circ},A_{\circ})} \bigoplus_{\sigma\in\mathcal{E}_{2}(M_{1})} \int_{i(\mathfrak{a}_{1}')^{+}} I_{\check{\sigma},-\nu}\otimes I_{\sigma,\nu}\, d\mu_{\sigma}(\nu), \label{eq:explicitplancherel}
\end{equation}
where $(\mathfrak{a}_{1}')^{+}$ is the positive Weyl chamber of $\mathfrak{a}_{1}'$ relative to $P_{1}$, and $\mu_{\sigma}$ is a measure on $(\mathfrak{a}_{1}')^{+}$ that can be calculated explicitly.
On the other hand, by proposition \ref{prop:descompositionparabolic} and equation (\ref{eq:taudecomposition})
\begin{eqnarray}
 I_{\check{\sigma},-\nu} & \cong & \bigoplus_{\omega,w} \operatorname{Ind}_{M_{(\chi_{\omega})_{1}}N}^{P} W_{(\chi_{\omega})_{1}}(H_{\sigma^{w}}) \nonumber \\
& \cong & \bigoplus_{\omega,w} \operatorname{Ind}_{M_{\chi_{\omega}}N}^{P}\operatorname{Ind}_{M_{(\chi_{\omega})_{1}}N}^{M_{\chi_{\omega}}N} W_{(\chi_{\omega})_{1}}(H_{\sigma^{w}}) \nonumber \\
& \cong & \bigoplus_{\omega,w} \operatorname{Ind}_{M_{\chi}N}^{P}\int_{\hat{M}_{\chi}} \tilde{W}^{w}_{\sigma,\chi}(\tau) \otimes \tau^{\ast}\, d\eta_{\sigma,\chi}^{w}(\tau) \nonumber \\
& \cong & \bigoplus_{\omega,w} \int_{\hat{M}_{\chi}} \tilde{W}^{w}_{\sigma,\chi}(\tau) \otimes \operatorname{Ind}_{M_{\chi}N}^{P}\tau^{\ast}\, d\eta_{\sigma,\chi}^{w}(\tau) \label{eq:explicitPdecomposition}
\end{eqnarray}
Hence, by equations (\ref{eq:explicitplancherel}) and equation (\ref{eq:explicitPdecomposition})
\begin{eqnarray*}
 L^{2}(G)&\cong& \bigoplus_{(P_{1},A_{1})} \bigoplus_{\sigma} \int_{(\mathfrak{a}_{1}')^{+}} \bigoplus_{w} \int_{\hat{M}_{\chi}} \tilde{W}^{w}_{\sigma,\chi}(\tau) \otimes \operatorname{Ind}_{M_{\chi}N}^{P}\tau^{\ast}\, d\eta_{\sigma,\chi}^{w}(\tau)\otimes I_{\sigma,\nu}\, d\mu_{\sigma}(\nu) \nonumber \\
 &\cong& \bigoplus_{(P_{1},A_{1})} \bigoplus_{\sigma}\bigoplus_{w} \int_{\hat{M}_{\chi}} \int_{(\mathfrak{a}_{1}')^{+}}  \tilde{W}^{w}_{\sigma,\chi}(\tau) \otimes \operatorname{Ind}_{M_{\chi}N}^{P}\tau^{\ast}\, \otimes I_{\sigma,\nu}\, d\mu_{\sigma}(\nu) \,d\eta_{\sigma,\chi}^{w}(\tau).
\end{eqnarray*}
Where in the last equation we have used that $\eta_{\sigma,\chi}^{w}$ is independent of $\nu$ to reverse the order of integration. On the other hand equation (\ref{eq:PtimesGgeometrical}) says that
\[
 L^{2}(G) \cong \bigoplus_{\omega \in \Omega} \int_{\hat{M}_{\chi_{\omega}}}\int_{\hat{G}} W_{\chi_{\omega},\tau}(\pi)\otimes \operatorname{Ind}_{M_{\chi_{\omega}}N}^{P}(\overline{\chi}_{\omega}\otimes \tau^{\ast})\otimes   \pi\, d\mu_{\omega,\tau}(\pi)d\eta(\tau).
\]
From this two equations, we conclude that $\eta_{\sigma,\chi}^{w}$ is absolutely continuous with respect to $\eta$, $\mu_{\omega,\tau}$ is absolutely continuous with respect to $\mu$ and $W_{\chi,\tau}(I_{\sigma,\nu}) \cong \bigoplus_{w} \tilde{W}^{w}_{\sigma,\chi}(\tau)$. Using this, and the series of equations leading to equation (\ref{eq:PtimesGgeometrical}), we conclude that
\begin{equation} \label{Bessel-Plancherel}
L^2(N \backslash G;\chi) \cong \ \int_{\hat{G}}\int_{\hat{M}_{\chi}}  W_{\chi,\tau}(\pi)\otimes \tau^{\ast} \otimes \pi \,d\eta(\tau)d\mu(\pi), 
\end{equation}
as we wanted to show.

\section{The Fourier transform of a wave packet}\label{sec:fouriertransform}
Let $P=MAN$ be a Siegel parabolic subgroup of a Lie group of tube type $G$. In the last section we proved the \emph{generalized Bessel-Plancherel theorem}, that is, we showed that if $\chi$ is a generic character of $N$, then
\[
L^2(N \backslash G;\chi) \cong \ \int_{\hat{G}}\int_{\hat{M}_{\chi}}  W_{\chi,\tau}(\pi)\otimes \tau^{\ast} \otimes \pi \,d\eta(\tau)d\mu(\pi), 
\]
where $\eta$, $\mu$ are the Plancherel measures of $M_{\chi}$ and $G$ respectively, and $W_{\chi,\tau}(\pi)$ is some multiplicity space, that we identified with a subspace of $Wh_{\chi,\tau}(\pi)$. What we want to show now is that if $M_{\chi}$ is compact, then $W_{\chi,\tau}(\pi)$ is actually isomorphic with $Wh_{\chi,\tau}(\pi)$, and hence finite dimensional. 

Fix a generic character $\chi$ of $N$ with compact stabilizer, and let $\O_{\chi}$ be the orbit of $\chi$ under the action of $P$ on $\hat{N}$. Then we have the following lemma
\begin{lemma}
Let $(\sigma,H_{\sigma})$ be an admissible, Hilbert representation of $M$. Given $\nu\in \mathfrak{a}_{\mathbb{C}}'$, denote by $I_{\sigma,\nu}$ the representation induced from the opposite parabolic to $P$ by $\sigma$ and $\nu$. Let $\phi\in I_{\check{\sigma},-\nu}^{\infty}$ be such that 
\[
 \mbox{$\phi|_{N}\in L^{1}(N)\cap L^{2}(N)$ and $\operatorname{supp} \hat{\phi} \subset \O_{\chi}$ is compact.}
\]
Let $f\in I_{\sigma,\nu}^{\infty}$ be arbitrary. Then
\[
 (\phi, f)= \int_{\hat{N}} (J_{\check{\sigma},-\nu}^{\overline{\chi_{0}}}(\phi),J_{\sigma,\nu}^{\chi_{0}}(f))\, d\chi_{0},
\]
if we use the convention that $J_{\sigma,\nu}^{\chi_{0}}(f)=0$ if $\chi_{0} \notin \O_{\chi}$.
\end{lemma}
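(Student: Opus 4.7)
The plan is to reduce the pairing $(\phi,f)$ to an integral over $N$ and then to invoke Plancherel on the abelian group $N$, with the Jacquet integrals playing the role of Fourier transforms on the two sides of the pairing. First, using the $\bar P$-covariance of $\phi$ and $f$ together with the Jacobian formula $dn=a(n)^{2\rho}\,dk$ on the open Bruhat cell $\bar P P$, I would rewrite
\[
(\phi,f)=\int_K(\phi(k),f(k))_\sigma\,dk = \int_N(\phi(n),f(n))_\sigma\,dn,
\]
where the complement of the big cell in $K$ is a null set and the $N$-integral converges absolutely because, after the change of variables, the integrand is precisely the continuous function $(\phi(k),f(k))_\sigma$ on the compact $K$.

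Next, the hypothesis that $\phi|_N\in L^1(N)\cap L^2(N)$ with $\widehat{\phi|_N}$ continuous and compactly supported in $\O_\chi$ lets me apply Fourier inversion on the abelian group $N$. A direct inspection of the conventions identifies $\widehat{\phi|_N}(\chi_0)$ with $J^{\overline{\chi_0}}_{\check\sigma,-\nu}(\phi)$, so substituting
\[
\phi(n)=\int_{\hat N}\chi_0(n)^{-1}\,J^{\overline{\chi_0}}_{\check\sigma,-\nu}(\phi)\,d\chi_0
\]
into the $N$-integral above and formally interchanging the order of integration yields
\[
(\phi,f) = \int_{\hat N}\bigl(J^{\overline{\chi_0}}_{\check\sigma,-\nu}(\phi),\, J^{\chi_0}_{\sigma,\nu}(f)\bigr)_\sigma\,d\chi_0,
\]
which, once we adopt the convention that $J^{\chi_0}_{\sigma,\nu}(f)=0$ for $\chi_0\notin\O_\chi$, is exactly the asserted formula.

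The main obstacle is the justification of this interchange, because for general $\nu$ the Jacquet integral $J^{\chi_0}_{\sigma,\nu}(f)$ is not absolutely convergent and only acquires meaning through the holomorphic continuation established in Chapter~1. My approach is to prove the identity first for $\operatorname{Re}\nu>q_\sigma$, the range in which Lemma~\ref{lemma:hyperplane} guarantees that $\chi_0\mapsto J^{\chi_0}_{\sigma,\nu}(f)$ is given by an absolutely convergent integral uniformly for $\chi_0$ in the compact support of $\widehat{\phi|_N}$; there Fubini applies directly and the calculation above is rigorous. To extend to general $\nu$, I would choose $K$-equivariant lifts $f\mapsto f_\nu\in I_{\sigma,\nu}^\infty$ and $\phi\mapsto\phi_\nu\in I_{\check\sigma,-\nu}^\infty$ so that $\widehat{\phi_\nu|_N}$ remains supported in a fixed compact subset of $\O_\chi$, and then observe that both sides of the identity are weakly holomorphic in $\nu$: the left side by inspection, the right side because the holomorphic continuation of the Jacquet integrals is uniform in $\chi_0$ on compacta in $\O_\chi$. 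The identity then propagates to all $\nu\in\mathfrak{a}_{\mathbb{C}}'$ by uniqueness of analytic continuation.
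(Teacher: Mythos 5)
Your reduction of $(\phi,f)$ to $\int_N(\phi(n),f(n))\,dn$ over the big cell, the identification $\hat\phi(\chi_0)=J^{\overline{\chi_0}}_{\check\sigma,-\nu}(\phi)$, and the Fubini computation in the range of absolute convergence all match the paper's proof. The gap is in the continuation step. You propose to deform $\nu$ itself and to carry $\phi$ along via a $K$-equivariant lift $\phi\mapsto\phi_\nu$, asserting that $\widehat{\phi_\nu|_N}$ stays supported in a fixed compact subset of $\O_\chi$. That assertion fails: under the $K$-equivariant lift (which fixes the restriction to $K$, not to $N$), $\phi_\nu|_N$ differs from the original $\phi|_N$ by the factor $a(n)^{\nu_0-\nu}$, so on the Fourier side $\widehat{\phi_\nu|_N}$ is the convolution of $\hat\phi$ with the distributional Fourier transform of $n\mapsto a(n)^{\nu_0-\nu}$, which is neither compactly supported nor confined to $\O_\chi$. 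The hypothesis on $\phi$ is a property of the single given function, not of a holomorphic family; once it is lost, the right-hand side is no longer an integral over a fixed compact subset of the open orbit, the convention that $J^{\chi_0}_{\sigma,\nu}(f)=0$ off $\O_\chi$ is no longer harmless, and the claimed weak holomorphy in $\nu$ of the right-hand side is unjustified.

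The paper avoids this by never moving $\phi$: it introduces an auxiliary real parameter $\lambda\le 0$ and deforms only $f$, setting $f^{\lambda}(k(n))=a(n)^{\lambda}f(k(n))$, so that $\Phi(\lambda)=\int_N(\phi(n),f^{\lambda}(n))\,dn$ is continuous for $\lambda\le 0$ and analytic for $\lambda<0$, while for $\lambda\ll 0$ absolute convergence and Plancherel on $N$ give $\Phi(\lambda)=\int_{\hat N}\bigl(J^{\overline{\chi_0}}_{\check\sigma,-\nu}(\phi),J^{\chi_0}_{\sigma,\nu+\lambda}(f)\bigr)\,d\chi_0=:\tilde\Phi(\lambda)$. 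Since $\hat\phi$ is untouched, $\tilde\Phi$ is an integral over a fixed compact subset of $\O_\chi$ of a function holomorphic in $\lambda$ (by the holomorphic continuation of the Jacquet integrals, uniform on that compact set), hence analytic everywhere; the two sides agree for $\lambda\ll 0$, hence for all $\lambda<0$, and evaluation at $\lambda=0$ by continuity gives the identity. If you want to retain your scheme, you must replace the $K$-equivariant lift of $\phi$ by one that fixes $\phi|_N$ (hence $\hat\phi$) and re-verify smoothness of the resulting sections; at that point your argument collapses into the paper's $\lambda$-deformation.
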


\begin{proof}
 Observe that, for all $\chi_{0}\in \hat{N}$, $\hat{\phi}(\chi_{0})=J_{\check{\sigma},-\nu}^{\overline{\chi_{0}}}(\phi)$. Now
\[
 (\phi, f)=\int_{N}(\phi(n), f(n))\, dn.
\]
Given $\lambda \leq 0$, let $f^{\lambda}\in I_{\sigma,\nu}^{\infty}$ be defined by 
\[
 f^{\lambda}(k(n))=a(n)^{\lambda}f(k(n)) \qquad \mbox{$\lambda\in \mathfrak{a}'$ real,}
\]
where $n=\bar{n}(n)a(n)m(n)k(n)$, with $\bar{n}(n)\in \bar{N}$, $a(n)\in A$, $m(n)\in M$, $k(n)\in K$. Set
\[
 \Phi(\phi,f,\nu,\lambda)= \int_{N}(\phi(n), f^{\lambda}(n))\, dn.
\]
Then $\Phi$ is continuous for $\lambda \leq 0$, and analytic for $\lambda < 0$. Now for $\lambda \ll 0$
\begin{eqnarray*}
  \Phi(\phi,f,\nu,\lambda) & = &  \int_{N}(\phi(n),a(n)^{\lambda}f(n))\, dn. \\
 & = & \int_{\hat{N}} (J_{\check{\sigma},-\nu}^{\overline{\chi_{0}}}(\phi),J_{\sigma,\nu+\lambda}^{\chi_{0}}(f))\, d\chi_{0}.
\end{eqnarray*}
On the other hand, if we set
$$
\tilde{\Phi}(\phi,f,\nu,\lambda)= \int_{\hat{N}} (J_{\check{\sigma},-\nu}^{\overline{\chi_{0}}}(\phi),J_{\sigma,\nu+\lambda}^{\chi_{0}}(f))\, d\chi_{0},
$$
then $\tilde{\Phi}$ is analytic for all $\lambda$ and, for $\lambda \ll 0$, $\Phi(\phi,f,\nu,\lambda)= \tilde{\Phi}(\phi,f,\nu,\lambda)$. Therefore
\begin{eqnarray*}
 \Phi(\phi,f,\nu,0) & = & \tilde{\Phi}(\phi,f,\nu,0) \\
(\phi, f) & = & \int_{\hat{N}}( J_{\check{\sigma},-\nu}^{\overline{\chi_{0}}},J_{\sigma,\nu}^{\chi_{0}}(f))\, d\chi_{0},
\end{eqnarray*}
as we wanted to show.
\end{proof}

Let $P_{0}=M_{0}A_{0}N_{0}$ be a minimal parabolic subgroup, with given Langlands decomposition, and assume that $P$ dominates $P_{0}$. Let $P_{1}=M_{1}A_{1}N_{1}$ be another parabolic subgroup dominating $P_{0}$. Let $P_{M_{1}}=P\cap M_{1}=M_{M_{1}}A_{M_{1}}N_{M_{1}}$, where
\[
 M_{M_{1}}=M\cap M_{1} \qquad A_{M_{1}}=A\cap M_{1} \qquad N_{M_{1}}=N\cap M_{1}.
\]
Let $(\sigma, H_{\sigma})$ be a Hilbert, square integrable representation of $M_{1}$, and let $V_{\sigma}$ be its space of smooth vectors. Then, by the Casselman-Wallach theorem, there exist and injective intertwiner map
\[
 \alpha:V_{\sigma}\longrightarrow I_{\xi,\lambda}^{\infty}
\]
where $\xi$ is an irreducible, smooth representation of $M_{M_{1}}$, and $\lambda \in Lie(A_{M_{1}})_{\mathbb{C}}'$. The Casselman-Wallach theorem also implies the existence of a map
\[
 \alpha^{T}:I_{\check{\xi},-\lambda}^{\infty}\longrightarrow V_{\check{\sigma}}
\]
such that
\[
 \alpha^{T}(\phi)(v)=\phi(\alpha(v)),
\]
for all $\phi \in I_{\check{\xi},-\lambda}^{\infty}$, $v\in V_{\sigma}$.
\begin{lemma}\label{lemma:wchiforsmallinduced}
Let $V_{\sigma}$ and $I_{\xi,\lambda}$ be as above, and let $\chi_{1}=\chi|_{N_{M_{1}}}$. Then for all $\phi \in I_{\check{\xi},-\lambda}^{\infty}$, $v\in V_{\sigma}$,
\[
 W^{\chi_{1}}(\alpha^{T}(\phi))(v)=  (J_{\check{\xi},-\lambda}^{\overline{\chi_{1}}}(\phi),J_{\xi,\lambda}^{\chi_{1}}(f)).
\]
where $f=\alpha(v)$.
\end{lemma}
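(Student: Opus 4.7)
The plan is to reduce the identity to the preceding lemma (the Fourier-inversion identity $(\phi,f)=\int_{\hat N}(J_{\check\sigma,-\nu}^{\overline{\chi_0}}(\phi),J_{\sigma,\nu}^{\chi_0}(f))\,d\chi_0$) applied \emph{inside} $M_1$ rather than inside $G$. First I would unfold the left-hand side. By the defining property of $\alpha^T$ and the $M_1$-equivariance of $\alpha$, for $n\in N_{M_1}$ one has $\alpha^T(\phi)(\sigma(n)v)=\phi(\alpha(\sigma(n)v))=\phi(\pi_\xi(n)f)$. Writing the pairing $\phi(\pi_\xi(n)f)$ as $(\phi,\pi_\xi(n)f)$, this turns the definition of $W^{\chi_1}$ into
\[
 W^{\chi_1}(\alpha^T(\phi))(v)=\int_{N_{M_1}}\chi_1(n)^{-1}(\phi,\pi_\xi(n)f)\,dn,
\]
provided this integral converges absolutely; the next paragraph handles this caveat.

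Next I would apply the preceding Fourier-inversion lemma, but to the group $M_1$ with the parabolic $P_{M_1}=M_{M_1}A_{M_1}N_{M_1}$ in place of $(G,P)$, with $\xi$ in place of $\sigma$ and $\lambda$ in place of $\nu$. That lemma produces the identity $(\phi,f)=\int_{\hat N_{M_1}}(J_{\check\xi,-\lambda}^{\overline{\chi_0}}(\phi),J_{\xi,\lambda}^{\chi_0}(f))\,d\chi_0$. To extract the \emph{pointwise} Fourier-transform statement I want, I will repeat the trick used in the proof of that lemma: define $f^t\in I_{\xi,\lambda}^\infty$ by $f^t(k(n))=a(n)^t f(k(n))$ for a real parameter $t\le 0$ and likewise $\phi^t$, and consider
\[
 \Psi(\phi,f,t)=\int_{N_{M_1}}\chi_1(n)^{-1}(\phi,\pi_\xi(n)f^t)\,dn.
\]
For $t\ll 0$ the integrand decays fast enough to apply Fubini, unfold the pairing using $(\phi,\pi_\xi(n)f^t)=\int_{P_{M_1}\backslash M_1}(\phi(g),f^t(gn))\,dg$, exchange the order of integration, and collapse one integration into a Jacquet integral on each side; the result is $\Psi(\phi,f,t)=(J_{\check\xi,-\lambda}^{\overline{\chi_1}}(\phi),J_{\xi,\lambda+t\rho}^{\chi_1}(f))$. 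Both sides are holomorphic in $t$ on a half-plane, and the right-hand side has weakly holomorphic continuation to all of $\mathfrak a_{M_1,\mathbb C}'$ by Section \ref{sec:holomorphiccontjacquet}. Setting $t=0$ yields the desired identity.

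The main obstacle is convergence of the original integral $\int_{N_{M_1}}\chi_1(n)^{-1}(\phi,\pi_\xi(n)f)\,dn$: for arbitrary $\phi$ and $f$ it is not clear that $n\mapsto(\phi,\pi_\xi(n)f)$ lies in $L^1(N_{M_1})$. The standard way out is exactly the shift-and-continue argument sketched above, which mirrors the proof of the preceding lemma; once both sides are interpreted as the common holomorphic continuation of $\Psi(\phi,f,t)$ at $t=0$, the identity falls out. The minor bookkeeping check is that the Jacquet integral $J_{\xi,\lambda+t\rho}^{\chi_1}(f)$ appearing here coincides, after this analytic continuation, with the $J_{\xi,\lambda}^{\chi_1}(f)$ from the statement, which follows directly from the construction of the continuation in Theorem of Section \ref{sec:holomorphiccontjacquet} applied inside $M_1$.
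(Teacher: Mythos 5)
Your reduction has a genuine gap at the Fubini/factorization step. After unfolding $(\phi,\pi_\xi(n)f^t)=\int_{N_{M_1}}(\phi(n'),f^t(n'n))\,dn'$ and substituting $n''=n'n$, the double integral splits into a pairing of two Jacquet integrals only if it converges absolutely in both variables, i.e.\ only if $\int_{N_{M_1}}\|\phi(n')\|\,dn'<\infty$ as well as $\int_{N_{M_1}}\|f^t(n'')\|\,dn''<\infty$. Your shift $f\mapsto f^t$ controls the second integral but does nothing for the first, which converges only when $\operatorname{Re}(-\lambda)$ is sufficiently dominant --- and $\lambda$ is fixed by the Casselman--Wallach embedding $\alpha:V_\sigma\to I_{\xi,\lambda}^{\infty}$, so you cannot assume this. (You write ``and likewise $\phi^t$'' but never use a shift on $\phi$; if you did shift $\phi$, then $\Psi$ would no longer compute the left-hand side of the lemma, and you would need a two-parameter continuation together with continuity at the boundary point where both shifts vanish --- exactly the delicate point handled in Proposition \ref{prop:Fouriertransformofpacket}, since at that point convergence of the outer integral comes from square-integrability of $\sigma$ rather than from estimates in the induced picture.) Note also that convergence of the left-hand side itself is not the real caveat: $(\phi,\pi_\xi(n)f)=(\alpha^{T}(\phi),\sigma(n)v)$ is a matrix coefficient of the square integrable representation $\sigma$, hence lies in $\C(M_1)$ and is integrable over $N_{M_1}$ by Lemmas \ref{lemma:matrixcoefficientinSchwartspace} and \ref{lemma:NL1}.

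The paper avoids this obstruction by splitting the argument in two. First it proves the identity only for $\phi$ satisfying the hypotheses of the preceding lemma, namely $\phi|_{N_{M_1}}\in L^{1}(N_{M_1})\cap L^{2}(N_{M_1})$ with $\operatorname{supp}\hat{\phi}$ compact in $\O_{\chi_1}$; for such $\phi$ the quantity $J_{\check\xi,-\lambda}^{\overline{\chi_0}}(\phi)=\hat{\phi}(\chi_0)$ is an honest absolutely convergent Fourier transform, and the computation is a direct application of that lemma plus Fourier inversion on $N_{M_1}$. Then, for general $\phi$, it observes that $\phi\mapsto W^{\chi_1}(\alpha^{T}(\phi))(v)$ is a continuous element of $Wh_{\overline{\chi_1}}(I_{\check\xi,-\lambda}^{\infty})$, so by the classification of such functionals from Chapter \ref{chapter:besselmodels} it equals $\mu_v\circ J_{\check\xi,-\lambda}^{\overline{\chi_1}}$ for a unique $\mu_v$; the special $\phi$ already treated force $\mu_v=J_{\xi,\lambda}^{\chi_1}(f)$. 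This second step --- using the uniqueness theorem for Bessel functionals to propagate the identity from a convenient class of $\phi$ to all of $I_{\check\xi,-\lambda}^{\infty}$ --- is the idea missing from your proposal.
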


\begin{proof}
Let $\phi \in I_{\check{\xi},-\lambda}^{\infty}$ be such that
\[
 \mbox{$\phi|_{N_{M_{1}}}\in L^{1}(N_{M_{1}})\cap L^{2}(N_{M_{1}})$ and $\operatorname{supp} \hat{\phi} \subset \O_{\chi_{1}}$ is compact.}
\]
Then
\begin{eqnarray*}
 W^{\chi_{1}}(\alpha^{T}(\phi))(v) & = & \int_{N_{M_{1}}} \chi_{1}(n) ( \phi, \pi(n)^{-1}f ) \, dn \\
& = & \int_{N_{M_{1}}} \chi_{1}(n)\int_{\hat{N}_{M_{1}}} (J_{\check{\xi},-\lambda}^{\overline{\chi_{0}}}(\phi),J_{\xi,\lambda}^{\chi_{0}}(\pi(n)^{-1}f)) \, d\chi_{0} \, dn\\
& = & \int_{N_{M_{1}}}\chi_{1}(n) \int_{\hat{N}_{M_{1}}} \chi_{0}(n)^{-1}(J_{\check{\xi},-\lambda}^{\overline{\chi_{0}}}(\phi),J_{\xi,\lambda}^{\chi_{0}}(f)) \, d\chi_{0}\, dn \\
 & = &(J_{\check{\xi},-\lambda}^{\overline{\chi_{1}}}(\phi),J_{\xi,\lambda}^{\chi_{1}}(f)).
\end{eqnarray*}
On the other hand, we know that for all $v\in V_{\sigma}$ the map $\phi \mapsto  W^{\chi_{1}}(\alpha^{T}(\phi))(v)$ is in $Wh_{\overline{\chi_{1}}}(I_{\check{\xi},-\lambda}^{\infty})$. Hence there exists $\mu_{v}\in V_{\check{\xi}}$ such that
\[
  W^{\chi_{1}}(\alpha^{T}(\phi))(v)=\mu_{v}(J_{\check{\xi},-\lambda}^{\overline{\chi_{1}}}(\phi))
\]
From this two equations we conclude that $\mu_{v}=J_{\xi,\lambda}^{\chi_{1}}(f)$, and hence
\[
 W^{\chi_{1}}(\alpha^{T}(\phi))(v)=  (J_{\check{\xi},-\lambda}^{\overline{\chi_{1}}}(\phi),J_{\xi,\lambda}^{\chi_{1}}(f)),
\]
as we wanted to show.
\end{proof}

\begin{corollary}
 If $(\sigma, V_{\sigma})$ is a square integrable representation of $M_{1}$, define
\[
 F_{\sigma,\nu}^{\chi}: I_{\check{\sigma}, -\nu} \longrightarrow W^{\chi_{1}}(V_{\sigma})
\]
by
\[
 F_{\sigma,\nu}^{\chi}(\phi)=W^{\chi_{1}}\circ J_{\check{\sigma},-\nu}^{\overline{\chi}}(\phi).
\]
Then  $F_{\sigma,\nu}^{\chi}$ has holomorphic continuation to all $\nu\in (\mathfrak{a}_{1})_{\mathbb{C}}'$.
\end{corollary}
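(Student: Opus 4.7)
The plan is to first rewrite $F_{\sigma,\nu}^\chi$ as a single Jacquet-type integral over $N$, and then reduce to the holomorphic continuation theorem for the Siegel parabolic established in Chapter 1.

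First, I would unfold the definitions of $W^{\chi_1}$ and $J^{\overline\chi}_{\check\sigma,-\nu}$. Using that $N = N_{M_1}\cdot N_{N_1}$ decomposes uniquely with $N$ abelian (since $P$ is Siegel), together with the left-$M_1$-equivariance $\phi(n_M g) = \check\sigma(n_M)\phi(g)$ for $n_M \in N_{M_1}$, the substitution $n = n_M^{-1}n_1$ converts the iterated integral into a single integral over $N$. Concretely, for $v \in V_\sigma$ and $\operatorname{Re}\nu$ in the region of absolute convergence,
\[
F_{\sigma,\nu}^\chi(\phi)(v) = \int_N \chi(n)(\phi(n), v)\, dn.
\]
The right-hand side is manifestly $\overline\chi$-covariant under right translation by $N$, so for each fixed $v$ the functional $\phi \mapsto F_{\sigma,\nu}^\chi(\phi)(v)$ lies in $Wh_{\overline\chi}(I_{\check\sigma,-\nu}^\infty)$ on its domain of definition.

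Next, to extend in $\nu$, I would invoke the Casselman-Wallach subrepresentation theorem to realize $V_\sigma$ as a closed subrepresentation of an $M_1$-principal series $I^{M_1}_{\bar P_{M_1}}(\xi,\lambda)^\infty$, with $\xi$ an irreducible smooth Fr\'echet representation of $M_{M_1} = M \cap M_1$. Dually, there is a continuous surjection $\alpha^T$ onto $V_{\check\sigma}$. Inducing from $\bar P_1$ to $G$ and applying induction in stages yields a continuous surjection $I^G_{\bar R}(\check\xi,-\mu)^\infty \twoheadrightarrow I_{\check\sigma,-\nu}^\infty$, where $\bar R := \bar P_{M_1}\bar N_1$ is a parabolic of $G$ with Levi $M_{M_1}$ and $\mu$ depends linearly on $(\lambda,\nu)$. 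Iterating on $\xi$, we ultimately realize $I_{\check\sigma,-\nu}^\infty$ as a continuous quotient of a $G$-principal series induced from the minimal parabolic $\bar P_0 \subseteq \bar P$. One final application of induction in stages then identifies this representation with $I^G_{\bar P}(\tilde\sigma,\tilde\nu)^\infty$, where $\tilde\sigma$ is an admissible smooth Fr\'echet representation of $M$ (itself a principal series of $M$ induced from $\bar P_0 \cap M$) and $\tilde\nu$ depends linearly on $\nu$.

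Finally, I would apply the main theorem of Chapter 1: the map $\tilde\nu \mapsto \tilde\mu \circ J^\chi_{\tilde\sigma,\tilde\nu}$ extends weakly holomorphically to all of $\mathfrak{a}_{\mathbb{C}}'$ for every $\tilde\mu \in V_{\tilde\sigma}'$. Tracking the integral $\int_N \chi(n)(\phi(n),v)\, dn$ through the chain of induction-in-stages isomorphisms (which preserve the integration domain $N$) and the Casselman-Wallach lifts, this extension descends to the desired holomorphic continuation of $\nu \mapsto F_{\sigma,\nu}^\chi(\phi)$.

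The principal technical obstacle is verifying that, under the chain of identifications (subrepresentation embeddings followed by induction in stages), the combined integral over $N$ is carried to the standard Siegel-parabolic Jacquet integral to which the Chapter 1 theorem applies. Explicitly, one must lift $\phi$ to an element of the larger induced representation, track how the $\bar P_1$-equivariance combines with the $M_1$-induction datum of $\tilde\sigma$, and identify the resulting $\tilde\mu$ in $V_{\tilde\sigma}'$ that encodes the pairing with $v$. This is a Bruhat-theoretic bookkeeping exercise on the open cell of the double-coset decomposition relative to the Siegel parabolic, straightforward in principle but requiring care.
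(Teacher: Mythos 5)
Your proposal is correct and follows essentially the same route as the paper: rewrite $W^{\chi_{1}}\circ J^{\overline{\chi}}_{\check{\sigma},-\nu}$ as a single Jacquet-type integral over $N$, use the Casselman--Wallach embedding $\alpha:V_{\sigma}\hookrightarrow I^{\infty}_{\xi,\lambda}$ (dually, the surjection $\tilde{\alpha}^{T}$) together with induction in stages, and then invoke the Chapter 1 holomorphic continuation theorem for the Siegel parabolic. The only presentational difference is that the paper isolates the identity $W^{\chi_{1}}(\alpha^{T}(\phi))(v)=(J^{\overline{\chi_{1}}}_{\check{\xi},-\lambda}(\phi),J^{\chi_{1}}_{\xi,\lambda}(\alpha(v)))$ as a separate lemma and stops after a single Casselman--Wallach step (with $\xi$ a representation of $M\cap M_{1}$), rather than iterating all the way down to the minimal parabolic.
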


\begin{proof}
Let 
$$
\alpha:V_{\sigma}\longrightarrow I_{\xi,\lambda}^{\infty}
$$
and
\[
 \alpha^{T}: I_{\check{\xi},-\lambda}^{\infty} \longrightarrow V_{\check{\sigma}}.
\]
be as before. Observe that $\alpha$ induces an intertwining map
\[
 \tilde{\alpha}:I_{\sigma,\nu}^{\infty}\longrightarrow I_{\pi_{\xi,\lambda},\nu}^{\infty},
\]
defined by $\tilde{\alpha}(f)(g)=\alpha(f(g))$, for $f\in I_{\sigma,\nu}^{\infty}$, and a corresponding surjective map
\[
 \tilde{\alpha}^{T}:I_{\pi_{\check{\xi},-\lambda},-\nu}^{\infty}\longrightarrow I_{\check{\sigma},-\nu}^{\infty}.
\]
Let $\phi\in I_{\pi_{\check{\xi},-\lambda},-\nu}^{\infty}$, $v\in V_{\sigma}$. Then from lemma \ref{lemma:wchiforsmallinduced}
\begin{eqnarray*}
 W^{\chi_{1}}\circ J_{\check{\sigma},-\nu}^{\overline{\chi}}(\tilde{\alpha}^{T}(\phi)))(v)& = &W^{\chi_{1}}(\alpha^{T}(J_{\pi_{\check{\xi},-\lambda},-\nu}^{\overline{\chi}}(\phi))(v)\\
 & = &  (J_{\check{\xi},-\lambda}^{\overline{\chi_{1}}}( J_{\pi_{\check{\xi},-\lambda},-\nu}^{\overline{\chi}}(\phi)),J_{\xi,\lambda}^{\chi_{1}}(f)),
\end{eqnarray*}
where $f=\alpha(v)$. Now, if we use the natural identification $I_{\pi_{\xi,\lambda},\nu} \cong I_{\xi,\lambda+\nu}$, then the above equation becomes
\[
 F_{\sigma,\nu}^{\chi}(\tilde{\alpha}(\phi))= (J_{\check{\xi},-\lambda-\nu}^{\overline{\chi}}(\phi),J_{\xi,\lambda}^{\chi_{1}}(f)). \label{eq:Fsigmanucontinuation}
\]
Observe that the right hand side has holomorphic continuation to all $\nu$.
\end{proof}

\begin{dfn}
 Define
\[
 W_{\sigma,\nu}^{\chi}: I_{\check{\sigma},-\nu}^{\infty} \longrightarrow \operatorname{Ind}_{M_{\chi_{1}}}^{M_{\chi}}W^{\chi_{1}}(V_{\check{\sigma}})
\]
by
\[
 W_{\sigma,\nu}^{\chi}(\phi)(m)=F_{\sigma,\nu}^{\chi}(\pi(m)\phi).
\]
\end{dfn}
Observe that if $\phi\in U_{\check{\sigma},-\nu}$, then this definition is consistent with the previous definition of $W_{\sigma,\nu}^{\chi}$.  Also observe that
\[
 (W_{\sigma,\nu}^{\chi})^{T}: (\operatorname{Ind}_{M_{\chi_{1}}}^{M_{\chi}}W^{\chi_{1}}(V_{\check{\sigma}}))' \longrightarrow (I_{\check{\sigma},-\nu}^{\infty})'
\]
is injective. Furthermore, $(W_{\sigma,\nu}^{\chi})^{T}((\operatorname{Ind}_{M_{\chi_{1}}}^{M_{\chi}}W^{\chi_{1}}(V_{\check{\sigma}}))')\subset Wh_{\overline{\chi}}(I_{\check{\sigma},-\nu}^{\infty})$.

\begin{proposition} \label{prop:Pdecompositioncompactstabilizer}
The map $\phi\mapsto \sum_{\chi}f_{\phi}^{\chi}$ extends to a $P$-equivariant isometry between $I_{\check{\sigma},-\nu}$ and $\sum_{\chi} \operatorname{Ind}_{M_{\chi_{1}}N}^{P} W_{\chi_{1}}(H_{\sigma})$.
\end{proposition}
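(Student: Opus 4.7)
The plan is to mirror the proof of Proposition \ref{prop:descompositionparabolic}, exploiting two simplifications that come from the compact-stabilizer hypothesis: the Matsuki set $W_\chi$ reduces to $\{e\}$ (so the inner sum over Weyl elements disappears), and the role played there by the Jacquet integrals $J^{\chi,w}_{\sigma,\nu}$ restricted to the dense cell $U_{\check{\sigma},-\nu}$ can now be played by the holomorphically continued map $W^\chi_{\sigma,\nu}$ defined on all of $I^\infty_{\check{\sigma},-\nu}$. Concretely, I set $f_\phi^{\chi_\omega}(p) := W^{\chi_\omega}_{\sigma,\nu}(\check{\pi}(p)\phi)(e)$ for each open $P$-orbit $\omega \in \Omega$ with representative $\chi_\omega$, and first verify, using the defining identity $W^\chi_{\sigma,\nu}(\phi)(m) = F^\chi_{\sigma,\nu}(\pi(m)\phi)$ together with the lemma preceding Proposition \ref{prop:Pdecompositioncompactstabilizer}, that $f_\phi^{\chi_\omega}$ transforms by $\overline{\chi_\omega}\otimes \check\sigma^{\,(e)}$ on the left under $M_{(\chi_\omega)_1}N$ and satisfies $f^{\chi_\omega}_{\pi(p_0)\phi} = R_{p_0} f^{\chi_\omega}_\phi$. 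This lands the map in $\operatorname{Ind}_{M_{(\chi_\omega)_1}N}^{P} W_{(\chi_\omega)_1}(H_\sigma)$ and furnishes the $P$-equivariance.

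For the isometry I work first on the dense subspace $U_{\check{\sigma},-\nu}$. Fourier inversion on $N$ gives
\[
\langle\phi,\phi\rangle = \int_{\hat N}\int_N \chi_0(n)^{-1}(\phi,\pi(n)^{-1}\phi)\,dn\,d\chi_0,
\]
and Lemma \ref{lemma:inducedfouriertransform} (with $W_\chi = \{e\}$) rewrites the inner integral for generic $\chi_0$ as $\int_{M_{(\chi_0)_1}\backslash M_{\chi_0}} \|W^{\chi_0}_{\sigma,\nu}(\phi)(m)\|^2\,dm$. Since the complement of $\bigcup_\omega \omega$ has Plancherel measure zero in $\hat N$, I can decompose
\[
\int_{\hat N} (\,\cdot\,)\,d\chi_0 \;=\; \sum_{\omega\in\Omega}\int_\omega (\,\cdot\,)\,d\chi_0 \;=\; \sum_{\omega\in\Omega}\int_{M_{\chi_\omega}N\backslash P}(\,\cdot\,)\,dp,
\]
where the last step parametrizes each orbit $\omega$ by $p\mapsto p^{-1}\cdot\chi_\omega$, as in the proof of Proposition \ref{prop:descompositionparabolicsquareintegrable}. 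Combining these steps and using $W^{\chi_\omega}_{\sigma,\nu}(\check\pi(p)\phi)(m)=f^{\chi_\omega}_\phi(mp)$ converts the right-hand side into $\sum_\omega \|f^{\chi_\omega}_\phi\|^2_{\operatorname{Ind}}$, which is the desired isometry on $U_{\check{\sigma},-\nu}$.

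The holomorphic continuation of $W^\chi_{\sigma,\nu}$, established via the embedding $\alpha:V_\sigma\to I^\infty_{\xi,\lambda}$ in the preceding corollary, ensures that $\phi\mapsto \sum_\omega f^{\chi_\omega}_\phi$ is continuous from $I^\infty_{\check{\sigma},-\nu}$ into $\bigoplus_\omega \operatorname{Ind}_{M_{(\chi_\omega)_1}N}^{P} W_{(\chi_\omega)_1}(H_\sigma)$ for every $\nu\in(\mathfrak a_1)'_\mathbb{C}$, and density of $U_{\check{\sigma},-\nu}$ in $I_{\check{\sigma},-\nu}$ lets the isometry pass to the Hilbert completion. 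Surjectivity is standard: the subspace $W^{(\chi_\omega)_1}(V_{\check\sigma})$ is dense in $W_{(\chi_\omega)_1}(H_\sigma)$ by construction, and a partition-of-unity argument on $M_{\chi_\omega}N\backslash P$ (using smooth compactly supported cutoffs and that $M_{\chi_\omega}/M_{(\chi_\omega)_1}$ is compact so the fiber induction behaves well) shows that $\{f^{\chi_\omega}_\phi : \phi\in U_{\check{\sigma},-\nu}\}$ is dense in each summand.

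The main obstacle is the justification of the interchange of the $\int_{\hat N}$ integral with the holomorphically continued $W^\chi_{\sigma,\nu}$ outside the original convergence region for the Jacquet integral. On $U_{\check{\sigma},-\nu}$ this is immediate from Lemma \ref{lemma:inducedfouriertransform}, but to upgrade the norm identity to all of $I^\infty_{\check{\sigma},-\nu}$ one needs $\phi\mapsto W^\chi_{\sigma,\nu}(\phi)$ to be continuous with estimates uniform in $\chi$ over compacta of each orbit $\omega$. This will follow from the explicit formula $F^\chi_{\sigma,\nu}(\tilde{\alpha}^T(\phi)) = (J^{\overline{\chi}}_{\check\xi,-\lambda-\nu}(\phi),\,J^{\chi_1}_{\xi,\lambda}(\alpha(\cdot)))$ in the corollary, combined with uniform holomorphic estimates on the Jacquet integrals for the smaller induced model, and is the technical point where the compactness of $M_\chi$ enters essentially, since it forces the target fibers of the induction to stay of finite $L^2$-volume.
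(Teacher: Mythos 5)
Your proposal is correct and follows essentially the same route as the paper, which simply states that the proof is completely analogous to that of Proposition \ref{prop:descompositionparabolic}; you have faithfully spelled out that analogy (single Matsuki cell $W_\chi=\{e\}$ when $M_\chi$ is compact, isometry on the dense subspace $U_{\check{\sigma},-\nu}$ via Fourier inversion and Lemma \ref{lemma:inducedfouriertransform}, extension by density, surjectivity from density of $W^{\chi_1}(V_{\check{\sigma}})$ in $W_{\chi_1}(H_{\sigma})$). The uniformity concern you raise at the end is not actually needed, since the isometric extension to the Hilbert completion only requires the norm identity on the dense subspace.
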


\begin{proof}
 The proof is completely analogous to the proof of \ref{prop:descompositionparabolic}. 
\end{proof}

We are now ready to state the main result of this section. Let $P_{0}$, $P$ and $P_{1}$ be as before. Let $(\sigma,H_{\sigma})$ a square integrable, Hilbert representation of $M_{1}$, and set
\[
 I_{\sigma}^{\infty}=\left\{f:K \longrightarrow V_{\sigma} \, \left| \,\begin{array}{c} \mbox{$f$ is $C^{\infty}$, and $f(mk)=\sigma(m)f(k)$}\\ \mbox{ for all  $m\in K_{M_{1}}:= K\cap M_{1}$}  \end{array} \right.\right\}.
\]
Observe that as a $K$-module $I_{\sigma}^{\infty}\cong I_{\sigma,\nu}^{\infty}$ for all $\nu\in (\mathfrak{a}_{1})_{\mathbb{C}}'$. Or, in other words, there is a family of representations $(\pi_{\nu}, I_{\sigma}^{\infty})$, $\nu\in (\mathfrak{a}_{1})_{\mathbb{C}}'$ with the same underlying representation space. 

Let $\phi\in I_{\check{\sigma}}$, $f\in I_{\sigma}$, and let $\alpha:i\mathfrak{a}_{1}' \longrightarrow \mathbb{C}$ be a smooth compactly supported function. For any $g\in G$ we will set
\[
 \Psi(f,\phi,\alpha)(g)=\int_{\mathfrak{a}_{1}'} \alpha(\nu) (\phi,\pi_{\nu}(g)f)  \, d\mu_{\sigma}(\nu)
\]
where $\mu_{\sigma}$ is the Plancherel measure. Then $\Psi(f,\phi,\alpha)\in \C(G)$ \cite[thm 12.7.7]{w:vol2}.
\begin{proposition}\label{prop:Fouriertransformofpacket}
 If $\Psi(f,\phi,\alpha)\in \C(G)$ is defined as above, then
\[
 \int_{N}\chi(n)\Psi(f,\phi,\alpha)(n^{-1})\, dn= \int_{\mathfrak{a}_{1}'} \alpha(\nu) (W_{\check{\sigma},-\nu}^{\overline{\chi}})^{T}(W_{\sigma,\nu}^{\chi}(\phi))(f) \, d\mu_{\sigma}(\nu).
\]
\end{proposition}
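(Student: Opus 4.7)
The plan is to substitute the definition of $\Psi(f,\phi,\alpha)$ into the left-hand side and interchange the order of the $N$ and $\mathfrak{a}_1'$ integrations. Since $\Psi(f,\phi,\alpha)\in\mathcal{C}(G)$, its restriction to $N$ lies in $L^1(N)\cap L^2(N)$ by lemma \ref{lemma:NL1}, so the outer integral is absolutely convergent. However, for a fixed $\nu\in i\mathfrak{a}_1'$ the matrix coefficient $n\mapsto(\phi,\pi_\nu(n^{-1})f)$ is tempered but typically not integrable on $N$, so a naive Fubini argument is not available and some regularization is needed.

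To handle this, I would regularize exactly as in the opening lemma of section \ref{sec:fouriertransform}. Introduce a real parameter $\lambda$ and replace $f$ by $f^\lambda$ (multiplied by $a(\,\cdot\,)^\lambda$ in the Iwasawa-type decomposition relative to $\bar P_1$), so that $f^\lambda$ is identified with an element of $I_{\sigma,\nu+\lambda}^\infty$ and the resulting integrand becomes absolutely integrable for $\operatorname{Re}\lambda$ sufficiently negative. In that range Fubini gives
\[
\int_N \chi(n)\Psi^\lambda(n^{-1})\,dn=\int_{\mathfrak{a}_1'}\alpha(\nu)\int_N\chi(n)(\phi,\pi_{\nu+\lambda}(n^{-1})f)\,dn\,d\mu_\sigma(\nu).
\]
For each such $\nu+\lambda$, lemma \ref{lemma:inducedfouriertransform}, combined with the definition of $W_{\sigma,\nu}^\chi$ as the nested transform $W^{\chi_1}\circ J_{\check\sigma,-\nu}^{\bar\chi}$ followed by induction to $M_\chi$, identifies the inner integral with $(W_{\check\sigma,-\nu-\lambda}^{\bar\chi})^T(W_{\sigma,\nu+\lambda}^\chi(\phi))(f)$; in the present setup the $W_\chi$-sum of the lemma collapses to the single pairing that appears on the right-hand side of the statement.

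It then remains to analytically continue in $\lambda$ back to $\lambda=0$. The left-hand side is holomorphic in $\lambda$ in a neighborhood of the real axis, since $\lambda\mapsto\Psi^\lambda$ is a holomorphic family in $\mathcal{C}(G)$ (with Schwartz seminorms bounded locally uniformly in $\lambda$) and the Fourier-on-$N$ map $\mathcal{C}(G)\to\mathcal{C}(N\backslash G;\chi)$ is continuous by lemma \ref{lemma:NL1}. The right-hand side is holomorphic by the corollary of section \ref{sec:fouriertransform}, which gives the weakly holomorphic extension of $F_{\sigma,\nu}^\chi$ (and hence of $W_{\sigma,\nu}^\chi$) to all of $(\mathfrak{a}_1)_\mathbb{C}'$. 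Uniqueness of holomorphic continuation then forces the two sides to agree at $\lambda=0$, which is the identity claimed.

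The main obstacle I expect is organising the regularization so that it is compatible with the $\nu$-integration over the compact support of $\alpha$: one must verify that the regularized integrand depends jointly smoothly on $(\nu,\lambda)$, that the family $\Psi^\lambda$ satisfies Schwartz estimates uniform in $\nu\in\operatorname{supp}\alpha$, and that the successive Fubini interchanges (between $N$, $\nu$, and the analytic continuation in $\lambda$) are all justified. Once these estimates are established, lemma \ref{lemma:inducedfouriertransform} and the holomorphic extension of the generalized Jacquet integrals developed in section \ref{sec:holomorphiccontjacquet} assemble into the identity by a routine analytic-continuation argument.
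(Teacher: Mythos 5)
Your proposal is correct and follows essentially the same route as the paper: the paper also regularizes by shifting the induction parameter (it uses two shifts $\lambda_{1},\lambda_{2}$, one on the $\phi$-side and one on the $f$-side, rather than your single $\lambda$), establishes the identity in the region of absolute convergence by Fubini and the same unfolding computation that underlies lemma \ref{lemma:inducedfouriertransform}, and then removes the regularization by analyticity of both sides, the right-hand side being analytic everywhere thanks to the holomorphic continuation of $F_{\sigma,\nu}^{\chi}$. The only cosmetic difference is that the paper carries out the unfolding explicitly instead of citing the lemma, and only needs continuity (not holomorphy) of the left-hand side at $\lambda=0$ to conclude.
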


\begin{proof}
Given $\lambda_{1}$, $\lambda_{2}\in \mathfrak{a}_{1}'$, set
\begin{eqnarray*}
 \lefteqn{\Phi(\phi,f,\nu,\lambda_{1},\lambda_{2})} \\ 
&= & \int_{N}\int_{\mathfrak{a}_{1}'} \int_{N_{N_{1}}} \int_{M_{\chi_{1}}\backslash M_{\chi}} \chi(n)^{-1}(\phi_{-\nu+\lambda_{1}}(n_{1}m),f_{\nu+\lambda_{2}}(n_{1}mn)) \, dm\,dn_{1}\, d\mu_{\sigma}(\nu)\, dn.
\end{eqnarray*}
Observe that if $\lambda_{i}(\alpha_{j}) \leq 0$ for all $i=1,2$, $\alpha_{j}\in\Phi(P_{1},A_{1})$, then $\Phi(\phi,f,\nu,\lambda_{1},\lambda_{2})<\infty$, and this integral defines a continuous function that is real analytic if $\lambda_{i}(\alpha_{j}) < 0$ for all $i=1,2$, $\alpha_{j}\in\Phi(P_{1},A_{1})$ (recall that we are inducing from the parabolic opposite to $P_{1}$). Also observe that equation (\ref{eq:mchiinnerproduct}) implies that
\[
 \Phi(\phi,f,\nu,0,0)= \int_{N}\chi(n)\int_{\mathfrak{a}_{1}'} \alpha(\nu) (\phi,\pi_{\nu}(n)^{-1}f).
\]
On the other hand set 
\[
 \tilde{\Phi}(\phi,f,\nu,\lambda_{1},\lambda_{2})= \int_{\mathfrak{a}_{1}'} \alpha(\nu) (W_{\check{\sigma},-\nu+\lambda_{1}}^{\overline{\chi}})^{T}\circ W_{\sigma,\nu+\lambda_{2}}^{\chi}(\phi)(f) \, \mu(\sigma,\nu) \, d\nu.
\]
Then $\tilde{\Phi}$ is analytic for $\lambda_{1}$, $\lambda_{2}$.

If $\lambda_{i}(\alpha_{j})  \ll 0$, for all $i=1,2$, $\alpha_{j}\in\Phi(P_{1},A_{1})$, then the following integrals are absolutely convergent, so we can change the order of integration in the next series of equations.
\begin{eqnarray*}
 \lefteqn{\Phi(\phi,f,\nu,\lambda_{1},\lambda_{2})}\\
& = & \int_{N}\int_{\mathfrak{a}_{1}'} \int_{N_{N_{1}}} \int_{M_{\chi_{1}}\backslash M_{\chi}} \chi(n)^{-1}(\phi_{-\nu+\lambda_{1}}(n_{1}m),f_{\nu+\lambda_{2}}(n_{1}mn)) \, dm\,dn_{1}\, d\mu_{\sigma}(\nu)\, dn \\
& = & \int_{\mathfrak{a}_{1}'}\int_{M_{\chi_{1}}\backslash M_{\chi}} \int_{N_{N_{1}}} \int_{N} \chi(n)^{-1}(\phi_{-\nu+\lambda_{1}}(n_{1}m),f_{\nu+\lambda_{2}}(n_{1}nm))  \, dn\,dn_{1}\, dm\, d\mu_{\sigma}(\nu) \\
& = & \int_{\mathfrak{a}_{1}'}\int_{M_{\chi_{1}}\backslash M_{\chi}} \int_{N_{N_{1}}} \int_{N} \chi(n_{1}^{-1}n)^{-1}(\phi_{-\nu+\lambda_{1}}(n_{1}m),f_{\nu+\lambda_{2}}(nm))  \, dn\,dn_{1}\, dm\, d\mu_{\sigma}(\nu) \\
&  = & \int_{\mathfrak{a}_{1}'} \int_{M_{\chi_{1}}\backslash M_{\chi}}\\  & & \quad  \int_{N_{M}} \chi(n_{m})^{-1} (J_{\check{\sigma},-\nu+\lambda_{1}}^{\overline{\chi}}(\pi(m)\phi),\sigma(n_{m})J_{\sigma,\nu+\lambda_{2}}^{\chi}(\pi(m)f))\alpha(\nu)\, dn_{m}\, dm\, d\mu_{\sigma}(\nu) \\
&  = & \int_{N}\int_{\mathfrak{a}_{1}'} \int_{N_{N_{1}}}\\   & &  \int_{M_{\chi_{1}}\backslash M_{\chi}} (W^{\chi_{1}}\circ J_{\check{\sigma},-\nu+\lambda_{1}}^{\overline{\chi}}(\pi(m)\phi),W^{\overline{\chi_{1}}}J_{\sigma,\nu+\lambda_{2}}^{\chi}(\pi(m)f))\alpha(\nu)     \, dm\,dn_{1}\, d\mu_{\sigma}(\nu) dn \\
& = & \int_{\mathfrak{a}_{1}'} \int_{M_{\chi_{1}}\backslash M_{\chi}} (W_{\sigma,\nu+\lambda_{1}}^{\chi}(\phi)(m),W_{\check{\sigma},-\nu+\lambda_{2}}^{\overline{\chi}}(f)(m)) \,  dm\, d\mu_{\sigma}(\nu) \\
& = & \int_{\mathfrak{a}_{1}'} \alpha(\nu) (W_{\check{\sigma},-\nu+\lambda_{2}}^{\overline{\chi}})^{T}\circ W_{\sigma,\nu+\lambda_{1}}^{\chi}(\phi)(f) \, d\mu_{\sigma}(\nu) \\
& = & \tilde{\Phi}(\phi,f,\nu,\lambda_{1},\lambda_{2}).
\end{eqnarray*}
Now, since $\Phi$ and $\tilde{\Phi}$ are analytic, $\Phi(\phi,f,\nu,0,0) = \tilde{\Phi}(\phi,f,\nu,0,0)$, i.e.,
\[
 \int_{N}\chi(n)\int_{\mathfrak{a}_{1}'} \alpha(\nu) \phi(\pi_{\nu}(n)^{-1}f)  \, d\mu_{\sigma}(\nu) = \int_{\mathfrak{a}_{1}'} \alpha(\nu) (W_{\check{\sigma},-\nu}^{\overline{\chi}})^{T}\circ W_{\sigma,\nu}^{\chi}(\phi)(f) \, d\mu_{\sigma}(\nu),
\]
as we wanted to show.
\end{proof}

\section{The explicit Bessel-Plancherel theorem} \label{sec:explicitbesselplancherel}

Let $G$ be a simple Lie group of tube type. Let $P_{\circ}=M_{\circ}A_{\circ}N_{\circ}$ be a minimal parabolic subgroup, and let $P=MAN$ be a Siegel parabolic subgroup dominating $P_{\circ}$. Let $P_{1}=M_{1}A_{1}N_{1}$ be another parabolic subgroup dominating $P_{\circ}$. Let $\chi$ be a generic character of $N$ whose stabilizer in $M$, $M_{\chi}$, is compact. We will set
\[
 \begin{array}{c}  N_{M_{1}}=N\cap M_{1}, \quad M_{M_{1}}= M\cap M_{1}, \quad A_{M_{1}}=A\cap M_{1}, \\
 N_{N_{1}}=N\cap N_{1}, \quad M_{\chi_{1}}=M_{\chi}\cap P_{1}. 
 \end{array} 
\]
The purpose of this section is to prove the following theorem:
\begin{theorem} With notation as above,
\begin{enumerate}
 \item Let $(\sigma, V_{\sigma})$ be a square integrable representation of $M_{1}$, and let $\nu \in (\mathfrak{a}_{1})_{\mathbb{C}}'$. If $(\tau, H_{\tau})$ is an irreducible representation of $M_{\chi}$, then $ Wh_{\chi}(I_{\sigma,\nu}^{\infty})(\check{\tau})$ is finite dimensional, and for all $\nu \in (\mathfrak{a}_{1})_{\mathbb{C}}'$ there exists an isomorphism
 \[
j_{\sigma,\nu}^{\chi,\tau}:Wh_{\chi_{1},\tau_{1}}(V_{\sigma})\otimes H_{\check{\tau}}\longrightarrow Wh_{\chi}(I_{\sigma,\nu}^{\infty})(\check{\tau}),
\]
where $\chi_{1}=\chi|_{N_{M_{1}}}$, $\tau_{1}=\tau|_{M_{\chi_{1}}}$.
\item As an $M_{\chi}N\times G$ representation
 \[
  L^2(N\backslash G) \cong \bigoplus_{\tau\in \check{M}_{\chi}} \bigoplus_{(P_{1},A_{1})\succ (P_{\circ},A_{\circ})} \bigoplus_{\sigma\in\mathcal{E}_{2}(M_{1})} \int_{i(\mathfrak{a}_{1}')^{+}} Wh_{\chi_{1},\tau_{1}}(V_{\sigma})\otimes H_{\check{\tau}}\otimes I_{\sigma,\nu}\, d\mu_{\sigma}(\nu).
 \]
where $\mu$ is the usual Plancherel measure for $G$.
\item Given $\alpha \in C_{c}^{\infty}(i\mathfrak{a}_{1}';Wh_{\chi_{1},\tau_{1}}(V_{\sigma})\otimes H_{\check{\tau}})$, and $f\in I_{\sigma}^{\infty}$, define
\[
 c_{\alpha,f}(g)=\int_{i\mathfrak{a}_{1}'} j_{\sigma,\nu}^{\chi,\tau}(\alpha(\nu))(\pi_{v}(g)f_{v})d\mu_{\sigma}(\nu).
\]
Then $c_{\alpha,f} \in \C(N\backslash G;\chi).$
\end{enumerate}
\end{theorem}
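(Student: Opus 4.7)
The plan is to handle the three parts in order: first establish the isomorphism $j_{\sigma,\nu}^{\chi,\tau}$ of part (1), then feed it into the Plancherel formula of Harish-Chandra to obtain the decomposition in (2), and finally use Proposition \ref{prop:Fouriertransformofpacket} to reduce (3) to Lemma \ref{lemma:NL1} applied to a classical wave packet.

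For part (1) I would first treat the tempered case $\nu \in i(\mathfrak{a}_1)'$. Taking continuous duals in Proposition \ref{prop:Pdecompositioncompactstabilizer} and restricting to those functionals invariant under $N$ against $\chi$ identifies $Wh_\chi(I_{\sigma,\nu}^\infty)$ with the continuous dual of $\operatorname{Ind}_{M_{\chi_1}N}^{P} W_{\chi_1}(H_\sigma)$, evaluated at the origin; concretely, the map $(W_{\check\sigma,-\nu}^{\overline\chi})^T$ sends $(\operatorname{Ind}_{M_{\chi_1}}^{M_\chi} W^{\overline{\chi_1}}(V_\sigma))'$ injectively into $Wh_\chi(I_{\sigma,\nu}^\infty)$, and the isometry tells us this is actually an isomorphism onto the Whittaker functionals. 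Passing to the $\check\tau$-isotypic component and applying Frobenius reciprocity on the compact group $M_\chi$ identifies the latter with $H_{\check\tau}\otimes \operatorname{Hom}_{M_{\chi_1}}\!\bigl(\tau_1,W^{\overline{\chi_1}}(V_\sigma)'\bigr)$. The square-integrability of $\sigma$ together with Proposition \ref{prop:squareintegrablemultiplicity} lets us identify $W^{\overline{\chi_1}}(V_\sigma)'\cong Wh_{\chi_1}(V_\sigma)$ (or a quotient thereof), and the $\tau_1$-intertwining part is then exactly $Wh_{\chi_1,\tau_1}(V_\sigma)$; finite-dimensionality follows because tempered $\tau_1$-intertwiners into a single $M_{\chi_1}$-module are controlled in dimension by multiplicity one for the chain of parabolics. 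To pass from tempered $\nu$ to all $\nu \in (\mathfrak a_1)'_{\mathbb C}$, I would use holomorphic continuation in the spirit of section \ref{sec:holomorphiccontjacquet}: the map $\nu\mapsto j_{\sigma,\nu}^{\chi,\tau}(\mu)$ extends weakly holomorphically to $(\mathfrak a_1)'_{\mathbb C}$ because both sides transform rationally and the dimension of $Wh_\chi(I_{\sigma,\nu}^\infty)(\check\tau)$ is upper semicontinuous.

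For part (2) I would plug the explicit description of $I_{\check\sigma,-\nu}$ from Proposition \ref{prop:Pdecompositioncompactstabilizer} into Harish-Chandra's Plancherel formula (\ref{eq:explicitplancherel}). The computation goes exactly as in the derivation of (\ref{Bessel-Plancherel}): abstract Frobenius reciprocity on the compact $M_\chi$ in the factor $\operatorname{Ind}_{M_{\chi_1}N}^{P}W_{\chi_1}(H_\sigma)$ decomposes it discretely into $\tau$-isotypics whose multiplicity spaces, by part (1), are exactly $Wh_{\chi_1,\tau_1}(V_\sigma)\otimes H_{\check\tau}$. The identification of $L^2(N\backslash G;\chi)$ with the appropriate fiber of the vector bundle $E\to\hat N$ from section \ref{sec:besselplancherel} then converts the $P$-decomposition into a direct sum over $\tau\in \widehat{M_\chi}$ with the stated integrand; the interchange of the $\nu$-integral and the discrete sum is justified by the absolute convergence of Harish-Chandra's Plancherel measure on each $K$-type.

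Part (3) should be the easiest: given $\alpha$ and $f$, choose $\phi \in I_{\check\sigma}^\infty$ whose $\chi$-Fourier coefficient realises $j_{\sigma,\nu}^{\chi,\tau}(\alpha(\nu))$ via $(W_{\check\sigma,-\nu}^{\overline\chi})^T\circ W_{\sigma,\nu}^\chi$ (possible by part (1) and holomorphic continuation), and form the classical wave packet $\Psi(f,\phi,\alpha)\in\mathcal C(G)$ of section \ref{sec:fouriertransform}. By Proposition \ref{prop:Fouriertransformofpacket} applied to translates $R_g\Psi(f,\phi,\alpha)$, the function $c_{\alpha,f}$ coincides with $\Psi(f,\phi,\alpha)_\chi$ in the notation of Lemma \ref{lemma:NL1}, hence lies in $\mathcal C(N\backslash G;\chi)$ by the continuity of the Harish-Chandra transform $\mathcal C(G)\to\mathcal C(N\backslash G;\chi)$.

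The main obstacle will be the holomorphic continuation step in part (1): establishing that $j_{\sigma,\nu}^{\chi,\tau}$ remains an isomorphism off the tempered line $i(\mathfrak a_1)'$ requires controlling poles of the intertwiners $W_{\sigma,\nu}^\chi$ and showing that neither side acquires extra dimensions at singular $\nu$. I expect one proves this by reducing, via induction in stages and the Casselman--Wallach theorem, to the case of minimally induced representations already treated in section \ref{sec:holomorphiccontjacquet}, thereby inheriting the weakly holomorphic extension and the shift-equation argument developed there.
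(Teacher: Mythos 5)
Your overall architecture (part (1) first, then feed it into Plancherel for part (2), then Proposition \ref{prop:Fouriertransformofpacket} for part (3)) matches the paper, and your treatments of parts (2) and (3) are essentially the paper's. The genuine gap is in part (1). Dualizing the $P$-equivariant isometry of Proposition \ref{prop:Pdecompositioncompactstabilizer} and applying Frobenius reciprocity only computes the $L^{2}$-multiplicity space $W_{\chi,\tau}(I_{\sigma,\nu})$ --- this is exactly Proposition \ref{prop:forwardequality}, which gives $W_{\chi,\tau}(I_{\sigma,\nu})\cong Wh_{\chi_{1},\tau_{1}}(V_{\sigma})$. It does \emph{not} compute $Wh_{\chi}(I_{\sigma,\nu}^{\infty})(\check{\tau})$, the space of \emph{all} continuous Whittaker functionals on the smooth Fr\'echet module: a continuous functional on $I_{\sigma,\nu}^{\infty}$ need not extend to the Hilbert completion, so "the isometry tells us this is actually an isomorphism onto the Whittaker functionals" is precisely the assertion that has to be proved, not a consequence of the isometry. (Proposition \ref{prop:squareintegrablemultiplicity} establishes the analogous equality only for square integrable representations of $G$, not for the induced representations $I_{\sigma,\nu}$.) Likewise, the proposed continuation off $i(\mathfrak{a}_{1})'$ via "upper semicontinuity of dimension" is not an argument --- dimension jumping at special $\nu$ is exactly the phenomenon that must be excluded, and is the whole point of the machinery of Chapter 1.

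What the paper actually does for part (1) (Proposition \ref{thm:adjointholomorphiccontinuation}) is independent of the $L^{2}$ theory: it defines $j_{\sigma,\nu}^{\chi,\tau}(\mu)(f)$ by an explicit absolutely convergent integral for $\operatorname{Re}B(\nu,\alpha)\gg 0$, realizes $V_{\sigma}$ as a quotient of a minimally induced module $I^{\infty}_{Q,\xi,\delta}$ via Casselman--Wallach, and compares the two resulting short exact sequences of Whittaker spaces. The middle terms are identified with $\operatorname{Hom}_{M_{0}}(V_{\xi},V_{\tau})$ by the double-induction-in-stages isomorphism from section \ref{sec:holomorphiccontjacquet}, and the induced map on the sub/quotient terms is shown to be well defined and injective by holomorphicity in $\nu$ together with an explicit test-function computation; Lemma \ref{lemma:snake} then yields the bijection for every $\nu\in(\mathfrak{a}_{1})_{\mathbb{C}}'$. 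You name these tools in your final paragraph as "the main obstacle," but since part (2) and part (3) both rest on the full strength of part (1) for all complex $\nu$, that step is the proof rather than a loose end, and your sketch does not supply it.
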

 
We call this the \emph{explicit Bessel-Plancherel theorem} as here, unlike the case for the generalized Bessel-Plancherel theorem, the multiplicities appearing in the decomposition are associated with finite dimensional vector spaces of interest in their own, and we also have explicit intertwiner operators.

We will start this section by stating a version of Frobenius reciprocity on which we will rely for the rest of the section. Although this result is well known, we will include a proof of it here as it will be useful to have at hand the explicit formulas for the isomorphism.

 Let $(\tau, V_{\tau})$ be an irreducible representation of a compact group $K$, and let $(\sigma, V_{\sigma})$ be a smooth, Fr\'echet representation of a subgroup $M\subset K$. Let
\[
 I^{\infty}_{\sigma}=\{f:K\longrightarrow V_{\sigma} \, | \, \mbox{$f$ is smooth, and $f(mk)=\sigma(m)f(k)$ for all $m\in M$}\}.
\]
We define a smooth Fr\'echet representation, $(I^{\infty}_{\sigma},\pi)$, of $K$ by setting $\pi(\bar{k})f(k)=f(k\bar{k})$ for all $k,\bar{k} \in K$. Then we have the following result
\begin{lemma}\label{lemma:Frobenius reciprocity}
 With notation as above, there exists a canonical isomorphism
\[
 Hom_{K}(I^{\infty}_{\sigma},V_{\tau}) \cong Hom_{M}(V_{\sigma},V_{\tau}).
\]
\end{lemma}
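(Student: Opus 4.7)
The plan is to make the isomorphism explicit by an integration formula. I would define
$$\Phi: \mathrm{Hom}_M(V_\sigma, V_\tau) \longrightarrow \mathrm{Hom}_K(I^\infty_\sigma, V_\tau), \qquad \Phi(S)(f) = \int_K \tau(k)^{-1} S(f(k)) \, dk,$$
with respect to the normalized Haar measure on $K$. The first routine step will be to verify that $\Phi(S)$ is well-defined: the $M$-equivariance of $S$ together with the transformation rule $f(mk) = \sigma(m) f(k)$ gives $\tau(mk)^{-1} S(f(mk)) = \tau(k)^{-1} S(f(k))$, so the integrand descends to $M \backslash K$. A change of variables $k \mapsto k k_0^{-1}$ shows $\Phi(S)(\pi(k_0) f) = \tau(k_0) \Phi(S)(f)$, so $\Phi(S)$ really lies in $\mathrm{Hom}_K(I^\infty_\sigma, V_\tau)$.

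For injectivity I would argue by localization. Suppose $\Phi(S) = 0$. Given $v \in V_\sigma$, pick an approximate identity $\{\psi_n\} \subset C^\infty(K)$ with shrinking support around the identity and form $f_{v,n}(k) = \int_M \psi_n(m^{-1}k) \sigma(m) v \, dm \in I^\infty_\sigma$. Substituting into the definition of $\Phi(S)$ and unfolding the $m$-integral, the value $\Phi(S)(f_{v,n})$ becomes, up to a fixed nonzero normalization, a concentration of $\tau(k)^{-1} S(f_{v,n}(k))$ near $k = 1$, whose limit as $n \to \infty$ is a nonzero scalar multiple of $S(v)$. Since each $\Phi(S)(f_{v,n}) = 0$, this forces $S = 0$.

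For surjectivity the cleanest route is through Peter--Weyl. Since $K$ is compact and $V_\tau$ is finite dimensional, any $T \in \mathrm{Hom}_K(I^\infty_\sigma, V_\tau)$ kills every isotypic component $I^\infty_\sigma(\tau')$ with $\tau' \neq \tau$, so $T$ is determined by its restriction to the finite-dimensional subspace $I^\infty_\sigma(\tau)$, extracted via the projector $P_\tau = (\dim V_\tau)\int_K \overline{\chi_\tau(k)}\, \pi(k)\, dk$. The classical Frobenius reciprocity $\mathrm{Hom}_K(V_\tau, I^\infty_\sigma) \cong \mathrm{Hom}_M(V_\tau|_M, V_\sigma)$ (via $U \mapsto \mathrm{ev}_1 \circ U$) provides an explicit identification $I^\infty_\sigma(\tau) \cong V_\tau \otimes \mathrm{Hom}_M(V_\tau|_M, V_\sigma)$, and evaluating $\mathrm{Hom}_K(-, V_\tau)$ against this gives $\mathrm{Hom}_K(I^\infty_\sigma, V_\tau) \cong \mathrm{Hom}_M(V_\tau|_M, V_\sigma)^{\ast}$. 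Since only the $\tau|_M$-isotypic part of $V_\sigma$ contributes and this subspace is finite dimensional, the trace pairing identifies $\mathrm{Hom}_M(V_\tau|_M, V_\sigma)^{\ast}$ canonically with $\mathrm{Hom}_M(V_\sigma, V_\tau)$, and tracing through the identifications shows this reverse construction inverts $\Phi$.

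The main obstacle will be the bookkeeping in the last paragraph: verifying that the chain of canonical finite-dimensional dualities really is compatible with the concrete integration formula defining $\Phi$. No genuinely new ideas are required beyond standard compact group theory, but the identifications must be tracked carefully so that the constructed inverse agrees with $\Phi^{-1}$ on the nose rather than up to an unspecified scalar.
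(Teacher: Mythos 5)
Your proposal is correct, and your forward map $\Phi(S)(f)=\int_K\tau(k)^{-1}S(f(k))\,dk$ is exactly the map the paper inverts; but the two bijectivity arguments are genuinely different. The paper gets existence and uniqueness of the inverse in one stroke by citing part 2 of the Kolk--Varadarajan theorem on transversally supported invariant distributions (the same machinery used throughout Chapter 1), and then spends the rest of the proof deriving the explicit formula $\mu_{\lambda}(v)=\lambda(d_{\tau}\chi_{\tau,v})$ with $\chi_{\tau,v}(k)=\int_{M}\chi_{\tau}(mk)\sigma(m)^{-1}v\,dm$ --- a formula it actually needs later (the functions $\chi_{\tau,w}$ reappear in the injectivity step of the holomorphic-continuation argument for $j^{\chi,\tau}_{\sigma,\nu}$). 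You instead prove injectivity by concentrating test sections near the identity (your computation in fact gives $\Phi(S)(f_{v,n})\to S(v)$ exactly, with no extra normalization, once Haar measures on $K$ and $M$ are normalized) and surjectivity by Peter--Weyl plus the classical adjunction $\mathrm{Hom}_{K}(V_{\tau},I^{\infty}_{\sigma})\cong\mathrm{Hom}_{M}(V_{\tau}|_{M},V_{\sigma})$ and a trace-pairing duality. Your route is more elementary and self-contained; its one soft spot is the final duality step, which uses that the $\tau|_{M}$-isotypic part of $V_{\sigma}$ is finite dimensional --- true in the paper's applications (where $\sigma$ is admissible) but not a stated hypothesis of the lemma, whereas the Kolk--Varadarajan route and your own approximate-identity construction avoid this issue. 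Note also that your injectivity functions $f_{v,n}$ are morally the matrix-coefficient sections $\chi_{\tau,v}$ of the paper, so if you replace the limit by the single test section $d_{\tau}\chi_{\tau,v}$ you recover the paper's explicit inverse and can dispense with the dualization bookkeeping entirely.
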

\begin{proof}
 Part 2 of theorem \ref{thm:kolk-varadarajan}  says that, given $\lambda \in Hom_{K}(I^{\infty}_{\sigma},V_{\tau})$, there exists a unique $\mu_{\lambda}\in Hom_{M}(V_{\sigma},V_{\tau})$, such that
\begin{equation}
 \lambda(f)=\int_{M\backslash K} \tau(k)^{-1}\mu_{\lambda}(f(k))\, dk, \label{eq:compactKolkVaradarajan}
\end{equation}
and the map $\lambda \mapsto \mu_{\lambda}$ defines a linear isomorphism between $Hom_{K}(I^{\infty}_{\sigma},V_{\tau})$ and $Hom_{M}(V_{\sigma},V_{\tau})$. This is enough to prove the lemma, but we would like to give a more explicit description of $\mu_{\lambda}$. What it's clear is that, if for any $\mu\in Hom_{M}(V_{\sigma},V_{\tau})$ and $f\in I^{\infty}_{\sigma}$, we set
\begin{equation}
 \lambda_{\mu}(f)=\int_{M\backslash K} \tau(k)^{-1}\mu(f(k))\, dk,
\end{equation}
then $\lambda_{\mu_{\lambda}}=\lambda$ and $\mu_{\lambda_{\mu}}=\mu$.

Given $v\in V_{\sigma}$, set
\[
 \chi_{\tau,v}(k)=\int_{M}\chi_{\tau}(mk)\sigma(m)^{-1}v\,dm.
\]
It's straightforward to check that $\chi_{\tau,v}\in I^{\infty}_{\sigma}$. Furthermore, for all $\bar{m}\in M$
\begin{eqnarray*}
 \chi_{\tau,\sigma(\bar{m})v}(k) & = & \int_{M}\chi_{\tau}(mk)\sigma(m)^{-1}\sigma(\bar{m})v\,dm\\
 & = & \int_{M}\chi_{\tau}(\bar{m}mk)\sigma(m)^{-1}v\,dm= \int_{M}\chi_{\tau}(mk\bar{m})\sigma(m)^{-1}v\,dm\\
& = & \chi_{\tau,v}(k\bar{m})=(\pi(\bar{m})\chi_{\tau,v})(k).
\end{eqnarray*}
Let $d_{\tau}$ be the dimension of $V_{\tau}$. Then from equation (\ref{eq:compactKolkVaradarajan}),
\begin{eqnarray*}
 \lambda(d_{\tau}\chi_{\tau,v}) & = &\int_{M\backslash K} \tau(k)^{-1}\mu_{\lambda}(d_{\tau}\chi_{\tau,v}(k))\, dk \\
& = & \int_{M\backslash K} d_{\tau} \tau(k)^{-1}\mu_{\lambda}(\int_{M}\chi_{\tau}(mk)\sigma(m)^{-1}v\,dm)\, dk \\
& = & \int_{M\backslash K}\int_{M} d_{\tau}\chi_{\tau}(mk) \tau(k)^{-1}\mu_{\lambda}(\sigma(m)^{-1}v)\,dm\, dk \\ 
& = & \int_{M\backslash K}\int_{M} d_{\tau}\chi_{\tau}(mk) \tau(mk)^{-1}\mu_{\lambda}(v)\,dm\, dk \\ 
& = & \int_{K} d_{\tau}\chi_{\tau}(k) \tau(k)^{-1}\mu_{\lambda}(v)\, dk= \mu_{\lambda}(v),
\end{eqnarray*}
that is
\begin{equation}
 \mu_{\lambda}(v)=\lambda(d_{\tau}\chi_{\tau,v}).
\end{equation}
This is the formula that we wanted to obtain.
\end{proof}

\begin{proposition}\label{prop:forwardequality}
 If $(\sigma, H_{\sigma})$ is a square integrable, Hilbert representation of $M_{1}$, then $Wh_{\chi_{1},\tau_{1}}(V_{\sigma})=W_{\chi,\tau}(I_{\sigma,\nu})$.
\end{proposition}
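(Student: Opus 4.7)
The strategy is to retrace the derivation of the generalized Bessel--Plancherel theorem in the compact-stabilizer case, specialize the multiplicity identification obtained there, and then reinterpret it via Frobenius reciprocity together with the Whittaker--Bessel map $W^{\chi_1}$. Throughout, I am exploiting that $M_\chi$ is compact, hence so is the closed subgroup $M_{\chi_1}=M_\chi\cap P_1$.

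First I would specialize equations (\ref{eq:taudecomposition})--(\ref{eq:explicitPdecomposition}) to the case at hand. Comparing the two decompositions of $L^2(G)$ produced in the proof of (\ref{Bessel-Plancherel})---the one coming from Harish-Chandra's Plancherel combined with Proposition \ref{prop:Pdecompositioncompactstabilizer}, and the abstract one from the generalized Bessel--Plancherel theorem---gives
\[
W_{\chi,\tau}(I_{\sigma,\nu})\ \cong\ \bigoplus_{w\in W_\chi}\tilde W^{w}_{\sigma,\chi}(\tau),
\]
where $\tilde W^{w}_{\sigma,\chi}(\tau)$ is the $\tau$-multiplicity appearing in the decomposition of $\operatorname{Ind}_{M_{\chi_1}}^{M_\chi}W_{\chi_1}(H_{\sigma^{w}})$. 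Because $M_\chi$ is compact, Corollary \ref{lemma:bruhat} says the Matsuki parameters collapse, so the sum over $w$ reduces to the trivial element and Proposition \ref{prop:Pdecompositioncompactstabilizer} applies directly. In particular only the identity contributes.

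Second, since both $M_\chi$ and $M_{\chi_1}$ are compact, Peter--Weyl gives a discrete isotypic decomposition and I may apply Lemma \ref{lemma:Frobenius reciprocity} (Frobenius reciprocity) to obtain
\[
\tilde W_{\sigma,\chi}(\tau)\ \cong\ \operatorname{Hom}_{M_{\chi_1}}\!\bigl(V_{\tau_1},\,W_{\chi_1}(H_\sigma)\bigr).
\]
Third, I would identify the right-hand side with $Wh_{\chi_1,\tau_1}(V_\sigma)$. Recall that $W_{\chi_1}(H_\sigma)$ is the Hilbert closure of the image of the Whittaker--Bessel map $W^{\chi_1}\colon V_{\check\sigma}\to Wh_{\chi_1}(V_\sigma)$, and the $M_{\chi_1}$-action on it is dual to the $\sigma$-action on $V_\sigma$. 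Given $\Lambda\in\operatorname{Hom}_{M_{\chi_1}}(V_{\tau_1},W_{\chi_1}(H_\sigma))$, the rule
\[
\lambda_\Lambda(v)(w)\ :=\ \Lambda(w)(v),\qquad v\in V_\sigma,\ w\in V_{\tau_1},
\]
defines a continuous map $V_\sigma\to V_{\tau_1}^{*}\cong V_{\tau_1}$ (using the inner product on $V_{\tau_1}$) satisfying $\lambda_\Lambda(\sigma(mn)v)=\chi_1(n)\tau_1(m)\lambda_\Lambda(v)$, i.e.\ $\lambda_\Lambda\in Wh_{\chi_1,\tau_1}(V_\sigma)$. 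Injectivity of $\Lambda\mapsto\lambda_\Lambda$ is immediate from the non-degeneracy of the pairing in part (5) of the lemma on Bessel functionals.

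The hard part will be surjectivity: one must show that \emph{every} $(\chi_1,\tau_1)$-Bessel model on $V_\sigma$ is realized by an $M_{\chi_1}$-equivariant map into the Hilbert closure $W_{\chi_1}(H_\sigma)$, i.e.\ that the image $W^{\chi_1}(V_{\check\sigma})$ is rich enough on each $\tau_1$-isotypic component. For this I would invoke Proposition \ref{prop:squareintegrablemultiplicity} applied within the group $M_1$: since $\sigma$ is square integrable on $M_1$, every $\mu\in Wh_{\chi_1}(V_\sigma)(\check\tau_1)$ produces a continuous intertwiner $T_\mu\colon H_\sigma\to L^2(N_{M_1}\backslash M_1;\chi_1)(\check\tau_1)$ via matrix coefficients, and by part (4) of the lemma on Bessel functionals these matrix coefficients are precisely the elements of $W_{\chi_1}(H_\sigma)$; the finite-dimensionality of $Wh_{\chi_1,\tau_1}(V_\sigma)$ guaranteed by that same proposition allows one to match multiplicities and conclude that the inclusion produced in Step 3 is in fact an equality. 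Combining Steps 1--3 then yields $Wh_{\chi_1,\tau_1}(V_\sigma)=W_{\chi,\tau}(I_{\sigma,\nu})$.
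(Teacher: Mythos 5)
Your argument is correct and follows essentially the same route as the paper: Proposition \ref{prop:Pdecompositioncompactstabilizer} gives $W_{\chi}(I_{\sigma,\nu})\cong \operatorname{Ind}_{M_{\chi_{1}}}^{M_{\chi}}W_{\chi_{1}}(H_{\sigma})$, Frobenius reciprocity (Lemma \ref{lemma:Frobenius reciprocity}) extracts the $\check{\tau}$-isotypic multiplicity as $\operatorname{Hom}_{M_{\chi_{1}}}$ from $V_{\tau_1}$ into $W_{\chi_{1}}(H_{\sigma})$, and Proposition \ref{prop:squareintegrablemultiplicity} applied to the square integrable $\sigma$ on $M_{1}$ identifies that space with $Wh_{\chi_{1},\tau_{1}}(V_{\sigma})$. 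Your extra care over surjectivity in the last step is exactly the content of the equality $W_{\chi_1}(H_\sigma)(\check{\tau}_1)=Wh_{\chi_1}(V_\sigma)(\check{\tau}_1)$ that the paper cites there.
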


\begin{proof}
By Proposition \ref{prop:squareintegrablemultiplicity}, if $(\pi,H_{\pi})$ is a square integrable, Hilbert representation of $G$, then
\begin{equation}
 W_{\chi}(H_{\pi})(\check{\tau})=Wh_{\chi}(V_{\pi})(\check{\tau}). \label{eq:WequalWhforsquareintegrable}
\end{equation}
On the other hand, by proposition  \ref{prop:Pdecompositioncompactstabilizer}
\[
 W_{\chi}(I_{\sigma,\nu}) \cong \operatorname{Ind}_{M_{\chi_{1}}}^{M_{\chi}}W_{\chi_{1}}(H_{\sigma}).
\]
Let $\check{W}_{\chi_{1}}(H_{\sigma})$ be the dual of $W_{\chi_{1}}(H_{\sigma})$. Then, by Frobenius reciprocity,
\begin{eqnarray*}
 W_{\chi}(I_{\sigma,\nu})(\check{\tau})& \cong &  H_{\check{\tau}}\otimes Hom_{M_{\chi}}(\operatorname{Ind}_{M_{\chi_{1}}}^{M_{\chi}}\check{W}_{\chi_{1}}(H_{\sigma}),H_{\tau}) \\
& \cong & H_{\check{\tau}}\otimes Hom_{M_{\chi_{1}}}(\check{W}_{\chi_{1}}(H_{\sigma}), H_{\tau}) \\
& \cong & H_{\check{\tau}}\otimes Wh_{\chi_{1},\tau_{1}}(V_{\sigma}),
\end{eqnarray*}
where the last equations follows from (\ref{eq:WequalWhforsquareintegrable}) and the definition of $Wh_{\chi_{1},\tau_{1}}(V_{\sigma})$.
\end{proof}

We will now want to show that $Wh_{\chi,\tau}(I_{\sigma,\nu}^{\infty})\cong  Wh_{\chi_{1},\tau_{1}}(V_{\sigma})$ for all $\nu$. For this we will first need the following lemma.

\begin{lemma}\label{lemma:snake}
 Consider the following commutative diagram:
\[
 \begin{array}{crcrcrccc}
  0 & \longrightarrow & U & \stackrel{p_{1}}{\longrightarrow} & V & \stackrel{p_{2}}{\longrightarrow} & W & \longrightarrow & 0 \\
    &                    &              &                   S &   \downarrow&                          T & \downarrow &  & \\
0 & \longrightarrow & U' & \stackrel{p'_{1}}{\longrightarrow} & V' & \stackrel{p'_{2}}{\longrightarrow} & W' & \longrightarrow & 0,
 \end{array}
\]
where the two rows are short exact sequences. If $S$ is an isomorphism, and $T$ is injective, then $T$ is an isomorphism. Furthermore there exists an isomorphism $R: U\longrightarrow U'$ that makes the whole diagram commute.
\end{lemma}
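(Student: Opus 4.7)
This is a routine diagram chase, essentially a slight variant of the short five lemma. The plan is to construct $R$ explicitly by using the injectivity of $p'_1$, verify it is an isomorphism via two short chases, and then deduce the surjectivity of $T$ from the surjectivity of $S$ and $p'_2$.

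First I would construct $R$. Given $u \in U$, consider $S(p_1(u)) \in V'$. Commutativity of the right square and exactness of the top row give $p'_2(S(p_1(u))) = T(p_2(p_1(u))) = T(0) = 0$, so $S(p_1(u)) \in \ker p'_2 = \operatorname{im} p'_1$. Since $p'_1$ is injective, there is a \emph{unique} $u' \in U'$ with $p'_1(u') = S(p_1(u))$, and I would set $R(u) := u'$. This is linear by construction and makes the left square commute by definition.

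Next I would check $R$ is an isomorphism. For injectivity, $R(u)=0$ gives $S(p_1(u))=p'_1(0)=0$, hence $p_1(u)=0$ since $S$ is injective, hence $u=0$ since $p_1$ is injective. For surjectivity, given $u' \in U'$, use that $S$ is surjective to pick $v \in V$ with $S(v) = p'_1(u')$; then $T(p_2(v)) = p'_2(S(v)) = p'_2(p'_1(u')) = 0$, so injectivity of $T$ forces $p_2(v)=0$, and by exactness of the top row $v = p_1(u)$ for some $u \in U$; this $u$ satisfies $p'_1(R(u)) = S(p_1(u)) = S(v) = p'_1(u')$, so $R(u) = u'$ by injectivity of $p'_1$.

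Finally, to upgrade $T$ from injective to bijective, given $w' \in W'$, surjectivity of $p'_2$ yields $v' \in V'$ with $p'_2(v') = w'$, and surjectivity of $S$ yields $v \in V$ with $S(v) = v'$; then $T(p_2(v)) = p'_2(S(v)) = p'_2(v') = w'$, so $T$ is surjective. Combined with the hypothesis that $T$ is injective, this completes the proof. There is no real obstacle here; the only point requiring care is ensuring that $R$ is well-defined, which relies precisely on the injectivity of $p'_1$ guaranteed by exactness of the bottom row.
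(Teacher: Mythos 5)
Your proof is correct and follows essentially the same diagram chase as the paper: the construction of $R$ via the injectivity of $p'_1$, the two chases for injectivity and surjectivity of $R$, and the deduction of surjectivity of $T$ from those of $S$ and $p'_2$ all match the paper's argument (which merely does the $T$-surjectivity step first rather than last). No gaps.
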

\begin{proof}
The proof is a classical diagram chasing argument, and it's provided below, but the reader my want to amuse himself and do the diagram chasing by his own.
 We will first show that $T$ is surjective. Let $w'\in W'$. Since $p'_{2}$ is surjective, there exists $v'\in V'$ such that $w'=p'_{2}(v')$. Set $w=p_{2}\circ S^{-1}(v')\in W$. Then
\[
 T(w)=T\circ p_{2}\circ S^{-1}(v')=p'_{2}(v')=w'.
\]
Since $T$ was already injective by hypothesis, we conclude that $T$ is an isomorphism.

Now let $u\in U$ and observe that
\[
 p'_{2}(S(p_{1}(u)))=T(p_{2}( p_{1}(u)))=0,
\]
since $p_{2}\circ p_{1}=0$. But now by the exactness of the bottom row, there exists a unique $u'\in U'$ such that $p'_{1}(u')=S(p_{1}(u))$. Set $R(u)=u'$. It's easy to check that this defines a linear map between $U$ and $U'$ that makes the diagram commute.

Let $u\in U$ be such that $R(u)=0$. Then $S^{-1}\circ p'_{1}\circ R(u)=0=p_{1}(u)$. Since $p_{1}$ is injective, this implies that $u=0$. Since the only condition in $u$ was that $R(u)=0$ we conclude that $R$ is injective.

Now let $u'\in U'$. Then $p'_{2}\circ p'_{1}(u')=(T\circ p_{2} \circ S^{-1})(p'_{1}(u'))=0$. Now since $T$ is an isomorphism, we conclude that $p_{2}(S^{-1}\circ p'_{1}(u'))=0$. Hence there exists a unique $u\in U$ such that $p_{1}(u) = S^{-1}\circ p'_{1}(u')$, therefore
\[
 p'_{1}(u')=S\circ p_{1}(u)=p'_{1}(R(u)).
\]
Now since $p'_{1}$ is injective we conclude that $R(u)=u'$, finishing the proof of the surjectivity of $R$ and finishing the proof of the lemma.
\end{proof}

Let $(\sigma, H_{\sigma})$ be an irreducible, square integrable, Hilbert representation of $M_{1}$, and let $\nu\in (\mathfrak{a}_{1})_{\mathbb{C}}'$. Let $(\tau,H_{\tau})$ be an irreducible representation of $M_{\chi}$. Given $\mu \in Wh_{\chi_{1},\tau_{1}}(V_{\sigma})$ and $f\in I_{\sigma,\nu}^{\infty}$, define
\[
 j_{\sigma,\nu}^{\chi,\tau}(\mu)(f)= \int_{N_{N_{1}}}\int_{M_{\chi_{1}}\backslash M_{\chi}} \chi(n)^{-1}\tau(m)^{-1}\mu(f(nm))\,dn\,dm.
\]

\begin{lemma}
 Let $\Phi(P_{1},A_{1})^{+}$ be the system of positive roots of $A_{1}$ induced by $P_{1}$. Let $B$ denote the Cartan-Killing form on $\mathfrak{g}_{\mathbb{C}}'$. Let $\mu \in Wh_{\chi_{1},\tau_{1}}(V_{\sigma})$ and $f\in I_{\sigma,\nu}^{\infty}$. If $\operatorname{Re} B(\nu,\alpha) \gg 0$ for all $\alpha \in \Phi(P_{1},A_{1})^{+}$, then the integral defining $j_{\sigma,\nu}^{\chi,\tau}(\mu)(f)$ converges absolutely.
\end{lemma}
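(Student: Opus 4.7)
The plan is to bound the integrand pointwise and reduce the question to a standard Jacquet-type convergence estimate on $N_{N_{1}}$. Because we are in the setting where $M_{\chi}$ is compact, the quotient $M_{\chi_{1}}\backslash M_{\chi}$ is compact, so once the integrand is controlled uniformly in $m$, the $m$-integration only contributes a bounded factor. The substantive content of the lemma is therefore convergence in the $n$-variable, which one expects to behave like a standard Jacquet integral for an induced representation.

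First I would dispose of the scalar prefactors. Since $\chi$ is unitary, $|\chi(n)^{-1}|=1$, and since $\tau$ is an irreducible representation of the compact group $M_{\chi}$ it is finite dimensional, so $\|\tau(m)^{-1}\|$ is uniformly bounded in $m\in M_{\chi}$. The continuity of $\mu\in Wh_{\chi_{1},\tau_{1}}(V_{\sigma})$ as a map $V_{\sigma}\to V_{\tau}$ provides a continuous seminorm $p_{\mu}$ on $V_{\sigma}$ with $\|\mu(v)\|_{V_{\tau}}\leq p_{\mu}(v)$. Next, applying the Iwasawa decomposition with respect to $(\bar{P}_{1},A_{1})$, write $nm=\bar{n}(nm)a(nm)m_{1}(nm)k(nm)$; the defining transformation law for $f\in I_{\sigma,\nu}^{\infty}$ gives
\[
\|f(nm)\|_{V_{\sigma}}=a(nm)^{\operatorname{Re}\nu-\rho}\,\|\sigma(m_{1}(nm))f(k(nm))\|_{V_{\sigma}}.
\]
Combining the moderate growth of $\sigma$ with the standard polynomial bound on the Iwasawa projections (lemma 4.A.2.1 of \cite{w:vol1}) and with the compactness of $K$ and of $M_{\chi}$ (so that $\|nm\|\leq C_{M}\|n\|$), one obtains an estimate of the form
\[
\|\mu(f(nm))\|_{V_{\tau}}\leq C\,a(nm)^{\operatorname{Re}\nu-\rho}\,\|n\|^{r}\,\tilde{q}(f)
\]
for a continuous seminorm $\tilde{q}$ on $I_{\sigma}^{\infty}$ and an exponent $r$ depending only on $\sigma$.

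To convert this into the desired shape, I would invoke lemma 4.A.2.3 of \cite{w:vol1}, giving $\|n\|\leq C_{1}a(n)^{-\lambda}$ for some $\lambda>0$, and combine it with the uniform comparability of $a(nm)$ and $a(n)$ for $m$ in the compact set $M_{\chi}$ (the essential structural input). Theorem 4.5.4 of \cite{w:vol1} then produces the convergence of
\[
\int_{N_{N_{1}}}a(n)^{\operatorname{Re}\nu-\rho-r\lambda}\,dn
\]
provided $\operatorname{Re} B(\nu,\alpha)$ is sufficiently large for every $\alpha\in\Phi(P_{1},A_{1})^{+}$, and the uniformity in $m$ together with compactness of $M_{\chi_{1}}\backslash M_{\chi}$ upgrades this to absolute convergence of the iterated integral, with local uniformity in $\nu$. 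The step I expect to demand the most care is the uniform comparability of $a(nm)$ with $a(n)$ as $m$ varies over $M_{\chi}$: in general $M_{\chi}$ does not sit inside $M_{1}$, so right multiplication by $m$ genuinely alters the Iwasawa projection of $n$, and one must exploit continuity of the Iwasawa map together with compactness of $M_{\chi}$ to bound the distortion by constants independent of $n$.
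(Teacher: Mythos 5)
Your argument is correct and is essentially the paper's proof: the paper simply declares this lemma ``identical to'' the convergence lemma for the generalized Jacquet integral, whose proof is exactly the chain you describe --- Iwasawa decomposition, the moderate-growth bound $\|\sigma(m_{1}(n))\|\leq C\|n\|^{r}$, the inequality $\|n\|\leq C\,a(n)^{-\lambda}$ from \cite{w:vol1}, and the convergence criterion of theorem 4.5.4 of \cite{w:vol1}, with the compact $m$-integration contributing only a bounded factor. The one step you flag as delicate is in fact immediate: in this section $M_{\chi}=K_{M}\subset K$, so right multiplication by $m\in M_{\chi}$ leaves the $\bar{N}_{1}A_{1}M_{1}K$-Iwasawa projections $a(\cdot)$ and $m_{1}(\cdot)$ unchanged and only moves the $K$-component, so $a(nm)=a(n)$ exactly.
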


\begin{proof}
 The proof of this lemma is identical to the proof of lemma \ref{lemma:hyperplane}
\end{proof}

\begin{proposition}\label{thm:adjointholomorphiccontinuation}
Let $\mu \in Wh_{\chi_{1},\tau_{1}}(V_{\sigma})$ and $f\in I_{\sigma,\nu}^{\infty}$. The map $\nu\mapsto j_{\sigma,\nu}^{\chi,\tau}(\mu)(f)$ extends to a holomorphic map from $(\mathfrak{a}_{1})_{\mathbb{C}}'$ to $\mathbb{C}$. Furthermore
\[
 j_{\sigma,\nu}^{\chi,\tau}:Wh_{\chi_{1},\tau_{1}}(V_{\sigma})\longrightarrow Wh_{\chi,\tau}(I_{\sigma,\nu}^{\infty})
\]
is a linear bijection for all $\nu \in (\mathfrak{a}_{1})_{\mathbb{C}}'$.
\end{proposition}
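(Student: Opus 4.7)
The plan is to prove the two assertions -- holomorphic continuation of $\nu\mapsto j_{\sigma,\nu}^{\chi,\tau}(\mu)(f)$ and bijectivity of $j_{\sigma,\nu}^{\chi,\tau}$ -- by reducing them to the holomorphic continuation of Jacquet integrals of Section \ref{sec:holomorphiccontjacquet} combined with the dimension identity of Proposition \ref{prop:forwardequality}. Since this proposition sits in the section where $M_{\chi}$ is compact, the outer integration over $M_{\chi_{1}}\backslash M_{\chi}$ is over a compact space, and all convergence issues reduce to the inner integral over $N_{N_{1}}$.

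For the holomorphic continuation, I would first recognize the inner integral over $N_{N_{1}}$ as a ``partial'' Jacquet integral. Since $N$ is abelian and splits as $N = N_{N_{1}}\cdot N_{M_{1}}$, and $\mu$ is $\chi_{1}$-equivariant under $N_{M_{1}}$ (where $\chi_{1} = \chi|_{N_{M_{1}}}$), absorbing an appropriate averaging against $N_{M_{1}}$ rewrites the inner integral (up to a harmless constant) as $\mu$ applied to a genuine Jacquet integral $J_{\sigma,\nu}^{\chi}(\pi(m)f)$ for the $G$-representation $I_{\sigma,\nu}^{\infty}$. This Jacquet integral admits holomorphic continuation in $\nu$ by the results of Section \ref{sec:holomorphiccontjacquet} (the shift-equation technique obtained by tensoring with a finite dimensional representation). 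The outer integration against $\tau(m)^{-1}$ over the compact quotient $M_{\chi_{1}}\backslash M_{\chi}$ preserves holomorphicity, yielding the extension.

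For the bijectivity, I would first prove injectivity directly: given $\mu\neq 0$, pick $v_{0}\in V_{\sigma}$ with $\mu(v_{0})\neq 0$ and construct a test function $f\in I_{\sigma,\nu}^{\infty}$ whose support is concentrated near a point $n_{0}m_{0}$ with $n_{0}\in N_{N_{1}},\, m_{0}\in M_{\chi}$, so that $j_{\sigma,\nu}^{\chi,\tau}(\mu)(f)$ computes a nonzero multiple of $\tau(m_{0})^{-1}\mu(v_{0})$ after unwinding the $M_{\chi}$-average via Frobenius reciprocity (Lemma \ref{lemma:Frobenius reciprocity}). For surjectivity, combine injectivity with the identity $\dim Wh_{\chi_{1},\tau_{1}}(V_{\sigma}) = \dim W_{\chi,\tau}(I_{\sigma,\nu})$ from Proposition \ref{prop:forwardequality} together with the tautological inclusion $W_{\chi,\tau}(I_{\sigma,\nu})\subset Wh_{\chi,\tau}(I_{\sigma,\nu}^{\infty})$; the remaining reverse bound $\dim Wh_{\chi,\tau}(I_{\sigma,\nu}^{\infty})\leq \dim Wh_{\chi_{1},\tau_{1}}(V_{\sigma})$ would be established by a vanishing-of-invariant-distributions argument in the style of Propositions \ref{prop:vanishing} and \ref{prop:vanishinggeneral}, using the Bruhat-Matsuki decomposition of $G$ relative to the subgroup $P_{\circ}\times M_{\chi}N$. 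This gives bijectivity for $\nu$ in the open set where all three dimensions agree, and the holomorphic extension promotes bijectivity to every $\nu\in(\mathfrak{a}_{1})_{\mathbb{C}}'$.

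I expect the main obstacle to be the reduction of the inner $N_{N_{1}}$-integral to a standard Jacquet integral for $I_{\sigma,\nu}^{\infty}$. Since $\mu$ is defined on $V_{\sigma}$ (a representation of $M_{1}$) while the Jacquet integrals of Section \ref{sec:holomorphiccontjacquet} apply to representations of $G$, the reduction must carefully exploit both the $\chi_{1}$-equivariance of $\mu$ under $N_{M_{1}}$ and the fact that conjugation by $m\in M_{\chi}$ preserves $\chi$, allowing the factors of $m$ inside $f(nm)$ and the character $\chi(n)$ to interact correctly. Once the inner integral is correctly identified as a $G$-Jacquet integral of $I_{\sigma,\nu}^{\infty}$ composed with $\mu$ and twisted by $\tau$ via the $M_{\chi}$-integration, the holomorphic continuation and bijectivity should follow from the results already in place.
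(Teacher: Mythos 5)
Your overall strategy is recognizable, and the test-function argument you sketch for injectivity is essentially the one the paper uses (it constructs $f(nm)=\phi(n)\chi_{\tau,w}(m)$ with $\chi_{\tau,w}$ from the proof of Lemma \ref{lemma:Frobenius reciprocity}). But the proposal has two genuine gaps, and the paper's actual proof is organized quite differently precisely to avoid them.

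First, the reduction of the inner $N_{N_{1}}$-integral to ``a genuine Jacquet integral $J^{\chi}_{\sigma,\nu}$ already covered by Section \ref{sec:holomorphiccontjacquet}'' does not go through as stated. The continuation results of that section are proved for induction from the Siegel parabolic $P=MAN$ with $\sigma$ a representation of $M$; here one induces from $\bar{P}_{1}$ with $\sigma$ square integrable on $M_{1}$, and the functional $\mu\in Wh_{\chi_{1},\tau_{1}}(V_{\sigma})$ is \emph{not} given by a convergent integral over $N_{M_{1}}$ (for a square integrable $\sigma$ it is an abstract Bessel functional), so you cannot ``absorb an averaging against $N_{M_{1}}$'' to produce a full integral over $N$. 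The object $n\mapsto\mu(f(nm))$ is exactly the mixed Jacquet--Bessel integral whose continuation is the content of the proposition. The paper handles this by resolving $V_{\sigma}$: it chooses, via Casselman--Wallach, a surjection $I^{\infty}_{Q,\xi,\delta}\twoheadrightarrow V_{\sigma}$ with $Q=P_{\circ}\cap M_{1}$ and kernel $W$, producing a short exact sequence of induced representations of $G$ whose middle term is (by induction in stages) induced from the minimal parabolic and hence falls under the already-proved isomorphism $Wh_{\chi,\tau}(I^{\infty}_{\bar{P}_{1},\pi_{Q,\xi,\delta},\nu})\cong Hom_{M_{0}}(V_{\xi},V_{\tau})\cong Wh_{\chi_{1},\tau_{1}}(I^{\infty}_{Q,\xi,\delta})$. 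The desired map and its holomorphicity are then inherited from the middle term.

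Second, the final step ``the holomorphic extension promotes bijectivity to every $\nu$'' is not valid. A holomorphic family of injective (or even bijective) linear maps can degenerate at special parameters, and moreover the dimension identity you invoke from Proposition \ref{prop:forwardequality} comes from the $L^{2}$ theory and is only available for $\nu\in i\mathfrak{a}_{1}'$, which is not open in $(\mathfrak{a}_{1})'_{\mathbb{C}}$; so you would at best get bijectivity on a totally real subspace, from which no holomorphicity argument recovers the remaining $\nu$. Obtaining the isomorphism at \emph{every} $\nu$ is the hard point, and the paper gets it from Lemma \ref{lemma:snake}: the middle vertical arrow in the diagram of exact sequences is an isomorphism for all $\nu$ (this is where the shift-equation/tensoring machinery of the earlier chapter enters), the right-hand arrow is shown to be well defined and injective (well-definedness using the integral formula for $\operatorname{Re}B(\nu,\alpha)\gg 0$ plus analytic continuation, injectivity by the test-function computation), and the snake lemma then produces the isomorphism $j^{\chi,\tau}_{\sigma,\nu}$ on the left-hand terms for every $\nu$. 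Your proposed route would instead require redoing the transverse-symbol and vanishing analysis for $\bar{P}_{1}\backslash G/M_{\chi}N$ from scratch, which is not carried out.
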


\begin{proof}
Let $Q=P_{\circ}\cap M_{1}$ be a minimal parabolic subgroup of $M_{1}$. Let $(\xi,V_{\xi})$ be an irreducible finite dimensional representation of $M_{0}$, and let $\delta \in (\mathfrak{a}_{0}\cap \mathfrak{m}_{1})$ be such that there exists a surjective map from $I^{\infty}_{Q,\xi,\delta}$ onto $V_{\sigma}$. Here $I^{\infty}_{Q,\xi,\delta}=I^{\infty}_{\xi,\delta}$ but we are including the parabolic subgroup from which we are inducing to avoid confusion with the several induced representations that we will use in this proof. Let $W$ denote the kernel of this map. If $\nu \in (\mathfrak{a}_{1})_{\mathbb{C}}'$, then we have the exact sequence
\[
0\longrightarrow  I^{\infty}_{\bar{P_{1}},\pi_{Q,\xi,\delta}|_{W},\nu} \longrightarrow I^{\infty}_{\bar{P_{1}},\pi_{Q,\xi,\delta},\nu} \longrightarrow I^{\infty}_{\bar{P_{1}},\sigma,\nu} \longrightarrow 0.                                                                                                                                                                                                                                                                                                                                                                                                                                                                                                                                                                                                                                                           \]
In this sequence the first arrow is given by the obvious homomorphism $S_{1}$, given by $S_{1}(f)(k)=f(k)$, since $W\subset I^{\infty}_{Q,\xi,\delta}$, and the second arrow is given by $S_{2}(f)(k)=S(f(k))$. The point is that the total spaces and $S_{1}$, $S_{2}$ are independent of $\nu$. we therefore have the exact sequence
\begin{equation}
 0\longrightarrow Wh_{\chi,\tau}(I^{\infty}_{\bar{P_{1}},\sigma,\nu}) \stackrel{S_{2}^{T}}{\longrightarrow} Wh_{\chi,\tau}(I^{\infty}_{\bar{P_{1}},\pi_{Q,\xi,\delta},\nu}) \stackrel{S_{1}^{T}}{\longrightarrow} Wh_{\chi,\tau}(I^{\infty}_{\bar{P_{1}},\pi_{Q,\xi,\delta},\nu})|_{I^{\infty}_{\bar{P_{1}},\mu,\nu}} \longrightarrow 0.
\end{equation}
We also have
\begin{equation}
 0\longrightarrow Wh_{\chi_{1},\tau_{1}}(V_{\sigma}) \stackrel{S^{T}}{\longrightarrow} Wh_{\chi_{1},\tau_{1}}(I^{\infty}_{Q,\xi,\delta}) \longrightarrow Wh_{\chi_{1},\tau_{1}}(I^{\infty}_{Q,\xi,\delta})|_{W} 
\longrightarrow 0.
\end{equation}
To simplify notation we will denote $\pi_{Q,\xi,\delta}$ by $\eta$. Then we have that
\[
 Wh_{\chi,\tau}(I^{\infty}_{\bar{P_{1}},\pi_{Q,\xi,\delta},\nu})\cong Hom_{M_{0}}(V_{\xi},V_{\tau}) \cong Wh_{\chi_{1},\tau_{1}}(I^{\infty}_{Q,\xi,\delta}).
\]
The isomorphism induced by this two isomorphisms is precisely $j_{\eta,\nu}=j_{\xi,\eta+\nu}\circ \Phi_{\xi,\eta}^{\chi_{1},\tau_{1}}$.

We now want to show that $j_{\eta,\nu}$ induces a well-defined injective map between $Wh_{\chi_{1},\tau_{1}}(I^{\infty}_{Q,\xi,\delta})|_{W}$ and $Wh_{\chi,\tau}(I^{\infty}_{\bar{P_{1}},\pi_{Q,\xi,\delta},\nu})|_{I^{\infty}_{\bar{P_{1}},\mu,\nu}}$. To show that the induced map is well defined, we need to show that if $\mu \in Wh_{\chi_{1},\tau_{1}}(I^{\infty}_{Q,\xi,\delta})$, and $\mu|_{W}=0$, then $j_{\eta,\nu}(\mu)|_{I^{\infty}_{\bar{P_{1}},\mu,\nu}}=0$. Let $f\in I^{\infty}_{\bar{P_{1}},\mu,\nu}$. Then the map $\nu\mapsto j_{\eta,\nu}(\mu)(f)$ is holomorphic on $\nu$. Let $\operatorname{Re} B(\nu,\alpha) \gg 0 \gg 0$. Then
\[
 j_{\eta,\nu}(\mu)(f)=\int_{N_{N_{1}}}\int_{M_{\chi_{1}}\backslash M_{\chi}} \chi(n)^{-1}\tau(m)^{-1}\mu(f(nm))\,dn\,dm = 0,
\]
since $f(nm)\in W$ for all $n\in N_{N_{1}}$, $m\in M_{\chi}$, and $\mu|_{W}=0$. Since the map $\nu\mapsto j_{\eta,\nu}(\mu)(f)$ is holomorphic on $\nu$ we conclude that $j_{\eta,\nu}(\mu)(f)=0$ for all $\nu\in (\mathfrak{a}_{1})_{\mathbb{C}}$. Since $f\in I^{\infty}_{\bar{P_{1}},\mu,\nu}$ was arbitrary, we conclude that $j_{\eta,\nu}$ induces a well-defined map between  $Wh_{\chi_{1},\tau_{1}}(I^{\infty}_{Q,\xi,\delta})|_{W}$ and $Wh_{\chi,\tau}(I^{\infty}_{\bar{P_{1}},\pi_{Q,\xi,\delta},\nu})|_{I^{\infty}_{\bar{P_{1}},\mu,\nu}}$.

Now we want to show that the map is injective. Assume that $j_{\eta,\nu}(\mu)|_{I^{\infty}_{\bar{P_{1}},\mu,\nu}}=0$. Let $w\in W$. We will define a function on $U_{\bar{P_{1}},\sigma,\nu}\cap I^{\infty}_{\bar{P_{1}},\mu,\nu}$ in the following way: given $m\in M_{\chi}$, and $n\in N_{N_{1}}$ we set
\[
f(nm)=\phi(n)\chi_{\tau,w}(m),                                                                                                                                                                                                                                                                      \]
where $\phi\in C_{c}^{\infty}(N_{N_{1}})$ is a function such that
\[
 \int_{N_{N_{1}}} \chi(n)^{-1}\phi(n) \, dn = 1.
\]
Then
\begin{eqnarray*}
 0 &  = &  j_{\eta,\nu}(\mu)(f)=\int_{N_{N_{1}}}\int_{M_{\chi_{1}}\backslash M_{\chi}} \chi(n)^{-1}\tau(m)^{-1}\mu(f(nm))\,dn\,dm \\
 & = & \int_{N_{N_{1}}}\int_{M_{\chi_{1}}\backslash M_{\chi}} \chi(n)^{-1}\tau(m)^{-1}\mu(\phi(n)\chi_{\tau,w}(m))\,dn\,dm \\
& = & \int_{M_{\chi_{1}}\backslash M_{\chi}}\tau^{-1}(m)\mu(\chi_{\tau,w}(m)) \, dm = \mu(w),
\end{eqnarray*}
according to the proof of lemma \ref{lemma:Frobenius reciprocity}. But this says that $\mu(w)=0$ for all $w\in W$, i.e., $\mu|_{W}=0$. Therefore the map $j_{\eta,\nu}$ is well defined and injective between $Wh_{\chi_{1},\tau_{1}}(I^{\infty}_{Q,\xi,\delta})|_{W}$ and $Wh_{\chi,\tau}(I^{\infty}_{\bar{P_{1}},\pi_{Q,\xi,\delta},\nu})|_{I^{\infty}_{\bar{P_{1}},\mu,\nu}}$. Therefore we are in the situation of lemma \ref{lemma:snake}, and hence we can define an isomorphism $j_{\sigma,\nu}^{\chi,\tau}$ from $Wh_{\chi_{1},\tau_{1}}(V_{\sigma})$ to $Wh_{\chi,\tau}(I^{\infty}_{\bar{P_{1}},\sigma,\nu})$ such that if $\operatorname{Re} B(\nu,\alpha) \gg 0$ for all alpha in $\Phi(P_{1},A_{1})^{+}$ or $f\in U_{\bar{P_{1}},\sigma,\nu}$, then
\[
 j_{\sigma,\nu}^{\chi,\tau}(\mu)(f)= \int_{N_{N_{1}}}\int_{M_{\chi_{1}}\backslash M_{\chi}} \chi(n)^{-1}\tau(m)^{-1}\mu(f(nm))\,dn\,dm,
\]
and the map $\nu \mapsto j_{\sigma,\nu}^{\chi,\tau}(\mu)(f)$ is holomorphic in $\nu$.
\end{proof}

To prove the theorem given at the beginning of this section we just have to put together all the results we have obtained. More concretely,
part 1 is just a restatement of proposition \ref{thm:adjointholomorphiccontinuation}. Part 2 follows from part 1, the generalized Bessel-Plancherel theorem and propositions \ref{prop:forwardequality} and \ref{thm:adjointholomorphiccontinuation}. Finally, part 3 is just a restatement of proposition \ref{prop:Fouriertransformofpacket}.

\begin{subappendices}
\section{Irreducible representations of Siegel Parabolic Subgroups} \label{sec:representationparabolics}
\begin{theorem}
 Let $P$ be a Siegel parabolic subgroup of a Lie group $G$, and let $P=MAN$ be its Langlands decomposition. If $(\pi,H)$ is an irreducible unitary representation of $P$, then
\[
 H\cong \operatorname{Ind}_{M_{\chi}  N}^{P}\tau\chi \qquad \mbox{with $\tau \in \hat{M_{\chi}}$, $\chi \in \hat{N}$}.
\]
\end{theorem}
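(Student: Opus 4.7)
The plan is to apply Mackey's little-group method to the semidirect-product structure $P = MA \ltimes N$, exploiting crucially that $N$ is abelian and normal in $P$.

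First, I would restrict $\pi$ to the abelian group $N$. The spectral theorem produces a projection-valued measure $E$ on $\hat{N}$ such that $\pi(n) = \int_{\hat{N}} \chi(n)\,dE(\chi)$ for all $n \in N$. Because $MA$ normalizes $N$, the covariance relation $\pi(p) E(S) \pi(p)^{-1} = E(p \cdot S)$ holds for every $p \in MA$ and Borel $S \subset \hat{N}$, where the action on $\hat{N}$ is $(p \cdot \chi)(n) = \chi(p^{-1} n p)$. Thus $(\pi|_{MA}, E)$ is a system of imprimitivity for the $MA$-action on $\hat{N}$.

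Next, I would verify the regularity hypothesis of Mackey's theorem, i.e. that $\hat{N}/MA$ is countably separated as a Borel space (equivalently, every orbit is locally closed). For a Siegel parabolic in a Lie group of tube type, the classification carried out in Section \ref{sec:classification} presents $\hat{N}$ concretely as a finite-dimensional real vector space (symmetric, Hermitian or quaternionic-Hermitian matrices, or the appropriate exceptional Jordan algebra, plus the $SO(n,2)$ example) on which $MA$ acts linearly. The orbit structure in each case is algebraic: orbits are indexed by signature-type invariants and are locally closed real-algebraic subvarieties. This establishes regularity in all cases.

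Once regularity is in hand, the irreducibility of $\pi$ forces the spectral measure $E$ to be supported, up to measure equivalence, on a single $MA$-orbit $\mathcal{O}_\chi \subset \hat{N}$ (the standard Schur-type consequence of the imprimitivity theorem). Fix a representative $\chi \in \mathcal{O}_\chi$ and compute the stabilizer in $MA$. Since $A = \exp(\mathbb{R}h)$ and $\mathfrak{n}$ is the $+2$-eigenspace of $\operatorname{ad}(h)$, for $a_t = \exp(th)$ we have $\operatorname{Ad}(a_t) X = e^{2t} X$ on $\mathfrak{n}$; writing $\chi = e^{i\xi}$ with $\xi \in \mathfrak{n}^*$ nonzero, the fixed-point condition $\xi(e^{2t}X) = \xi(X)$ forces $t = 0$. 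Hence $A_\chi = \{e\}$ and the full stabilizer in $P$ is $P_\chi = M_\chi N$. Mackey's theorem then produces an irreducible unitary representation $\tilde{\pi}$ of $M_\chi N$ whose restriction to $N$ is a multiple of $\chi$, and such that $\pi \cong \operatorname{Ind}_{M_\chi N}^P \tilde{\pi}$. Because $\chi$ is $M_\chi$-invariant by definition, $\tilde{\pi}$ splits as $\tau \otimes \chi$ for some $\tau \in \hat{M_\chi}$, yielding the claimed form $H \cong \operatorname{Ind}_{M_\chi N}^P \tau\chi$.

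The main obstacle is really the regularity check, but as indicated it reduces in every case to the classical fact that a reductive group acts on a Jordan-algebra-like real vector space with finitely many locally closed orbits; the remaining steps are pure Mackey machinery. (I note that representations corresponding to the trivial $\chi$ factor through $P/N \cong MA$ and must be handled by a separate but standard application of the same method, absorbing the $A$-character into the representation of the stabilizer.)
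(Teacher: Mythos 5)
Your proposal is correct and is essentially the paper's own argument: both are the Mackey little-group method applied to $P = MA \ltimes N$ with $N$ abelian and normal, concluding from irreducibility that the spectral measure on $\hat{N}$ lives on a single orbit and that $\pi$ is induced from the stabilizer $M_{\chi}N$. The paper runs this by hand via a direct-integral decomposition of $H|_{N}$ and the CCR property of $N$ rather than citing the imprimitivity theorem by name, so your explicit regularity check on the orbit space, the computation showing $A_{\chi}=\{e\}$, and the remark about the trivial character are welcome details that the paper leaves implicit.
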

\begin{proof}
 As an N-module, we have that
\[
 H\cong \int_{\hat{N}}E_{\chi}\, d_{\nu}(\chi),
\]
where $E_{\chi}\cong L_{\chi}\otimes V_{\chi}$, $V_{\chi}\in \hat{N}$, and $L_{\chi}$ is a multiplicity space. This means that there exists a vector bundle
\[
 \begin{array}{c}
  E \\ \downarrow \\ \hat{N}
 \end{array}
\]
and a measure $\nu$ on $\hat{N}$, such that
\[
 H\cong L^2(\hat{N},E,\nu):=\{s:\hat{N}\rightarrow E\, | \, s(\chi)\in E_{\chi}, \int_{\hat{N}} \|s(\chi)\|^{2}\, d\nu(\chi) < \infty\}
\]
under the action
\[
 (\pi(n)\cdot s)(\chi)=\chi(n)s(\chi).
\]
Under this isomorphism we can extend this action of $N$ on $L^2(\hat{N},E,\nu)$ to an action of $P$ on the same space.

Let $m\in M$, and define
\[
 \begin{array}{c}
  E^{m} \\ \downarrow \\ \hat{N}
 \end{array}
\]
to be the vector bundle such that $E_{\chi}^{m}=E_{m\cdot \chi}$. Define a measure $\nu_{m}$ on $\hat{N}$ by
\[
 \nu_{m}(X)=\nu(m\cdot X) \qquad \mbox{for $X\subset \hat{N}$ a measurable set}
\]
and define
\[
 \tau(m):L^2(\hat{N},E,\nu) \longrightarrow L^2(\hat{N},E^{m},\nu_{m})
\]
by
\[
 (\tau(m)s)(\chi)=(\phi(m)s)(m\cdot \chi).
\]
We claim that $\tau(m)$ is an isometry. Effectively
\begin{eqnarray*}
 \|\tau(m)s\|_{m}^{2} & = & \int_{\hat{N}}\|(\tau(m)s)(\chi)\|^{2}\, d{\nu}_{m}(\chi) \\
                      & = & \int_{\hat{N}}\|(\pi(m)s)(m\cdot \chi)\|^{2}\, d_{\nu}(m\cdot \chi) \\
                      & = & \int_{\hat{N}}\|(\pi(m)s)(\chi)\|^{2}\, d_{\nu}(\chi)\\
                      & = & \|\pi(m)s)\|^{2}=\|s\|^{2},
\end{eqnarray*}
where the last equality comes from the fact that the action of $P$ is unitary. Now if we define an action of $N$ on $L^2(\hat{N},E^{m},\nu_{m})$ by
\[
 (\pi_{m}\cdot s)(\chi)=\chi(n)s(\chi),
\]
then $\tau(m)$ becomes an $N$-intertwiner,that is,
\begin{eqnarray*}
 \tau(m)(\pi(n)s)(\chi)& = & \pi(m)\pi(n)s(m\cdot \chi) \\
                       & = & \pi(mnm^{-1})(\pi(m)s)(m\cdot \chi) \\
                       & = & (m\cdot \chi)(mnm^{-1})(\pi(m)s)(m\cdot \chi) \\
                       & = & \chi(m^{-1}mnm^{-1}m)(\pi(m)s)(m\cdot \chi)\\
                       & = & \chi(n)(\tau(m)s)(\chi)=(\pi_{m}(n)\tau(m)s)(\chi).
\end{eqnarray*}
But now since $N$ is a CCR group the $N$-interwiner
\[
 \tau(m):L^2(\hat{N},E,\nu) \longrightarrow L^2(\hat{N},E^{m},\nu_{m})
\]
should come from a morphism of vector bundles
\[
 \tilde{\tau}(m):E\longrightarrow E^{m},
\]
that is, $(\tau(m)s)(\chi)=\tilde{\tau}(m)s(\chi)$, and hence
\begin{eqnarray*}
 (\tau(m)s)(\chi)& = & \tilde{\tau}(m)s(\chi) \\
 (\pi(m)s)(m\cdot \chi) & = &\tilde{\tau}(m)s(\chi),
\end{eqnarray*}
which says that
\[
 (\pi(m)s)(\chi)=\tilde{\tau}(m)s(m^{-1} \cdot \chi).
\]
Now since $L^2(\hat{N},E,\nu)$ is irreducible as a representation of $P$, the support of $\nu$ should be contained in a unique $P$-orbit on $\hat{N}$, and hence
\[
 L^2(\hat{N},E,\nu) \cong L^2(MA/M_{\chi},E) \cong \operatorname{Ind}_{M_{\chi}  N}^{P} E_{\chi}.
\]
Using again that $L^2(\hat{N},E,\nu)$ is irreducible we conclude that $E_{\chi}\cong \tau\chi$ with $\tau \in \hat{M_{\chi}}$, $\chi \in \hat{N}$. Putting all of this together we get that
\[
 H\cong \operatorname{Ind}_{M_{\chi}  N}^{P} \tau\chi
\]
 as we wanted to show.
\end{proof}

\section{Decomposition of $L^2(P,d_{r}p)$ under the action of $P\times P$} \label{sec:plancherelparabolic}

We will now decompose $L^2(P,d_{r}p)$ under the action of $P\times P$ given by
\[
 (p_{1},p_{2})\cdot f=\delta(p_{1})^{-1}L_{p_{1}}R_{p_{2}}f.
\]
As a left $N$-module
\begin{eqnarray*}
 L^{2}(P) & \cong & \operatorname{Ind}_{N}^{P}\operatorname{Ind}_{1}^{N} 1 \cong \operatorname{Ind}_{N}^{P}(L^2(N)) \\
          & \cong & \operatorname{Ind}_{N}^{P}(\int_{\hat{N}} HS(V_{\chi}) \, d\mu(\chi) \\
          & \cong & \int_{\hat{N}}\operatorname{Ind}_{N}^{P}HS(V_{\chi}) \, d\mu(\chi) \cong L^2(\hat{N},E,\mu),
\end{eqnarray*}
with $E_{\chi}=HS(V_{\chi})$. The isomorphism is given in the following way: Given $f\in C_{c}(P)$, define $s_{f}\in L^2(\hat{N},E,\nu)$ by
\[
 s_{f}(\chi)(p)=\int_{N}\chi(n)^{-1}f(np) \, dn.
\]
Observe that $\|f\|=\|s_{f}\|$ and hence this map extends to an isometry between $L^{2}(P)$ and $L^2(\hat{N},E,\nu)$.
Furthermore
\begin{eqnarray*}
 s_{R_{p_{1}}f}(\chi)(p)&=&\int_{N}\chi(n)^{-1}R_{p_{1}}f(np) \, dn=\int_{N}\chi(n)^{-1}f(npp_{1}) \, dn\\ & =& s_{f}(\chi)(pp_{1})=(R_{p_{1}}s_{f}(\chi))(p),
\end{eqnarray*}
and
\begin{eqnarray*}
 s_{L_{p_{1}}f}(\chi)(p)& = & \int_{N}\chi(n)^{-1}\delta(p_{1})^{-1}L_{p_{1}}f(np) \, dn\\ &= &\int_{N}\chi(n)^{-1}\delta(p_{1})^{-1}f(p_{1}^{-1}np_{1}p_{1}^{-1}p) \, dn \\
                    & = & \int_{N}\chi(p_{1}np_{1}^{-1})^{-1}f(np_{1}^{-1}p) \, dn \\ &= & \int_{N}(p_{1}^{-1}\chi)(n)^{-1}f(np_{1}^{-1}p) \, dn  \\
                    & = & s_{f}(p_{1}^{-1}\chi)(p_{1}^{-1}p)=[L_{p_{1}}s_{f}(p_{1}^{-1}\chi)](p).
\end{eqnarray*}
Therefore
\begin{eqnarray*}
 L^2(\hat{N},E,\mu)& \cong & \bigoplus_{\omega\in \Omega} L^2(\omega,E,\tilde{\mu}) \\
                   & \cong & \bigoplus_{\omega\in \Omega} \operatorname{Ind}_{M_{\chi}  N \times P}^{P\times P} \operatorname{Ind}_{\Delta M_{\chi} N\times N}^{M_{\chi}N\times P}\overline{\chi}\otimes \chi  \\
                   & \cong & \bigoplus_{\omega\in \Omega} \operatorname{Ind}_{M_{\chi}  N}^{P}(\int_{\hat{M_{\chi}}}\tau^{\ast}\chi^{\ast} \otimes \operatorname{Ind}_{M_{\chi}  N}^{P}\tau\chi)\, d\nu(\tau)  \\
                   & \cong & \bigoplus_{\omega\in \Omega}\int_{\hat{M_{\chi}}} \operatorname{Ind}_{M_{\chi}  N}^{P}\tau^{\ast}\chi^{\ast} \otimes \operatorname{Ind}_{M_{\chi}  N}^{P}\tau\chi\, d\nu(\tau),
\end{eqnarray*}
where $\nu$ is the Plancherel measure of $M_{\chi}$.
\section{Temperedness of the spectrum} \label{sec:preliminary}

Let $G$ be a real reductive group and let $K$ be a maximal compact subgroup. Let $P_{\circ}=N_{\circ}A_{\circ}M_{\circ}$ be a minimal parabolic subgroup of $G$ with given Langlands decomposition and let $P=NAM$ be another parabolic subgroups dominating $P_{\circ}$, i.e., $P_{\circ} \subset P$, $N \subset N_{\circ}$, $A \subset A_{\circ}$ and $M_{\circ}\subset M$. Let $\chi$ be a unitary character of $N$ and let
\[
 L^2(N\backslash G;\chi)=\left\lbrace f:G \longrightarrow \mathbb{C} \, \left| \,\begin{array}{c}
				\mbox{$f(ng)=\chi(n)f(g)$, $\forall n\in N$, $g\in G$ } \\
                                \mbox{and $\int_{N\backslash G} |f(g)|^2 \, dNg < \infty$}
                              \end{array}  \right\rbrace\right. .
\]
The measure on $N\backslash G$ is chosen so that if $dg$ and $dn$ are some fixed invariant measures on $G$ and $N$, respectively, then 
\[
 \int_{N\backslash G}\int_{N} f(ng)\,dn \, d(Ng)=\int_{G} f(g)\, dg
\]
for all $f$ integrable on $G$. Set $C_{c}(N\backslash G;\chi )$ equal to the space of all continuous functions on $G$ such that $f(ng)=\chi(n)f(g)$ for all $n\in N$, $g\in G$ and such that $g\mapsto |f(g)|$ is in $C_c(N\backslash G)$.

\begin{lemma}
 Let $N_{M}=N_{\circ}\cap M$, $A_{M}=A_{\circ}\cap M$ and $K_{M}=K\cap M$. There is a choice of measures  $dn_{M}$, $da_{\circ}$, $da$ and $dm$, on  $N_{M}$, $A_{\circ}$, $A$ and $M$, respectively, such that if $f\in C_{c}(N \backslash G)$, then
\begin{eqnarray}
 \int_{N\backslash G} f(g)\, dNg & = & \int_{A}\int_{M}\int_{K} a^{-2\rho_{P}} f(amk)\, dk\, dm\, da \\
                                 & = & \int_{N_{M}}\int_{A_{\circ}}\int_{K} a_{\circ}^{-2\rho}f(n_{m}a_{\circ}k)dk\, da_{\circ}\, dn_{M}
\end{eqnarray}
where $\rho$ and $\rho_{P}$ are half the sum of the roots of $(P,A)$ and $(P_{\circ},A_{\circ})$, respectively.
\end{lemma}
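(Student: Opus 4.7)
The plan is to start from a standard consequence of the Iwasawa decomposition for $G$, and then apply the Iwasawa decomposition for the smaller reductive group $M$ to pass from the first formula to the second.

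First I would use the decomposition $G = NAMK$ (Iwasawa with respect to the $p$-pair $(P,A)$). It is a standard fact (see, e.g., \cite{w:vol1}) that with appropriate normalizations of Haar measures on $N$, $A$, $M$, and $K$, one has
\[
\int_{G} F(g)\, dg = \int_{N}\int_{A}\int_{M}\int_{K} a^{-2\rho_{P}} F(namk)\, dk\, dm\, da\, dn
\]
for all $F\in C_c(G)$. Taking $F(g) = \phi(g)\, \tilde{f}(g)$ with $\phi\in C_c(N)$ suitable and $\tilde f$ the lift of $f\in C_c(N\backslash G;\chi=1)$, and then integrating out the $N$-variable against the defining formula for $dNg$, one obtains the first displayed equation.

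For the second equation I would apply the Iwasawa decomposition of $M$ relative to the minimal $p$-pair $(P_\circ\cap M, A_M)$, namely $M = N_M A_M K_M$. The corresponding integration formula reads
\[
\int_{M} h(m)\, dm = \int_{N_M}\int_{A_M}\int_{K_M} a_M^{-2\rho_M}\, h(n_M a_M k_M)\, dk_M\, da_M\, dn_M,
\]
where $\rho_M$ is half the sum of the roots of $(P_\circ\cap M, A_M)$. Substituting this into the first formula, using $A_\circ = A\cdot A_M$ (direct product, so that $da_\circ = da\, da_M$) and the fact that the roots of $(P_\circ,A_\circ)$ split as positive roots of $(P,A)$ extended trivially to $A_M$ together with roots of $(P_\circ\cap M, A_M)$ extended trivially to $A$, one gets $a_\circ^{2\rho} = a^{2\rho_P}\, a_M^{2\rho_M}$ for $a_\circ = a\, a_M$. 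Thus the two exponential factors combine to give $a_\circ^{-2\rho}$.

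The remaining step is to eliminate $K_M$: since $f$ is already right $K$-invariant up to the $\chi$-transformation law of $N$ (and in the formula here we have the absolute value, i.e.\ $f\in C_c(N\backslash G)$ so no $\chi$), and $K_M\subset K$, the inner integral over $K_M$ against $\int_K\cdot\, dk$ absorbs trivially after normalizing $dk$ to have total mass compatible with $dk_M$; replacing the pair of integrals $\int_{K_M}\int_K$ by a single $\int_K$ (which is legal because the integrand after the substitution $k_M k \mapsto k$ is independent of $k_M$) yields the second formula. I expect the only delicate point to be bookkeeping of the normalizations of $dk_M$ versus $dk$ and of $da$, $da_M$ versus $da_\circ$: the statement merely asserts existence of compatible normalizations, so once the functional identities above are set up, the measures can be chosen to absorb all constants. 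No step here requires a genuinely new idea beyond the two Iwasawa decompositions; the main obstacle is purely notational, ensuring that the $\rho$-shifts add correctly across the compatible chain $A\subset A_\circ$.
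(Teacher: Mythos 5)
Your proposal is correct and is exactly the paper's argument: the paper's proof consists of the single remark that the lemma ``follows from the integral formulas of the Iwasawa decomposition of $G$ and $M$ and from the fact that $\rho=\rho_{M}+\rho_{P}$,'' which is precisely the combination of the two Iwasawa integration formulas and the additivity of the $\rho$-shifts that you spell out. Your elimination of $K_{M}$ via the substitution $k\mapsto k_{M}^{-1}k$ (using that $A$ centralizes $M$ to commute $a$ past $n_{M}$) is the right way to fill in the detail the paper leaves implicit.
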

\begin{proof}
 Let $\rho_{M}$ be equal to half the sum of the roots of $(P_{M},A_{M})$, $P_{M}=P\cap M$. The lemma follows from the integral formulas of the Iwasawa decomposition of $G$ and $M$ and from the fact that $\rho = \rho_{M}+\rho_{P}$.
\end{proof}

Given $f\in L^2(N\backslash G; \chi)$ define $(\pi_{\chi}(g)f)(x)=f(xg)$. Then $(\pi_{\chi},L^2(N\backslash G; \chi))$ is a unitary representation of $G$. We now state the main result of this section.

\begin{lemma}
 $\operatorname{supp}(\pi_{\chi})$ is contained in the tempered spectrum of $G$.
\end{lemma}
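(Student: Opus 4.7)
The plan is to realize $\pi_{\chi}$ as the induced representation $\operatorname{Ind}_{N}^{G}\chi$ and show that it is weakly contained in the regular representation $L^{2}(G)$; by definition this is exactly the statement that $\operatorname{supp}(\pi_{\chi})$ lies in the tempered spectrum of $G$.

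The key input is amenability of $N$. In the setup of the excerpt, $N$ is a unipotent subgroup of a real reductive group; in particular it is nilpotent, hence amenable. Under the identification $\hat{N}\cong \mathbb{R}^{\dim N}$ by the Kirillov--Pontryagin correspondence, the Plancherel measure of $N$ is (a multiple of) Lebesgue measure, whose support is all of $\hat{N}$. Consequently every unitary character $\chi$ of $N$ is weakly contained in the regular representation $L^{2}(N)$. First I would verify this weak containment directly via positive definite functions: given $\chi\in\hat{N}$, the constant function $1$ on $N$, viewed through $\chi$, is a pointwise limit of compactly supported positive definite functions arising from $L^{2}(N)$, by convolving $\chi$ with an approximate identity.

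Next I would invoke Fell's continuity of induction: if $\rho_{1}\prec \rho_{2}$ are unitary representations of a closed subgroup $H\subset G$, then $\operatorname{Ind}_{H}^{G}\rho_{1}\prec \operatorname{Ind}_{H}^{G}\rho_{2}$. Applied to $H=N$, $\rho_{1}=\chi$, $\rho_{2}=L^{2}(N)$, this yields $\operatorname{Ind}_{N}^{G}\chi \prec \operatorname{Ind}_{N}^{G}L^{2}(N)$. Induction in stages identifies $\operatorname{Ind}_{N}^{G}L^{2}(N)=\operatorname{Ind}_{N}^{G}\operatorname{Ind}_{\{e\}}^{N}\mathbf{1}\cong \operatorname{Ind}_{\{e\}}^{G}\mathbf{1}=L^{2}(G)$. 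Chaining, $\pi_{\chi}=L^{2}(N\backslash G;\chi)\cong\operatorname{Ind}_{N}^{G}\chi \prec L^{2}(G)$, and the conclusion follows.

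The main obstacle is really bookkeeping: one must be careful with the precise formulation of weak containment and Fell's theorem in the locally compact (non-type-I for $P$, but no matter) setting, and with the measure-theoretic identification $\operatorname{Ind}_{N}^{G}L^{2}(N)\cong L^{2}(G)$ using the integration formula recorded in the lemma just above. As a concrete fallback, one could instead derive the temperedness by a direct matrix-coefficient estimate: for $f,h\in C_{c}(N\backslash G;\chi)$ that are $K$-finite on the right, write
\[
\langle \pi_{\chi}(g)f,h\rangle =\int_{A}\int_{M}\int_{K}a^{-2\rho_{P}}f(amkg)\overline{h(amk)}\,dk\,dm\,da,
\]
using the integration formula, and use Harish-Chandra's bound on the elementary spherical function to produce an inequality of the form $|\langle \pi_{\chi}(g)f,h\rangle|\leq C_{f,h}\,\Xi(g)$. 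By the Cowling--Haagerup--Howe criterion, this again gives $\pi_{\chi}\prec L^{2}(G)$, hence the stated containment in the tempered spectrum.
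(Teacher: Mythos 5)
Your main argument is correct, but it is a genuinely different route from the one the paper takes. The paper proves the lemma by a direct matrix-coefficient estimate: it reduces (via Wallach, Real Reductive Groups II, Chapter 14) to showing $|\langle \pi_{\chi}(g)f,f\rangle|\leq C_{f}\Xi(g)$ for $f\in C_{c}(N\backslash G;\chi)$, passes to a $K$-isotypic component to dominate everything by the spherical case $\chi=1$, $f\in C_{c}(N\backslash G/K)$, and then computes explicitly with the integration formula $\int_{N\backslash G}=\int_{N_{M}}\int_{A_{\circ}}\int_{K}a_{\circ}^{-2\rho}(\cdot)$, Cauchy--Schwarz, and the Iwasawa decomposition of $kg$ to land exactly on $\Xi(g)\|f\|^{2}$ --- essentially the ``concrete fallback'' you sketch at the end, though the actual reduction to the spherical function is via the domination $|\langle\pi_{\chi}(g)f,f\rangle|\leq d(\gamma)|\langle\pi_{1}(g)\tilde f,\tilde f\rangle|$ rather than a direct appeal to bounds on $\Xi$. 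Your primary route --- $\chi\prec L^{2}(N)$ by amenability (or full support of the Plancherel measure) of the nilpotent group $N$, Fell's continuity of induction, and induction in stages giving $\operatorname{Ind}_{N}^{G}\chi\prec\operatorname{Ind}_{N}^{G}L^{2}(N)\cong L^{2}(G)$ --- is sound, since $N$ and $G$ are unimodular so $L^{2}(N\backslash G;\chi)$ with the quotient measure really is $\operatorname{Ind}_{N}^{G}\chi$, and the support of the regular representation is by definition the tempered (reduced) dual. What the soft argument buys is brevity and generality: it works for any closed amenable subgroup and any unitary representation of it, with no computation. What the paper's argument buys is the explicit uniform bound $|\langle\pi_{\chi}(g)f,f\rangle|\leq C_{f}\Xi(g)$, which is the quantitative form of temperedness actually used elsewhere in Harish-Chandra-style harmonic analysis (wave packets, Schwartz space estimates), and it stays entirely inside the toolkit the rest of the chapter relies on.
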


\begin{proof}
 By the arguments given in chapter 14 of \cite{w:vol2}, it suffices to show that if $f\in C_{c}(N\backslash G;\chi)$, then
\begin{equation}
 |\langle \pi_{\chi}(g)f,f\rangle | \leq C_{f}\Xi(g) \label{eq:tempinequality} 
\end{equation}
where $\Xi$ is Harish-Chandra's $\Xi$ function. Let $\gamma \in \hat{K}$ and let $C_{c}(N\backslash G;\chi)(\gamma)$ be the $\gamma$-isotypic component of $C_{c}(N\backslash G;\chi)$. Since the direct sum of the isotypic components is dense in $C_{c}(N\backslash G;\chi)$, it suffices to take $f\in C_{c}(N\backslash G;\chi)(\gamma)$. For such an $f$ define $\tilde{f}(g)=\sup\{|f(gk)|\, | \, k\in K \}$. Then $\tilde{f} \in C_{c}(N\backslash G/K)$ and
\begin{equation}
 |\langle \pi_{\chi}(g)f,f\rangle | \leq d(\gamma) |\langle \pi_{1}(g)f,f\rangle | \label{eq:tempinequalityisotypic} 
\end{equation}
with $1$ denoting the trivial character of $N$. Thus, to complete the proof we may assume that $\chi=1$ and that $f\in C_{c}(N\backslash G/K)$. With this assumptions
\begin{eqnarray}
 |\langle \pi_{1}(g)f,f\rangle | & = & \left|\int_{N\backslash G} f(xg)\overline{f(x)}\, dx \right| \nonumber \\
 & = & \left|\int_{N_{M}}\int_{A_{\circ}} \int_{K} a_{\circ}^{-2\rho}f(n_{M}a_{\circ}kg)\overline{f(n_{M}a_{\circ}k)}\, dk\, da_{\circ}\,dn_{M}\right| \nonumber  \\
& \leq & \int_{K} \left|\int_{N_{M}}\int_{A_{\circ}}a_{\circ}^{-2\rho} f(n_{M}a_{\circ}kg)\overline{f(n_{M}a_{\circ})}\, da_{\circ}\,dn_{M}\right| \, dk  \nonumber  \\
& \leq & \int_{K} \left[\int_{N_{M}}\int_{A_{\circ}} a_{\circ}^{-2\rho} |f(n_{M}a_{\circ}kg)|^{2}\, da_{\circ}\, dn_{M}\right]^{1/2} \nonumber \\ &  &\times  \left[\int_{N_{M}}\int_{A_{\circ}} a_{\circ}^{-2\rho} |f(n_{M}a_{\circ})|^{2}\, da_{\circ}\, dn_{M}\right]^{1/2} \, dk  \nonumber  \\
& \leq & \|f \| \int_{K}\left[\int_{N_{M}}\int_{A_{\circ}} a_{\circ}^{-2\rho} |f(n_{M}a_{\circ}kg)|^{2}\, da_{\circ}\, dn_{M}\right]^{1/2} \, dk.
\end{eqnarray}
We will now write $kg=n_{\circ}(kg)a_{\circ}(kg)k(kg)$, with $n_{\circ}(kg) \in N_{\circ}$, $a_{\circ}(kg)\in A_{\circ}$ and $k(kg)\in K$. Then
\begin{equation}
 f(n_{M}a_{\circ}kg)=f(n_{M}a_{\circ}n_{\circ}(kg)a_{\circ}(kg)k(kg))=f(n_{M}(a_{\circ}n_{\circ}(kg)a_{\circ}^{-1})a_{\circ}a_{\circ}(kg)), \label{eq:normal}
\end{equation}
with $a_{\circ}n_{\circ}(kg)a_{\circ}^{-1} \in N_{\circ}$. Now observe that $N_{\circ} = N \rtimes N_{M}$ and hence 
\[
a_{\circ}n_{\circ}(kg)a_{\circ}^{-1}=n(a_{\circ},k,g)n_{M}(a_{\circ},k,g) 
\]
 with $n(a_{\circ},k,g)\in N$ and $n_{M}(a_{\circ},k,g) \in N_{M}$. Plugging this into equation (\ref{eq:normal}) we get that
\begin{eqnarray*}
 f(n_{M}a_{\circ}kg) & = & f(n_{M}n(a_{\circ},k,g)n_{M}(a_{\circ},k,g)a_{\circ}a_{\circ}(kg)) \\
& = & f((n_{M}n(a_{\circ},k,g)n_{M}^{-1})n_{M}  n_{M}(a_{\circ},k,g)a_{\circ}a_{\circ}(kg)) \\
& = & f(n_{M}  n_{M}(a_{\circ},k,g)a_{\circ}a_{\circ}(kg)),
\end{eqnarray*}
where the last equality follows from the fact that $n_{M}n(a_{\circ},k,g)n_{M}^{-1} \in N$. Therefore
\begin{eqnarray}
 \lefteqn{ \int_{K}\left[\int_{N_{M}}\int_{A_{\circ}} a_{\circ}^{-2\rho} |f(n_{M}a_{\circ}kg)|^{2}\, da_{\circ}\, dn_{M}\right]^{1/2} \, dk.} \nonumber \\ 
& = & \int_{K}\left[\int_{N_{M}}\int_{A_{\circ}} a_{\circ}^{-2\rho} |f(n_{M}n_{M}(a_{\circ},k,g)a_{\circ}a_{\circ}(kg))|^{2}\, da_{\circ}\, dn_{M}\right]^{1/2} \, dk. \nonumber \\
& = & \int_{K}\left[\int_{N_{M}}\int_{A_{\circ}} a_{\circ}^{-2\rho}a_{\circ}(kg)^{2\rho} |f(n_{M}a_{\circ})|^{2}\, da_{\circ}\, dn_{M}\right]^{1/2} \, dk. \nonumber \\
& = & \int_{K} a_{\circ}(kg)^{\rho} \, dk \left[\int_{N_{M}}\int_{A_{\circ}} a_{\circ}^{-2\rho} |f(n_{M}a_{\circ})|^{2}\, da_{\circ}\, dn_{M}\right]^{1/2} \nonumber \\
& = & \Xi(g) \|f \|. \label{eq:inequalityXi}
\end{eqnarray}
The lemma now follows from (\ref{eq:tempinequality}), (\ref{eq:tempinequalityisotypic}), (\ref{eq:normal}) and (\ref{eq:inequalityXi}).
\end{proof}

\end{subappendices}
\chapter{Applications: Howe duality}

\section{Howe duality and the relative Langlands program}
Let $G$ be the set of $\k$-points of a reductive algebraic group defined over a local field $\k$. Associated to this group $G$ we can find the dual group $\check{G}$, which is a complex reductive algebraic group, and its $L$-group ${}^{L}G$, which is a semi-direct product of the absolute Galois group of $\k$ with $\check{G}$. Let $WD_{\k}$ be the Weil-Deligne group of $\k$.
In their current form, the local Langlands conjectures establish that there is a natural finite to one correspondence between the sets 
\[
 \left\lbrace \begin{array}{c} \mbox{Conjugacy classes of } \\ \mbox{$L$-parameters}\\ \phi: WD_{\k}  \longrightarrow {}^L G \end{array} \right\rbrace \longleftrightarrow \left\lbrace  \begin{array}{c} \mbox{Equivalent classes of} \\ \mbox{Irreducible smooth} \\ \mbox{representations of $G$} \end{array}
\right\rbrace. 
\]

Let ${}^L H$ be a subgroup of the $L$-group of $G$, and consider the set of $L$-parameters $\phi$ that factor through ${}^L H$. The natural question here is: What is the set of irreducible representations of $G$ associated to this $L$-parameters? The general consensus is that there should be a subgroup, $\widetilde{H} \subset G$, such that 
\[
 \left\lbrace \begin{array}{rcl}\phi: WD_{\k} & \longrightarrow & {}^L G \\
                                          \searrow            &    \bigcirc      &   \nearrow \\
                                                              & {}^L H   &
                                          \end{array}
 \right\rbrace  \longleftrightarrow   
\left\lbrace \begin{array}{c} \mbox{Irreducible representations} \\
                              \mbox{of $G$ with an $\widetilde{H}$}\\
                              \mbox{invariant functional}
 \end{array} \right\rbrace. 
\]
One can also ask the same question in the opposite direction: Given a subgroup $\widetilde{H} \subset G$, satisfying certain properties, is there an ${}^L {H} \subset {}^L G$ such that we have a correspondence like the mentioned above?

In recent years there has been a lot of progress in formalizing this ideas. For example, if $X$ is a $G$-spherical variety, then Gaitsgory and Nadler \cite{gn:2010} have defined a subgroup $\check{G} _{X}$, of the dual group $\check{G}$ of $G$, that encodes many aspects of the geometry and the representation theory of the variety $X$. This result set into motion the so~called ``relative Langlands program'', which aims to set a framework for the study of $\widetilde{H}$-distinguished representations of $G$. Building on this ideas, Sakellaridis and Venkatesh \cite{yv:sph}, have stated a conjecture that relates the harmonic analysis of the space $L^{2}(X)$ with the group $\check{G}_{X}$. The ideal result in this direction is the following: Given a $G$-spherical variety X, we want to find a group $G_{X}$ and a correspondence
\[
 \Theta: A\subset \hat{G}_{X} \longrightarrow \hat{G},
\]
between the unitary duals of $G$ and $G_{X}$, with the following properties:
\begin{enumerate}
 \item If $\pi \in A$ has $L$-parameter $\phi: WD_{\k}  \longrightarrow  {}^L G_{X}$, then $\Theta(\pi)$ has $L$-parameter $i\circ \phi : WD_{\k}  \longrightarrow {}^L G$, where $i$ is the natural inclusion of ${}^{L}G_{X}$ into ${}^{L}G$.
\item We have the following spectral decomposition:
\[
 L^{2}(X)\cong \int_{A}M(\pi)\otimes \Theta(\pi)\, d\mu(\pi),
\]
where $\mu$ is the Plancherel measure of $G_{X}$ restricted to $A$, and $M(\pi)$ is some multiplicity space.
\end{enumerate}

We will consider the following classical example to illustrate this ideas. Let
\[
 X=S^{n-1}\cong O(n-1,\mathbb{R}) \backslash O(n,\mathbb{R}),
\]
where $S^{n-1}$ is the $(n-1)$-th sphere, and $O(n,\mathbb{R})$ is the group of $n\times n$ orthogonal matrices. We want to understand the decomposition of $L^{2}(S^{n-1})$ under the natural action of $O(n,\mathbb{R})$.

Let $\mathbb{C}[x_{1},\ldots,x_{n}]$ be the space of complex valued polynomials in $n$ variables. This space has a natural action of $O(n,\mathbb{R})$ and, from classical invariant theory,
\[
 \mathbb{C}[x_{1},\ldots,x_{n}] \cong \bigoplus_{k\geq 0} \mathcal{H}^{k}\otimes \mathbb{C}[r^{2}],
\]
where $r^{2}=x_{1}^{2}+\cdots x_{n}^{2}$ and
\[
 \mathcal{H}^{k} = \{p(x)\in \mathbb{C}[x_{1},\ldots,x_{n}] \, | \, \mbox{$\deg p(x)=k$, and $\Delta p=0$}\}.
\]
Here
\[
 \Delta=\frac{\partial^{2}}{\partial x_{1}^{2}}+\cdots + \frac{\partial^{2}}{\partial x_{n}^{2}}
\]
is the Laplace operator. The spaces $\mathcal{H}^{k}$ are irreducible under the action of $O(n,\mathbb{R})$ and, if we restrict this polynomials to the unit circle, we can identify them with square integrable functions on $S^{n-1}$. The functions obtained this way are the so~called spherical harmonics, and it's a classical result that
\[
 L^2(S^{n-1}) \cong \hat{\bigoplus_{k\geq 0}} \mathcal{H}^{k}.
\]

Let
\begin{eqnarray*}
h & = & E + n/2 = x_{1} \frac{\partial}{\partial x_{1}} + \cdots +x_{n} \frac{\partial}{\partial x_{n}}  + n/2 \\
e & = & r^2/2 = (x_{1}^{2} + \cdots + x_{n}^{2})/2 \\
f & = & -\Delta/2 = (\frac{\partial}{\partial x_{1}^{2}} + \cdots +\frac{\partial}{\partial x_{n}^{2}})/2 .
\end{eqnarray*}
Then, an easy calculation shows that
$[h,e]=2e$, $[h,f]=-2f$ and $[e,f]=h$,
i.e., ${\mathfrak sl}(2,\mathbb{C}) \simeq \mbox{Span}_{\mathbb{C}}\{h,e,f\}$. Observe that the action of this differential operators commutes with the action of $O(n,\mathbb{R})$. Furthermore, for all $k\geq 0$, 
\[
 \mathcal{H}^{k}\otimes \mathbb{C}[r^{2}] \cong D_{k+\frac{n}{2}}^{+},
\]
 where $D_{k+\frac{n}{2}}^{+}$ is an irreducible, lowest weight representation of ${\mathfrak sl}(2,\mathbb{C})$  with lowest weight $k+\frac{n}{2}$. This representation integrates to a discrete series representation of $\widetilde{SL}(2,\mathbb{R})$, the double cover of $SL(2,\mathbb{R})$. 

Let $A=\{D_{k+\frac{n}{2}}^{+}\, | \, k\geq 0\}\subset (\widetilde{SL}(2,\mathbb{R}))^{\wedge}$ and define
\[
 \Theta:A \longrightarrow \hat{O(n,\mathbb{R})},
\]
by
\[
 \Theta(D_{k+\frac{n}{2}}^{+})=\mathcal{H}^{k}.
\]
Then, since the $D_{k+\frac{n}{2}}^{+}$ are square integrable, it's clear that
\[
 L^{2}(S^{n-1}) \cong \int_{A}\Theta(\pi) \, d\mu(\pi),
\]
where $\mu$ is the Plancherel measure of $\widetilde{SL}(2,\mathbb{R})$.

We will now describe a family of examples of this kind of correspondence where the space $X$ is not a spherical variety. What this examples will show is that the ideas discussed here have applications beyond the spherical variety setting. To construct this examples we will use Howe's theory of dual pairs.

 Let $\widetilde{Sp}(n,\mathbb{R})$ be the double cover of the symplectic group $Sp(n,\mathbb{R})$. There is a special representation of $\widetilde{Sp}(n,\mathbb{R})$ on $L^2(\mathbb{R}^n)$ called the oscillator representation \cite{h:1989}. Let $G_{1}, G_{2} \subset \widetilde{Sp}(n,\mathbb{R})$ be two reductive subgroups. We say that they form a  {reductive dual pair} if one group is the centralizer of the other one in $\widetilde{Sp}(n,\mathbb{R})$ and viceversa. 
In this case Howe duality theory states that, if we restrict the oscillator representation to the subgroup generated by $G_{1}$ and $G_{2}$, then
\[
 L^2(\mathbb{R}^n) \simeq \int_{\hat{G}_{1}} \pi \otimes \Theta(\pi) d\mu(\pi),
\]
for some measure $\mu$, where $\Theta(\pi)$ is an irreducible representation of $G_{2}$. Even more,
\[
 \Theta(\pi) = \Theta(\pi ')  \Longleftrightarrow \pi = \pi '.
\]
We will focus on the dual pair $Sp(m,\mathbb{R})\times O(p,q)\subset Sp(m(p+q),\mathbb{R})$. Let's start with the case $m=1$. In this case, Howe has shown that, if $p$, $q\geq 2$,
\[
 L^{2}(O(p-1,q)\backslash O(p,q))\cong \int_{\widetilde{SL}(2,\mathbb{R})} Wh_{\chi}(\pi)\otimes \Theta(\pi)\, d\mu(\pi),
\]
where $\mu$ is the Plancherel measure of $Sp(m,\mathbb{R})$, and
\[
 Wh_{\chi}(\pi) = \{\lambda: V_{\pi} \longrightarrow \mathbb{C} \,| \,\mbox{$ \lambda(\pi(n)v)=\chi(n)\lambda(v)$ for all $n\in N$}\},
\]
for some generic character $\chi$ of the unipotent radical, $N$, of some minimal parabolic subgroup $P=MAN$. On the other hand, Wallach \cite{w:vol2} has shown that
\[
 L^{2}(N\backslash \widetilde{SL}(2,\mathbb{R});\chi)\cong \int_{\widetilde{SL}(2,\mathbb{R})} Wh_{\overline{\chi}}(\pi)\otimes \pi\, d\mu(\pi),
\]
where
\[
L^2(N\backslash \widetilde{SL}(2,\mathbb{R});\chi)=\left\lbrace f:\widetilde{SL}(2,\mathbb{R}) \longrightarrow \mathbb{C} \, \left| \,\begin{array}{c}
				\mbox{$f(ng)=\chi_{r,s}(n)f(g)$ and } \\
                                \mbox{$\int_{N\backslash \widetilde{SL}(2,\mathbb{R})} |f(g)|^2 \, dNg < \infty$}
                              \end{array}  \right\rbrace\right. .
\]
In other words the Plancherel measure of $L^{2}(O(p,q-1)\backslash O(p,q))$ can be seen as the pullback, under the $\Theta$-lift, of $L^{2}(N\backslash \widetilde{SL}(2,\mathbb{R});\chi)$.

We will now consider the dual pair $Sp(m,\mathbb{R})\times O(p,q)\subset Sp(m(p+q),\mathbb{R})$, with $m >1$. We will assume that we are in the stable range, that is, $p,q > m$. Let $P=MAN$ be a Siegel parabolic subgroup with given Langlands decomposition. Let $\chi$ be a generic character of $N$. In this case generic means that the orbit of $\chi$ in $\hat{N}$ under the action of $M$ is open. Let
\[
 M_{\chi}=\{m\in M \,| \, \chi(m^{-1}nm)=\chi(n)\}
\]
be the stabilizer of $\chi$ in $M$.
 Then there is a natural action of $M_{\chi}\times G$ on
\[
L^2(N\backslash \widetilde{Sp}(m,\mathbb{R});\chi)=\left\lbrace f:\widetilde{Sp}(m,\mathbb{R}) \longrightarrow \mathbb{C} \, \left| \,\begin{array}{c}
				\mbox{$f(ng)=\chi(n)f(g)$ and } \\
                                \mbox{$\int_{N\backslash \widetilde{Sp}(m,\mathbb{R})} |f(g)|^2 \, dNg < \infty$}
                              \end{array}  \right\rbrace\right. .
\]

\begin{theorem}
As a $M_{\chi}\times G$-module
\[
 L^{2}(N\backslash \widetilde{Sp}(m,\mathbb{R});\chi)\cong \int_{\hat{G}} \int_{\hat{M}_{\chi}} W_{\chi,\tau}(\pi)\otimes \check{\tau}\otimes \pi \, d\nu(\tau)\, d\mu(\pi),
\]
where $\mu$, $\nu$, are the Plancherel measures of $\widetilde{Sp}(m,\mathbb{R})$ and $M_{\chi}$  respectively, and $W_{\chi,\tau}(\pi)$ is some multiplicity space that depends on $\chi$ and $\tau$. Furthermore, if $M_{\chi}$ is compact, then $W_{\chi,\tau}(\pi)\cong Wh_{\chi,\tau}(\pi)$, where
\[
 Wh_{\chi,\tau}(\pi) = \{ \lambda:V_{\pi}\longrightarrow V_{\tau} \,|\,\mbox{$ \lambda(\pi(mn)v)=\chi(n)\tau(m)\lambda(v)$ for all $m\in M_{\chi}$, $n\in N$}\}.
\]
\end{theorem}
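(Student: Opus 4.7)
The theorem is precisely the Bessel--Plancherel theorem proved in Chapter 2 (equation (\ref{Bessel-Plancherel})) specialized to the group $\widetilde{Sp}(m,\mathbb{R})$, with the additional refinement that in the compact-stabilizer case the abstract multiplicity space $W_{\chi,\tau}(\pi)$ may be identified concretely with $Wh_{\chi,\tau}(\pi)$. Since $Sp(m,\mathbb{R})$ is a Lie group of tube type and the Siegel parabolic $P=MAN$ is one of the parabolics classified in section \ref{sec:classification}, the whole machinery of Chapter 2 applies; the only point to check is that passing to the double cover $\widetilde{Sp}(m,\mathbb{R})$ is harmless, which it is because $N$ splits in the metaplectic cover and $P$ lifts to $\widetilde{P}=\widetilde{M}A N$, so the Bruhat decomposition, the Matsuki theory, and the Kolk--Varadarajan vanishing of transverse distributions all transfer verbatim.

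The plan is therefore: first, record that the constructions of section \ref{sec:besselplancherel} produce, starting from Harish-Chandra's Plancherel formula for $\widetilde{Sp}(m,\mathbb{R})$ and from the $P$-decomposition of the induced representations given in Proposition \ref{prop:descompositionparabolic}, an isomorphism of $M_\chi\times G$-modules
\[
L^{2}(N\backslash \widetilde{Sp}(m,\mathbb{R});\chi)\cong \int_{\hat{G}}\int_{\hat{M}_\chi} W_{\chi,\tau}(\pi)\otimes \check\tau\otimes \pi\, d\nu(\tau)\,d\mu(\pi),
\]
where the appearance of $\check\tau$ (rather than $\tau^\ast$ written as in Chapter 2) is a matter of notation. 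Second, to compare $W_{\chi,\tau}(\pi)$ with $Wh_{\chi,\tau}(\pi)$ when $M_\chi$ is compact, invoke the explicit Bessel--Plancherel theorem of section \ref{sec:explicitbesselplancherel}. The key input there is the isomorphism
\[
j^{\chi,\tau}_{\sigma,\nu}\colon Wh_{\chi_1,\tau_1}(V_\sigma)\otimes H_{\check\tau}\longrightarrow Wh_{\chi,\tau}(I^{\infty}_{\sigma,\nu})(\check\tau),
\]
proved by holomorphic continuation of the relevant Jacquet integral in Proposition \ref{thm:adjointholomorphiccontinuation}, combined with Proposition \ref{prop:forwardequality} which identifies $Wh_{\chi_1,\tau_1}(V_\sigma)$ with $W_{\chi,\tau}(I_{\sigma,\nu})$ for square integrable $\sigma$. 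Plugging this into the decomposition above replaces each abstract multiplicity space with the concrete $Wh_{\chi,\tau}(\pi)$, yielding the claimed identification.

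The step I expect to be the main obstacle is purely bookkeeping: verifying that the metaplectic cover does not introduce any genuine complications. One must check that the Siegel parabolic $P\subset Sp(m,\mathbb{R})$ pulls back to a parabolic $\widetilde P$ of $\widetilde{Sp}(m,\mathbb{R})$ with the same unipotent radical $N$ (the cover trivializes over $N$ because $N$ is simply connected and abelian), that $\widetilde{M}_\chi$ is compact whenever $M_\chi$ is, and that the measure-theoretic arguments in Chapter 2 are insensitive to the two-fold cover. Once these points are granted, no new representation-theoretic input is required beyond what has already been established, and the theorem follows by direct citation of Proposition \ref{prop:forwardequality}, Proposition \ref{thm:adjointholomorphiccontinuation}, and the generalized Bessel--Plancherel identity (\ref{Bessel-Plancherel}).
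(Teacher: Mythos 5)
Your proposal is correct and follows the same route as the paper: the theorem is obtained there by specializing the generalized Bessel--Plancherel identity (\ref{Bessel-Plancherel}) to $G=\widetilde{Sp}(m,\mathbb{R})$ and then identifying the multiplicity spaces in the compact-stabilizer case via Proposition \ref{prop:forwardequality} and Proposition \ref{thm:adjointholomorphiccontinuation}, exactly as you cite. Your explicit check that the metaplectic cover splits over $N$ and preserves the Siegel parabolic structure is a point the paper leaves implicit, but it introduces no new difficulty.
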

 The purpose of this chapter is to use the explicit formulas for the action of $P\times G$ on the oscillator representation and this result to show that
\begin{theorem}
If $r+s=m$, then there exists a generic character, $\chi_{r,s}$, of $N$ such that
\[
 L^{2}(O(p-r,q-s)\backslash O(p,q)) \cong \int_{\widetilde{Sp}(m,\mathbb{R})^{\wedge}} \int_{\hat{M}_{\chi_{r,s}}} W_{\chi_{r,s},\tau}(\pi)\otimes \check{\tau}\otimes \pi \, d\nu(\tau)\, d\mu(\pi).
\]
\end{theorem}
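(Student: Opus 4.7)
The approach is to realize $L^{2}(O(p-r,q-s)\backslash O(p,q))$ as the \emph{$(N,\chi_{r,s})$-fiber} of the oscillator representation on $L^{2}(\mathbb{R}^{m(p+q)})$, computed in two complementary ways, and then to invoke the Bessel--Plancherel theorem of Chapter 2. First, using the explicit formulas of \cite{A:theta,R:weil,R:witt,R:explicit}, identify $\mathbb{R}^{m(p+q)}$ with $M_{(p+q)\times m}(\mathbb{R})$, so that $O(p,q)$ acts on the left by matrix multiplication and the Siegel unipotent $N\subset Sp(m,\mathbb{R})$ acts by
\[
\bigl(\xi(n_{X})f\bigr)(v)=\chi\bigl(\tfrac{1}{2}\operatorname{tr}(X\, v^{T}I_{p,q}v)\bigr)f(v).
\]
Choose $\chi_{r,s}$ to be the generic character of $N$ corresponding to the symmetric matrix $I_{r,s}$, so that $M_{\chi_{r,s}}\cong O(r,s)$. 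In the stable range $\min(p,q)>m$, the set $\{v:v^{T}I_{p,q}v=2I_{r,s}\}$ is a single $O(p,q)$-orbit, with stabilizer isomorphic to $O(p-r,q-s)$. A partial Fourier transform along $N$ then disintegrates $L^{2}(\mathbb{R}^{m(p+q)})$ over $\hat{N}$, and the $\chi_{r,s}$-fiber is canonically isomorphic, as an $O(r,s)\times O(p,q)$-module, to $L^{2}(O(p-r,q-s)\backslash O(p,q))$.

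Second, apply Howe duality in the stable range to write
\[
L^{2}(\mathbb{R}^{m(p+q)})\cong\int_{\widetilde{Sp}(m,\mathbb{R})^{\wedge}}\pi\otimes\Theta(\pi)\,d\mu(\pi),
\]
and perform the same partial Fourier transform along $N$. On the spectral side, the $\chi_{r,s}$-fiber of the $\pi$-component is precisely the $\pi$-isotypic piece of $L^{2}(N\backslash\widetilde{Sp}(m,\mathbb{R});\chi_{r,s})$. Comparing the two disintegrations yields an $O(r,s)\times O(p,q)$-equivariant isomorphism
\[
L^{2}(O(p-r,q-s)\backslash O(p,q))\cong\int_{\widetilde{Sp}(m,\mathbb{R})^{\wedge}}L^{2}(N\backslash\widetilde{Sp}(m,\mathbb{R});\chi_{r,s})_{\pi}\otimes\Theta(\pi)\,d\mu(\pi).
\]
Substituting the Bessel--Plancherel decomposition of $L^{2}(N\backslash\widetilde{Sp}(m,\mathbb{R});\chi_{r,s})$ proved in Chapter 2 then produces the target formula, with the representation written $\pi$ in the statement understood as its theta lift $\Theta(\pi)$ to $O(p,q)$.

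The main obstacle will be the measure-theoretic justification that the partial Fourier transform along $N$ commutes with the Howe direct integral and legitimately yields the claimed identification at the $\chi_{r,s}$-fiber: the single character $\chi_{r,s}$ has Plancherel measure zero in $\hat{N}$, so the passage to a fiber must be carried out through the disintegration machinery of Section~2.3, and must be shown to be compatible with the explicit form of $\Theta(\pi)$ in the stable range following Jian-Shu Li \cite{jsl:stiefel}. Once this fibered compatibility is in place, the proof reduces to a direct comparison of two spectral decompositions of the oscillator representation and an application of the Bessel--Plancherel theorem.
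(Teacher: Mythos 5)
Your proposal is correct and follows essentially the same route as the paper: compare the geometric orbit decomposition of the Schr\"odinger model under $P\times O(p,q)$ (which produces $\operatorname{Ind}_{O(r,s)N}^{P}L^{2}(O(p-r,q-s)\backslash O(p,q))\otimes\chi_{r,s}$ over each open orbit) with the Howe duality decomposition $\int\pi|_{P}\otimes\Theta(\pi)\,d\mu(\pi)$, and feed in the Bessel--Plancherel theorem. The paper handles the measure-zero-fiber issue you flag by never isolating a single character: it works with the whole induced representation over the open $P$-orbit and uses the restriction-to-parabolic theorem $\pi^{\ast}|_{P}\cong\bigoplus_{r+s=m}\int_{O(r,s)^{\wedge}}W_{\chi_{r,s},\tau}(\pi)\otimes\operatorname{Ind}_{O(r,s)N}^{P}\tau^{\ast}\,d\eta(\tau)$ (itself obtained by comparing the Bessel--Plancherel decomposition of $L^{2}(Sp(m,\mathbb{R}))$ with Harish-Chandra's Plancherel theorem), which is exactly the precise form of your fiber argument.
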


\section{Howe Duality for the Symplectic and the Orthogonal Group}

Consider the dual pair $(Sp(m,\mathbb{R}),O(p,q))\subset Sp(mn,\mathbb{R})$, with $n=p+q$, and $p\geq q\geq 2m$. The last condition asserts that we are in the stable range. Let $P=MN$ be the Siegel parabolic subgroup of $Sp(m,\mathbb{R})$ with given Langlands decomposition. In the theory of the oscillator representation there are very explicit formulas for the action of $P \times O(p,q)$ on $L^2(\mathbb{R}^{mn})$ \cite{A:theta,R:explicit,R:weil,R:witt}. To simplify the exposition we will only consider the case where $n$ is even. The case $n$ odd is very similar, but involves taking a double cover of $Sp(m,\mathbb{R})$. To write down the explicit action on the oscillator representation we will identify $\mathbb{R}^{mn}$ with $Hom(\mathbb{R}^{m},\mathbb{R}^{n})$, and we will fix a unitary character $\psi$ of $\mathbb{R}$. The action is then given by
\begin{eqnarray}
 \left(\left[\begin{array}{cc}I & X \\ & I\end{array}\right]\cdot \varphi\right)(T)& = & \psi(\operatorname{tr} XT^tI_{p,q}T)\varphi(T) \label{eq:Naction}  \\
\left(\left[\begin{array}{cc}A &  \\  & A^{-t}\end{array}\right]\cdot \varphi\right)(T)&=&|\det A|^{\frac{n}{2}}\varphi(TA), \qquad A\in GL(m,\mathbb{R}) \label{eq:Maction} \\
(g\cdot \varphi)(T)&=&\varphi(g^{-1}T), \qquad g\in O(p,q), \label{eq:O(p,q)action}
\end{eqnarray}
where $T\in Hom(\mathbb{R}^{m},\mathbb{R}^{n})$. Using this formulas we will describe $L^2(Hom(\mathbb{R}^{m},\mathbb{R}^{n}))$ as a representation of $P\times O(p,q)$. Let
\[
 U=\left\{T\in Hom(\mathbb{R}^{m},\mathbb{R}^{n})\, \left| \, \begin{array}{c} \mbox{$T$ is of maximal rank and the inner } \\ \mbox{product on $T(\mathbb{R}^{m})$ is non-degenerated} \end{array} \right\}. \right.
\]

Observe that $U\subset Hom(\mathbb{R}^{m},\mathbb{R}^{n})$ is open, dense, and its complement has measure $0$. Let $r,s\geq0$ be a pair of integers such that $r+s=m$, and define
\[
 U_{r,s}=\{T\in U\, | \, \mbox{$T(\mathbb{R}^{m})$ has signature $(r,s)$}\}.
\]
It's then clear that
\[
 U=\bigcup_{r+s=m}U_{r,s},
\]
and hence
\begin{equation}
 L^2(Hom(\mathbb{R}^{m},\mathbb{R}^{n}))\cong\bigoplus_{r+s=m}L^2(U_{r,s}). \label{eq:directsum}
\end{equation}
By looking at the formulas for the action of $P\times O(p,q)$ on $L^2(Hom(\mathbb{R}^{m},\mathbb{R}^{n}))$, it is easy to check that the subspaces $L^2(U_{r,s})$ are $P\times O(p,q)$ invariant. 

We will now describe $L^2(U_{r,s})$ as a $P\times O(p,q)$-module. Let $T_{r,s}\in U_{r,s}$ be given by $T_{r,s}e_{i}=e_{p-r+i}$, and define a character $\chi_{r,s}$ on $N$ by the formula
\[
 \chi_{r,s}\left(\left[\begin{array}{cc}I & X \\ & I\end{array}\right]\right)=\psi(\operatorname{tr} X T_{r,s}^{t}I_{p,q}T_{r,s})=\psi(\operatorname{tr} XI_{r,s}).
\]
Let 
\[
M_{r,s}=\{m\in M \, | \, \chi_{r,s}(mnm^{-1})=\chi_{r,s}(n) \}
\]
be the stabilizer of $\chi_{r,s}$ in $M$. We will now identify $M$ with $GL(m,\mathbb{R})$. Observe that then $M_{r,s}$ gets identified with $O(r,s)$. On the other hand we can define an embedding of $O(r,s)\times O(p-r,q-s)$ into $O(p,q)$ by identifying $O(p-r,q-s)$ with the subgroup of $O(p,q)$ that fixes every element in the image of $T_{r,s}$, and $O(r,s)$ with the subgroup that fixes every element in the orthogonal complement of the image of $T_{r,s}$. With this identifications in mind, let $H_{r,s}$ be the stabilizer of $T_{r,s}$ in $M\times O(p,q)$. Observe that there is a subgroup, that we will denote by $\Delta O(r,s)$, of $H_{r,s}$ such that $\Delta O(r,s)\subset O(r,s)\times O(r,s) \subset M\times O(p,q)$, and $H_{r,s}=(\Delta O(r,s)\times O(p-r,q-s))N$. Then from equations (\ref{eq:Naction}), (\ref{eq:Maction}) and (\ref{eq:O(p,q)action}) we have that
\begin{eqnarray}
 L^2(U_{r,s})&\cong  & \operatorname{Ind}_{H_{r,s}  N}^{P\times O(p,q)} 1\otimes \chi_{r,s} \nonumber \\
             &\cong  & \operatorname{Ind}_{(\Delta O(r,s)\times O(p-r,q-s))N}^{P\times O(p,q)}1\otimes 1 \otimes \chi_{r,s} \nonumber \\
            &\cong  & \operatorname{Ind}_{O(r,s)N}^{P}L^{2}(O(p-r,q-s)\backslash O(p,q))\otimes \chi_{r,s}, \label{eq:IndL2}
\end{eqnarray}
where $O(p,q)$ acts on the right on $L^{2}(O(p-r,q-s)\backslash O(p,q))$ and $O(r,s)$ acts on the left. Then, from equations (\ref{eq:directsum}) and (\ref{eq:IndL2}),
\begin{equation}\label{eq:opqpart}
L^2(Hom(\mathbb{R}^{m},\mathbb{R}^{n}))\cong \bigoplus_{r+s=m} \operatorname{Ind}_{O(r,s)N}^{P}L^{2}(O(p-r,q-s)\backslash O(p,q))\otimes \chi_{r,s}.
\end{equation}

Now we will describe the mixed model of the oscillator representation. Observe that, since $p$, $q\geq 2m$, there exists a polarization $\mathbb{R}^{n}=X\oplus U\oplus Y$ such that $X$ and $Y$ are totally isotropic complementary subspaces, and $\dim X=\dim Y=2m$. Let $B=Stab_{X}=\{g\in O(p,q)\, | \, g X\subset X\}$ be the stabilizer of $X$, and let $B=M_{B}N_{B}$ be its Langlands decomposition.  We will now describe the mixed model of the oscillator representation relative to the polarization $\mathbb{R}^{n}=X\oplus U\oplus Y$. Observe that $\mathbb{R}^{mn}=\mathbb{R}^{2m}\otimes \mathbb{R}^{2m}\oplus \mathbb{R}^{m}\otimes \mathbb{R}^{n-4m}$. Identifying $\mathbb{R}^{2m}\otimes \mathbb{R}^{2m}$ and $\mathbb{R}^{m}\otimes \mathbb{R}^{n-4m}$ with $End(\mathbb{R}^{2m})$ and $Hom(\mathbb{R}^{m},\mathbb{R}^{n-4m})$, respectively, we have that
\[
L^{2}(\mathbb{R}^{mn})\cong L^{2}(End(\mathbb{R}^{2m}))\otimes L^{2}(Hom(\mathbb{R}^{m},\mathbb{R}^{n-4m})),
\]
where we interpretate $L^{2}(End(\mathbb{R}^{2m}))\otimes L^{2}(Hom(\mathbb{R}^{m},\mathbb{R}^{n-4m}))$ as the space of square integrable functions on $End(\mathbb{R}^{2m})$ with values in $L^{2}(Hom(\mathbb{R}^{m},\mathbb{R}^{n-4m}))$. Now identifying $M_{B}$ with $GL(2m,\mathbb{R})\times O(p-2m,q-2m)\cong GL(X)\times O(U)$, we have that the action of $Sp(m,\mathbb{R})\times M_{B}$ is given by
\begin{eqnarray}
(A\cdot \phi)(T)(S) & = & |\det A|^{-m} \phi(A^{-1}T)(S) \qquad A \in GL(2m,\mathbb{R}) \\
(h \cdot \phi)(T)(S) & = & \phi(h^{-1}T)(S). \\
(g\cdot \phi)(T)(S) &= & [\tilde{\omega}(g)\phi (Tg)](S) \qquad g\in Sp(m,\mathbb{R}) ,
\end{eqnarray}
where $T\in End(\mathbb{R}^{2m})$, $S\in Hom(\mathbb{R}^{m},\mathbb{R}^{n-4m})$, and $(\tilde{\omega}, L^{2}(Hom(\mathbb{R}^{m},\mathbb{R}^{n-4m}))$ is the oscillator representation associated to the dual pair $(Sp(m,\mathbb{R}),O(U))$. Now observe that $GL(2m,\mathbb{R})\subset End(\mathbb{R}^{2m})$ is open, dense, and its complement has measure $0$. Therefore, if $I_{2m}\in  End(\mathbb{R}^{2m})$ is the identity map, then as a $Sp(2m,\mathbb{R})\times GL(2m,\mathbb{R}) $-module,
\begin{eqnarray*}
L^{2}(\mathbb{R}^{nm}) &\cong & Ind_{Stab_{I_{2m}}}^{Sp(m,\mathbb{R})\times GL(2m,\mathbb{R})} 1\otimes   L^{2}(Hom(\mathbb{R}^{m},\mathbb{R}^{n-4m})) \\
& \cong & \int_{Sp(m,\mathbb{R})^{\wedge}} \pi \otimes Ind_{Sp(m,\mathbb{R})}^{GL(2m,\mathbb{R})} \pi^{\ast} \otimes L^{2}(Hom(\mathbb{R}^{m},\mathbb{R}^{n-4m})) \, d\mu(\pi),
\end{eqnarray*}
where $\mu$ is the Plancherel measure of $Sp(m,\mathbb{R})$. Hence, from the abstract theory of Howe duality,
\begin{equation}\label{eq:HoweL2}
L^{2}(\mathbb{R}^{mn})|_{P\times O(p,q)} \cong \int_{Sp(m,\mathbb{R})^{\wedge}} \pi|_{P}\otimes \Theta(\pi) \, d\mu(\pi).
\end{equation}

In the stable range the representation $\Theta(\pi)$ has been determined by the work of Jian-Shu Li \cite{jsl:stiefel} among others. We are thus left with the problem of decomposing an irreducible tempered representation of $Sp(m,\mathbb{R})$ when restricted to $P$.

Now let's look at $L^2(Sp(m,\mathbb{R}))$ as a $P\times Sp(m,\mathbb{R})$-module. We claim that we have an isomorphism
\[
 L^2(Sp(m,\mathbb{R}))\cong L^2(\hat{N},E,\lambda),
\]
where $E$ is a measurable bundle over $\hat{N}$ with fibers $E_{\chi}\cong \operatorname{Ind}_{N}^{Sp(m,\mathbb{R})} \chi$ for any given $\chi\in \hat{N}$, and $\lambda$ is a Haar measure on $\hat{N}$. The isomorphism is given in the following way: Given $f\in  L^2(Sp(m,\mathbb{R}))$, define $s_{f}\in L^2(\hat{N},E,\lambda)$ by
\[
 s_{f}(\chi)(g)=\int_{N}\chi(n)^{-1}f(ng) \, dn,
\]
where $dn$ is the usual Lebesgue measure on $N$. The way we should interpret
the above formula is that we have an isomorphism $L^2(Sp(m,\mathbb{R}))\cong \operatorname{Ind}_{N}^{Sp(m,\mathbb{R})} \operatorname{Ind}_{1}^{N} 1 \cong \operatorname{Ind}_{N}^{Sp(m,\mathbb{R})} L^2(N)$, and in the last expression we take the Fourier transform on $L^2(N)$.
With this convention the measure $\lambda$ on $\hat{N}$ is the measure dual to $dn$. Now by definition
\begin{eqnarray*}
 s_{R_{g_{1}}f}(\chi)(g)& = &\int_{N}\chi(n)^{-1}R_{g_{1}}f(ng) \, dn=\int_{N}\chi(n)^{-1}f(ngg_{1}) \, dn\\ & = & s_{f}(\chi)(gg_{1})=(R_{g_{1}}s_{f}(\chi))(g)
\end{eqnarray*}
and
\begin{eqnarray*}
 s_{L_{p}f}(\chi)(g)& = & \int_{N}\chi(n)^{-1}L_{p}f(ng) \, dn=\int_{N}\chi(n)^{-1}f(p^{-1}npp^{-1}g) \, dn \\
                    & = & \int_{N}\chi(pnp^{-1})^{-1}\delta(p)f(np^{-1}g) \, dn\\ & = & \int_{N}(p^{-1}\chi)(n)^{-1}\delta(p)f(np^{-1}g) \, dn  \\
                    & = & \delta(p)s_{f}(p^{-1}\chi)(p^{-1}g)=[\delta(p)L_{p}s_{f}(p^{-1}\chi)](g),
\end{eqnarray*}
where $\delta$ is the modular function of $P$. This means that the action of $P\times Sp(m,\mathbb{R})$ on $L^2(\hat{N},E,\eta)$ is given by a vector bundle action, and hence, if $\Omega$ is the set of open orbits for the action of $M$ on $\hat{N}$, then 
\[
\Omega=\bigcup_{r+s=m} \Omega_{r,s},
\]
 where $\Omega_{r,s}$ is the orbit of the character $\chi_{r,s}$ defined before. Therefore
\begin{eqnarray*}
 L^2(Sp(m,\mathbb{R}))& \cong & L^2(\hat{N},E,\lambda) \\
 & \cong & \bigoplus_{r+s=m} \operatorname{Ind}_{M_{r,s}  N \times Sp(m,\mathbb{R})}^{P\times Sp(m,\mathbb{R})}\operatorname{Ind}_{N}^{Sp(m,\mathbb{R})}\chi_{r,s} \\
 & \cong & \bigoplus_{r+s=m} \operatorname{Ind}_{M_{r,s}  N \times Sp(m,\mathbb{R})}^{P\times Sp(m,\mathbb{R})}\operatorname L^2(N\backslash   Sp(m,\mathbb{R});\chi_{r,s}).
\end{eqnarray*}
But, then according to equation (\ref{Bessel-Plancherel}),
\begin{equation}\label{eq:BesselPlancherelmeasure}
 L^2(Sp(m,\mathbb{R})) \cong  \bigoplus_{r+s=m} \operatorname{Ind}_{M_{r,s}  N}^{P} \int_{Sp(m,\mathbb{R})^{\wedge}} \int_{O(r,s)^{\wedge}} W_{\chi_{r,s},\tau}(\pi)\otimes \tau^{\ast}\otimes \pi \, d\eta(\tau) d \mu_{r,s}(\pi),
\end{equation}
where $\eta$ is the Plancherel measure of $O(r,s)$ and $\mu_{r,s}$ is the Bessel-Plancherel measure.
On the other hand, the Harish-Chandra Plancherel theorem says that
\begin{equation}
 L^2(Sp(m,\mathbb{R}))\cong  \int_{Sp(m,\mathbb{R})^{\wedge}} \pi^{\ast}|_{P}\otimes \pi\, d\mu(\pi). \label{L2GHarish}
\end{equation}
Now from equations (\ref{L2GHarish}) and (\ref{eq:BesselPlancherelmeasure}) we conclude that
\begin{equation}
\pi^{\ast}|_{P} \cong \bigoplus_{r+s=m} \int_{O(r,s)^{\wedge}} W_{\chi_{r,s},\tau}(\pi)\otimes   \operatorname{Ind}_{M_{\chi}  N}^{P} \tau^{\ast} \, d\eta(\tau). \label{eq:restrictiontoparabolic}
\end{equation}
 From this and equation (\ref{eq:HoweL2}) we have that
\begin{eqnarray}
L^2(Hom(\mathbb{R}^{m},\mathbb{R}^{n})) &  \cong & \int_{Sp(m,\mathbb{R})^{\wedge}} \pi^{\ast}|_{P}\otimes \Theta(\pi^{\ast}) \, d\mu(\pi)   \nonumber \\
& \cong & \bigoplus_{r+s=m} \int_{Sp(m,\mathbb{R})^{\wedge}}  \int_{O(r,s)^{\wedge}} W_{\chi_{r,s},\tau}(\pi) \otimes   \operatorname{Ind}_{M_{\chi}  N}^{P} \tau^{\ast}\nonumber \\
& &  \otimes \Theta(\pi^{\ast})\, d\eta(\tau) \, d\mu_{r,s}(\pi). 
\end{eqnarray}
But then, from this and equation (\ref{eq:opqpart}), we have that as an $O(r,s)\times O(p,q)$-module
\[
L^{2}(O(p-r,q-s)\backslash O(p,q)) \cong \int_{Sp(m,\mathbb{R})^{\wedge}}\int_{O(r,s)^{\wedge}} W_{\chi_{r,s},\tau}(\pi)\otimes    \tau^{\ast} \, \otimes \Theta(\pi^{\ast})\, d\eta(\tau)\, d\mu_{r,s}(\pi).
\] 

\section{The Dual Pair $(SL(2,\mathbb{R})$, $O(V))$ outside stable range}

The results obtained in the former section can be further refined when we restrict ourselves to the case $n=1$, i.e., to the dual pair $(SL(2,\mathbb{R})$, $O(V))$.

\subsection{The case $O(V)=O(n)$}

From classical invariant theory we know that as an $SL(2,\mathbb{R}) \times O(n)$-module
\[
L^2(\mathbb{R}^{n}) \simeq \bigoplus_{k\geq 0} \mbox{\rmfamily H}^{k}\otimes \C[r^{2}]
\]
where $\mbox{\rmfamily H}^{k}$ are the harmonic polynomials of degree $k$, and ${\mathfrak sl}(2,\mathbb{R})$ acts via the operators
\[
 e=r^2/2 \qquad f=-\Delta/2, \qquad h=E+n/2
\]
where $E$ is the Euler operator.

In this case the $\theta$-correspondence relates the irreducible representations of $O(n)$ on $\mbox{\rmfamily H}^{k}$ with the irreducible representations of $\widetilde{SL}(2,\mathbb{R})$ with lowest weight $k+n/2$.

\subsection{The case $O(V)=O(p,1)$ }

We will now consider the dual pair $(SL(2,\mathbb{R}),O(p,1))$. Once again we will only consider the case where $n=p+1$ is even, and leave to the reader the modifications needed for the case where $n$ is odd. Let $P=MN\subset SL(2,\mathbb{R})$ be the minimal parabolic subgroup consisting of all upper triangular matrices, with given Langlands decomposition. Let $\{e_{1},\ldots, e_{n}\}$ be the canonical basis of $\mathbb{R}^{n}$, and assume that we have an inner product $\langle, \rangle$ such that $\langle e_{i}, e_{j}\rangle =0$ if $i\neq j$, $\langle e_{i},e_{i}\rangle =1$ for $1\leq i\leq p$, and $\langle e_{n}, e_{n}\rangle=-1$. Then the oscillator representation associated to the dual pair $(SL(2,\mathbb{R}),O(p,1))$ can be realized on the space $L^{2}(\mathbb{R}^{n})$, and the action of $P\times O(p,1)$ is given by the following formulas:
\begin{eqnarray}
 \left( \left[ \begin{array}{cc} 1 & x \\ & 1 \end{array}\right] \cdot \varphi \right) (v)&  = & \psi(x\langle v,v\rangle)f(v) \qquad \forall x\in \mathbb{R}, \label{actionN2} \\
\left( \left[ \begin{array}{cc} \lambda &  \\ & \lambda^{-1} \end{array}\right] \cdot \varphi \right) (v)&  = & |\lambda|^{n/2}\varphi(\lambda v) \qquad \forall \lambda \in \mathbb{R}^*, \label{actionlamba2} \\
(g\cdot \varphi)(v) & = & \varphi(g^{-1}v) \qquad \forall g\in O(p,1), \label{actionO2}
\end{eqnarray}
where $ v\in \mathbb{R}^{n}$. Let $U=\{v\in \mathbb{R}^{n}\, | \,\langle v, v\rangle \neq 0 \}$. Then $U$ is open, dense, and its complement has measure 0. Observe that $U=U_{+}\cup U_{-}$, where $U_{+}=\{v\in \mathbb{R}^{n}\, | \,\langle v, v\rangle > 0 \}$ and $U_{-}=\{v\in \mathbb{R}^{n}\, | \,\langle v, v\rangle < 0 \}$ . Hence
\begin{equation}\label{eq:sum2}
L^{2}(\mathbb{R}^{n}) \cong L^{2}(U_{+}) \oplus L^{2}(U_{-}).
\end{equation}
We will identify $O(p,\mathbb{R})$ with the subgroup of $O(p,1)$ that fixes $e_{n}$, $O(p-1,1)$ with the subgroup of $O(p,1)$ fixing $e_{1}$, and $O(1,\mathbb{R})$ with the center of $SL(2,\mathbb{R})$. We will also set $H_{+}=Stab_{e_{1}}$ and $H_{-}=Stab_{e_{n}}$. Then, putting everything together, we get that
\begin{eqnarray}
L^{2}(U_{+})& \cong & Ind_{H_{+}N}^{P\times O(p,1)} 1\otimes \chi \nonumber \\
 & \cong & Ind_{O(1,\mathbb{R})N}^{P}L^{2}(O(p-1,1)\backslash O(p,1))\otimes \chi, \label{eq:U+2}
\end{eqnarray}
and 
\begin{eqnarray}
L^{2}(U_{-})& \cong & Ind_{H_{-}N}^{P\times O(p,1)} 1\otimes \chi \nonumber \\
 & \cong & Ind_{O(1,\mathbb{R})N}^{P}L^{2}(O(p,\mathbb{R})\backslash O(p,1))\otimes \overline{\chi}, \label{eq:U-2}
\end{eqnarray}
where 
\[
\chi \left( \left[ \begin{array}{cc} 1 & x \\ & 1 \end{array} \right]\right)=e^{ix} \qquad \mbox{and} \qquad \overline{\chi} \left( \left[ \begin{array}{cc} 1 & x \\ & 1 \end{array} \right]\right)=e^{-ix} \qquad \forall x\in \mathbb{R}.
\]

Now we will describe the mixed model for the oscillator representation associated to a complete polarization $\mathbb{R}^{n}=X\oplus U\oplus Y$. Observe that $\dim X=\dim Y=1$. Let $B=Stab_{X}=\{g\in O(p,1) \, | \, g X\subset X\}$, and let $B=M_{B}N_{B}$ be a Langlands decomposition. Observe that $M_{B}\cong \mathbb{R}^{\ast}\times O(p-1,\mathbb{R})$, where $\mathbb{R}^{\ast}=\mathbb{R}-\{0\}$. Since $\mathbb{R}^{n}\cong \mathbb{R}^{2}\oplus \mathbb{R}^{p-1}$, we can identify $L^2(\mathbb{R}^{n})$ with $L^2(\mathbb{R}^{2})\otimes L^2(\mathbb{R}^{p-1})$, which we interpretate as the space of square integrable functions on $\mathbb{R}^{2}$ with values on $L^2(\mathbb{R}^{p-1})$. With this conventions the action of $ SL(2,\mathbb{R})\times M_{B}$ is given by the following formulas: 
\begin{eqnarray}
(\lambda  \cdot \phi)((x,y))(v) & = &|\lambda|^{-1}\phi(\lambda^{-1}x,\lambda^{-1}y)(v) \qquad \mbox{for $\lambda \in \mathbb{R}^{\ast}$,} \label{actionN3}\\
(h\cdot \phi)((x,y))(v)           & = & \phi((x,y))(h^{-1}v) \qquad \mbox{for $h\in O(p-1,\mathbb{R})$,} \label{actionO3}\\
(g\cdot \phi)((x,y))(v)          & = & [\omega_{p-1}(g)\phi((x,y)g)](v), \qquad \mbox{for $g\in SL(2,\mathbb{R})$,}  \label{actionSL23} 
\end{eqnarray}
where $x$, $y \in \mathbb{R}$, $v\in \mathbb{R}^{p-1}$, and $(\omega_{p-1}, L^{2}(\mathbb{R}^{p-1}))$ is the oscillator representation associated to the dual pair $(SL(2,\mathbb{R}), O(p-1,\mathbb{R}))$. Now observe that $\mathbb{R}^{2}-\{0\}$ and $\mathbb{R}^{p-1}-\{0\}$ are open dense subsets of $\mathbb{R}^{2}$ and $\mathbb{R}^{p-1}$, respectively, and its complements have measure 0. From this observation, and equations (\ref{actionN3}), (\ref{actionO3}) and (\ref{actionSL23}) we have that as an $ SL(2,\mathbb{R})\times M_{B}$-module
\[
L^2(\mathbb{R}^{n}) \cong Ind_{HN}^{SL(2,\mathbb{R})\times \mathbb{R}^{\ast}} L^{2}(\mathbb{R}^{p-1})\otimes \chi,
\]
where 
\[
H=\left\{\left. \left( \left[\begin{array}{cc} \lambda & \\ & \lambda^{-1}\end{array}\right], \lambda \right) \, \right|  \, \lambda \in \mathbb{R}^{\ast}\right\} \subset SL(2,\mathbb{R})\times \mathbb{R}^{\ast}.
\]
Let $\Delta\{\pm 1\}$ be the subgroup of $H$ whose projection onto $\mathbb{R}^{\ast}$ is precisely $\{\pm 1\}$. Then
\begin{eqnarray*}
L^2(\mathbb{R}^{p+1})&\cong &Ind_{HN}^{SL(2,\mathbb{R})\times \mathbb{R}^{\ast}} L^{2}(\mathbb{R}^{p-1})\otimes \chi \\
      & \cong & Ind_{HN}^{SL(2,\mathbb{R})\times \mathbb{R}^{\ast}} Ind_{\Delta\{\pm 1\}N}^{HN}L^{2}(S^{p-2})\otimes  \chi \\
 & \cong & Ind_{N}^{SL(2,\mathbb{R})}\chi \otimes Ind_{\{\pm 1\}}^{\mathbb{R}^{\ast}} L^{2}(S^{p-2}),
\end{eqnarray*}
where $S^{p-2} \subset \mathbb{R}^{p-1}$ is the $p-2$-dimensional sphere.
From all this formulas it is immediate that
\begin{equation}\label{eq:whittakerplancherel2}
L^2(\mathbb{R}^{n})|_{P\times O(p,1)} = \int_{SL(2,\mathbb{R})^{\wedge}} \pi|_{P} \otimes \Theta(\pi) \, d\mu_{\chi}(\pi),
\end{equation}
where $\mu_{\chi}$ is the Plancherel-Whittaker measure of $L^{2}(N\backslash SL(2,\mathbb{R});\chi)$.

Finally, from the usual Plancherel-Whittaker theorem \cite{w:vol2} we have that if $(\pi,H_{\pi})$ is a tempered representation of $SL(2,\mathbb{R})$, then
\[
\pi|_{P}\cong Ind_{O(1,\mathbb{R}) N}^{P} Wh_{\chi}(\pi)\otimes \chi \bigoplus Ind_{O(1,\mathbb{R}) N}^{P} Wh_{\overline{\chi}}(\pi)\otimes \overline{\chi}.
\]
But then, from equations (\ref{eq:sum2}), (\ref{eq:U+2}), (\ref{eq:U-2}) and (\ref{eq:whittakerplancherel2}), we have that
\begin{equation}
L^{2}(O(p-1,1)\backslash O(p,1)) \cong \int_{SL(2,\mathbb{R})^{\wedge}} Wh_{\chi}(\pi)\otimes \Theta(\pi)\, d\mu_{\chi}(\pi)
\end{equation}
and
\begin{equation}\label{eq:spherical}
L^{2}(O(p,\mathbb{R})\backslash O(p,1)) \cong \int_{SL(2,\mathbb{R})^{\wedge}} Wh_{\overline{\chi}}(\pi)\otimes \Theta(\pi)\, d\mu_{\chi}(\pi).
\end{equation}
Observe that $L^{2}(O(p,\mathbb{R})\backslash O(p,1))$ has no discrete spectrum. Effectively from equation (\ref{eq:spherical}) only the theta lift of a representation with a positive \emph{and} a negative whittaker model can appear in the spectral decomposition of $L^{2}(O(p,\mathbb{R})\backslash O(p,1))$.

\end{document}